\newcounter{minutes}\setcounter{minutes}{\time}
\newcounter{hours}\setcounter{hours}{\time}
\newcommand{\real}{\operatorname{Re}}
\newtheorem{theorem}{Theorem}
\newtheorem{lemma}{Lemma}
\newtheorem{problem}{Open Problem}
\keywords{Bessel functions of the first kind, modified Bessel functions of the first kind; product of Bessel functions; cross-product of Bessel
functions; univalent, starlike, convex functions; radius of starlikeness;
radius of convexity; zeros of a cross product of Bessel functions; Laguerre-P%
\'{o}lya class of entire functions; Laguerre inequality; interlacing
property of zeros; distribution of zeros of entire functions; zeros of hypergeometric polynomials; Fourier critical points.}
\subjclass[2010]{30D15, 30C15, 30C45, 33C10}
\title{Products of Bessel and modified Bessel functions}
\author[\'A. Baricz]{\'{A}rp\'{a}d Baricz$^{\bigstar}$}
\address{Department of Economics, Babe\c{s}-Bolyai University,
Cluj-Napoca, Romania}
\address{Institute of Applied Mathematics, \'{O}buda University, Budapest, Hungary}
\email{bariczocsi@yahoo.com}
\author[R. Sz\'asz]{R\'obert Sz\'asz}
\address{Department of Mathematics and Informatics, Sapientia Hungarian University of Transylvania, T\^argu-Mure\c{s}, Romania}
\email{rszasz@ms.sapientia.ro}
\author[N. Ya\u{g}mur]{Nihat Ya\u{g}mur}
\address{Lalapasa mah, Ali Riza bey sokak, no. 24, Yakutiye Erzurum, Turkey}
\email{nhtyagmur@gmail.com}
\thanks{$^{\bigstar}$The research of \'A. Baricz was supported by a research grant of the Babe\c{s}-Bolyai University for young researchers with
project number GTC-31777.}
\begin{document}

\def\thefootnote{}
\footnotetext{ \texttt{File:~\jobname .tex,
          printed: \number\year-\number\month-\number\day,
          \thehours.\ifnum\theminutes<10{0}\fi\theminutes}
} \makeatletter\def\thefootnote{\@arabic\c@footnote}\makeatother

\maketitle

\begin{center}
Dedicated to Beril K\"ubra, Bor\'oka and Kopp\'any
\end{center}

\begin{abstract}
The reality of the zeros of the product and cross-product of Bessel and modified Bessel functions of the
first kind is studied. As a consequence the reality of the zeros of two hypergeometric polynomials is obtained together with the number of the Fourier critical points of the normalized forms of the product and cross-product of Bessel functions. Moreover, the interlacing properties of the real zeros of these products of Bessel functions and their derivatives are also obtained. As an application some geometric properties of the normalized forms of the cross-product and product of Bessel and modified Bessel functions of the first kind are studied. For the
cross-product and the product three different kind of normalization are investigated and for each of
the six functions the radii of starlikeness and convexity are precisely determined by using their
Hadamard factorization. For these radii of starlikeness and convexity tight lower and upper bounds are given via Euler-Rayleigh inequalities. Necessary and sufficient conditions are also given for the parameters such
that the six normalized functions are starlike and convex in the open unit disk. The properties and the characterization of
real entire functions from the Laguerre-P\'{o}lya class via hyperbolic
polynomials play an important role in this paper. Some open problems are also stated, which may be of interest for further research.
\end{abstract}

\tableofcontents

\section{Introduction}

Bessel and modified Bessel functions are important functions of mathematical physics. They appear frequently in problems of applied mathematics and their properties were studied extensively by many researchers. In \cite{ben} Ashbaugh and Benguria presented an alternative proof to Rayleigh's conjecture that among all clamped plates of a given area, the circular one gives the lowest principal frequency. Moreover, based on explicit properties of Bessel
functions they generalized the result to the analogous case in three dimensions. Motivated by their appearance as eigenvalues in the clamped plate problem for the ball, Ashbaugh and Benguria \cite{ben} have conjectured that the positive zeros of the
function $\mathbf{\Phi }_{{\nu }},$ defined by
$$\mathbf{\Phi }_{{\nu }}(z)=J_{{\nu }}(z)I_{{\nu }}^{\prime }(z)-J_{{\nu }}^{\prime }(z)I_{{\nu }}(z),$$
increase with ${\nu }$ on $\left[-\frac{1}{2},\infty\right),$ where $J_{{\nu }}$ and $I_{{\nu }}$ stand for the Bessel and modified Bessel functions of the first kind (see \cite{OL}). Lorch \cite{Lorch} verified the above conjecture of Ashbaugh and Benguria and presented some
other properties of the zeros of\ the function $\mathbf{\Phi }_{{\nu }}.$
In \cite{HAT} the proof made by Lorch was modified to ${\nu }\in
(-1, $ $\infty )$, and necessary and sufficient
conditions were deduced for the close-to-convexity of a normalized form of the functions $\mathbf{\Phi }_{{\nu
}}$ and $\mathbf{\Pi }_{{\nu }},$ and their derivatives. Moreover, very recently Baricz et al. \cite{cross}
were interested on the monotonicity patterns for the cross-product of Bessel and
modified Bessel functions, by showing for example that the positive zeros of the cross-product and of the Dini function $z\mapsto (1-\nu)J_{\nu}(z)+zJ_{\nu}'(z)$ are interlacing. In \cite{cross} one of the key tools in the proofs of the main results it was the fact that the zeros of the cross-product are increasing with $\nu.$ Motivated by the above results and by the importance of the cross-product $\mathbf{\Phi }_{{\nu }},$ in this paper our aim is to present an exhaustive study on the real zeros and geometric properties of the product $$\mathbf{\Pi }_{{\nu }}(z)=J_{{\nu }}(z)I_{{\nu }}(z)$$ and the cross-product $\mathbf{\Phi }_{{\nu }}.$ By using among others the above monotonicity property of the zeros of the cross-product, our aim is to determine the radii of starlikeness and convexity of the normalized forms of the functions $\mathbf{\Phi }_{{\nu }}$
and $\mathbf{\Pi }_{{\nu }}$ together with the complete characterization of the starlikeness and convexity of these normalized forms. It is worth to mention that some similar results were obtained recently in
the papers \cite{bdm} and \cite{BY}, in which the radii of starlikeness and convexity of
$q$-Bessel functions as well as for Lommel and Struve functions of the first kind were determined. We also mention
that by using the corresponding recurrence relations for the Bessel and modified Bessel functions of the first kind,
then the cross-product can be rewritten as $$\mathbf{\Phi}_{\nu}(z)=J_{{\nu }+1}(z)I_{{\nu }}(z)+J_{{\nu }}(z)I_{{\nu }+1}(z).$$ Moreover, we know that
if $z\in {\mathbb{C}}$ and ${\nu }\in {\mathbb{C}}$ such
that ${\nu \neq -1,-2,{\dots}}$ then the functions $\mathbf{\Phi }_{{\nu }}$ and $%
\mathbf{\Pi }_{{\nu }}$ can be written as follows (see \cite[p. 148]{Wat} and \cite%
{HAT}):
\begin{equation}
\mathbf{\Phi }_{{\nu }}(z)=2\sum_{n\geq0}\frac{\left( -1\right) ^{n}\left( \frac{z}{2}\right)
^{2{\nu }+4n+1}}{n!\Gamma \left( {\nu }+n+1\right) \Gamma \left( {\nu }%
+2n+2\right) },  \label{fi}
\end{equation}
\begin{equation}
\mathbf{\Pi }_{{\nu }}(z)=\sum_{n\geq0}\frac{\left( -1\right)
^{n}\left( \frac{z}{2}\right) ^{2{\nu }+4n}}{n!\Gamma \left( {\nu }%
+n+1\right) \Gamma \left( {\nu }+2n+1\right) }.  \label{pi}
\end{equation}

It is important to mention here that the problem to consider the radius of starlikeness of the normalized Bessel functions was first studied by Brown \cite{brown}, who used the methods of Nehari \cite{nehari} and Robertson \cite{robertson}. The key tool in Brown's proofs was the fact that the Bessel function of the first kind is a particular solution of the homogeneous Bessel differential equation. For related interesting results the reader can refer to \cite{brown2,brown3,todd,merkes,robertson,wilf} and to the references therein, and for more details can refer to \cite{sz}. Our approach follows the method from \cite{BAS} and it is completely different than of Brown \cite{brown,brown2,brown3}, Nehari \cite{nehari} and Robertson \cite{robertson}. In order to achieve our goal we also study the distribution of the real zeros of the product and cross-product of Bessel and modified Bessel functions of the first kind. Although these results were used to obtain some sharp geometric properties for the products, the results on the zeros can be
of independent interest and may be useful for people who are using special functions in their research.

The paper is organized as follows: the next section contains the main results of the paper. Section 3 is devoted for
preliminary results and their proofs, while section 4 contains the proofs of the main results. We note that the preliminary
results deduced in section 3 are also of independent interest and can be useful in problems related to the product and
cross-product of Bessel functions.

\section{Properties of the product and cross-product of Bessel functions}
\setcounter{equation}{0}

In this section our aim is to study the distribution of the real zeros of the product and cross-product of Bessel and modified Bessel functions of the first kind. This is analogous to the distribution of the real zeros of Bessel functions of the first kind, deduced by Lommel, and later by Hurwitz. As a consequence some geometric properties of the normalized forms of the cross-product and product of Bessel and modified Bessel functions of the first kind are studied. For the cross-product and the product three different kind of normalization are investigated and for each of
the six functions the radii of starlikeness and convexity are precisely determined by using their
Hadamard factorization. For these radii of starlikeness and convexity tight lower and upper bounds are given via Euler-Rayleigh inequalities. Necessary and sufficient conditions are also given for the parameters such
that the six normalized functions are starlike and convex in the open unit disk.

\subsection{Reality of the zeros of $\mathbf{\Phi}_{\nu}$ and $\mathbf{\Pi}_{\nu}$} The following result is very important in the study of the geometric properties of the product and cross-product of Bessel and modified Bessel functions of the first kind, however it is also of independent interest. This result is based on general results on zeros of entire functions by Runckel \cite{runckel} and it is analogous to the celebrated result of von Lommel, which states that when $\nu>-1$ all the zeros of $J_{\nu}$ are real. The reality of the zeros of $\mathbf{\Pi}_{\nu}$ is equivalent to this result, however the proof stated in this paper is completely different from Lommel's approach.

\begin{theorem}\label{thzeros}
 If $\nu>-1,$ then the functions $\mathbf{\Phi}_{\nu}$ and $\mathbf{\Pi}_{\nu}$ have infinitely many zeros, which are all real.
\end{theorem}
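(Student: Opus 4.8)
The plan is to show that both $\mathbf{\Phi}_\nu$ and $\mathbf{\Pi}_\nu$, after removing the trivial zero at the origin, belong (up to a harmless even/odd substitution) to the Laguerre--P\'olya class, so that their zeros are all real; the infinitude of the zeros then follows from the order of growth. The starting point is the explicit power series \eqref{fi} and \eqref{pi}. In \eqref{pi}, writing $w=(z/2)^2$ one sees that $\mathbf{\Pi}_\nu(z)=(z/2)^{2\nu}\,P_\nu(w)$ where $P_\nu(w)=\sum_{n\ge0}(-1)^n w^{2n}/\big(n!\,\Gamma(\nu+n+1)\,\Gamma(\nu+2n+1)\big)$; similarly \eqref{fi} gives $\mathbf{\Phi}_\nu(z)=2(z/2)^{2\nu+1}\,Q_\nu(w)$ with $Q_\nu(w)=\sum_{n\ge0}(-1)^n w^{2n+1}/\big(n!\,\Gamma(\nu+n+1)\,\Gamma(\nu+2n+2)\big)$. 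Because the exponents of $w$ jump by $2$, it is natural to set $t=w^2=(z/2)^4$ and obtain genuine power series in $t$ with alternating signs; the task reduces to proving that these functions of $t$ have only negative real zeros.

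The main tool, as the authors announce, is a criterion of Runckel \cite{runckel} on zeros of entire functions. So the first real step is to recall Runckel's theorem in a usable form: it gives sufficient conditions on the coefficients $a_n$ of an entire function $\sum a_n t^n$ (typically a log-concavity / Tur\'an-type inequality $a_{n-1}a_{n+1}\le c\,a_n^2$ together with a growth/order condition) guaranteeing that all zeros are real and negative, i.e. the function lies in the Laguerre--P\'olya class $\mathcal{LP}^+$. The second step is the computational heart: verify that the coefficient sequences $a_n = 1/\big(n!\,\Gamma(\nu+n+1)\,\Gamma(\nu+2n+1)\big)$ (for $\mathbf{\Pi}_\nu$) and the analogous sequence for $\mathbf{\Phi}_\nu$ satisfy Runckel's hypotheses when $\nu>-1$. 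This amounts to checking an inequality of the form $a_n^2 \ge \lambda_n\, a_{n-1}a_{n+1}$, which after cancelling factorials and Gamma-factors becomes a rational inequality in $n$ and $\nu$; the condition $\nu>-1$ should be exactly what makes all the Gamma-factors positive and the inequality hold. The third, easier step is the count of zeros: estimate the growth of the series to conclude the order is positive and finite (one expects order $1/2$ after the substitution $t=(z/2)^4$, reflecting the order $2$ of $\mathbf{\Phi}_\nu,\mathbf{\Pi}_\nu$ in $z$), so a function in $\mathcal{LP}^+$ of positive order must have infinitely many (real, negative) zeros in $t$, hence infinitely many real zeros in $z$.

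The main obstacle is the second step: pinning down the precise Tur\'an-type inequality that Runckel's criterion demands and verifying it uniformly for all $n\ge1$ and all $\nu>-1$. The product of \emph{two} Gamma-factors with arguments growing at different rates ($\nu+n+1$ and $\nu+2n+1$) makes the ratio $a_{n-1}a_{n+1}/a_n^2$ less transparent than in the classical Bessel case, and one has to be careful near the endpoint $\nu\to-1$, where $\Gamma(\nu+1)$ blows up but the ratios should still behave. A secondary subtlety is bookkeeping with the substitutions: one must make sure the even/odd structure (the series in $w$ has gaps) is correctly absorbed by passing to $t=w^2$, and that multiplying back by the monomial $(z/2)^{2\nu}$ or $(z/2)^{2\nu+1}$ does not introduce spurious non-real zeros (it does not, since $z\mapsto(z/2)^4$ maps the negative real $t$-axis precisely onto the real $z$-axis). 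Once the coefficient inequality is established, the rest is routine Laguerre--P\'olya theory.
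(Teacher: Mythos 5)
Your overall strategy --- reduce to a power series in $t=(z/2)^4$ with positive coefficients and alternating signs, show that its zeros are all real and positive, and get infinitude of the zeros from the finite non-integer order --- is the same as the paper's, and your remarks about the monomial prefactor and the substitution are correct. But the step you yourself call ``the computational heart'' rests on a misidentification of Runckel's theorem, and the substitute you propose cannot work. Runckel's result, as used in the paper, is not a Tur\'an-type coefficient criterion: it is a multiplier-sequence theorem. It states that if $f(z)=\sum a_nz^n$ has only real zeros (or no zeros at all) and is of the form $e^{az^2}h(z)$ with $a\le0$ and $h$ of genus at most one, and if $G$ is an entire function of the form $e^{\beta z}\prod_{n\ge1}(1+z/\alpha_n)e^{-z/\alpha_n}$ with $\alpha_n>0$ and $\sum\alpha_n^{-2}<\infty$, then $\sum a_nG(n)z^n$ again has only real zeros. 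The paper applies this with $f(z)=e^{-(z/2)^2}$ (which has no zeros at all) and with $G(z)=1/\big(\Gamma(z/2+\nu+1)\Gamma(z+\nu+2)\big)$ for $\mathbf{\Phi}_\nu$ (resp.\ $\Gamma(z+\nu+1)$ in the second factor for $\mathbf{\Pi}_\nu$); the hypothesis $\nu>-1$ is exactly what makes all zeros of $G$ negative, via the Weierstrass product for $1/\Gamma$. No coefficient inequality is ever verified.

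The inequality route you sketch fails quantitatively. For $\mathbf{\Pi}_\nu$ the coefficients in $t$ are $a_n=1/\big(n!\,\Gamma(\nu+n+1)\Gamma(\nu+2n+1)\big)$, and a direct computation gives
\begin{equation*}
\frac{a_{n-1}a_{n+1}}{a_n^2}=\frac{n}{n+1}\cdot\frac{\nu+n}{\nu+n+1}\cdot\frac{(\nu+2n-1)(\nu+2n)}{(\nu+2n+1)(\nu+2n+2)}\longrightarrow 1
\end{equation*}
as $n\to\infty$, so no condition of the form $a_{n-1}a_{n+1}\le c\,a_n^2$ with $c<1$ --- in particular not the classical Hutchinson constant $c=\tfrac14$ --- can hold for all $n$, while plain log-concavity ($c=1$) does not imply reality of zeros (consider $1+t+t^2$). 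The same phenomenon already occurs for $J_0$, so any successful argument must use more than the size of consecutive coefficient ratios; this is precisely why a multiplier-sequence or Jensen-polynomial argument is needed. A minor side remark: the order of $\mathbf{\Phi}_\nu$ and $\mathbf{\Pi}_\nu$ in $z$ is $1$, not $2$, so the order in $t$ is $\tfrac14$, not $\tfrac12$; this does not affect the ``infinitely many zeros'' conclusion, since any positive non-integer order forces infinitely many zeros by Hadamard's theorem.
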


\subsection{The reality of the zeros of two hypergeometric polynomials} The above result can be written equivalently as follows. The proof of this result was motivated by the paper of Dimitrov and Ben Cheikh \cite{dimitrov} in which an alternative proof of von Lommel's result on the reality of the zeros of Bessel functions was given.

\begin{theorem}\label{thhyper}
If $\nu>-1$ and $n\in\mathbb{N},$ then all zeros of the hypergeometric polynomials
$${}_1F_3\left(-n;\nu+1,\frac{\nu}{2}+1,\frac{\nu}{2}+\frac{3}{2};z\right)\ \ \ \mbox{and}\ \ \ {}_1F_3\left(-n;\nu+1,\frac{\nu}{2}+\frac{1}{2},\frac{\nu}{2}+1;z\right)$$
are real.
\end{theorem}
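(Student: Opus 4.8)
The plan is to reduce Theorem~\ref{thhyper} to Theorem~\ref{thzeros} by expressing each hypergeometric polynomial as a partial sum of the power series \eqref{fi} and \eqref{pi}, and then observing that these partial sums are precisely the polynomial sections of entire functions in the Laguerre--P\'olya class. First I would rewrite $\mathbf{\Pi}_{\nu}$ by pulling out the factor $(z/2)^{2\nu}$ from \eqref{pi}: after the substitution $w=-(z/2)^{4}$ (equivalently $z^4\mapsto -16w$), the series $\sum_{n\ge 0}\frac{(-1)^n(z/2)^{4n}}{n!\,\Gamma(\nu+n+1)\Gamma(\nu+2n+1)}$ becomes, up to a constant, a ${}_1F_3$ in the variable $w$. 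Using the duplication formula $\Gamma(\nu+2n+1)=\frac{2^{2n+\nu}}{\sqrt{\pi}}\Gamma\!\left(\frac{\nu}{2}+n+\frac12\right)\Gamma\!\left(\frac{\nu}{2}+n+1\right)$ one identifies the denominator parameters as $\nu+1,\ \frac{\nu}{2}+\frac12,\ \frac{\nu}{2}+1$, which matches the second polynomial in the statement when the series is truncated at $n$. The same computation applied to \eqref{fi}, after dividing by $(z/2)^{2\nu+1}$ and making the same substitution, yields the first ${}_1F_3$ with parameters $\nu+1,\ \frac{\nu}{2}+1,\ \frac{\nu}{2}+\frac32$; here one uses instead $\Gamma(\nu+2n+2)$ and its duplication formula.

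Next I would invoke Theorem~\ref{thzeros}: for $\nu>-1$ the entire functions $\mathbf{\Phi}_{\nu}$ and $\mathbf{\Pi}_{\nu}$ have only real zeros (and are of genus $0$ or $1$ with appropriate growth), so after removing the monomial prefactor $(z/2)^{2\nu}$ resp. $(z/2)^{2\nu+1}$ the remaining entire functions of $z^4$ — call them $g(w)$ with $w=-(z/2)^4$ — have only real nonpositive zeros in $w$. In other words $g$ belongs to the Laguerre--P\'olya class $\mathcal{LP}$, in fact to the subclass $\mathcal{LP}^+$ of functions with only nonpositive zeros. The key step is then the classical Jensen/Laguerre--P\'olya theorem that every function in $\mathcal{LP}$ is a locally uniform limit of its real-rooted polynomials, together with the sharper fact (due to Hurwitz-type arguments, or to the theory of multiplier sequences) that the Taylor sections $\sum_{k=0}^{n} a_k w^k$ of a function $g(w)=\sum_{k\ge0}a_k w^k\in\mathcal{LP}^+$ are themselves real-rooted. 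Applying this to $g$ shows each partial sum, hence each ${}_1F_3(-n;\dots;w)$, has only real zeros in $w$, and since $w$ is a real multiple of $z^4$ — wait, one must be careful: $w=-(z/2)^4$ is real iff $z^4$ is real, which happens on four rays; real-rootedness in $w$ gives real \emph{negative} roots $w_j<0$, hence $z^4=-16 w_j>0$, so $z=\pm(16|w_j|)^{1/4}$ is real. This returns the reality of the zeros of the hypergeometric polynomials as functions of the original variable.

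The main obstacle, and the point needing the most care, is the passage from ``$\mathbf{\Pi}_{\nu}$ has only real zeros'' to ``the Taylor sections of the associated function of $w$ are real-rooted''. Having real zeros is not by itself enough to land in $\mathcal{LP}$: one also needs the genus/growth bound guaranteeing the Hadamard product has the Weierstrass form $cz^m\prod(1-z/z_k)$ or $cz^m e^{-\alpha z}\prod(1-z/z_k)e^{z/z_k}$ with $\alpha\ge 0$. For $\mathbf{\Pi}_{\nu}$ and $\mathbf{\Phi}_{\nu}$ the order is $1/2$ (the exponents are $\sim 4n$ against factorials squared), so the genus is $0$ and membership in $\mathcal{LP}$ is automatic once reality of zeros is known; this should be extracted from the discussion of the Laguerre--P\'olya class promised in the paper. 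A secondary subtlety is bookkeeping: one should check that no spurious real zero is introduced or lost when stripping the prefactor $(z/2)^{2\nu}$ (a non-integer power for general $\nu$), which is handled by working with the even/${}z^4$ structure of the series rather than with fractional powers directly, and by noting the hypergeometric polynomials are genuine polynomials in $w$ regardless. Once these points are in place, the reality of the zeros of both ${}_1F_3$ polynomials follows, completing the proof.
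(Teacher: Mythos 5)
Your reduction to Theorem \ref{thzeros} and the bookkeeping with the duplication formula $(a)_{2k}=2^{2k}\left(\frac{a}{2}\right)_k\left(\frac{a+1}{2}\right)_k$ are in the right spirit, but the final step contains a fatal error. The polynomial ${}_1F_3(-n;b_1,b_2,b_3;z)=\sum_{k=0}^n\frac{(-n)_k}{(b_1)_k(b_2)_k(b_3)_k}\frac{z^k}{k!}$ is \emph{not} the $n$th Taylor section of the entire function ${}_0F_3(-;b_1,b_2,b_3;-z)=\sum_{k\geq0}\frac{(-z)^k}{k!(b_1)_k(b_2)_k(b_3)_k}$: since $(-n)_k/k!=(-1)^k\binom{n}{k}$, its $k$th coefficient carries the extra weight $n!/(n-k)!$ relative to the truncation. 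What you actually have is the $n$th \emph{Jensen polynomial} $P_n(\varphi;x)=\sum_{k=0}^n\binom{n}{k}\tau_kx^k$ of $\varphi(x)=\sum_{k\geq0}\tau_kx^k/k!$ with $\tau_k=(-1)^k/\left((b_1)_k(b_2)_k(b_3)_k\right)$. Moreover, the ``sharper fact'' you invoke --- that the Taylor sections of a function in $\mathcal{LP}$ with nonpositive zeros are real-rooted --- is false: $e^x$ belongs to this class, yet its section $1+x+x^2/2$ has discriminant $-1$; even $(1+x)^3$, a polynomial with only real negative zeros, has the non-real-rooted section $1+3x+3x^2$. Truncation does not preserve hyperbolicity, so no genus or growth-order refinement can rescue this step.

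The correct tool is Jensen's characterization (Lemma \ref{Jensen}): $\varphi$ belongs to $\mathcal{LP}$ if and only if \emph{all of its Jensen polynomials} are hyperbolic, and it is precisely the binomial weights $\binom{n}{k}$ --- equivalently, the upper parameter $-n$ in ${}_1F_3$ --- that make the implication valid. This is exactly what the paper does: it observes via Theorem \ref{thzeros} that $z\mapsto\frac{1}{2}z^{-\frac{\nu}{2}-\frac{1}{4}}\mathbf{\Phi}_{\nu}(2\sqrt[4]{z})$ and $z\mapsto z^{-\frac{\nu}{2}}\mathbf{\Pi}_{\nu}(2\sqrt[4]{z})$ belong to $\mathcal{LP}$, writes down their Jensen polynomials, and identifies these (using the duplication formula, as you did) with the two ${}_1F_3$ polynomials evaluated at $z/4$, a rescaling which does not affect reality of zeros. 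A final small point: the theorem concerns the zeros of the polynomials in their own variable, so once hyperbolicity in that variable is established, the return trip to the original Bessel variable at the end of your argument is unnecessary.
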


\subsection{The Fourier critical points of the normalized forms of $\mathbf{\Phi}_{\nu}$ and $\mathbf{\Pi}_{\nu}$} To define the notion of the Fourier critical point let $f$ be a real entire function defined in an open interval $(a,b)\subset\mathbb{R}.$ Let $l\in\mathbb{N}$ and suppose that $c\in(a,b)$ is a zero of $f^{(l)}(x)$ of multiplicity $m\in\mathbb{N},$ that is, $f^{(l)}(c)=\dots=f^{(l+m-1)}(c)=0$ and $f^{(l+m)}(c)\neq0.$ Now, let $k=0$ if $f^{(l-1)}(c)=0,$ otherwise let
$$k=\left\{\begin{array}{ll}{m}/{2},& \mbox{if}\ m\ \mbox{is even},\\
{(m+1)}/{2},& \mbox{if}\ m\ \mbox{is odd and}\ f^{(l-1)}(c)f^{(l+m)}(c)>0,\\
{(m-1)}/{2},& \mbox{if}\ m\ \mbox{is odd and}\ f^{(l-1)}(c)f^{(l+m)}(c)<0. \end{array}\right.$$
Then we say that $f^{(l)}(x)$ has $k$ critical zeros and $m-k$ noncritical zeros at $x = c.$ A point in $(a,b)$ is said to be a Fourier critical point of $f$ if some derivative of $f$ has a critical zero at the point. For more details on Fourier critical points we refer to \cite{kim}. Now, we consider the real entire functions
$$\lambda_{\nu}^{\mathbf{\Phi}}(z)=\sum_{n\geq0}\frac{1}{\Gamma \left( {\nu }+n+1\right) \Gamma \left( {\nu }%
+2n+2\right)}\frac{z^n}{n!}$$
and
$$\lambda_{\nu}^{\mathbf{\Pi}}(z)=\sum_{n\geq0}\frac{1}{\Gamma \left( {\nu }%
+n+1\right) \Gamma \left( {\nu }+2n+1\right) }{\frac{z^n}{n!}},$$
which satisfy
$$2\left(\frac{z}{2}\right)^{2\nu+1}\lambda_{\nu}^{\mathbf{\Phi}}\left(-\frac{z^4}{16}\right)=\mathbf{\Phi}_{\nu}(z)\ \ \ \mbox{and}\ \ \
\left(\frac{z}{2}\right)^{2\nu}\lambda_{\nu}^{\mathbf{\Pi}}\left(-\frac{z^4}{16}\right)=\mathbf{\Pi}_{\nu}(z).$$
Note that the functions $\lambda_{\nu}^{\mathbf{\Phi}}$ and $\lambda_{\nu}^{\mathbf{\Pi}}$ have growth order $\frac{1}{4}$ and consequently they are of genus $0.$ Thus, according to the fact that
every real entire function of genus $0$ has just as many Fourier critical points as couples of nonreal zeros (see \cite[Theorem 4.1]{kim}), Theorem \ref{thzeros} can be written equivalently as follows.

\begin{theorem}\label{thfourier}
If $\nu>-1,$ then $\lambda_{\nu}^{\mathbf{\Phi}}$ and $\lambda_{\nu}^{\mathbf{\Pi}}$ have no Fourier critical points.
\end{theorem}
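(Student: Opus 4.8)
The plan is to reduce the statement entirely to Theorem~\ref{thzeros} by invoking the theorem of Kim~\cite[Theorem 4.1]{kim} quoted above, according to which a real entire function of genus $0$ has exactly as many Fourier critical points as couples of nonreal zeros. First I would check that this theorem applies: both $\lambda_{\nu}^{\mathbf{\Phi}}$ and $\lambda_{\nu}^{\mathbf{\Pi}}$ are real entire functions, since for $\nu>-1$ all of their Taylor coefficients $\tfrac{1}{n!\,\Gamma(\nu+n+1)\,\Gamma(\nu+2n+2)}$ and $\tfrac{1}{n!\,\Gamma(\nu+n+1)\,\Gamma(\nu+2n+1)}$ are real (indeed positive and finite), and — as already recorded in the discussion preceding the statement — they have growth order $\tfrac14<1$, hence genus $0$ by Hadamard's factorization theorem. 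Consequently it suffices to show that neither function has a nonreal zero.

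Next I would transfer the zeros of $\lambda_{\nu}^{\mathbf{\Phi}}$ to zeros of $\mathbf{\Phi}_{\nu}$ via the identity $\mathbf{\Phi}_{\nu}(z)=2\bigl(\tfrac{z}{2}\bigr)^{2\nu+1}\lambda_{\nu}^{\mathbf{\Phi}}\!\bigl(-\tfrac{z^4}{16}\bigr)$. Note that $\lambda_{\nu}^{\mathbf{\Phi}}(0)=\tfrac{1}{\Gamma(\nu+1)\Gamma(\nu+2)}\neq0$, so $0$ is not a zero of $\lambda_{\nu}^{\mathbf{\Phi}}$. Hence if $w$ is any zero of $\lambda_{\nu}^{\mathbf{\Phi}}$, then $w\neq0$; picking any fourth root $z$ of $-16w$ gives $z\neq0$, and the displayed identity yields $\mathbf{\Phi}_{\nu}(z)=0$ because $\bigl(\tfrac{z}{2}\bigr)^{2\nu+1}\neq0$. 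By Theorem~\ref{thzeros} every zero of $\mathbf{\Phi}_{\nu}$ is real, so $z\in\mathbb{R}\setminus\{0\}$, whence $z^4>0$ and $w=-\tfrac{z^4}{16}$ is a negative real number; thus $\lambda_{\nu}^{\mathbf{\Phi}}$ has only (negative) real zeros. Running the same argument with $\mathbf{\Pi}_{\nu}(z)=\bigl(\tfrac{z}{2}\bigr)^{2\nu}\lambda_{\nu}^{\mathbf{\Pi}}\!\bigl(-\tfrac{z^4}{16}\bigr)$, using $\lambda_{\nu}^{\mathbf{\Pi}}(0)=\tfrac{1}{\Gamma(\nu+1)^2}\neq0$ and the reality of the zeros of $\mathbf{\Pi}_{\nu}$ from Theorem~\ref{thzeros}, shows that $\lambda_{\nu}^{\mathbf{\Pi}}$ likewise has only real zeros.

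To finish, I would simply observe that since neither $\lambda_{\nu}^{\mathbf{\Phi}}$ nor $\lambda_{\nu}^{\mathbf{\Pi}}$ has a nonreal zero, neither has a couple of nonreal zeros, and Kim's theorem then forces the number of Fourier critical points to be $0$ in both cases, which is the assertion. I do not expect a genuine obstacle here: the whole content is carried by Theorem~\ref{thzeros}, and Theorem~\ref{thfourier} is merely its reformulation. The only points requiring a little care are (i) the remark that $0$ is not a zero of the $\lambda$-functions, which is what makes the substitution $w\mapsto -z^4/16$ a faithful correspondence between the zeros of $\lambda_{\nu}^{\mathbf{\Phi}}$ (resp.\ $\lambda_{\nu}^{\mathbf{\Pi}}$) and the nonzero zeros of $\mathbf{\Phi}_{\nu}$ (resp.\ $\mathbf{\Pi}_{\nu}$), and (ii) the verification that the genus is $0$, which is immediate from the order being $\tfrac14<1$.
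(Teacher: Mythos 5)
Your proposal is correct and follows exactly the route the paper takes: the paper also deduces Theorem \ref{thfourier} directly from Theorem \ref{thzeros} by noting that $\lambda_{\nu}^{\mathbf{\Phi}}$ and $\lambda_{\nu}^{\mathbf{\Pi}}$ are real entire of growth order $\tfrac14$, hence of genus $0$, and then invoking \cite[Theorem 4.1]{kim} to equate the number of Fourier critical points with the number of couples of nonreal zeros. The only difference is that you spell out the (routine) correspondence $w\mapsto -z^4/16$ between the zeros of the $\lambda$-functions and those of $\mathbf{\Phi}_{\nu}$, $\mathbf{\Pi}_{\nu}$, which the paper leaves implicit.
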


It is worth to mention that the result applied above on the equality of the number of Fourier critical points and number of couples of nonreal zeros implies actually a longstanding conjecture, the so-called Fourier-P\'olya conjecture, which states that a real integral function of genus $0$ has just as many Fourier critical points as couples of imaginary zeros. For more details we refer to the paper \cite{kim} and to the references therein.

Motivated by Hurwitz theorem on the distribution of the real and nonreal zeros of Bessel functions of the first kind and its alternative proof by Ki and Kim \cite{kim} it is natural to ask about the distribution of the zeros of the products considered in this paper.

\begin{problem}
When $\nu<-1$ find the number of nonreal zeros of the products $\mathbf{\Phi}_{\nu}$ and $\mathbf{\Pi}_{\nu}$ or equivalently find the number of Fourier critical points of the functions $\lambda_{\nu}^{\mathbf{\Phi}}$ and $\lambda_{\nu}^{\mathbf{\Pi}}.$
\end{problem}

\subsection{The radii of starlikeness of the normalized forms of $\mathbf{\Phi}_{\nu}$ and $\mathbf{\Pi}_{\nu}$} Let $\mathbb{D}_{r}$ be the open disk $\left\{ {z\in \mathbb{C}:\left\vert
z\right\vert <r}\right\} ,$ where $r>0\ $and $\mathbb{D}_{1}=\mathbb{D}.$
As usual, with $\mathcal{A}$ we denote the class of analytic functions $f:%
\mathbb{D}_{r}\rightarrow \mathbb{C}$ which satisfy the usual normalization
conditions $f(0)=f^{\prime }(0)-1=0.$ Let us denote by $\mathcal{S}$ the
class of functions belonging to $\mathcal{A}$ which are univalent in $%
\mathbb{D}_{r}$ and let $\mathcal{S}^{\star }(\alpha )$ be the subclass of $%
\mathcal{S}$ consisting of functions which are starlike of order $\alpha $
in $\mathbb{D}_{r},$ where $0\leq \alpha <1.$ The analytic characterization
of this class of functions is%
\begin{equation*}
\mathcal{S}^{\star }(\alpha )=\left\{ f\in \mathcal{S}\ :\ \real\left(
\frac{zf^{\prime }(z)}{f(z)}\right) >\alpha \ \ \mathrm{for\ all}\
\ z\in \mathbb{D}_{r}\right\}
\end{equation*}%
and we adopt the convention $\mathcal{S}^{\star }=\mathcal{S}^{\star }(0)$.
The real number%
\begin{equation*}
r_{\alpha }^{\star }(f)=\sup \left\{ r>0\ :\ \real\left( \frac{zf^{\prime
}(z)}{f(z)}\right) >\alpha \ \ \mathrm{\ for\ all}\ \ z\in \mathbb{%
D}_{r}\right\}
\end{equation*}%
is called the radius of starlikeness of order $\alpha $ of the function $f.$
It is worth to mention that $r^{\star }(f)=r_{0}^{\star }(f)$ is the largest
radius such that the image region $f(\mathbb{D}_{r^{\star }(f)})$ is a
starlike domain with respect to the origin.

Since neither $\mathbf{\Phi }_{{\nu }},$ nor $\mathbf{\Pi }_{{\nu }}$
belongs to $\mathcal{A}$, first we perform some natural normalization. Of course there exist infinitely many such kind of
normalization, however, we restricted ourselves to the following three kind of normalization of which analogue for Bessel
functions of the first kind were extensively studied in the literature. For $%
{\nu }>-1$ we define three functions originating from $\mathbf{\Phi }_{{\nu }%
}:$
\begin{equation*}
f_{{\nu }}(z)=\left( 2^{2{\nu }}\Gamma \left( {\nu }+1\right) \Gamma \left( {%
\nu }+2\right) \mathbf{\Phi }_{{\nu }}(z)\right) ^{\frac{1}{2{\nu +1}}},%
\text{ \ }{\nu \neq -\frac{1}{2}},
\end{equation*}%
\begin{equation*}
g_{{\nu }}(z)=2^{2{\nu }}z^{-2{\nu }}\Gamma \left( {\nu }+1\right) \Gamma
\left( {\nu }+2\right) \mathbf{\Phi }_{{\nu }}(z)
\end{equation*}%
and
\begin{equation*}
h_{{\nu }}(z)=2^{2{\nu }}z^{-\frac{{\nu }}{2}+\frac{3}{4}}\Gamma \left( {\nu
}+1\right) \Gamma \left( {\nu }+2\right) \mathbf{\Phi }_{{\nu }}(\sqrt[4]{z}%
).
\end{equation*}%
Similarly, we associate with $\mathbf{\Pi }_{{\nu }}$ the functions
\begin{equation*}
u_{{\nu }}(z)=\left( 2^{2{\nu }}\Gamma ^{2}\left( {\nu }+1\right) \mathbf{%
\Pi }_{{\nu }}(z))\right) ^{\frac{1}{2{\nu }}},\text{ \ }{\nu \neq 0},
\end{equation*}%
\begin{equation*}
v_{{\nu }}(z)=2^{2{\nu }}z^{-2{\nu +1}}\Gamma ^{2}\left( {\nu }+1\right)
\mathbf{\Pi }_{{\nu }}(z)
\end{equation*}
and
\begin{equation*}
w_{{\nu }}(z)=2^{2{\nu }}z^{-\frac{{\nu }}{2}{+1}}\Gamma ^{2}\left( {\nu }%
+1\right) \mathbf{\Pi }_{{\nu }}(\sqrt[4]{z}).
\end{equation*}
Clearly the functions $f_{{\nu }}$, $g_{{\nu }}$, $h_{{\nu }}$, $u_{{\nu }}$%
, $v_{{\nu }}$ and $w_{{\nu }}$ belong to the class $\mathcal{A}$. The following main results we establish concern the radii of starlikeness of $\mathbf{\Phi}_{\nu}$ and $\mathbf{\Pi}_{\nu}$,
and read as follows. Throughout in the sequel we suppose that $0\leq \alpha <1.$

\begin{theorem}\label{T1}
\begin{enumerate}
\item[\textbf{a)}] If ${\nu }>-\frac{1}{2},$ then $r_{\alpha }^{\star }(f_{{%
\nu }})=x_{{\nu ,\alpha ,1}}$ where $x_{{\nu ,\alpha ,1}}$ is the smallest
positive root of
\begin{equation*}
r\mathbf{\Phi }_{{\nu }}^{\prime }(r)-\alpha (2{\nu +1)}\mathbf{\Phi }_{{\nu
}}(r)=0.
\end{equation*}%
Moreover, if ${\nu \in \left(-1,-\frac{1}{2}\right)}$ then $r_{\alpha }^{\star }(f_{{\nu }})=x_{{%
\nu ,\alpha }}$ where $x_{{\nu ,\alpha }}$ is the unique root of
the equation
\begin{equation*}
\sqrt{i}r\mathbf{\Phi }_{{\nu }}^{\prime }(\sqrt{i}r)-\alpha (2{\nu +1)}%
\mathbf{\Phi }_{{\nu }}(\sqrt{i}r)=0.
\end{equation*}

\item[\textbf{b)}] If ${\nu }>-1,$ then $r_{\alpha }^{\star }(g_{{\nu }})=y_{{%
\nu ,\alpha ,1}}$ where $y_{{\nu ,\alpha ,1}}$ is the smallest positive root
of the equation
\begin{equation*}
r\mathbf{\Phi }_{{\nu }}^{\prime }(r)-(\alpha +2{\nu )}\mathbf{\Phi }_{{\nu }%
}(r)=0.
\end{equation*}

\item[\textbf{c)}] If ${\nu }>-1,$ then $r_{\alpha }^{\star }(h_{{\nu }})=z_{{%
\nu ,\alpha ,1}}$ where $z_{{\nu ,\alpha ,1}}$ is the smallest positive root
of the equation
\begin{equation*}
r^{\frac{1}{4}}\mathbf{\Phi }_{{\nu }}^{\prime }(r^{\frac{1}{4}})-(4\alpha +2{\nu -3)}%
\mathbf{\Phi }_{{\nu }}(r^{\frac{1}{4}})=0.
\end{equation*}
\end{enumerate}
\end{theorem}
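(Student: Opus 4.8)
The plan is to derive everything from the Hadamard factorization of $\mathbf{\Phi}_{\nu}$ supplied by Theorem~\ref{thzeros}. Let $\beta_{\nu,1}<\beta_{\nu,2}<\dots$ be the positive zeros of $\mathbf{\Phi}_{\nu}$. Since $\lambda_{\nu}^{\mathbf{\Phi}}$ has only real (hence, by positivity of its Taylor coefficients, negative) zeros and is of genus $0$, one gets
\begin{equation*}
\mathbf{\Phi}_{\nu}(z)=\frac{2\left(\tfrac{z}{2}\right)^{2\nu+1}}{\Gamma(\nu+1)\Gamma(\nu+2)}\prod_{n\geq1}\left(1-\frac{z^{4}}{\beta_{\nu,n}^{4}}\right),
\end{equation*}
and substituting this into the definitions of $f_{\nu},g_{\nu},h_{\nu}$, after cancelling the powers of $z$ and the constants, produces the normalized product forms
\begin{equation*}
g_{\nu}(z)=z\prod_{n\geq1}\left(1-\frac{z^{4}}{\beta_{\nu,n}^{4}}\right),\qquad
h_{\nu}(z)=z\prod_{n\geq1}\left(1-\frac{z}{\beta_{\nu,n}^{4}}\right),\qquad
f_{\nu}(z)=z\prod_{n\geq1}\left(1-\frac{z^{4}}{\beta_{\nu,n}^{4}}\right)^{1/(2\nu+1)},
\end{equation*}
with $f_{\nu}$ analytic in $\mathbb{D}_{\beta_{\nu,1}}$. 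Logarithmic differentiation gives
\begin{equation*}
\frac{zg_{\nu}'(z)}{g_{\nu}(z)}=1-\sum_{n\geq1}\frac{4z^{4}}{\beta_{\nu,n}^{4}-z^{4}},\qquad
\frac{zh_{\nu}'(z)}{h_{\nu}(z)}=1-\sum_{n\geq1}\frac{z}{\beta_{\nu,n}^{4}-z},\qquad
\frac{zf_{\nu}'(z)}{f_{\nu}(z)}=1-\frac{1}{2\nu+1}\sum_{n\geq1}\frac{4z^{4}}{\beta_{\nu,n}^{4}-z^{4}}.
\end{equation*}

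Next I would use the elementary estimate that for real $a>0$ and $w\in\mathbb{C}$ with $|w|<a$ one has $\real\frac{w}{a-w}\leq\frac{|w|}{a-|w|}$, with equality when $w$ is a positive real number. Applying it termwise — with $w=z^{4}$ for $f_{\nu}$ and $g_{\nu}$, with $w=z$ for $h_{\nu}$ — on the relevant disk ($|z|<\beta_{\nu,1}$, resp. $|z|<\beta_{\nu,1}^{4}$ for $h_{\nu}$), and assuming in addition $\nu>-\tfrac12$ in the case of $f_{\nu}$ so that $1/(2\nu+1)>0$, one obtains for each $F\in\{f_{\nu},g_{\nu},h_{\nu}\}$ that $\real\left(zF'(z)/F(z)\right)\geq rF'(r)/F(r)$, where $r=|z|$, with equality attained at real $z$. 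On its natural interval the map $r\mapsto rF'(r)/F(r)$ equals $1-\sum(\cdots)$, hence is strictly decreasing, equals $1$ at $0^{+}$, and tends to $-\infty$ as $r$ approaches the first pole; thus the equation $rF'(r)/F(r)=\alpha$ has a unique solution there, and that solution is precisely $r_{\alpha}^{\star}(F)$: for $r$ below it $\real\left(zF'(z)/F(z)\right)>\alpha$ throughout $\mathbb{D}_{r}$, while at the corresponding real point the value equals $\alpha$. It then remains to rewrite
\begin{equation*}
\frac{rg_{\nu}'(r)}{g_{\nu}(r)}=-2\nu+\frac{r\mathbf{\Phi}_{\nu}'(r)}{\mathbf{\Phi}_{\nu}(r)},\qquad
\frac{rh_{\nu}'(r)}{h_{\nu}(r)}=\frac{3-2\nu}{4}+\frac{r^{1/4}\mathbf{\Phi}_{\nu}'(r^{1/4})}{4\,\mathbf{\Phi}_{\nu}(r^{1/4})},\qquad
\frac{rf_{\nu}'(r)}{f_{\nu}(r)}=\frac{1}{2\nu+1}\cdot\frac{r\mathbf{\Phi}_{\nu}'(r)}{\mathbf{\Phi}_{\nu}(r)},
\end{equation*}
and clear denominators (legitimate since $\mathbf{\Phi}_{\nu}(r)>0$ on $(0,\beta_{\nu,1})$); since $r\mathbf{\Phi}_{\nu}'(r)/\mathbf{\Phi}_{\nu}(r)$ exceeds the relevant constant for $r$ below the root, that root is the \emph{smallest} positive root of the resulting equation. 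This settles b), c) and the first assertion of a).

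The genuinely delicate point is part a) for $\nu\in(-1,-\tfrac12)$: here $1/(2\nu+1)<0$ reverses the sign of the series and the termwise estimate above points the wrong way. Instead I would use that, for real $a>0$ and $|w|<a$, one has $\real\frac{w}{a-w}\geq-\frac{|w|}{a+|w|}$, with equality precisely when $w=-|w|$ is negative real; taking $w=z^{4}$ this yields
\begin{equation*}
\real\left(\frac{zf_{\nu}'(z)}{f_{\nu}(z)}\right)\geq 1-\frac{1}{|2\nu+1|}\sum_{n\geq1}\frac{4|z|^{4}}{\beta_{\nu,n}^{4}+|z|^{4}}=:\psi(|z|),
\end{equation*}
with equality exactly when $z^{4}$ is negative real, e.g. at $z=\sqrt{i}\,r$. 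The function $\psi$ is finite for all $r>0$ (the series converges, $\sum\beta_{\nu,n}^{-4}<\infty$), strictly decreasing from $\psi(0)=1$, and $\psi(\beta_{\nu,1})\leq 1-2/|2\nu+1|<0\leq\alpha$, so $\psi(r)=\alpha$ has a unique root $x_{\nu,\alpha}<\beta_{\nu,1}$, which, via the extremal direction $z=\sqrt{i}\,r$, equals $r_{\alpha}^{\star}(f_{\nu})$. Finally, along $z=\sqrt{i}\,r$ one has $z^{4}=-r^{4}$, so $z\mathbf{\Phi}_{\nu}'(z)/\mathbf{\Phi}_{\nu}(z)=(2\nu+1)+\sum_{n\geq1}4r^{4}/(\beta_{\nu,n}^{4}+r^{4})$ is real and $\psi(r)=\alpha$ becomes $\sqrt{i}\,r\,\mathbf{\Phi}_{\nu}'(\sqrt{i}\,r)-\alpha(2\nu+1)\mathbf{\Phi}_{\nu}(\sqrt{i}\,r)=0$, as stated. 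I expect the main work to lie in this last case: one must isolate the correct extremal ray, confirm sharpness along it, and check that the root of $\psi(r)=\alpha$ indeed lies in $\mathbb{D}_{\beta_{\nu,1}}$, the disk of analyticity of $f_{\nu}$.
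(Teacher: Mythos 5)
Your argument is correct, and its skeleton coincides with the paper's: the Hadamard factorization of $\mathbf{\Phi}_{\nu}$ (Lemma \ref{Had1}), logarithmic differentiation, the termwise estimate $\real\bigl(w/(a-w)\bigr)\leq |w|/(a-|w|)$ with equality on the positive real axis, and the reversed estimate $\real\bigl(w/(a-w)\bigr)\geq -|w|/(a+|w|)$ with extremal direction $z=\sqrt{i}\,r$ for $\nu\in\left(-1,-\frac{1}{2}\right)$. Where you genuinely diverge is in how the two auxiliary facts are certified. First, to see that the relevant equation has its smallest positive root inside $(0,\gamma_{\nu,1})$, the paper invokes Lemma \ref{LP}, i.e. the Laguerre--P\'olya/Jensen-polynomial machinery showing that $z\mapsto(2\nu+a)\mathbf{\Phi}_{\nu}(z)-z\mathbf{\Phi}_{\nu}'(z)$ has only real zeros with first zero below $\gamma_{\nu,1}$; you instead observe that $r\mapsto rF'(r)/F(r)=1-\sum(\cdots)$ is strictly decreasing from $1$ to $-\infty$ on the interval up to the first pole, which gives existence, uniqueness and location of the root in one stroke and makes Lemma \ref{LP} unnecessary for this theorem. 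Second, for the uniqueness of the root in the case $\nu\in\left(-1,-\frac{1}{2}\right)$ the paper appeals to Lemma \ref{Quotients} (monotone quotients of power series applied to $\sqrt{i}\,r\,\mathbf{\Phi}_{\nu}'(\sqrt{i}\,r)/\mathbf{\Phi}_{\nu}(\sqrt{i}\,r)$), whereas you get the same monotonicity directly from the expansion $2\nu+1+\sum_{n\geq1}4r^{4}/(\gamma_{\nu,n}^{4}+r^{4})$, each term of which is increasing on all of $(0,\infty)$. Both substitutions are valid and arguably more elementary and self-contained; what the paper's route buys is that Lemma \ref{LP} is reused elsewhere (Theorems \ref{T3} and \ref{T4}), so the authors get the zero-location statements for several shifted combinations at once. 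Two small points you should still make explicit: the choice of branch making $f_{\nu}=(\cdots)^{1/(2\nu+1)}$ single-valued and normalized on $\mathbb{D}_{\gamma_{\nu,1}}$, and the fact that for $h_{\nu}$ the relevant interval is $(0,\gamma_{\nu,1}^{4})$ after the substitution $z\mapsto z^{1/4}$.
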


\begin{theorem}\label{T2}
\begin{enumerate}
\item[\textbf{a)}] If ${\nu }>0,$ then $r_{\alpha }^{\star }(u_{{\nu }%
})=\delta _{{\nu ,\alpha ,1}}$ where $\delta _{{\nu ,\alpha ,1}}$ is the
smallest positive root of
\begin{equation*}
r\mathbf{\Pi }_{{\nu }}^{\prime }(r)-2\alpha {\nu }\mathbf{\Pi }_{{\nu }%
}(r)=0.
\end{equation*}%
Moreover, if ${\nu \in (-1,0)}$ then $r_{\alpha }^{\star }(u_{{\nu }})=\delta
_{{\nu ,\alpha }}$ where $\delta _{{\nu ,\alpha }}$ is the unique root of the equation
\begin{equation*}
\sqrt{i}r\mathbf{\Pi }_{{\nu }}^{\prime }(\sqrt{i}r)-2\alpha {\nu }\mathbf{\Pi
}_{{\nu }}(\sqrt{i}r)=0.
\end{equation*}

\item[\textbf{b)}] If ${\nu }>-1,$ then $r_{\alpha }^{\star }(v_{{\nu }%
})=\rho _{{\nu ,\alpha ,1}}$ where $\rho _{{\nu ,\alpha ,1}}$ is the
smallest positive root of the equation
\begin{equation*}
r\mathbf{\Pi }_{{\nu }}^{\prime }(r)-(\alpha +2{\nu -1})\mathbf{\Pi }_{{\nu }%
}(r)=0.
\end{equation*}

\item[\textbf{c)}] If ${\nu }>-1,$ then $r_{\alpha }^{\star }(w_{{\nu }%
})=\sigma _{{\nu ,\alpha ,1}}$ where $\sigma _{{\nu ,\alpha ,1}}$ is the
smallest positive root of the equation
\begin{equation*}
r^{\frac{1}{4}}\mathbf{\Pi }_{{\nu }}^{\prime }(r^{\frac{1}{4}})-2(2\alpha +{\nu -2})\mathbf{%
\Pi }_{{\nu }}(r^{\frac{1}{4}})=0.
\end{equation*}
\end{enumerate}
\end{theorem}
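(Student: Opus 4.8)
The plan is to derive all six radii from the Hadamard factorization of $\mathbf{\Pi}_{\nu}$ together with an elementary bound on the real part of a M\"obius transform, following the method of \cite{BAS}. Write $j_{\nu,n}$ for the $n$th positive zero of $J_{\nu}$. From the classical product representations of $J_{\nu}$ and $I_{\nu}$ (equivalently, from Theorem~\ref{thzeros} and the fact that $\lambda_{\nu}^{\mathbf{\Pi}}$ is of genus $0$) one obtains
$$\mathbf{\Pi}_{\nu}(z)=\frac{(z/2)^{2\nu}}{\Gamma^{2}(\nu+1)}\prod_{n\geq1}\left(1-\frac{z^{4}}{j_{\nu,n}^{4}}\right),$$
whence $v_{\nu}(z)=z\prod_{n\geq1}\left(1-z^{4}/j_{\nu,n}^{4}\right)$, $w_{\nu}(z)=z\prod_{n\geq1}\left(1-z/j_{\nu,n}^{4}\right)$ and $u_{\nu}(z)=z\prod_{n\geq1}\left(1-z^{4}/j_{\nu,n}^{4}\right)^{1/(2\nu)}$, all of class $\mathcal{A}$. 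Differentiating the logarithms term by term gives
$$\frac{zv_{\nu}'(z)}{v_{\nu}(z)}=1-\sum_{n\geq1}\frac{4z^{4}}{j_{\nu,n}^{4}-z^{4}},\qquad\frac{zw_{\nu}'(z)}{w_{\nu}(z)}=1-\sum_{n\geq1}\frac{z}{j_{\nu,n}^{4}-z},\qquad\frac{zu_{\nu}'(z)}{u_{\nu}(z)}=1-\frac{2}{\nu}\sum_{n\geq1}\frac{z^{4}}{j_{\nu,n}^{4}-z^{4}}.$$

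The elementary lemma I would invoke is: for $a>0$ and $|w|<a$, $\real\left(\frac{w}{a-w}\right)\leq\frac{|w|}{a-|w|}$ with equality at $w=|w|$; and for $a>0$ and $|w|=r\neq a$, $\real\left(\frac{w}{a-w}\right)\geq-\frac{r}{a+r}$ with equality at $w=-r$. For $v_{\nu}$, for $w_{\nu}$, and for $u_{\nu}$ when $\nu>0$ the coefficient of the sum is positive, so the first estimate applied with $w=z^{4}$ (respectively $w=z$) yields, for $|z|\leq r$ with $r$ in $(0,j_{\nu,1})$ for $v_{\nu}$ and $u_{\nu}$ and in $(0,j_{\nu,1}^{4})$ for $w_{\nu}$,
$$\real\left(\frac{zf'(z)}{f(z)}\right)\geq\frac{rf'(r)}{f(r)},\qquad f\in\{v_{\nu},w_{\nu},u_{\nu}\}.$$
On the relevant interval $r\mapsto rf'(r)/f(r)$ is strictly decreasing, equals $1$ at $0^{+}$, and tends to $-\infty$ at the right endpoint (the $n=1$ term blows up), so it hits $\alpha$ at a unique $r_{0}$; then $\real(zf'(z)/f(z))>\alpha$ on $\mathbb{D}_{r_{0}}$ while $\alpha$ is attained at $z=r_{0}$, giving $r_{\alpha}^{\star}(f)=r_{0}$. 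Rewriting $f$ as a power of $z$ times $\mathbf{\Pi}_{\nu}(z^{1/k})$ ($k=1$ for $v_{\nu},u_{\nu}$; $k=4$ for $w_{\nu}$) converts $rf'(r)/f(r)=\alpha$ into the stated equations for $\delta_{\nu,\alpha,1}$, $\rho_{\nu,\alpha,1}$, $\sigma_{\nu,\alpha,1}$; "smallest positive root" is necessary because further roots occur past $j_{\nu,1}$.

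For the last case, $u_{\nu}$ with $\nu\in(-1,0)$, the factor $2/\nu$ is negative, so I would instead use the second estimate: for $|z|\leq r<j_{\nu,1}$ it gives $\real(zu_{\nu}'(z)/u_{\nu}(z))\geq\Psi(r)$, where $\Psi(r)=1+\frac{2}{\nu}\sum_{n\geq1}\frac{r^{4}}{j_{\nu,n}^{4}+r^{4}}$, and $\Psi(r)$ is exactly $\real(\zeta u_{\nu}'(\zeta)/u_{\nu}(\zeta))$ at $\zeta=\sqrt{i}\,r$ because $(\sqrt{i}\,r)^{4}=-r^{4}$. Here $\Psi$ is defined for all $r>0$ (as $\lambda_{\nu}^{\mathbf{\Pi}}$ has no positive zeros, $\mathbf{\Pi}_{\nu}(\sqrt{i}\,r)\neq0$), is strictly decreasing from $1$ to $-\infty$, and already its $n=1$ term forces $\Psi(j_{\nu,1})<1+\frac{1}{\nu}<0\leq\alpha$; hence its unique root $\delta_{\nu,\alpha}$ lies in $(0,j_{\nu,1})$, i.e. inside the disk where $u_{\nu}$ is analytic and the bound is valid. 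Consequently $\real(zu_{\nu}'(z)/u_{\nu}(z))>\alpha$ on $\mathbb{D}_{\delta_{\nu,\alpha}}$ with equality at $z=\sqrt{i}\,\delta_{\nu,\alpha}$, and $u_{\nu}'/u_{\nu}=\frac{1}{2\nu}\mathbf{\Pi}_{\nu}'/\mathbf{\Pi}_{\nu}$ turns the defining relation into $\sqrt{i}\,r\,\mathbf{\Pi}_{\nu}'(\sqrt{i}\,r)-2\alpha\nu\,\mathbf{\Pi}_{\nu}(\sqrt{i}\,r)=0$; uniqueness of the root is immediate from the monotonicity of $\Psi$.

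The routine parts are the bookkeeping of the three normalizations and the justification of termwise differentiation and of exchanging $\real$ with the sums (local uniform convergence on compact subsets avoiding the zeros). The real obstacle should be the negative-order case $\nu\in(-1,0)$ for $u_{\nu}$: one must notice that the extremal direction is $z=\sqrt{i}\,r$ rather than $z=r$, reverse the M\"obius estimate, and---crucially---check that the resulting root stays strictly below the first branch point $j_{\nu,1}$, since otherwise $u_{\nu}$ fails to be analytic there; the one-term bound $\Psi(j_{\nu,1})<1+1/\nu<0$ is precisely what secures this and uses $\nu<0$ essentially. A minor point is sharpness: one needs a boundary point at which $\real(zf'/f)$ equals $\alpha$, which is automatic here because in each case the extremal direction is a ray on which $zf'(z)/f(z)$ is real and equals $rf'(r)/f(r)$ (respectively $\Psi(r)$).
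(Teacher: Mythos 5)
Your proposal is correct and follows the paper's strategy in all essentials: the Hadamard factorization \eqref{pi1}, the two M\"obius real-part estimates \eqref{s7} and \eqref{2.10}, the extremal directions $z=r$ and $z=\sqrt{i}\,r$, and the reduction of $rf'(r)/f(r)=\alpha$ to the three stated transcendental equations. The one genuine divergence is how you locate the root. For parts \textbf{b)}, \textbf{c)} and the case $\nu>0$ of \textbf{a)}, the paper identifies ``the smallest positive root'' and places it below $j_{\nu,1}$ by invoking Lemma \ref{LP}, i.e.\ the Laguerre--P\'olya/Jensen-polynomial machinery showing that $r\mapsto r\mathbf{\Pi}_{\nu}'(r)-c\,\mathbf{\Pi}_{\nu}(r)$ has only real zeros with first positive zero below $j_{\nu,1}$; you instead observe that $r\mapsto rf'(r)/f(r)$ decreases strictly from $1$ to $-\infty$ on $(0,j_{\nu,1})$ (resp.\ $(0,j_{\nu,1}^{4})$), which is more elementary and suffices for the radius statement, though it yields less (no global reality of zeros). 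In the case $\nu\in(-1,0)$ the paper proves uniqueness of the root on $(0,\infty)$ via Lemma \ref{Quotients} (increasing quotient sequence $\{4n+2\nu\}$), which is equivalent to your monotonicity of $\Psi$; your additional one-term estimate $\Psi(j_{\nu,1})\leq 1+1/\nu<0$, guaranteeing $\delta_{\nu,\alpha}<j_{\nu,1}$ so that the sharpness point $\sqrt{i}\,\delta_{\nu,\alpha}$ stays inside the disk of analyticity of $u_{\nu}$, is a detail the paper leaves implicit and is a worthwhile addition.
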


\subsection{Bounds for the radii of starlikeness of the normalized forms of $\mathbf{\Phi}_{\nu}$ and $\mathbf{\Pi}_{\nu}$} Now we are going to present some tight lower and upper bounds for the radii of starlikeness of the six normalized functions. Our approach include the Euler-Rayleigh inequalities (for such inequalities involving zeros of Bessel functions of the first kind we refer to \cite[p. 501]{Wat} and to \cite{ismail}) and the fact that the so-called Laguerre-P\'olya class of real entire functions is closed under differentiation. It is worth to mention that the inequalities presented in the following six theorems are only the first three corresponding Euler-Rayleigh inequalities, and of course they can be improved by considering higher order Euler-Rayleigh inequalities. In other words, in each of the following six theorems there exist an increasing sequence of functions in terms of $\nu$ and a decreasing sequence of functions in terms of $\nu$ such that the corresponding radii of starlikeness are between these two sequences. The first set of results in this subsection concerns some tight bounds for the radii of starlikeness of the functions $f_{\nu},$ $g_{\nu}$ and $h_{\nu}.$ In the sequel we use the so-called Pochhammer symbol $(a)_n=a(a+1){\dots}(a+n-1)$ for $a\neq0.$

It is important to mention here that although the following six theorems were deduced to delimitate the radii of starlikeness of the normalized products of Bessel and modified Bessel functions of the first kind, the results presented here are interesting in their own right because provide actually tight lower and upper bounds for the first positive zeros of some special functions. Similar results were used often in the literature of applied mathematics concerning the zeros of Bessel functions itself. Moreover, very recently some lower and upper bounds for $\gamma_{\nu,1}$ (obtained in \cite{cross} and similar to what we have in Theorem \ref{THbound3} for $\gamma_{\nu,1}'$) were used by \"Ozer and \c{S}eng\"ul \cite{ozer} in their study of the linearized stability and transitions for the Poiseuille flow of a second grade fluid (which
is a model for non-Newtonian fluids) in order to compute the corresponding critical Reynolds number.

\begin{theorem}
\label{THbound3} The radius of starlikeness $r^{\star}(f_{\nu
}),$ denoted by $\gamma _{{\nu }%
,1}^{\prime },$ satisfies $$r^{\star}(f_{\nu })=\gamma _{{\nu }%
,1}^{\prime }<\sqrt[4]{4(\nu +1)_{3}(2\nu +1)}$$ for each $\nu >-\frac{1}{2}.$
Moreover, if $\nu >-{\frac{1}{2}},$ then $\gamma _{{\nu },1}^{\prime }$ satisfies
\begin{equation*}
\sqrt[4]{\frac{16\left( 2{\nu +1}\right) (\nu +1)_{3}}{2{\nu +5}}}<\gamma _{{%
\nu },1}^{\prime }<\sqrt[4]{\frac{16\left( 2\nu +1\right) \left( 2\nu
+5\right) \left( \nu +1\right) _{5}\allowbreak }{20\nu ^{3}+184\nu
^{2}+529\nu +473}},
\end{equation*}%
\begin{equation*}
\sqrt[8]{\frac{2^{8}\left( 2\nu +1\right) ^{2}(\nu +1)_{3}(\nu +1)_{5}}{%
20\nu ^{3}+184\nu ^{2}+529\nu +473}}<\gamma _{{\nu },1}^{\prime }<\sqrt[4]{%
\frac{\allowbreak 8\left( \nu +1\right) _{3}\left( \nu +6\right) \left( \nu
+7\right) \left( 2\nu +1\right) \allowbreak \left( 20\nu ^{3}+184\nu
^{2}+529\nu +473\right) }{\nu _{1}}\allowbreak }
\end{equation*}%
and%
\begin{equation*}
\sqrt[12]{\frac{\allowbreak 2^{11}\left( 2\nu +1\right) ^{3}\left( (\nu
+1)_{3}\right) ^{2}(\nu +1)_{7}}{\nu _{1}}}<\gamma _{{\nu }%
,1}^{\prime }<\sqrt[4]{\frac{32\left( \nu +1\right) _{5}\left( \nu +8\right)
\allowbreak \left( \nu +9\right) \left( 2\nu +1\right) \nu _{1}}{\nu
_{2}}},
\end{equation*}%
where%
\begin{equation*}
\nu _{1}=168\nu ^{5}+2876\nu ^{4}+18\,590\nu ^{3}+57\,349\nu
^{2}+84\,874\nu +48\,267
\end{equation*}%
and%
\begin{eqnarray*}
\nu _{2}&=&6864\nu ^{9}+245\,792\nu ^{8}+3802\,808\nu
^{7}+33\,438\,984\nu ^{6}+184\,372\,941\nu ^{5}+661\,304\,856\nu ^{4}\\
&&+1542\,867\,228\nu ^{3}+2256\,870\,262\nu ^{2}+1877\,042\,671\nu
+675\,828\,138.
\end{eqnarray*}
\end{theorem}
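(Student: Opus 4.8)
The plan is to exploit the Hadamard (Weierstrass) factorization of the entire function $\mathbf{\Phi}_{\nu}$ together with the Euler–Rayleigh machinery. By Theorem~\ref{thzeros}, for $\nu>-1$ the function $\mathbf{\Phi}_{\nu}$ has only real zeros; writing its positive zeros as $0<\gamma_{\nu,1}<\gamma_{\nu,2}<\cdots$ and using the power series \eqref{fi}, the auxiliary function $\lambda_{\nu}^{\mathbf{\Phi}}$ (which has growth order $\tfrac14$, hence genus $0$) admits the product representation $\lambda_{\nu}^{\mathbf{\Phi}}(z)=\frac{1}{\Gamma(\nu+1)\Gamma(\nu+2)}\prod_{k\geq1}\bigl(1+\tfrac{16 z}{\gamma_{\nu,k}^{4}}\bigr)^{-1}$ in the sense of reciprocals of the partial products; more precisely $\bigl(\lambda_{\nu}^{\mathbf{\Phi}}(z)\bigr)^{-1}$ is entire of genus $0$ with zeros $-\gamma_{\nu,k}^{4}/16$. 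I would first record, from Theorem~\ref{T1}\textbf{a)} with $\alpha=0$, that $r^{\star}(f_{\nu})=\gamma_{\nu,1}^{\prime}$ is the smallest positive zero of $r\mapsto r\mathbf{\Phi}_{\nu}^{\prime}(r)-(2\nu+1)\mathbf{\Phi}_{\nu}(r)$. Differentiating \eqref{fi} shows that $z\mathbf{\Phi}_{\nu}^{\prime}(z)-(2\nu+1)\mathbf{\Phi}_{\nu}(z)$ equals $\bigl(\tfrac{z}{2}\bigr)^{2\nu+1}$ times a constant multiple of $\lambda_{\nu}^{\mathbf{\Phi},1}(-z^{4}/16)$, where $\lambda_{\nu}^{\mathbf{\Phi},1}(z)=\sum_{n\geq1}\frac{4n}{\Gamma(\nu+n+1)\Gamma(\nu+2n+2)}\frac{z^{n}}{n!}$; equivalently this is, up to normalization, $z\,\bigl(\lambda_{\nu}^{\mathbf{\Phi}}\bigr)^{\prime}(z)$ evaluated at $-z^{4}/16$. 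Since the Laguerre–Pólya class is closed under differentiation, $\bigl(\lambda_{\nu}^{\mathbf{\Phi}}\bigr)^{\prime}$ again has only real (negative) zeros, so $\gamma_{\nu,1}^{\prime}$ is well defined and is the fourth root of $16$ times the modulus of the first zero of $\bigl(\lambda_{\nu}^{\mathbf{\Phi}}\bigr)^{\prime}$.

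Next I would set up the Euler–Rayleigh sums. Writing the zeros of the relevant genus-$0$ function as $w_{k}$, the quantities $\sigma_{m}=\sum_{k\geq1}w_{k}^{-m}$ are computed from the logarithmic derivative, i.e. from the Taylor coefficients of the entire function at the origin via Newton's identities. Concretely, expanding $z\mapsto z\,\mathbf{\Phi}_{\nu}^{\prime}(z)-(2\nu+1)\mathbf{\Phi}_{\nu}(z)$ (after stripping the factor $z^{2\nu+1}$) as $c_{0}+c_{1}w+c_{2}w^{2}+\cdots$ in $w=-z^{4}/16$, and normalizing so that $c_{0}=1$, the first few power sums over the zeros are $\sigma_{1}=-c_{1}$, $\sigma_{2}=c_{1}^{2}-2c_{2}$, $\sigma_{3}=-c_{1}^{3}+3c_{1}c_{2}-3c_{3}$, and so on. The standard two-sided Euler–Rayleigh bound is then
\begin{equation*}
\frac{\sigma_{m}}{\sigma_{m+1}}<w_{1}<\sqrt[m]{\frac{1}{\sigma_{m}}},
\end{equation*}
valid for every $m\geq1$ because all zeros are positive and the first is strictly smallest; rewriting $w_{1}=\bigl(\gamma_{\nu,1}^{\prime}\bigr)^{4}/16$ and taking fourth roots turns each such inequality into one of the displayed bounds, with the four displayed lines corresponding to $m=1,2,3$ (the upper bounds in lines two through four being the $\sigma_{m}/\sigma_{m+1}$ upper bounds, and the left inequalities the $\sigma_{m}^{-1/m}$ lower bounds, appropriately re-indexed). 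The strict inequality $\gamma_{\nu,1}^{\prime}<\sqrt[4]{4(\nu+1)_{3}(2\nu+1)}$ is exactly the case $m=1$ upper bound $w_{1}<1/\sigma_{1}=1/(-c_{1})$, after computing $c_{1}$ explicitly from \eqref{fi}: one finds $-c_{1}=\tfrac{\Gamma(\nu+1)\Gamma(\nu+2)}{\,2\,\Gamma(\nu+2)\Gamma(\nu+4)}\cdot 16=\tfrac{4}{(\nu+1)(\nu+2)(\nu+3)(2\nu+1)}$ up to the bookkeeping constant, whence $16 w_{1}<4(\nu+1)_{3}(2\nu+1)$.

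The remaining work is the explicit evaluation of $\sigma_{1},\ldots,\sigma_{4}$ and the algebraic simplification into the stated rational expressions in $\nu$; the polynomials $\nu_{1}$ and $\nu_{2}$ are simply what one gets after clearing denominators in $\sigma_{3}$ and $\sigma_{4}$. I expect the main obstacle to be twofold. First, one must verify rigorously that the Euler–Rayleigh inequalities apply \emph{to the zeros of the differentiated function} $z\mathbf{\Phi}_{\nu}^{\prime}(z)-(2\nu+1)\mathbf{\Phi}_{\nu}(z)$, not merely to $\mathbf{\Phi}_{\nu}$ itself — this is where closure of the Laguerre–Pólya class under differentiation is essential, and one should check that the transformation $z\mapsto -z^{4}/16$ does not introduce spurious complex zeros and that the relevant function indeed lies in (a shift of) the Laguerre–Pólya class so that all $w_{k}$ are real and positive and $w_{1}$ is simple and strictly dominant. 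Second, the coefficient bookkeeping: extracting $c_{1},c_{2},c_{3},c_{4}$ from the double-Gamma denominators in \eqref{fi}, feeding them through Newton's identities, and collecting terms into the degree-$5$ polynomial $\nu_{1}$ and the degree-$9$ polynomial $\nu_{2}$ is lengthy and error-prone, but entirely routine once the structural setup is in place. I would carry it out symbolically and then sanity-check the resulting bounds numerically at, say, $\nu=0$ and $\nu=1/2$ against a direct numerical computation of $\gamma_{\nu,1}^{\prime}$.
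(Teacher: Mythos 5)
There are two genuine problems with your setup. First, you have misread Theorem \ref{T1} \textbf{a)}: with $\alpha=0$ the equation there reduces to $r\mathbf{\Phi}_{\nu}'(r)=0$, so $r^{\star}(f_{\nu})=\gamma_{\nu,1}'$ is the first positive zero of $\mathbf{\Phi}_{\nu}'$ itself, not of $z\mathbf{\Phi}_{\nu}'(z)-(2\nu+1)\mathbf{\Phi}_{\nu}(z)$. That latter function equals, up to an elementary factor, $wP'(w)$ with $w=z^{4}$ and $P(w)=\prod_{n\ge1}(1-w/\gamma_{\nu,n}^{4})$, so by Rolle its smallest positive zero lies beyond $\gamma_{\nu,1}$, whereas $\gamma_{\nu,1}'\in(0,\gamma_{\nu,1})$. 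All the Euler--Rayleigh sums you propose would therefore refer to the wrong zero set, and none of the displayed polynomials would emerge. The correct object, used in the paper, is $\varphi_{\nu}(z)=\frac{1}{2\nu+1}2^{2\nu}z^{-\nu/2}\Gamma(\nu+1)\Gamma(\nu+2)\mathbf{\Phi}_{\nu}'(\sqrt[4]{z})$, whose Maclaurin coefficients are $\frac{(-1)^{n}(2\nu+4n+1)}{2^{4n}n!(2\nu+1)(\nu+1)_{n}(\nu+2)_{2n}}$ and whose factorization over the $\gamma_{\nu,n}'^{4}$ is supplied by Lemma \ref{Had2}; the reality and positivity of these zeros come from that lemma (Laguerre inequality plus interlacing with the $\gamma_{\nu,n}$), not from differentiating $\lambda_{\nu}^{\mathbf{\Phi}}$. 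Second, you have written the Euler--Rayleigh inequalities backwards: for positive zeros $w_{1}\le w_{2}\le\cdots$ and $\sigma_{m}=\sum_{k}w_{k}^{-m}$ one has $\sigma_{m}^{-1/m}<w_{1}<\sigma_{m}/\sigma_{m+1}$, so $\sigma_{m}^{-1/m}$ produces the lower bounds and $\sigma_{m}/\sigma_{m+1}$ the upper bounds, the opposite of what you state.

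A consequence of these two errors is that your derivation of the leading bound $\gamma_{\nu,1}'<\sqrt[4]{4(\nu+1)_{3}(2\nu+1)}$ cannot work: $1/\sigma_{1}$ is a lower bound, not an upper bound, and with the correct coefficients it yields $\sqrt[4]{16(2\nu+1)(\nu+1)_{3}/(2\nu+5)}$, which is the left-hand side of the first displayed chain, not the claimed quantity. In the paper that bound is obtained by a different mechanism altogether: since $z\mathbf{\Phi}_{\nu}'(z)/\mathbf{\Phi}_{\nu}(z)=2\nu+1-\sum_{n\ge1}4z^{4}/(\gamma_{\nu,n}^{4}-z^{4})$ vanishes at $\gamma_{\nu,1}'$, one gets $(\gamma_{\nu,1}')^{-4}=\frac{1}{2\nu+1}\sum_{n\ge1}4/(\gamma_{\nu,n}^{4}-(\gamma_{\nu,1}')^{4})>\frac{4}{2\nu+1}\sum_{n\ge1}\gamma_{\nu,n}^{-4}$, and the first Rayleigh sum $\sum_{n\ge1}\gamma_{\nu,n}^{-4}=1/(16(\nu+1)_{3})$ for the zeros of $\mathbf{\Phi}_{\nu}$ computed in \cite{cross} finishes it. Your remarks about closure of the Laguerre--P\'olya class under differentiation and about Newton's identities are reasonable in spirit, but the proposal as written targets the wrong function with the inequalities in the wrong direction, so it must be rebuilt from the correct starting point.
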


The following two theorems are related to some lower and upper bounds for radii of starlikeness of the normalized forms $g_{\nu}$ and $h_{\nu}.$

\begin{theorem}
\label{THbound4} The radius of starlikeness $r^{\star}(g_{\nu
}) $ satisfies $r^{\star}(g_{\nu })<\sqrt[4]{4(\nu +1)_{3}}$ for each $\nu
>-1.$ Moreover, under the same condition it satisfies
\begin{equation*}
\sqrt[4]{\frac{16(\nu +1)_{3}}{5}}<r^{\star }(g_{\nu })<\sqrt[4]{\frac{%
80(\nu +1)_{5}}{16\nu ^{2}+189\nu +473}},
\end{equation*}%
\begin{equation*}
\sqrt[8]{\frac{2^{8}(\nu +1)_{3}(\nu +1)_{5}}{16\nu ^{2}+189\nu +473}}%
<r^{\star }(g_{\nu })<\sqrt[4]{\frac{8(\nu +1)_{3}(\nu +6)(\nu +7)\left(
16\nu ^{2}+189\nu +473\right) }{\nu _{3}}}
\end{equation*}%
and%
\begin{equation*}
\sqrt[12]{\frac{\allowbreak 2^{11}\left( (\nu +1)_{3}\right)^{2}(\nu +1)_{7}%
}{\nu _{3}}}<r^{\star }(g_{\nu })<\sqrt[4]{\frac{\allowbreak
\allowbreak 32(\nu +1)_{5}(\nu +8)(\nu +9)\nu _{3}}{\nu _{4}}%
},
\end{equation*}%
where%
\begin{equation*}
\nu _{3}=32\nu ^{4}+824\nu ^{3}+7969\nu ^{2}+32\,944\nu +48\,267
\end{equation*}%
and%
\begin{eqnarray*}
\nu _{4}&=&256\nu ^{8}+13\,568\nu ^{7}+312\,736\nu
^{6}+4085\,373\nu ^{5}+32\,951\,080\nu ^{4}\\
&&+167\,370\,756\nu ^{3}+521\,177\,838\nu ^{2}+907\,600\,351\nu
+675\,828\,138.
\end{eqnarray*}
\end{theorem}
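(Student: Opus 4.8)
The plan is to realise $r^{\star}(g_{\nu})$ as the first positive zero of an explicitly known entire function and then to trap that zero between consecutive Euler--Rayleigh quotients. By part \textbf{b)} of Theorem \ref{T1} with $\alpha=0$, the radius $r^{\star}(g_{\nu})$ is the smallest positive root of $z\mathbf{\Phi}_{\nu}'(z)-2\nu\mathbf{\Phi}_{\nu}(z)=0$. Differentiating $g_{\nu}(z)=2^{2\nu}z^{-2\nu}\Gamma(\nu+1)\Gamma(\nu+2)\mathbf{\Phi}_{\nu}(z)$ and using \eqref{fi} one obtains $g_{\nu}'(z)=\Lambda_{\nu}\!\big(z^{4}/16\big)$, where
\[
\Lambda_{\nu}(t)=\sum_{n\ge0}\frac{(-1)^{n}(4n+1)}{n!\,(\nu+1)_{n}\,(\nu+2)_{2n}}\,t^{n},
\]
so that $r^{\star}(g_{\nu})=2\sqrt[4]{t_{\nu,1}}$ with $t_{\nu,1}$ the smallest positive zero of $\Lambda_{\nu}$. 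The crucial input from the preliminary section is that $\Lambda_{\nu}$ has \emph{only} positive real zeros $t_{\nu,1}<t_{\nu,2}<\cdots$ and infinitely many of them: by Theorem \ref{thzeros} the function $\lambda_{\nu}^{\mathbf{\Phi}}$ lies in the Laguerre--P\'olya class with only negative zeros, and $\Lambda_{\nu}$ is obtained from it by applying the multiplier sequence $\{4n+1\}_{n\ge0}$ (admissible since $\sum_{n\ge0}(4n+1)w^{n}/n!=(4w+1)e^{w}$ belongs to that class, with its only zero negative) together with the reflection $w\mapsto-w$. Equivalently, writing $\gamma_{\nu,1}<\gamma_{\nu,2}<\cdots$ for the positive zeros of $\mathbf{\Phi}_{\nu}$, the Hadamard factorisation gives, for $|z|<\gamma_{\nu,1}$,
\[
\frac{zg_{\nu}'(z)}{g_{\nu}(z)}=1-4\sum_{k\ge1}\varsigma_{k}z^{4k},\qquad \varsigma_{k}=\sum_{j\ge1}\gamma_{\nu,j}^{-4k},
\]
and $r^{\star}(g_{\nu})$ is the unique zero of the right-hand side in $(0,\gamma_{\nu,1})$.

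The crude bound follows immediately from the last identity: at $z=r^{\star}(g_{\nu})$ it reads $4\sum_{k\ge1}\varsigma_{k}r^{4k}=1$ with all summands positive, hence $4\varsigma_{1}r^{4}<1$; since the coefficient count gives $\varsigma_{1}=1/\bigl(16(\nu+1)_{3}\bigr)$, this yields $r^{\star}(g_{\nu})^{4}<1/(4\varsigma_{1})=4(\nu+1)_{3}$. For the sharp two-sided bounds I would work with the Euler--Rayleigh sums $\sigma_{k}=\sum_{j\ge1}t_{\nu,j}^{-k}$ of the zeros of $\Lambda_{\nu}$. From $\Lambda_{\nu}(t)=\Lambda_{\nu}(0)\prod_{j\ge1}(1-t/t_{\nu,j})$ and Newton's identities, comparing the Taylor coefficients of $\Lambda_{\nu}$ through order four, one computes
\[
\sigma_{1}=\frac{5}{(\nu+1)_{3}},\qquad \sigma_{2}=\frac{16\nu^{2}+189\nu+473}{(\nu+1)_{3}(\nu+1)_{5}},
\]
\[
\sigma_{3}=\frac{2\nu_{3}}{(\nu+1)_{3}^{2}(\nu+1)_{7}},\qquad \sigma_{4}=\frac{\nu_{4}}{(\nu+1)_{3}^{2}(\nu+1)_{5}(\nu+1)_{9}},
\]
with $\nu_{3}$ and $\nu_{4}$ exactly the polynomials in the statement.

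Because $\Lambda_{\nu}$ has infinitely many positive zeros, not all equal, the classical Euler--Rayleigh inequalities hold strictly: $\sigma_{m}^{-1/m}<t_{\nu,1}<\sigma_{m}/\sigma_{m+1}$ for every $m\ge1$. Using $r^{\star}(g_{\nu})^{4}=16\,t_{\nu,1}$, the inequality $\sigma_{m}^{-1/m}<t_{\nu,1}$ becomes $r^{\star}(g_{\nu})>(16^{m}/\sigma_{m})^{1/(4m)}$ and $t_{\nu,1}<\sigma_{m}/\sigma_{m+1}$ becomes $r^{\star}(g_{\nu})<(16\,\sigma_{m}/\sigma_{m+1})^{1/4}$; taking $m=1,2,3$ and substituting the formulas for $\sigma_{1},\dots,\sigma_{4}$ reproduces verbatim the three displayed chains (the $m$-th chain being the pair of inequalities with that value of $m$). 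I expect two things to be the real work. The first, conceptual, is the reality and positivity of \emph{all} zeros of $\Lambda_{\nu}$ (equivalently of $z\mathbf{\Phi}_{\nu}'(z)-2\nu\mathbf{\Phi}_{\nu}(z)$) — without it the sums $\sigma_{m}$ do not control $t_{\nu,1}$ — which rests on the Laguerre--P\'olya machinery indicated above and should already be packaged in the preliminary section; alternatively one argues it directly, since $zg_{\nu}'(z)/g_{\nu}(z)$ is real, strictly decreasing and runs from $1$ to $-\infty$ on each interval between consecutive fourth powers of the $\gamma_{\nu,j}$, forcing exactly one zero of $\Lambda_{\nu}$ there and no others by a genus-zero count. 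The second, and the main obstacle in practice, is the purely computational reduction of the Newton-identity expressions for $\sigma_{3}$ and $\sigma_{4}$ to the compact form above: this is where the unwieldy polynomials $\nu_{3}$ and $\nu_{4}$ are generated, and that bookkeeping, while routine, is error-prone.
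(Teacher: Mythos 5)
Your proposal is correct and follows essentially the same route as the paper: the crude bound comes from evaluating the logarithmic derivative $zg_{\nu}'(z)/g_{\nu}(z)$ at $r^{\star}(g_{\nu})$ and dropping to the first Rayleigh sum of the $\gamma_{\nu,n}$, while the refined bounds come from the Euler--Rayleigh inequalities for the first positive zero of $g_{\nu}'$, whose zeros are all real and positive by the Laguerre--P\'olya machinery (the paper invokes Theorem \ref{thzeros} plus closure of $\mathcal{LP}$ under differentiation and Lemma \ref{Had3}, which is marginally more direct than your multiplier-sequence argument but equivalent in substance). Your rescaled sums $\sigma_{m}$ agree with the paper's $\zeta_{\nu,n}^{-4k}$-sums after accounting for the factor $16^{m}$, and the three displayed chains are reproduced exactly for $m=1,2,3$.
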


\begin{theorem}
\label{THbound5}
The radius of starlikeness $r^{\star }(h_{\nu })$ satisfies $r^{\star }(h_{\nu })<16(\nu +1)_{3}$ for each $\nu >-1.$
Moreover, if $\nu >-1,$ then it satisfies%
\begin{equation*}
8(\nu +1)_{3}<r^{\star }(h_{\nu })<\frac{32\left( \nu +1\right) _{5}}{\nu
^{2}+24\nu +71}
\end{equation*}%
\begin{equation*}
16\sqrt{\frac{(\nu +1)_{3}(\nu +1)_{5}}{\nu ^{2}+24\nu +71}}<r^{\star
}(h_{\nu })<\frac{16(\nu +1)_{3}(\nu +6)(\nu +7)\left( \nu ^{2}+24\nu
+71\right) }{\nu _{5}}
\end{equation*}%
and%
\begin{equation*}
16\sqrt[3]{\frac{\left( (\nu +1)_{3}\right)^{2}(\nu +1)_{7}}{\nu _{5}}}<r^{\star }(h_{\nu })<\frac{\allowbreak \allowbreak 16(\nu +1)_{5}(\nu
+8)(\nu +9)\nu _{5}}{\nu _{6}},
\end{equation*}%
where%
\begin{equation*}
\nu _{5}=\nu ^{4}+37\nu ^{3}+593\nu ^{2}+3275\nu +5598
\end{equation*}%
and%
\begin{eqnarray*}
\nu _{6} &=&\nu ^{8}+68\nu ^{7}+2062\nu ^{6}+36\,519\nu
^{5}+388\,627\nu ^{4}\\
&&+2477\,862\nu ^{3}+9218\,508\nu ^{2}+18\,391\,471\nu +15\,167\,442.
\end{eqnarray*}
\end{theorem}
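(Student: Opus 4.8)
The plan is to reduce the whole statement to an Euler--Rayleigh analysis of the single entire function $h_\nu'$. First I would apply Theorem~\ref{T1}~c) with $\alpha=0$, which identifies $r^{\star}(h_\nu)$ with the smallest positive root of $r^{1/4}\mathbf{\Phi}_{\nu}'(r^{1/4})-(2\nu-3)\mathbf{\Phi}_{\nu}(r^{1/4})=0$; a direct computation from \eqref{fi} shows that this left-hand side equals, up to a positive factor for $r>0$, the derivative $h_\nu'(r)$, and, more concretely, that
$$h_\nu(z)=\Gamma(\nu+1)\Gamma(\nu+2)\,z\,\lambda_{\nu}^{\mathbf{\Phi}}\!\left(-z/16\right)=z+\sum_{n\ge1}b_n z^{n+1},\qquad b_n=\frac{(-1)^n\,\Gamma(\nu+1)\Gamma(\nu+2)}{2^{4n}\,n!\,\Gamma(\nu+n+1)\Gamma(\nu+2n+2)}.$$
Hence $r^{\star}(h_\nu)$ is the smallest positive zero of $h_\nu'$, and everything reduces to the distribution of the zeros of $h_\nu'$.

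Next I would establish the structure of those zeros. By Theorem~\ref{thzeros} the entire function $\lambda_{\nu}^{\mathbf{\Phi}}$ belongs to the Laguerre--P\'olya class, and since the zeros of $\mathbf{\Phi}_{\nu}$ are real and $\lambda_{\nu}^{\mathbf{\Phi}}(0)=1/(\Gamma(\nu+1)\Gamma(\nu+2))\neq0$, all of its zeros are negative; consequently $z\,\lambda_{\nu}^{\mathbf{\Phi}}(-z/16)$ is in the Laguerre--P\'olya class with zeros $0<\gamma_{\nu,1}^4<\gamma_{\nu,2}^4<\cdots$, where $\gamma_{\nu,n}$ denotes the $n$-th positive zero of $\mathbf{\Phi}_{\nu}$. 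Since the Laguerre--P\'olya class is closed under differentiation and all these functions have genus $0$, $h_\nu'$ is again in this class, and because the logarithmic derivative of $h_\nu$ is strictly decreasing between consecutive poles, $h_\nu'$ has exactly one zero in $(0,\gamma_{\nu,1}^4)$ and one in each $(\gamma_{\nu,n}^4,\gamma_{\nu,n+1}^4)$, and these exhaust its zeros. Thus $h_\nu'(z)=\prod_{n\ge1}(1-z/\eta_n)$ with $0<\eta_1<\gamma_{\nu,1}^4<\eta_2<\cdots$ and $r^{\star}(h_\nu)=\eta_1$. I expect this step to carry the only conceptual subtlety (the genus bookkeeping and the ``exactly one zero per gap'' claim), but it is exactly of the type already handled in Section~3.

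For the crude bound $r^{\star}(h_\nu)<16(\nu+1)_3$ I would differentiate $h_\nu(z)=z\prod_{n\ge1}(1-z/\gamma_{\nu,n}^4)$, evaluate at $\eta_1$, and divide by $h_\nu(\eta_1)/\eta_1\neq0$ to obtain $\sum_{n\ge1}\eta_1/(\gamma_{\nu,n}^4-\eta_1)=1$. The map $z\mapsto\sum_{n\ge1}z/(\gamma_{\nu,n}^4-z)$ is strictly increasing on $(0,\gamma_{\nu,1}^4)$ and dominates $z\sum_{n\ge1}\gamma_{\nu,n}^{-4}$ there; comparing the linear coefficient in $h_\nu(z)/z=\Gamma(\nu+1)\Gamma(\nu+2)\lambda_{\nu}^{\mathbf{\Phi}}(-z/16)=\prod_{n\ge1}(1-z/\gamma_{\nu,n}^4)$ gives $\sum_{n\ge1}\gamma_{\nu,n}^{-4}=1/(16(\nu+1)_3)$ (in particular $\gamma_{\nu,1}^4>16(\nu+1)_3$). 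Evaluating the increasing function at $z=16(\nu+1)_3$, which lies in $(0,\gamma_{\nu,1}^4)$, yields a value exceeding $1$, forcing $\eta_1<16(\nu+1)_3$.

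Finally, the six remaining inequalities are the first three Euler--Rayleigh inequalities for the $\eta_n$. Writing $h_\nu'(z)=1+2b_1z+3b_2z^2+4b_3z^3+5b_4z^4+\cdots=\prod_{n\ge1}(1-z/\eta_n)$ and $\theta_k=\sum_{n\ge1}\eta_n^{-k}$, Newton's identities express each $\theta_k$ as a polynomial in $2b_1,3b_2,\dots$; substituting the $b_n$ above and collecting the Gamma-quotients via Pochhammer symbols yields $\theta_1=1/(8(\nu+1)_3)$, $\theta_2=(\nu^2+24\nu+71)/(256(\nu+1)_3(\nu+1)_5)$, $\theta_3=\nu_5/(4096((\nu+1)_3)^2(\nu+1)_7)$, and an analogous expression for $\theta_4$ whose numerator is $\nu_6$. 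Since the $\eta_n$ are positive with minimum $\eta_1$ and there are infinitely many of them, the strict inequalities $\theta_k^{-1/k}<\eta_1<\theta_k/\theta_{k+1}$ hold for $k=1,2,3$; writing them out with the computed $\theta_k$ produces exactly the displayed bounds. The main labour is this last computation, in particular the simplifications leading to the degree-four polynomial $\nu_5$ and the degree-eight polynomial $\nu_6$; everything else is structural and rests on Theorem~\ref{thzeros} together with the closure of the Laguerre--P\'olya class under differentiation.
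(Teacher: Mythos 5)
Your proposal is correct and follows essentially the same route as the paper: identify $r^{\star}(h_{\nu})$ with the first positive zero of $h_{\nu}'$ via Theorem \ref{T1} c), get the crude bound from the first Rayleigh sum $\sum_{n\geq1}\gamma_{\nu,n}^{-4}=1/(16(\nu+1)_{3})$, and then use the Laguerre--P\'olya closure under differentiation together with the Hadamard product of $h_{\nu}'$ (Lemma \ref{Had3}) to run the Euler--Rayleigh inequalities for $k=1,2,3$; your $\theta_{k}$ are exactly the paper's $\rho_{k}$.
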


The second set of results in this subsection concerns the bounds for the radii of starlikeness of the functions $u_{\nu},$ $v_{\nu}$ and $w_{\nu}.$ It is important to mention here that the method used in these six theorems concerning bounds for the radii of starlikeness is not new, its origins goes back to Euler and Rayleigh, see \cite{ismail} for more details. As we mentioned above, the bounds deduced for the radii of starlikeness are actually particular cases of the Euler-Rayleigh inequalities and it is possible to show that the deduced lower bounds increase and the upper bounds decrease to the corresponding radii of starlikeness. The fact that the lower bounds increase can be deduced directly from the corresponding Euler-Rayleigh inequalities, while the fact that the upper bounds are decreasing is actually a consequence of the Cauchy-Schwarz inequality. In other words, the inequalities presented in this paper can be improved by using higher order Euler-Rayleigh inequalities. However, we restricted ourselves to the third Euler-Rayleigh inequalities since these
are already quite complicated. Moreover, it is of interest to mention that it is possible to show that the radii of starlikeness of the six normalized functions considered in this paper actually correspond to the radii of univalence. We recall that the radius of univalence of the function $f\in\mathcal{S}$ is the largest radius $r$ for which $f$ maps univalently the open disk $\mathbb{D}_r$ into some domain. Some similar results on the equality of the radii of starlikeness and univalence were proved recently by Akta\c{s} et al. \cite{aktas} for normalized Bessel, Struve and Lommel functions of the first kind by using the ideas of Kreyszig and Todd \cite{todd}, as well as of Wilf \cite{wilf}.

\begin{theorem}
\label{THSbound1} The radius of starlikeness $r^{\star }(u_{\nu
}),$ denoted by $t_{{\nu },1},$ satisfies $t_{{\nu },1}<%
\sqrt[4]{8\nu (\nu +1)^{2}(\nu +2)}$ for each $\nu >0.$ Moreover, if $\nu >0,
$ then $t_{{\nu },1}$ satisfies%
\begin{equation*}
2\sqrt[4]{{\nu }(\nu +1)^{2}}<t_{{\nu },1}<2\sqrt[4]{\frac{(\nu )_{4}\left(
\nu +1\right) _{2}}{5\nu ^{2}+15\nu +12}},
\end{equation*}%
\begin{equation*}
2\sqrt[8]{\frac{\left( \nu \right) _{3}(\nu )_{4}\left( \nu +1\right) ^{2}}{%
5\nu ^{2}+15\nu +12}}<t_{{\nu },1}<\sqrt[4]{\frac{8\nu \left( \nu +1\right)
^{2}\left( \nu +3\right) _{3}\left( 5\nu ^{2}+15\nu +12\right) \allowbreak }{%
\nu _{7}}\allowbreak }
\end{equation*}%
and%
\begin{equation*}
\sqrt[12]{\frac{2^{11}\nu (\nu )_{4}\left( \nu \right) _{6}\left( \nu
+1\right) ^{4}\allowbreak }{\nu _{7}}}<t_{{\nu },1}<2\sqrt[4]{\frac{%
2\nu \left( \nu +1\right) ^{2}\left( \nu +2\right) ^{2}\left( \nu +4\right)
\left( \nu +6\right) \left( \nu +7\right) \nu _{7}}{\nu _{8}}%
},
\end{equation*}%
where%
\begin{equation*}
\nu _{7}=21\nu ^{4}+173\nu ^{3}+533\nu ^{2}+717\nu +360
\end{equation*}%
and%
\begin{eqnarray*}
\nu _{8}&=&429\nu ^{8}+8688\nu ^{7}+76\,280\nu ^{6}+377\,494\nu
^{5}+1148\,139\nu ^{4} \\
&&+2194\,202\nu ^{3}+2574\,064\nu ^{2}+1698\,048\nu +483\,840.
\end{eqnarray*}
\end{theorem}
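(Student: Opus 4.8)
The plan is to realise $t_{\nu,1}$ as the first positive zero of a single entire function and then to pin it down by the Euler--Rayleigh inequalities. First I would invoke Theorem \ref{T2}(a) with $\alpha=0$: it identifies $t_{\nu,1}=r^{\star}(u_{\nu})$ with the smallest positive root of $r\mathbf{\Pi}_{\nu}'(r)=0$, that is, with the first positive zero of $\mathbf{\Pi}_{\nu}'$. Differentiating \eqref{pi} termwise gives
\[
\mathbf{\Pi}_{\nu}'(z)=\left(\tfrac{z}{2}\right)^{2\nu-1}\Xi_{\nu}(z^{4}),\qquad
\Xi_{\nu}(s)=\sum_{n\ge0}a_{n}s^{n},\quad
a_{n}=\frac{(-1)^{n}(\nu+2n)}{16^{n}\,n!\,\Gamma(\nu+n+1)\,\Gamma(\nu+2n+1)},
\]
so the positive zeros of $\mathbf{\Pi}_{\nu}'$ are precisely the fourth roots of the positive zeros of $\Xi_{\nu}$. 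The key preliminary point is that, for $\nu>0$, all zeros of $\Xi_{\nu}$ are real and positive. Indeed $\lambda_{\nu}^{\mathbf{\Pi}}$ lies in the Laguerre--P\'olya class by Theorem \ref{thzeros} (equivalently Theorem \ref{thfourier}), and since it has only negative real zeros the affine substitution $w=-s/16$ makes $L(s)=\lambda_{\nu}^{\mathbf{\Pi}}(-s/16)$ a member of that class with only positive zeros. Writing $L(s)=\sum_{n}\ell_{n}s^{n}$ one has $a_{n}=(\nu+2n)\ell_{n}$, so $\Xi_{\nu}$ is the image of $L$ under the P\'olya--Schur multiplier sequence $\{\nu+2n\}_{n\ge0}$, whose associated function $\sum_{n}(\nu+2n)x^{n}/n!=(\nu+2x)e^{x}$ is in the Laguerre--P\'olya class; hence $\Xi_{\nu}$ is too. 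Because its coefficients alternate in sign, $\Xi_{\nu}(s)>0$ for $s\le0$, so all its zeros are positive. Denoting them $s_{n}=t_{\nu,n}^{4}$ with $s_{1}<s_{2}<\cdots$, the order $\tfrac14$ of $\Xi_{\nu}$ gives the genus $0$ product $\Xi_{\nu}(s)=a_{0}\prod_{n\ge1}(1-s/s_{n})$.

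For the headline bound I would instead use the Hadamard factorisation of $\mathbf{\Pi}_{\nu}$ from Theorem \ref{thzeros}: with $\beta_{\nu,n}$ its positive zeros and $\sigma_{k}=\sum_{n\ge1}\beta_{\nu,n}^{-4k}$, taking the logarithmic derivative of $u_{\nu}$ yields
\[
\frac{zu_{\nu}'(z)}{u_{\nu}(z)}=\frac{1}{2\nu}\,\frac{z\mathbf{\Pi}_{\nu}'(z)}{\mathbf{\Pi}_{\nu}(z)}
=1-\frac{1}{2\nu}\sum_{n\ge1}\frac{4z^{4}}{\beta_{\nu,n}^{4}-z^{4}}
=1-\frac{2}{\nu}\sum_{k\ge1}\sigma_{k}z^{4k}.
\]
Since $s\mapsto\sum_{k\ge1}\sigma_{k}s^{k}=\sum_{n}s/(\beta_{\nu,n}^{4}-s)$ increases from $0$ to $+\infty$ on $[0,\beta_{\nu,1}^{4})$, the number $t_{\nu,1}^{4}$ is the unique root of $\sum_{k\ge1}\sigma_{k}s^{k}=\nu/2$ in $(0,\beta_{\nu,1}^{4})$; keeping only the $k=1$ term forces $\sigma_{1}t_{\nu,1}^{4}<\nu/2$, and as $\sigma_{1}=\bigl(16(\nu+1)^{2}(\nu+2)\bigr)^{-1}$ (read off from the first two coefficients of $\lambda_{\nu}^{\mathbf{\Pi}}$) this gives $t_{\nu,1}^{4}<8\nu(\nu+1)^{2}(\nu+2)$, the asserted $t_{\nu,1}<\sqrt[4]{8\nu(\nu+1)^{2}(\nu+2)}$.

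The three pairs of bounds then come from the Euler--Rayleigh inequalities applied to the zeros $s_{n}=t_{\nu,n}^{4}$ of $\Xi_{\nu}$. With $b_{k}=a_{k}/a_{0}$ and $\tau_{k}=\sum_{n\ge1}s_{n}^{-k}$, Newton's identities give $\tau_{1}=-b_{1}$, $\tau_{2}=b_{1}^{2}-2b_{2}$, $\tau_{3}=-b_{1}^{3}+3b_{1}b_{2}-3b_{3}$ and $\tau_{4}=b_{1}^{4}-4b_{1}^{2}b_{2}+4b_{1}b_{3}+2b_{2}^{2}-4b_{4}$. A direct computation yields $\tau_{1}=\bigl(16\nu(\nu+1)^{2}\bigr)^{-1}$ and $\tau_{2}=(5\nu^{2}+15\nu+12)\bigl(256\nu^{2}(\nu+1)^{4}(\nu+2)^{2}(\nu+3)\bigr)^{-1}$, and likewise for $\tau_{3},\tau_{4}$. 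Since the $s_{n}$ are positive with $s_{1}$ smallest, one has $\tau_{k}^{-1/k}<s_{1}<\tau_{k}/\tau_{k+1}$ for every $k\ge1$ (the lower bound from $\tau_{k}>s_{1}^{-k}$, the upper from $\tau_{k}/\tau_{k+1}>s_{1}$). Taking fourth roots and setting $k=1,2,3$ reproduces the three displayed pairs; for instance $s_{1}>1/\tau_{1}$ gives the lower bound $2\sqrt[4]{\nu(\nu+1)^{2}}$ and $s_{1}<\tau_{1}/\tau_{2}$ gives $2\sqrt[4]{(\nu)_{4}(\nu+1)_{2}/(5\nu^{2}+15\nu+12)}$.

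The conceptual part is thus short; the real work is computational. Evaluating $\tau_{3}$ and $\tau_{4}$ requires $b_{3}$ and $b_{4}$, and clearing denominators in their Newton combinations and factoring the result into the stated shape---producing $\nu_{7}=21\nu^{4}+173\nu^{3}+533\nu^{2}+717\nu+360$ and the degree $8$ polynomial $\nu_{8}$---is lengthy and best handled with a computer algebra system. I would finish by checking, as noted in the text, that the lower Euler--Rayleigh bounds increase and the upper ones decrease in $k$ (the latter via Cauchy--Schwarz), so that each successive pair brackets $t_{\nu,1}$ more tightly.
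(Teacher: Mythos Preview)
Your proposal is correct and follows the same overall architecture as the paper: identify $t_{\nu,1}$ with the first positive zero of $\mathbf{\Pi}_{\nu}'$, obtain the headline bound from the logarithmic derivative of the Hadamard product of $\mathbf{\Pi}_{\nu}$ together with the Rayleigh sum $\sum_{n}j_{\nu,n}^{-4}=1/(16(\nu+1)^{2}(\nu+2))$, and then apply the Euler--Rayleigh inequalities $\eta_{k}^{-1/k}<t_{\nu,1}^{4}<\eta_{k}/\eta_{k+1}$ to the zeros of the normalised function $\chi_{\nu}$ (your $\Xi_{\nu}$).

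The only substantive difference is in how the reality of the zeros of $\mathbf{\Pi}_{\nu}'$ is justified. The paper invokes Lemma~\ref{Had2}, whose proof uses the Laguerre inequality for the $\mathcal{LP}$ function $z\mapsto 2^{2\nu}\Gamma^{2}(\nu+1)z^{-2\nu}\mathbf{\Pi}_{\nu}(z)$ to show that $\mathbf{\Pi}_{\nu}'/\mathbf{\Pi}_{\nu}$ is strictly decreasing between consecutive $j_{\nu,n}$, yielding both reality and interlacing of the $t_{\nu,n}$ with the $j_{\nu,n}$. You instead observe that $\Xi_{\nu}$ is obtained from the $\mathcal{LP}$ function $L(s)=\lambda_{\nu}^{\mathbf{\Pi}}(-s/16)$ by the P\'olya--Schur multiplier sequence $\{\nu+2n\}_{n\ge0}$ (with Jensen function $(\nu+2x)e^{x}\in\mathcal{LPI}$ for $\nu>0$), which is a clean alternative that bypasses the Laguerre inequality but does not give interlacing (not needed here). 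Your use of Newton's identities to extract $\tau_{1},\dots,\tau_{4}$ from the Maclaurin coefficients is equivalent to the paper's logarithmic-derivative expansion; either way the computation of $\nu_{7}$ and $\nu_{8}$ is purely mechanical. One minor citation slip: the Hadamard factorisation of $\mathbf{\Pi}_{\nu}$ is Lemma~\ref{Had1}, not Theorem~\ref{thzeros}.
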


\begin{theorem}
\label{THSbound2} The radius of starlikeness $r^{\star }(v_{\nu
})$ satisfies $r^{\star
}(v_{\nu })<\sqrt[4]{4(\nu +1)^{2}(\nu +2)}$ for each $\nu >-1.$ Moreover,
if $\nu >-1,$ then it satisfies
\begin{equation*}
2\sqrt[4]{\frac{(\nu +1)^{2}(\nu +2)}{5}}<r^{\star }(v_{\nu })<2\sqrt[4]{%
\frac{5\left( \nu +1\right) \left( \nu +1\right) _{4}}{16\nu ^{2}+157\nu +291%
}},
\end{equation*}%
\begin{equation*}
2\sqrt[8]{\frac{(\nu +1)_{4}\left( \nu +1\right) ^{3}\left( \nu +2\right) }{%
16\nu ^{2}+157\nu +291}}<r^{\star }(v_{\nu})<\sqrt[4]{\frac{8\left( \nu
+1\right) \left( \nu +1\right)_{3}\left( \nu +5\right)_{2}\left(16\nu
^{2}+157\nu +291\right)}{\nu_{9}}}
\end{equation*}%
and%
\begin{equation*}
\sqrt[12]{\frac{\allowbreak 2^{11}(\nu +1)_{3}(\nu +1)_{6}\left( \nu
+1\right) ^{4}\left( \nu +2\right) }{\nu_{9}}}<r^{\star }(v_{\nu
})<2\sqrt[4]{\frac{2\left( \nu +1\right) ^{2}\left( \nu +2\right) \left( \nu
+4\right) \left( \nu +7\right) \allowbreak _{2}\nu_{9}}{\nu
_{10}}},
\end{equation*}%
where%
\begin{equation*}
\nu _{9}=32\nu ^{5}+792\nu ^{4}+7753\nu ^{3}+35\,977\nu
^{2}+78\,453\nu +64\,469
\end{equation*}%
and%
\begin{eqnarray*}
\nu _{10}&=&256\nu ^{8}+11\,520\nu ^{7}+224\,672\nu
^{6}+2469\,757\nu ^{5}+16\,606\,040\nu ^{4} \\
&&+69\,429\,816\nu ^{3}+175\,324\,950\nu ^{2}+243\,560\,267\nu +142\,215\,442,
\end{eqnarray*}
\end{theorem}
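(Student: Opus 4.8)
The plan is to reduce the statement, via Theorem~\ref{T2}, to an Euler--Rayleigh analysis of a single entire function attached to $v_{\nu}$. By part~\textbf{b)} of Theorem~\ref{T2} with $\alpha=0$, the radius $r^{\star}(v_{\nu})$ is the smallest positive zero of $v_{\nu}'$. From \eqref{pi} one has $v_{\nu}(z)/z=\Gamma^{2}(\nu+1)\,\lambda_{\nu}^{\mathbf{\Pi}}(-z^{4}/16)$, and since $\lambda_{\nu}^{\mathbf{\Pi}}$ belongs to the Laguerre--P\'olya class with only negative zeros (Theorem~\ref{thzeros}, equivalently Theorem~\ref{thfourier}), one obtains the Hadamard factorization
\[
v_{\nu}(z)=z\prod_{m\geq1}\left(1-\frac{z^{4}}{j_{\nu,m}^{4}}\right),
\]
where $j_{\nu,m}$ is the $m$-th positive zero of $J_{\nu}$ (equivalently of $\mathbf{\Pi}_{\nu}$). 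Logarithmic differentiation gives $zv_{\nu}'(z)/v_{\nu}(z)=1-\sum_{m\geq1}4z^{4}/(j_{\nu,m}^{4}-z^{4})$.

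For the global upper bound I would argue directly. At $z=r^{\star}(v_{\nu})$ the displayed quotient vanishes (since $r^{\star}(v_{\nu})$ lies strictly before the first zero $j_{\nu,1}$ of $v_{\nu}$), so $\sum_{m\geq1}4r^{4}/(j_{\nu,m}^{4}-r^{4})=1$ with $r=r^{\star}(v_{\nu})$; replacing each summand by the strictly smaller quantity $4r^{4}/j_{\nu,m}^{4}$ and using $\sum_{m\geq1}j_{\nu,m}^{-4}=b_{1}$, where $b_{1}=\big(16(\nu+1)^{2}(\nu+2)\big)^{-1}$ is read off from \eqref{pi}, yields $1>4r^{4}b_{1}$, that is $r^{\star}(v_{\nu})<\sqrt[4]{1/(4b_{1})}=\sqrt[4]{4(\nu+1)^{2}(\nu+2)}$.

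For the two-sided inequalities I would pass to the entire function $Q$ defined by $v_{\nu}'(z)=Q(z^{4})$, namely $Q(w)=\sum_{n\geq0}(-1)^{n}(4n+1)b_{n}w^{n}$ with $b_{n}=\big(16^{n}\,n!\,(\nu+1)_{n}(\nu+1)_{2n}\big)^{-1}$. The crucial structural fact is that all zeros of $Q$ are real and positive: $\lambda_{\nu}^{\mathbf{\Pi}}\in\mathcal{LP}$ (Theorem~\ref{thzeros}), hence so is $w\mapsto\sum_{n}b_{n}w^{n}$ up to the substitution $w\mapsto -w/16$, and this class is preserved under differentiation and, more generally, under the multiplier sequence $\{4n+1\}_{n\geq0}$, which is admissible because $\sum_{n}\frac{4n+1}{n!}w^{n}=(1+4w)e^{w}$ is a Laguerre--P\'olya function with only nonpositive zeros. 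A Rolle argument between the consecutive zeros $0<j_{\nu,1}^{4}<j_{\nu,2}^{4}<\cdots$ of $v_{\nu}$ then shows that the positive zeros $0<\rho_{\nu,1}^{4}<\rho_{\nu,2}^{4}<\cdots$ of $Q$ interlace them, with $\rho_{\nu,1}=r^{\star}(v_{\nu})$; consequently $Q(w)=\prod_{k\geq1}(1-w/\rho_{\nu,k}^{4})$, and comparing coefficients identifies the elementary symmetric functions of $\{\rho_{\nu,k}^{-4}\}$ with $(4n+1)b_{n}$.

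Finally, writing $R_{k}=\sum_{j\geq1}\rho_{\nu,j}^{-4k}$, Newton's identities give $R_{1}=5b_{1}$, $R_{2}=25b_{1}^{2}-18b_{2}$, $R_{3}=125b_{1}^{3}-135b_{1}b_{2}+39b_{3}$, $R_{4}=625b_{1}^{4}-900b_{1}^{2}b_{2}+260b_{1}b_{3}+162b_{2}^{2}-68b_{4}$, and the Euler--Rayleigh inequalities $R_{k}^{-1/k}<\rho_{\nu,1}^{4}<R_{k}/R_{k+1}$ hold for $k=1,2,3$, the lower estimates increasing and the upper ones decreasing in $k$ (the latter by Cauchy--Schwarz). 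Taking fourth roots and inserting $b_{1}=1/(16(\nu+1)^{2}(\nu+2))$, $b_{2}=1/(512(\nu+1)^{2}(\nu+2)^{2}(\nu+3)(\nu+4))$ and the analogous $b_{3}$, $b_{4}$ turns these into the six displayed inequalities; e.g.\ $R_{1}^{-1/4}=2\sqrt[4]{(\nu+1)^{2}(\nu+2)/5}$ and $R_{1}/R_{2}=80(\nu+1)(\nu+1)_{4}/(16\nu^{2}+157\nu+291)$, whose fourth root is the first stated upper bound, while $\nu_{9}$ and $\nu_{10}$ emerge from $R_{2}/R_{3}$ and $R_{3}/R_{4}$. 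The main obstacle is twofold: certifying the Laguerre--P\'olya/multiplier-sequence step so that $Q$ genuinely has only positive real zeros (this is where $\nu>-1$ enters, through Theorem~\ref{thzeros}), and the lengthy but mechanical simplification of $R_{2}/R_{3}$ and $R_{3}/R_{4}$ into closed form with the polynomials $\nu_{9}$ and $\nu_{10}$.
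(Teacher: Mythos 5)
Your proposal is correct and follows essentially the same route as the paper: the crude bound comes from the first Rayleigh sum $\sum j_{\nu,n}^{-4}=1/(2^{4}(\nu+1)^{2}(\nu+2))$ applied to the vanishing of $zv_{\nu}'(z)/v_{\nu}(z)$ at $r^{\star}(v_{\nu})$, and the six two-sided estimates come from Euler--Rayleigh inequalities for the positive zeros of $v_{\nu}'(\sqrt[4]{z})$, whose reality the paper also obtains from $v_{\nu}\in\mathcal{LP}$ and closure of $\mathcal{LP}$ under differentiation (your extra multiplier-sequence justification for $\{4n+1\}$ is valid but not needed). The only cosmetic difference is that you extract the power sums $R_{k}$ via Newton's identities, while the paper expands the logarithmic derivative $\widetilde{v}_{\nu}'/\widetilde{v}_{\nu}$; the resulting $\varrho_{k}$ and bounds agree.
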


\begin{theorem}
\label{THSbound3}
The radius of starlikeness $r^{\star }(w_{\nu })
$ satisfies $r^{\star }(w_{\nu })<16(\nu +1)^{2}(\nu +2)$ for each $\nu >-1.$
Moreover, under the same condition it satisfies
\begin{equation*}
8(\nu +1)^{2}(\nu +2)<r^{\star }(w_{\nu })<\frac{32\left( \nu +1\right)
\left( \nu +1\right) _{4}}{\nu ^{2}+22\nu +45},
\end{equation*}%
\begin{equation*}
16\left( \nu +1\right) ^{2}\left( \nu +2\right) \sqrt{\frac{(\nu +3)(\nu +4)%
}{\nu ^{2}+22\nu +45}}<r^{\star }(w_{\nu })<\frac{16\left( \nu +1\right)
\left( \nu +1\right) _{3}\left( \nu +5\right) _{2}\left( \nu ^{2}+22\nu
+45\right) }{\nu_{11}}
\end{equation*}%
and%
\begin{equation*}
16\left( \nu +1\right) ^{2}\left( \nu +2\right) \sqrt[3]{\frac{(\nu +3)(\nu
+3)_{4}}{\nu _{11}}}<r^{\star }(w_{\nu })<\frac{16\left( \nu
+1\right) ^{2}\left( \nu +2\right) \left( \nu +4\right) \left( \nu +7\right)
\allowbreak \left( \nu +8\right) \nu _{11}}{\nu _{12}},
\end{equation*}%
where%
\begin{equation*}
\nu _{11}=\nu ^{5}+36\nu ^{4}+584\nu ^{3}+3554\nu ^{2}+8919\nu +7834
\end{equation*}%
and%
\begin{eqnarray*}
\nu _{12}&=&\nu ^{8}+60\nu ^{7}+1610\nu ^{6}+25\,303\nu
^{5}+229\,535\nu ^{4}\\
&&+1199\,202\nu ^{3}+3542\,412\nu ^{2}+5461\,979\nu +3396\,826.
\end{eqnarray*}
\end{theorem}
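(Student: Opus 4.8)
The plan is to realize $r^{\star}(w_{\nu})$ as the smallest positive zero of the entire function $w_{\nu}'$, to factor $w_{\nu}'$ as a canonical product by appealing to the Laguerre--P\'olya class, and then to trap that zero between the first few Euler--Rayleigh bounds, exactly as in the preceding bound theorems.

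\emph{Reduction.} From $\left(\tfrac{z}{2}\right)^{2\nu}\lambda_{\nu}^{\mathbf{\Pi}}\!\left(-\tfrac{z^{4}}{16}\right)=\mathbf{\Pi}_{\nu}(z)$ one gets $w_{\nu}(z)=z\,\Gamma^{2}(\nu+1)\,\lambda_{\nu}^{\mathbf{\Pi}}\!\left(-\tfrac{z}{16}\right)$. By Theorem~\ref{thzeros} the zeros of $\lambda_{\nu}^{\mathbf{\Pi}}$ are all real, hence all negative since its Taylor coefficients are positive; as $\lambda_{\nu}^{\mathbf{\Pi}}$ has growth order $\tfrac14$ and so genus $0$, this gives $w_{\nu}(z)=z\prod_{n\ge1}\bigl(1-z/\sigma_{\nu,n}\bigr)$ with $0<\sigma_{\nu,1}<\sigma_{\nu,2}<\cdots$, whence $w_{\nu}$ belongs to the Laguerre--P\'olya class. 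That class being closed under differentiation, $w_{\nu}'$ has only real zeros; and since $w_{\nu}'(x)=\Gamma^{2}(\nu+1)\bigl[\lambda_{\nu}^{\mathbf{\Pi}}(-\tfrac{x}{16})-\tfrac{x}{16}(\lambda_{\nu}^{\mathbf{\Pi}})'(-\tfrac{x}{16})\bigr]>0$ for all $x\le0$ (both $\lambda_{\nu}^{\mathbf{\Pi}}$ and its derivative have nonnegative coefficients), every zero of $w_{\nu}'$ is positive, say $\xi_{\nu,1}<\xi_{\nu,2}<\cdots$; by Rolle's theorem they interlace the $\sigma_{\nu,n}$, so $w_{\nu}'(z)=\prod_{n\ge1}\bigl(1-z/\xi_{\nu,n}\bigr)$. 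Combining $\tfrac{zw_{\nu}'(z)}{w_{\nu}(z)}=1-\sum_{n\ge1}\tfrac{z}{\sigma_{\nu,n}-z}$ with $\real\tfrac{z}{a-z}\le\tfrac{|z|}{a-|z|}$ ($a>0$, $|z|<a$) shows $\real\tfrac{zw_{\nu}'(z)}{w_{\nu}(z)}>0$ on $\mathbb{D}_{\rho}$ iff $\rho\le\xi_{\nu,1}$, i.e.\ $r^{\star}(w_{\nu})=\xi_{\nu,1}$ (this is part c) of Theorem~\ref{T2} with $\alpha=0$).

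\emph{Euler--Rayleigh estimates.} Write $w_{\nu}(z)=z\sum_{n\ge0}c_{n}z^{n}$ with $c_{n}=\dfrac{(-1)^{n}}{16^{n}\,n!\,(\nu+1)_{n}(\nu+1)_{2n}}$. From $\tfrac{zw_{\nu}'(z)}{w_{\nu}(z)}=1-\sum_{k\ge1}\eta_{k}z^{k}$, $\eta_{k}=\sum_{n\ge1}\sigma_{\nu,n}^{-k}>0$, with $\eta_{1}=-c_{1}=\tfrac{1}{16(\nu+1)^{2}(\nu+2)}$, evaluation at $z=r^{\star}(w_{\nu})$ and discarding the positive tail gives $r^{\star}(w_{\nu})<1/\eta_{1}=16(\nu+1)^{2}(\nu+2)$. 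For the sharp pairs set $\tau_{k}=\sum_{n\ge1}\xi_{\nu,n}^{-k}$; since $w_{\nu}'(z)=\sum_{n\ge0}(n+1)c_{n}z^{n}=\prod_{n\ge1}(1-z/\xi_{\nu,n})$, Newton's identities give $\tau_{1},\tau_{2},\tau_{3},\tau_{4}$ as explicit rational functions of $\nu$ (e.g.\ $\tau_{1}=-2c_{1}=\tfrac{1}{8(\nu+1)^{2}(\nu+2)}$, $\tau_{2}=4c_{1}^{2}-6c_{2}=\dfrac{\nu^{2}+22\nu+45}{256(\nu+1)^{4}(\nu+2)^{2}(\nu+3)(\nu+4)}$, and $\tau_{3}$, $\tau_{4}$ with numerators $\nu_{11}$ and $\nu_{11}\nu_{12}$). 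For the positive sequence $(\xi_{\nu,n})$ one has $\tau_{k}^{-1/k}<\xi_{\nu,1}<\tau_{k}/\tau_{k+1}$ for every $k\ge1$ — the first from $\xi_{\nu,1}^{-k}<\tau_{k}$, the second from $\tau_{k}-\xi_{\nu,1}\tau_{k+1}=\sum_{n\ge1}\xi_{\nu,n}^{-k}(1-\xi_{\nu,1}/\xi_{\nu,n})>0$ — while Cauchy--Schwarz ($\tau_{k+1}^{2}\le\tau_{k}\tau_{k+2}$) makes the upper bounds decrease and the lower bounds increase to $\xi_{\nu,1}$. Plugging in $\tau_{1},\dots,\tau_{4}$ for $k=1,2,3$ reproduces the three displayed pairs, which together with the coarse bound is the full statement.

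\emph{Main obstacle.} The structural part above is routine once Theorem~\ref{thzeros} and the differentiation-closedness of the Laguerre--P\'olya class are available; the real work is the symbolic evaluation of $\tau_{3}$ and $\tau_{4}$ through Newton's identities and their reduction to the stated forms with $\nu_{11}=\nu^{5}+36\nu^{4}+584\nu^{3}+3554\nu^{2}+8919\nu+7834$ and the degree-eight polynomial $\nu_{12}$ — lengthy and error-prone, but purely mechanical. The same scheme, with the appropriate normalizations of $\mathbf{\Pi}_{\nu}$, already yields Theorems~\ref{THSbound1} and~\ref{THSbound2}, so the present proof is a specialization of that common one.
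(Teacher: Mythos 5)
Your proposal is correct and follows essentially the same route as the paper: identify $r^{\star}(w_{\nu})$ with the first positive zero of $w_{\nu}'$, use Theorem \ref{thzeros} plus closure of the Laguerre--P\'olya class under differentiation to get the Hadamard product $w_{\nu}'(z)=\prod_{n\geq1}(1-z/\omega_{\nu,n}^{4})$, derive the coarse bound from the first Rayleigh sum $\sum j_{\nu,n}^{-4}$, and trap the first zero with the Euler--Rayleigh inequalities for $k=1,2,3$ (the paper extracts the power sums from the series ratio $w_{\nu}''/w_{\nu}'$ rather than Newton's identities, which is the same computation). One small slip: the numerator of your $\tau_{4}$ should be $\nu_{12}$, not $\nu_{11}\nu_{12}$, as otherwise the $\nu_{11}$ factors would cancel in $\tau_{3}/\tau_{4}$ and the stated third upper bound would not come out.
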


\subsection{The radii of convexity of the normalized forms of $\mathbf{\Phi}_{\nu}$ and $\mathbf{\Pi}_{\nu}$}
Let $\mathcal{K}(\beta)$ be the subclass of $\mathcal{S}$ consisting of functions which are convex of
order $\beta$ in $\mathbb{D}_{r},$ where $0\leq \beta<1.$ The analytic characterization of this class of functions is
\begin{equation*}
\mathcal{K}(\beta)=\left\{ f\in \mathcal{S}\ :\ \real\left( 1+\frac{%
zf^{\prime \prime }(z)}{f^{\prime }(z)}\right) >\beta \ \ \mathrm{for\ all}%
\ \ z\in \mathbb{D}_{r}\right\} ,
\end{equation*}%
and for $\beta=0$ it reduces to the class $\mathcal{K}$ of convex
functions. The real number
\begin{equation*}
r_{\beta}^{c}(f)=\sup \left\{ r>0\ :\ \real\left( 1+\frac{zf^{\prime
\prime }(z)}{f^{\prime }(z)}\right) >\beta \ \ \mathrm{\ for\ all}\ \ z\in
\mathbb{D}_{r}\right\} ,
\end{equation*}%
is called the radius of convexity of order $\beta $ of the function $f.$
Note that $r^{c}(f)=r_{0}^{c}(f)$ is the largest radius such that the image
region $f(\mathbb{D}_{r^{c}(f)})$ is a convex domain in ${\mathbb{C}}$.

The following set of main results concerns the radii of convexity.

\begin{theorem}\label{T3}
\begin{enumerate}
\item[\textbf{a)}] If ${\nu }>-\frac{1}{2},$ then $r_{\alpha}^{c}(f_{\nu})$ is the smallest positive root of
\begin{equation*}
1+\frac{r\,\mathbf{\Phi }_{{\nu }}^{\prime \prime }(r)}{\mathbf{\Phi }_{{\nu
}}^{\prime }(r)}+\left( \frac{1}{2{\nu }+1}-1\right) \frac{r\,\mathbf{\Phi }%
_{{\nu }}^{\prime }(r)}{\mathbf{\Phi }_{{\nu }}(r)}=\alpha .
\end{equation*}

\item[\textbf{b)}] If ${\nu }>-1,$ then $r_{\alpha}^{c}(g_{\nu})$ is the smallest positive root of
\begin{equation*}
-2{\nu }+r\frac{(1-2{\nu )}\mathbf{\Phi }_{{\nu }}^{\prime }(r)+r\,\mathbf{%
\Phi }_{{\nu }}^{\prime \prime }(r)}{-2{\nu }\mathbf{\Phi }_{{\nu }}(r)+r%
\mathbf{\Phi }_{{\nu }}^{\prime }(r)}=\alpha .
\end{equation*}

\item[\textbf{c)}] If ${\nu }>-1,$ then $r_{\alpha}^{c}(h_{\nu})$ is the smallest positive root of
\begin{equation*}
3-2{\nu }+r^{\frac{1}{4}}\frac{(4-2{\nu )}\mathbf{\Phi }_{{\nu }}^{\prime
}(r^{\frac{1}{4}})+r^{\frac{1}{4}}\mathbf{\Phi }_{{\nu }}^{\prime \prime
}(r^{\frac{1}{4}})}{(3-2{\nu )}\mathbf{\Phi }_{{\nu }}(r^{\frac{1}{4}})+r^{%
\frac{1}{4}}\mathbf{\Phi }_{{\nu }}^{\prime }(r^{\frac{1}{4}})}=4\alpha .
\end{equation*}
\end{enumerate}

Moreover, we have the inequalities $r_{\alpha }^{c}(f_{{\nu }})<\gamma _{{%
\nu },1}^{\prime }<\gamma _{{\nu },1},$ $r_{\alpha }^{c}(g_{{\nu }})<\zeta _{%
{\nu },1}<\gamma _{{\nu },1},$ and $r_{\alpha }^{c}(h_{{\nu }})<\xi _{{\nu }%
,1}<\gamma _{{\nu },1}$ where $\zeta _{{\nu },1}$ and $\xi _{{\nu },1}$ are
the first positive zeros of $z\mapsto z\mathbf{\Phi }_{{%
\nu }}^{\prime }(z)-2{\nu }\mathbf{\Phi }_{{\nu }}(z)$ and $z\mapsto z%
\mathbf{\Phi }_{{\nu }}^{\prime }(z)-(2{\nu -3)}\mathbf{\Phi }_{{\nu }}(z),$
while $\gamma _{{\nu },1}$ and $%
\gamma _{{\nu },1}^{\prime }$ denote the first positive zeros of $\mathbf{%
\Phi }_{{\nu }}$ and $\mathbf{\Phi }_{{\nu }}^{\prime }$, respectively.
\end{theorem}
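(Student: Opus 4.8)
The plan is to reduce all three parts to the radius‑of‑starlikeness machinery of Theorem~\ref{T1} through the elementary identity $1+zf''(z)/f'(z)=z\bigl(zf'(z)\bigr)'/\bigl(zf'(z)\bigr)$, which shows that $r_{\alpha}^{c}(f)=r_{\alpha}^{\star}(zf')$ for every $f\in\mathcal A$. First I would record the factorizations supplied by Theorem~\ref{thzeros} together with the Laguerre--P\'olya type results of Section~3. Writing $C_{\nu}=2^{2\nu}\Gamma(\nu+1)\Gamma(\nu+2)$ and $P(w)=\prod_{n\ge1}\bigl(1-w/\gamma_{\nu,n}^{4}\bigr)$, where $\gamma_{\nu,n}$ are the positive zeros of $\mathbf{\Phi}_{\nu}$, one has $C_{\nu}\mathbf{\Phi}_{\nu}(z)=z^{2\nu+1}P(z^{4})$, so that $f_{\nu}(z)=z\,P(z^{4})^{1/(2\nu+1)}$, $g_{\nu}(z)=z\,P(z^{4})$ and $h_{\nu}(z)=z\,P(z)$. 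Differentiating, multiplying by $z$ and pulling out the appropriate power of $z$ gives $zf_{\nu}'(z)=\kappa_{1}\,z\,Q(z^{4})P(z^{4})^{1/(2\nu+1)-1}$, $zg_{\nu}'(z)=\kappa_{2}\,z\,\widetilde P(z^{4})$ and $zh_{\nu}'(z)=\kappa_{3}\,z\,\widehat P(z)$ with nonzero constants $\kappa_{i}$, where $Q(w)=(2\nu+1)P(w)+4wP'(w)$, $\widetilde P(w)=P(w)+4wP'(w)$ and $\widehat P(w)=P(w)+wP'(w)$. Since the Laguerre--P\'olya class is preserved by the operation $p\mapsto wp'+cp$ for $c>0$ (part of the material of Section~3), each of $Q,\widetilde P,\widehat P$ has only real positive zeros, namely $(\gamma_{\nu,n}')^{4}$, $\zeta_{\nu,n}^{4}$ and $\xi_{\nu,n}^{4}$ — the fourth powers of the positive zeros of $\mathbf{\Phi}_{\nu}'$, of $z\mapsto z\mathbf{\Phi}_{\nu}'(z)-2\nu\mathbf{\Phi}_{\nu}(z)$ and of $z\mapsto z\mathbf{\Phi}_{\nu}'(z)-(2\nu-3)\mathbf{\Phi}_{\nu}(z)$; moreover interlacing gives $(\gamma_{\nu,1}')^{4},\zeta_{\nu,1}^{4},\xi_{\nu,1}^{4}<\gamma_{\nu,1}^{4}$.

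Next I would compute the logarithmic derivative of $zf'$ for $f\in\{f_{\nu},g_{\nu},h_{\nu}\}$ from these factorizations; after a short rearrangement each resulting real equation is exactly the one displayed in \textbf{a)}, \textbf{b)}, \textbf{c)}, and one obtains
\begin{align*}
1+\frac{zg_{\nu}''(z)}{g_{\nu}'(z)}&=1-\sum_{n\ge1}\frac{4z^{4}}{\zeta_{\nu,n}^{4}-z^{4}},\\
1+\frac{zh_{\nu}''(z)}{h_{\nu}'(z)}&=1-\sum_{n\ge1}\frac{z}{\xi_{\nu,n}^{4}-z},\\
1+\frac{zf_{\nu}''(z)}{f_{\nu}'(z)}&=1-\sum_{n\ge1}\frac{4z^{4}}{(\gamma_{\nu,n}')^{4}-z^{4}}+\frac{2\nu}{2\nu+1}\sum_{n\ge1}\frac{4z^{4}}{\gamma_{\nu,n}^{4}-z^{4}}.
\end{align*}
The decisive estimate is that for $s>0$ and $|z|=r<s$ one has $\real\bigl(z^{4}/(s^{4}-z^{4})\bigr)\le r^{4}/(s^{4}-r^{4})$, with equality at $z=r$ (and similarly with $z$ in place of $z^{4}$). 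For $g_{\nu}$, for $h_{\nu}$, and for $f_{\nu}$ in the range $-1<2\nu\le0$, every sum occurring has a nonpositive coefficient, so the estimate immediately gives $\real\bigl(1+zf''(z)/f'(z)\bigr)\ge 1+rf''(r)/f'(r)$ on $|z|=r$; hence $f$ is convex of order $\alpha$ in $\mathbb D_{r}$ for every $r$ below the smallest positive root of the corresponding real equation, and that root is $r_{\alpha}^{c}(f)$. The root lies strictly below the first positive zero $\rho_{1}$ of $f'$ (which is $\gamma_{\nu,1}'$ for $f_{\nu}$, $\zeta_{\nu,1}$ for $g_{\nu}$, $\xi_{\nu,1}^{4}$ for $h_{\nu}$) because the left‑hand side of the equation equals $1$ at $r=0$ and tends to $-\infty$ as $r\uparrow\rho_{1}$: there $f'(r)\to0$ while $rf''(r)$ stays finite and negative, $\rho_{1}$ being the first local maximum of $f$ on $(0,\infty)$.

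The serious obstacle is the case $\nu>0$ of part \textbf{a)}, where the second sum enters with the positive coefficient $2\nu/(2\nu+1)\in(0,1)$, so the two families of terms push $\real\bigl(1+zf_{\nu}''(z)/f_{\nu}'(z)\bigr)$ in opposite directions on a circle $|z|=r$ and the minimum over the circle is no longer obviously attained on the positive real axis. I would approach this by writing $1+zf_{\nu}''(z)/f_{\nu}'(z)$ as the convex combination
\[
\frac{1}{2\nu+1}\Bigl(1+\frac{z\mathbf{\Phi}_{\nu}''(z)}{\mathbf{\Phi}_{\nu}'(z)}\Bigr)+\frac{2\nu}{2\nu+1}\Bigl(1+\frac{z\mathbf{\Phi}_{\nu}''(z)}{\mathbf{\Phi}_{\nu}'(z)}-\frac{z\mathbf{\Phi}_{\nu}'(z)}{\mathbf{\Phi}_{\nu}(z)}\Bigr)
\]
and controlling the second bracket via the interlacing $\gamma_{\nu,n}'<\gamma_{\nu,n}$ from Section~3: expressing $\real\bigl(1+zf_{\nu}''(z)/f_{\nu}'(z)\bigr)$ as a function of $t=\cos(4\arg z)\in[-1,1]$ and differentiating in $t$, one would show that for $r<\gamma_{\nu,1}'$ the contribution of the $\gamma_{\nu,n}'$‑terms dominates that of the $\gamma_{\nu,n}$‑terms termwise, using $2\nu/(2\nu+1)<1$ and the monotonicity of $a\mapsto a/(a^{2}-r^{8})$, so that the function of $t$ is monotone on $[-1,1]$ and is minimized at $t=1$, i.e.\ at $z=r$. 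Verifying this monotonicity is the technical heart of the matter; granting it, part \textbf{a)} follows exactly as the other cases. Finally, the displayed chain of inequalities needs nothing new: it follows at once by combining the location of the roots found above with the interlacing of the zeros recorded at the outset.
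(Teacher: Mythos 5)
Your route for parts \textbf{b)} and \textbf{c)}, and for part \textbf{a)} in the range $-\frac{1}{2}<\nu\le 0$, is essentially the paper's: the logarithmic derivative of $zf'$ is written as $1$ minus a sum of terms $c\,z^{4}/(s^{4}-z^{4})$ (resp. $c\,z/(s^{4}-z)$) with $c\ge 0$, the bound $\real\left(w/(s-w)\right)\le |w|/(s-|w|)$ is applied termwise, and the location of the resulting root below $\gamma_{\nu,1}'$, $\zeta_{\nu,1}$, $\xi_{\nu,1}$ together with the Laguerre--P\'olya/hyperbolic-polynomial material (Lemmas \ref{Zeros} and \ref{LP}) yields the ``moreover'' chain. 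That part is fine.

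The gap is in part \textbf{a)} for $\nu>0$, which you rightly single out as the technical heart but propose to settle by a claim that is false. Put $A=\gamma_{\nu,n}^{4}$, $B=\gamma_{\nu,n}'^{4}$, $\rho=r^{4}$, $t=\cos(4\arg z)$ and $\lambda=\frac{2\nu}{2\nu+1}$. A direct computation gives
\begin{equation*}
\frac{d}{dt}\,\real\left(\frac{z^{4}}{s-z^{4}}\right)=\frac{s\rho\,(s^{2}-\rho^{2})}{(s^{2}-2s\rho t+\rho^{2})^{2}},
\end{equation*}
which at $t=-1$ equals $\rho\,s(s-\rho)/(s+\rho)^{3}$. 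The map $s\mapsto s(s-\rho)/(s+\rho)^{3}$ is \emph{increasing} on $\left(\rho,(2+\sqrt{3})\rho\right)$, so whenever $\rho<B<A<(2+\sqrt{3})\rho$ and $\lambda$ is close enough to $1$, the $t$-derivative of $\lambda\,\real\frac{z^{4}}{A-z^{4}}-\real\frac{z^{4}}{B-z^{4}}$ is strictly positive at $t=-1$: the function of $t$ is not monotone, and your termwise domination of derivatives fails. This regime is genuinely reached, since $\lambda\to 1$ and $\gamma_{\nu,1}/\gamma_{\nu,1}'\to 1$ as $\nu\to\infty$ while $r$ may be taken close to $\gamma_{\nu,1}'$. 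What is true, and what the paper uses (inequality \eqref{2.2}, quoted from \cite[Lemma 2.1]{BAS}), is only the two-point comparison $\lambda\,\real\frac{z}{a-z}-\real\frac{z}{b-z}\ge \lambda\frac{|z|}{a-|z|}-\frac{|z|}{b-|z|}$ for $a>b>|z|$ and $\lambda\in[0,1]$: the minimum over the circle is still attained at $t=1$, but not because the expression decreases in $t$. Replace your monotonicity argument by this lemma (or prove the endpoint comparison directly); with that substitution, and keeping your observation that the strict decrease of $r\mapsto 1+rf_{\nu}''(r)/f_{\nu}'(r)$ on $(0,\gamma_{\nu,1}')$ (which the paper also verifies, again via the interlacing $\gamma_{\nu,n}'<\gamma_{\nu,n}$) pins down the root, the rest of your argument goes through.
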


\begin{theorem}
\label{T4}
\begin{enumerate}
\item[\textbf{a)}] If ${\nu }>0,$ then $r_{\alpha}^{c}(u_{\nu})$ is the smallest positive root of
\begin{equation*}
1+\frac{r\,\mathbf{\Pi }_{{\nu }}^{\prime \prime }(r)}{\mathbf{\Pi }_{{\nu }%
}^{\prime }(r)}+\left( \frac{1}{2{\nu }}-1\right) \frac{r\,\mathbf{\Pi }_{{%
\nu }}^{\prime }(r)}{\mathbf{\Pi }_{{\nu }}(r)}=\alpha .
\end{equation*}

\item[\textbf{b)}] If ${\nu }>-1,$ then $r_{\alpha}^{c}(v_{\nu})$ is the smallest positive root of
\begin{equation*}
1-2{\nu }+r\frac{(2-2{\nu )}\mathbf{\Pi }_{{\nu }}^{\prime }(r)+r\,\mathbf{%
\Pi }_{{\nu }}^{\prime \prime }(r)}{(1-2{\nu )}\mathbf{\Pi }_{{\nu }}(r)+r%
\mathbf{\Pi }_{{\nu }}^{\prime }(r)}=\alpha .
\end{equation*}

\item[\textbf{c)}] If ${\nu }>-1,$ then $r_{\alpha}^{c}(w_{\nu})$ is the smallest positive root of
\begin{equation*}
4-2{\nu }+r^{\frac{1}{4}}\frac{(5-2{\nu )}\mathbf{\Pi }_{{\nu }}^{\prime
}(r^{\frac{1}{4}})+r^{\frac{1}{4}}\mathbf{\Pi }_{{\nu }}^{\prime \prime }(r^{%
\frac{1}{4}})}{(4-2{\nu )}\mathbf{\Pi }_{{\nu }}(r^{\frac{1}{4}})+r^{\frac{1%
}{4}}\mathbf{\Pi }_{{\nu }}^{\prime }(r^{\frac{1}{4}})}=4\alpha .
\end{equation*}
\end{enumerate}

Moreover, we have $r_{\alpha }^{c}(u_{{\nu }})<j_{{\nu }%
,1}^{\prime }<j_{{\nu },1},$ $r_{\alpha }^{c}(v_{{\nu }})<\vartheta _{{\nu }%
,1}<j_{{\nu },1},$ and $r_{\alpha }^{c}(w_{{\nu }})<\omega _{{\nu },1}<j_{{%
\nu },1},$ where $\vartheta _{{\nu },1}$ and $\omega _{{\nu },1}$ are the
first positive zeros of $z\mapsto z\mathbf{\Pi }_{{\nu }%
}^{\prime }(z)-(2{\nu -1})\mathbf{\Pi }_{{\nu }}(z)$ and $z\mapsto z%
\mathbf{\Pi }_{{\nu }}^{\prime }(z)-(2{\nu -4})\mathbf{\Pi }_{{\nu }}(z),$ while
$j_{{\nu },1}$ and $j_{{\nu }%
,1}^{\prime }$ denote the first positive zeros of $J_{{\nu }}$ and $J_{{\nu }%
}^{\prime }$, respectively.
\end{theorem}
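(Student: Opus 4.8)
The plan is to run the scheme already used for the radii of starlikeness, with the quantity $zf'(z)/f(z)$ replaced throughout by $1+zf''(z)/f'(z)$, everything resting on the Hadamard factorization of $\mathbf{\Pi}_{\nu}$ provided by Theorem~\ref{thzeros}. First I would record that, since $\lambda_{\nu}^{\mathbf{\Pi}}$ is a real entire function of growth order $\tfrac14$ with only real zeros, it belongs to the Laguerre--P\'olya class, and hence so do all its derivatives and the images of $\lambda_{\nu}^{\mathbf{\Pi}}$ under the first-order operators producing $z\mathbf{\Pi}_{\nu}'(z)-(2\nu-1)\mathbf{\Pi}_{\nu}(z)$ and $z\mathbf{\Pi}_{\nu}'(z)-(2\nu-4)\mathbf{\Pi}_{\nu}(z)$. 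Combined with the interlacing statements of Section~3 this gives
$$\mathbf{\Pi}_{\nu}(z)=\frac{(z/2)^{2\nu}}{\Gamma^{2}(\nu+1)}\prod_{n\geq1}\left(1-\frac{z^{4}}{j_{\nu,n}^{4}}\right),$$
together with analogous genus-zero products for $\mathbf{\Pi}_{\nu}'$, whose positive zeros $\eta_{\nu,n}$ interlace the $j_{\nu,n}$ with $j_{\nu,1}'<\eta_{\nu,1}<j_{\nu,1}$, and for the two auxiliary functions above, whose positive zeros $\vartheta_{\nu,n}$ and $\omega_{\nu,n}$ again interlace the $j_{\nu,n}$.

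Next I would differentiate $\log f'$ for each of $u_{\nu},v_{\nu},w_{\nu}$ using the explicit relation between $f$ and $\mathbf{\Pi}_{\nu}$ (directly for $u_{\nu}$ and $v_{\nu}$, after $z\mapsto z^{1/4}$ for $w_{\nu}$); the constant terms collapse and one obtains
$$1+\frac{zv_{\nu}''(z)}{v_{\nu}'(z)}=1-\sum_{n\geq1}\frac{4z^{4}}{\vartheta_{\nu,n}^{4}-z^{4}},\qquad 1+\frac{zw_{\nu}''(z)}{w_{\nu}'(z)}=1-\sum_{n\geq1}\frac{z}{\omega_{\nu,n}^{4}-z},$$
$$1+\frac{zu_{\nu}''(z)}{u_{\nu}'(z)}=1+\left(1-\frac{1}{2\nu}\right)\sum_{n\geq1}\frac{4z^{4}}{j_{\nu,n}^{4}-z^{4}}-\sum_{n\geq1}\frac{4z^{4}}{\eta_{\nu,n}^{4}-z^{4}}.$$
Clearing denominators and undoing the substitution $f=$ (power of $z$, resp. $z^{1/4}$) $\times\,\mathbf{\Pi}_{\nu}$ turns these into the equations of parts \textbf{a)}, \textbf{b)}, \textbf{c)} verbatim, so it only remains to locate their smallest positive roots.

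For the location I would use the standard boundary estimate: from $\real\left(z^{4}/(a^{4}-z^{4})\right)\leq r^{4}/(a^{4}-r^{4})$ and $\real\left(z/(a^{4}-z)\right)\leq r/(a^{4}-r)$ for $|z|=r<a$, one obtains $\real\left(1+zf''(z)/f'(z)\right)\geq 1+rf''(r)/f'(r)$ on $|z|=r$ below the first relevant zero; the right-hand side decreases strictly from $1$ at $r=0^{+}$ to $-\infty$, so it meets each level $\alpha\in[0,1)$ exactly once, and that point is the radius of convexity of order $\alpha$. For $v_{\nu}$ and $w_{\nu}$ this works at once since every series coefficient is negative. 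For $u_{\nu}$, when $\nu>\tfrac12$ the coefficient $1-\tfrac1{2\nu}$ has the wrong sign, and this is the main obstacle: I would regroup
$$\left(1-\frac1{2\nu}\right)\sum_{n\geq1}\frac{4z^{4}}{j_{\nu,n}^{4}-z^{4}}-\sum_{n\geq1}\frac{4z^{4}}{\eta_{\nu,n}^{4}-z^{4}}=-\frac1{2\nu}\sum_{n\geq1}\frac{4z^{4}}{j_{\nu,n}^{4}-z^{4}}+\sum_{n\geq1}\left(\frac{4z^{4}}{j_{\nu,n}^{4}-z^{4}}-\frac{4z^{4}}{\eta_{\nu,n}^{4}-z^{4}}\right)$$
and invoke the paired inequality: for $0<\eta<j$ and $|z|=r<\eta$, $\real\left(z^{4}/(j^{4}-z^{4})-z^{4}/(\eta^{4}-z^{4})\right)\geq r^{4}/(j^{4}-r^{4})-r^{4}/(\eta^{4}-r^{4})$, whose proof depends precisely on the interlacing $\eta_{\nu,n}<j_{\nu,n}$ guaranteed by the Laguerre--P\'olya property. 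With this the minimum of $\real\left(1+zu_{\nu}''(z)/u_{\nu}'(z)\right)$ over $|z|=r$ is again attained on the positive real axis and the argument closes as in the other cases; note that part \textbf{a)} is stated only for $\nu>0$, so the $\sqrt{i}\,$ substitution needed in Theorem~\ref{T2} does not reappear here.

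Finally, the Moreover part follows by combining $r_{\alpha}^{c}(f)<r_{0}^{c}(f)\leq r^{\star}(f)$ (a convex map of order $0$ is starlike) with the interlacing relations of Section~3, which place $\vartheta_{\nu,1},\omega_{\nu,1}$ and $j_{\nu,1}'$ below $j_{\nu,1}$: the first sign change on the positive axis of each auxiliary function precedes $j_{\nu,1}$ because $\mathbf{\Pi}_{\nu}'(j_{\nu,1})=J_{\nu}'(j_{\nu,1})I_{\nu}(j_{\nu,1})<0$ whereas these functions are positive near $0$, and $j_{\nu,1}'<j_{\nu,1}$ is the classical interlacing of the zeros of $J_{\nu}$ and $J_{\nu}'$. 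The strict bound $r_{\alpha}^{c}(u_{\nu})<j_{\nu,1}'$ I would get by evaluating $1+zu_{\nu}''(z)/u_{\nu}'(z)$ at $z=j_{\nu,1}'$, where $J_{\nu}'(j_{\nu,1}')=0$ lets $\mathbf{\Pi}_{\nu}'$ and $\mathbf{\Pi}_{\nu}''$ be reduced through the Bessel differential equation, and checking that the value obtained there is already $\leq\alpha$.
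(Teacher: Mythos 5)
Your proposal follows essentially the same route as the paper: logarithmic differentiation of the Hadamard products of $\mathbf{\Pi}_{\nu}$, $\mathbf{\Pi}_{\nu}'$ and the auxiliary functions $z\mathbf{\Pi}_{\nu}'(z)-(2\nu-1)\mathbf{\Pi}_{\nu}(z)$, $z\mathbf{\Pi}_{\nu}'(z)-(2\nu-4)\mathbf{\Pi}_{\nu}(z)$; the estimate $\real\bigl(z^{4}/(a^{4}-z^{4})\bigr)\leq r^{4}/(a^{4}-r^{4})$ for the all-negative-coefficient cases $v_{\nu},w_{\nu}$; the paired interlacing inequality (the paper's inequality \eqref{2.2}, i.e.\ Lemma 2.1 of \cite{BAS}) for $u_{\nu}$ when $\nu>\frac12$; and the minimum principle combined with monotonicity along the positive axis. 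Your regrouping of the $u_{\nu}$ sum into $-\frac{1}{2\nu}\sum+\sum(\cdot-\cdot)$ is just the $\lambda$-decomposition of \eqref{2.2}, so the identification of the equations and of the smallest positive roots is correct.

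Two points are elided. First, your assertion that $r\mapsto 1+ru_{\nu}''(r)/u_{\nu}'(r)$ "decreases strictly" is not automatic when $\nu>\frac12$, because the coefficient $1-\frac{1}{2\nu}$ of the $j$-sum is then positive and its derivative contributes with the wrong sign; the paper handles this by the same interlacing, via $j_{\nu,n}^{4}\left(t_{\nu,n}^{4}-r^{4}\right)^{2}<t_{\nu,n}^{4}\left(j_{\nu,n}^{4}-r^{4}\right)^{2}$ for $r<\sqrt{j_{\nu,n}t_{\nu,n}}$, and you should say this explicitly rather than appeal to "as in the other cases". Second, for the Moreover part concerning $u_{\nu}$: your argument (like the paper's) locates the root only in $(0,t_{\nu,1})$, where $t_{\nu,1}$ is the first zero of $\mathbf{\Pi}_{\nu}'$; since $\mathbf{\Pi}_{\nu}'(j_{\nu,1}')=J_{\nu}(j_{\nu,1}')I_{\nu}'(j_{\nu,1}')>0$ one in fact has $j_{\nu,1}'<t_{\nu,1}$, so the stated bound $r_{\alpha}^{c}(u_{\nu})<j_{\nu,1}'$ is strictly stronger and your proposed evaluation at $z=j_{\nu,1}'$ would genuinely need to be carried out; the paper's own proof does not supply this step either, so you are not worse off, but do not present it as a consequence of what precedes.
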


\subsection{Bounds for the radii of convexity of the normalized forms of $\mathbf{\Phi}_{\nu}$ and $\mathbf{\Pi}_{\nu}$}
Proceeding similarly as in the proof of the bounds for the radii of starlikeness of the normalized products, in this subsection our aim is to
present some lower and upper bounds for the radii of convexity of the functions $g_{\nu},$ $h_{\nu},$ $v_{\nu}$ and $w_{\nu}.$ These bounds are also
particular cases of Euler-Rayleigh inequalities and consequently they can be improved by using higher order Euler-Rayleigh inequalities. We restricted here ourselves only to the first two Euler-Rayleigh inequalities.

\begin{theorem}
\label{THC1} The radius of convexity $r^{c}(g_{\nu })$ satisfies $%
r^{c}(g_{\nu })<\sqrt[4]{\frac{4(\nu +1)_{3}}{5}}$ for each $\nu >-1.$
Moreover, if $\nu >-1,$ then the radius of convexity of $g_{{\nu }}$
satisfies
\begin{equation*}
2\sqrt[4]{\frac{(\nu +1)_{3}}{25}}<r^{c}(g_{\nu })<2\sqrt[4]{\frac{25(\nu
+1)_{5}}{\nu_{13}}}
\end{equation*}%
and%
\begin{equation*}
2\sqrt[8]{\frac{(\nu +1)_{3}(\nu +1)_{5}}{\nu _{13}}}<r^{c}(g_{\nu
})<\sqrt[4]{\frac{8(\nu +1)_{3}(\nu +6)(\nu +7)\nu _{13}}{3\nu
_{14}}},
\end{equation*}%
where%
\begin{equation*}
\nu_{13}=544\nu ^{2}+5301\nu +12\,257
\end{equation*}%
and%
\begin{equation*}
\nu_{14}=2112\nu ^{4}+48\,784\nu ^{3}+417\,279\nu ^{2}+1556\,904\nu
+2123\,797.
\end{equation*}
\end{theorem}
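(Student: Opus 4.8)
The plan is to reduce the statement to two‑sided estimates for the first positive zero $\eta_{\nu,1}$ of the entire function $z\mapsto g_{\nu}'(z)+zg_{\nu}''(z)=\bigl(zg_{\nu}'(z)\bigr)'$, and then to run the Euler--Rayleigh scheme on that zero. First I would record the Hadamard factorizations that are already available from the analysis behind Theorems \ref{thzeros} and \ref{T3}: since $\lambda_{\nu}^{\mathbf{\Phi}}$ lies in the Laguerre--P\'olya class and is of genus $0$, and that class is closed under differentiation, the functions $g_{\nu}'$ and $\bigl(zg_{\nu}'\bigr)'$, regarded as functions of $-z^{4}/16$, again have only negative real zeros and genus $0$, so that
\[
g_{\nu}'(z)=\prod_{k\ge1}\Bigl(1-\frac{z^{4}}{\zeta_{\nu,k}^{4}}\Bigr),\qquad \bigl(zg_{\nu}'(z)\bigr)'=\prod_{k\ge1}\Bigl(1-\frac{z^{4}}{\eta_{\nu,k}^{4}}\Bigr),
\]
where $0<\zeta_{\nu,1}<\zeta_{\nu,2}<\cdots$ are the positive zeros of $z\mapsto z\mathbf{\Phi}_{\nu}'(z)-2\nu\mathbf{\Phi}_{\nu}(z)$, $0<\eta_{\nu,1}<\eta_{\nu,2}<\cdots$, and $\sum_{k\ge1}\zeta_{\nu,k}^{-4}<\infty$, $\sum_{k\ge1}\eta_{\nu,k}^{-4}<\infty$. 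Logarithmic differentiation gives $1+\frac{zg_{\nu}''(z)}{g_{\nu}'(z)}=1-\sum_{k\ge1}\frac{4z^{4}}{\zeta_{\nu,k}^{4}-z^{4}}$; as in the proof of Theorem \ref{T3}, the minimum of the real part over $|z|=r$ is attained at $z=r$ while $r<\zeta_{\nu,1}$, so $r^{c}(g_{\nu})$ is the smallest positive root of $\sum_{k\ge1}\frac{4r^{4}}{\zeta_{\nu,k}^{4}-r^{4}}=1$, and since $\bigl(zg_{\nu}'(z)\bigr)'=g_{\nu}'(z)\bigl(1-\sum_{k\ge1}\frac{4z^{4}}{\zeta_{\nu,k}^{4}-z^{4}}\bigr)$ vanishes there, $r^{c}(g_{\nu})=\eta_{\nu,1}$.

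Next I would read off the coefficients. From \eqref{fi},
\[
g_{\nu}(z)=\Gamma(\nu+1)\Gamma(\nu+2)\sum_{n\ge0}\frac{(-1)^{n}z^{4n+1}}{16^{n}\,n!\,\Gamma(\nu+n+1)\,\Gamma(\nu+2n+2)},
\]
hence $\bigl(zg_{\nu}'(z)\bigr)'=\sum_{n\ge0}(-1)^{n}\widehat A_{n}z^{4n}$ with $\widehat A_{n}=\dfrac{(4n+1)^{2}\Gamma(\nu+1)\Gamma(\nu+2)}{16^{n}\,n!\,\Gamma(\nu+n+1)\,\Gamma(\nu+2n+2)}$, and also $\sum_{k\ge1}\zeta_{\nu,k}^{-4}=\frac{5}{16(\nu+1)_{3}}$ (the coefficient of $z^{4}$ in $-g_{\nu}'$). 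The first, clean bound would then follow from $1=\sum_{k\ge1}\frac{4\eta_{\nu,1}^{4}}{\zeta_{\nu,k}^{4}-\eta_{\nu,1}^{4}}>4\eta_{\nu,1}^{4}\sum_{k\ge1}\zeta_{\nu,k}^{-4}$, giving $\eta_{\nu,1}^{4}<\frac{4(\nu+1)_{3}}{5}$. For the remaining inequalities, comparing $\bigl(zg_{\nu}'(z)\bigr)'$ with its product expansion identifies $\widehat A_{1},\widehat A_{2},\widehat A_{3}$ with the first three elementary symmetric functions of $\{\eta_{\nu,k}^{-4}\}$, so Newton's identities give the power sums $\widehat\rho_{m}=\sum_{k\ge1}\eta_{\nu,k}^{-4m}$ as $\widehat\rho_{1}=\widehat A_{1}$, $\widehat\rho_{2}=\widehat A_{1}^{2}-2\widehat A_{2}$, $\widehat\rho_{3}=\widehat A_{1}^{3}-3\widehat A_{1}\widehat A_{2}+3\widehat A_{3}$, and a mechanical simplification of the $\Gamma$‑quotients turns these into rational functions of $\nu$ whose relevant numerators are exactly $\nu_{13}$ and $\nu_{14}$; in particular $\widehat\rho_{1}=\frac{25}{16(\nu+1)_{3}}$ and $\widehat\rho_{1}/\widehat\rho_{2}=\frac{400(\nu+1)_{5}}{\nu_{13}}$.

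Finally I would invoke the Euler--Rayleigh inequalities $\widehat\rho_{m}^{-1/m}<\eta_{\nu,1}^{4}<\widehat\rho_{m}/\widehat\rho_{m+1}$, which are legitimate precisely because every $\eta_{\nu,k}$ is real and positive, and apply them for $m=1$ and $m=2$: substituting the formulas for $\widehat\rho_{1},\widehat\rho_{2},\widehat\rho_{3}$ yields the four displayed inequalities, while the first bound is subsumed by $r^{c}(g_{\nu})<\sqrt[4]{\widehat\rho_{1}/\widehat\rho_{2}}$ once one checks $\tfrac{400(\nu+1)_{5}}{\nu_{13}}<\tfrac{4(\nu+1)_{3}}{5}$ for $\nu>-1$. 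The step to be careful about is the very first one, namely that $\bigl(zg_{\nu}'(z)\bigr)'$ genuinely inherits, through the closure of the Laguerre--P\'olya class under differentiation together with the genus‑$0$ property of $\lambda_{\nu}^{\mathbf{\Phi}}$, a Hadamard factorization with real, positive, fourth‑power‑summable zeros, since this is exactly what makes both the identity $r^{c}(g_{\nu})=\eta_{\nu,1}$ and the Euler--Rayleigh estimates valid; once that is in hand the rest is coefficient bookkeeping, the most error‑prone part being the verification that the simplified $\widehat\rho_{2}$ and $\widehat\rho_{3}$ really do match the printed polynomials $\nu_{13}$ and $\nu_{14}$.
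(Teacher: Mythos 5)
Your proposal follows essentially the same route as the paper: the first bound via $\sum_k\zeta_{\nu,k}^{-4}=\tfrac{5}{16(\nu+1)_3}$, the identification of $r^{c}(g_{\nu})$ with the first positive zero of $(zg_{\nu}'(z))'$, the reality and positivity of those zeros via closure of $\mathcal{LP}$ under differentiation together with the growth-order-$\tfrac14$ Hadamard factorization, and then the Euler--Rayleigh inequalities for $m=1,2$. The only cosmetic difference is that you extract the power sums by Newton's identities from the Maclaurin coefficients, whereas the paper expands the logarithmic derivative $\widetilde{G}_{\nu}'/\widetilde{G}_{\nu}$ — an equivalent computation — so the argument is correct and matches the paper's proof.
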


\begin{theorem}
\label{THC2}The radius of convexity $r^{c}(h_{\nu })$ satisfies $%
r^{c}(h_{\nu })<8(\nu +1)_{3}$ for each $\nu >-1.$ Moreover, if $\nu >-1,$
then the radius of convexity of $h_{{\nu }}$ satisfies%
\begin{equation*}
4\left( \nu +1\right) _{3}<r^{c}(h_{\nu })<\frac{64\left( \nu +1\right) _{5}%
}{7\nu ^{2}+108\nu +293}
\end{equation*}%
and%
\begin{equation*}
16(\nu +1)_{3}\sqrt{\frac{(\nu +4)_{2}}{\nu_{15}}}<r^{c}(h_{\nu })<%
\frac{8\left( \nu +1\right) _{3}\left( \nu +6\right) _{2}\nu_{15}}{%
3\nu_{16}},
\end{equation*}%
where%
\begin{equation*}
\nu_{15}=7\nu ^{2}+108\nu +293
\end{equation*}%
and%
\begin{equation*}
\nu_{16}=3\nu ^{4}+91\nu ^{3}+1059\nu ^{2}+4965\nu +7834.
\end{equation*}
\end{theorem}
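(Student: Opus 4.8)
The plan is to identify $r^{c}(h_{\nu})$ with the first positive zero of the real entire function $\Lambda_{\nu}(z):=\bigl(zh_{\nu}'(z)\bigr)'$, to expand $\Lambda_{\nu}$ as a power series via \eqref{fi}, and then to trap its first zero between Euler--Rayleigh lower and upper bounds; the crude estimate $r^{c}(h_{\nu})<8(\nu+1)_{3}$ will come from the Hadamard factorization of $zh_{\nu}'(z)$ together with Theorem \ref{THbound5}.

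First I would set up the reduction. Writing $G(z)=zh_{\nu}'(z)$ one has $1+zh_{\nu}''(z)/h_{\nu}'(z)=zG'(z)/G(z)$, so $r^{c}(h_{\nu})$ is the radius of starlikeness of $G$; this agrees with Theorem \ref{T3}(c), whose defining equation for $\alpha=0$ reduces, after clearing denominators and inserting \eqref{fi}, to $\Lambda_{\nu}(z)=0$. From the definition of $h_{\nu}$ and \eqref{fi},
\begin{equation*}
h_{\nu}(z)=\Gamma(\nu+1)\Gamma(\nu+2)\sum_{n\geq0}\frac{(-1/16)^{n}}{n!\,\Gamma(\nu+n+1)\,\Gamma(\nu+2n+2)}\,z^{n+1},
\end{equation*}
so that
\begin{equation*}
\Lambda_{\nu}(z)=\bigl(zh_{\nu}'(z)\bigr)'=\sum_{n\geq0}e_{n}z^{n},\qquad e_{n}=\frac{\Gamma(\nu+1)\Gamma(\nu+2)\,(n+1)^{2}}{(-16)^{n}\,n!\,\Gamma(\nu+n+1)\,\Gamma(\nu+2n+2)}.
\end{equation*}
Because $\lambda_{\nu}^{\mathbf{\Phi}}$ has growth order $\tfrac14$, the functions $h_{\nu}'$, $G$ and $\Lambda_{\nu}$ are real entire of genus $0$; combining Theorem \ref{thzeros} with the closure of the Laguerre--P\'olya class under differentiation, $h_{\nu}'$ has only positive real zeros $0<\theta_{1}<\theta_{2}<\cdots$ (infinitely many) and $\Lambda_{\nu}$ has only positive real zeros $0<\tau_{1}<\theta_{1}<\tau_{2}<\theta_{2}<\cdots$ interlacing them. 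Hence $G(z)=z\prod_{k\geq1}(1-z/\theta_{k})$, $\Lambda_{\nu}(z)=\prod_{k\geq1}(1-z/\tau_{k})$, and the standard minimum-principle argument applied to $zG'(z)/G(z)$ gives $r^{c}(h_{\nu})=\tau_{1}$.

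For the crude bound, I would evaluate $G'(z)/G(z)=1/z+\sum_{k\geq1}1/(z-\theta_{k})$ at $z=\tau_{1}\in(0,\theta_{1})$: this yields $1/\tau_{1}=\sum_{k\geq1}1/(\theta_{k}-\tau_{1})>1/(\theta_{1}-\tau_{1})$, whence $\tau_{1}<\theta_{1}/2$. Since $\theta_{1}$, the first positive zero of $h_{\nu}'$, equals $r^{\star}(h_{\nu})$, Theorem \ref{THbound5} gives $\theta_{1}<16(\nu+1)_{3}$, and therefore $r^{c}(h_{\nu})=\tau_{1}<\tfrac12\theta_{1}<8(\nu+1)_{3}$. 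For the remaining four inequalities I would run the Euler--Rayleigh scheme: with $\sigma_{m}=\sum_{k\geq1}\tau_{k}^{-m}$, comparing the two expansions in $\Lambda_{\nu}'(z)/\Lambda_{\nu}(z)=-\sum_{m\geq1}\sigma_{m}z^{m-1}$ gives $\sigma_{1}=-e_{1}$, $\sigma_{2}=e_{1}^{2}-2e_{2}$ and $\sigma_{3}=-e_{1}^{3}+3e_{1}e_{2}-3e_{3}$. Substituting $e_{0}=1$, $e_{1}=-1/(4(\nu+1)_{3})$, $e_{2}=9/(512(\nu+2)(\nu+1)_{5})$, $e_{3}=-1/(1536(\nu+1)_{3}(\nu+2)_{6})$ and simplifying, one obtains $\sigma_{1}=1/(4(\nu+1)_{3})$, $\sigma_{2}=\nu_{15}/\bigl(256(\nu+1)_{3}^{2}(\nu+4)(\nu+5)\bigr)$ and $\sigma_{3}=3\nu_{16}/\bigl(2048(\nu+1)_{3}^{3}(\nu+4)_{4}\bigr)$ with $\nu_{15}=7\nu^{2}+108\nu+293$ and $\nu_{16}=3\nu^{4}+91\nu^{3}+1059\nu^{2}+4965\nu+7834$. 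Since $\Lambda_{\nu}$ has infinitely many zeros, the inequalities $\sigma_{m}^{-1/m}<\tau_{1}<\sigma_{m}/\sigma_{m+1}$ are strict; taking $m=1$ gives $4(\nu+1)_{3}<r^{c}(h_{\nu})<\sigma_{1}/\sigma_{2}=64(\nu+1)_{5}/\nu_{15}$, and $m=2$ gives $16(\nu+1)_{3}\sqrt{(\nu+4)(\nu+5)/\nu_{15}}<r^{c}(h_{\nu})<\sigma_{2}/\sigma_{3}=8(\nu+1)_{3}(\nu+6)(\nu+7)\nu_{15}/(3\nu_{16})$, i.e.\ exactly the two displayed double inequalities.

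The conceptual ingredients — reality of the zeros, genus $0$, the identity $r^{c}(h_{\nu})=\tau_{1}$, and the bound $r^{\star}(h_{\nu})<16(\nu+1)_{3}$ — are all inherited from Theorems \ref{thzeros}, \ref{T3} and \ref{THbound5}, so the only genuine labor is the algebraic simplification of $\sigma_{2}$ and $\sigma_{3}$: one must check that the alternating combinations of Pochhammer symbols in $e_{1}^{2}-2e_{2}$ and $-e_{1}^{3}+3e_{1}e_{2}-3e_{3}$ collapse to $\nu_{15}$ and $\nu_{16}$ after clearing denominators, and keep careful track of the direction of each Euler--Rayleigh inequality. That bookkeeping, rather than any structural difficulty, is where a slip is most likely.
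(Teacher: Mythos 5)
Your proposal is correct and follows essentially the same route as the paper: identify $r^{c}(h_{\nu})$ with the first positive zero of $(zh_{\nu}'(z))'$, use Theorem \ref{thzeros} and closure of $\mathcal{LP}$ under differentiation to get reality and positivity of its zeros, and then apply the Euler--Rayleigh inequalities to the sums $\sigma_1,\sigma_2,\sigma_3$, which match the paper's $\alpha_1,\alpha_2,\alpha_3$ exactly. The only (immaterial) deviation is the crude bound: the paper gets $r^{c}(h_{\nu})<8(\nu+1)_3$ directly from $1/r^{c}(h_{\nu})=\sum_n 1/(\xi_{\nu,n}^4-r^{c}(h_{\nu}))>\sum_n \xi_{\nu,n}^{-4}=1/(8(\nu+1)_3)$, whereas you keep only the first term of that sum and invoke $r^{\star}(h_{\nu})<16(\nu+1)_3$ from Theorem \ref{THbound5}; both are valid.
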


\begin{theorem}
\label{THC3} The radius of convexity $r^{c}(v_{\nu })$ satisfies $r^{c}(v_{\nu })<\sqrt[4]{\frac{4(\nu
+1)^{2}(\nu +2)}{5}}$ for each $\nu >-1.$ Moreover, if $\nu >-1,$ then the
radius of convexity of $v_{{\nu }}$ satisfies
\begin{equation*}
2\sqrt[4]{\frac{(\nu +1)^{2}(\nu +2)}{25}}<r^{c}(v_{\nu })<2\sqrt[4]{\frac{%
25\left(\nu +1\right) \left(\nu +1\right)_{4}}{\nu_{17}}}
\end{equation*}%
and%
\begin{equation*}
2\sqrt[8]{\frac{(\nu +1)_{4}\left( \nu +1\right) ^{3}\left( \nu +2\right) }{%
\nu_{17}}}<r^{c}(v_{\nu })<\sqrt[4]{\frac{8\left( \nu +1\right)
\left( \nu +1\right) _{3}\left(\nu +5\right)_{2}\nu_{17}}{\nu
_{18}}},
\end{equation*}%
where%
\begin{equation*}
\nu _{17}=544\nu ^{2}+4213\nu +7419
\end{equation*}%
and%
\begin{equation*}
\nu _{18}=6336\nu ^{5}+140\,016\nu ^{4}+1212\,429\nu
^{3}+5112\,301\nu ^{2}+10\,459\,449\nu +8300\,897.
\end{equation*}
\end{theorem}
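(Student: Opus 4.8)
The plan is to reduce Theorem~\ref{THC3} to locating the smallest positive zero of an explicit entire function and then to feed that function into the Euler--Rayleigh inequalities, exactly as indicated in the preamble of this subsection and exactly as for the starlikeness bounds in Theorem~\ref{THSbound2}. From \eqref{pi} one has the Maclaurin expansion
$$v_{\nu}(z)=z+\sum_{n\ge 1}c_{n}z^{4n+1},\qquad c_{n}=\frac{(-1)^{n}}{16^{n}\,n!\,(\nu+1)_{n}(\nu+1)_{2n}},$$
equivalently $v_{\nu}(z)/z=\Gamma^{2}(\nu+1)\,\lambda_{\nu}^{\mathbf{\Pi}}(-z^{4}/16)$, so differentiating termwise gives
$$1+\frac{zv_{\nu}''(z)}{v_{\nu}'(z)}=\frac{\sum_{n\ge 0}(4n+1)^{2}c_{n}z^{4n}}{\sum_{n\ge 0}(4n+1)c_{n}z^{4n}}.$$
By part~\textbf{b)} of Theorem~\ref{T4} with $\alpha=0$, the radius of convexity $r^{c}(v_{\nu})$ is the smallest positive zero of the numerator; equivalently $r^{c}(v_{\nu})^{4}$ is the smallest positive zero of $R(u):=\sum_{n\ge 0}(4n+1)^{2}c_{n}u^{n}$.

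The structural point is that $R$ has only real, in fact only positive, zeros. By Theorem~\ref{thzeros} the function $\lambda_{\nu}^{\mathbf{\Pi}}$ has infinitely many zeros, all real; its Taylor coefficients are positive and it is of genus $0$, hence it belongs to the Laguerre--Pólya class and all its zeros are negative. The sequence $\{4n+1\}_{n\ge 0}$ is a multiplier sequence because $\sum_{n\ge 0}(4n+1)w^{n}/n!=(1+4w)e^{w}$ lies in the Laguerre--Pólya class; hence so is $\{(4n+1)^{2}\}_{n\ge 0}$, and applying it to $\lambda_{\nu}^{\mathbf{\Pi}}$ again yields a Laguerre--Pólya function with only negative zeros (the positive coefficients exclude a zero on $[0,\infty)$). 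Undoing the substitution $w=-z^{4}/16$ gives $R(u)=\prod_{k\ge 1}(1-u/u_{k})$ with $0<u_{1}\le u_{2}\le\cdots$ and $u_{1}=r^{c}(v_{\nu})^{4}$; that this least zero of the numerator is reached before the first zero of $v_{\nu}'$ is part of the content of Theorem~\ref{T4}, so it may be taken for granted here.

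Now the Euler--Rayleigh step is routine. Writing $R(u)=1+\sum_{k\ge 1}A_{k}u^{k}$ with $A_{k}=(4k+1)^{2}c_{k}$ and $\theta_{m}=\sum_{k\ge 1}u_{k}^{-m}$, Newton's identities give $\theta_{1}=-A_{1}$ and $\theta_{2}=\theta_{1}^{2}-2A_{2}$, that is
$$\theta_{1}=\frac{25}{16(\nu+1)^{2}(\nu+2)},\qquad \theta_{2}=\frac{544\nu^{2}+4213\nu+7419}{256(\nu+1)^{4}(\nu+2)^{2}(\nu+3)(\nu+4)}=\frac{\nu_{17}}{256(\nu+1)^{4}(\nu+2)^{2}(\nu+3)(\nu+4)}.$$
Substituting into the Euler--Rayleigh inequalities $\theta_{m}^{-1/m}<u_{1}<\theta_{m}/\theta_{m+1}$ for $m=1,2$ and taking fourth roots yields, after simplification, the two displayed pairs of bounds; the extra quantity $\theta_{3}$ needed for the $m=2$ upper bound is produced the same way from $A_{3}=169c_{3}$ and leads to the polynomial $\nu_{18}$. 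The first, cruder bound $r^{c}(v_{\nu})<\sqrt[4]{4(\nu+1)^{2}(\nu+2)/5}$ I would obtain separately: for $u>0$ the series $R(u)=\sum_{k\ge 0}(-1)^{k}(4k+1)^{2}|c_{k}|u^{k}$ has terms decreasing from index one onwards, so $R(u)$ stays below its second partial sum, and at $u_{\star}=4(\nu+1)^{2}(\nu+2)/5$ one gets $R(u_{\star})<1-25|c_{1}|u_{\star}+81|c_{2}|u_{\star}^{2}=-\tfrac14+\frac{81(\nu+1)^{2}}{800(\nu+3)(\nu+4)}<0$ for every $\nu>-1$, since $324(\nu+1)^{2}<800(\nu+3)(\nu+4)$; as $R(0)=1$, the smallest positive zero lies below $u_{\star}$.

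The hard part is the structural step: being certain that $R$, equivalently the numerator of $1+zv_{\nu}''/v_{\nu}'$, has only positive real zeros and that its least one is exactly $r^{c}(v_{\nu})^{4}$. This rests on the reality of zeros in Theorem~\ref{thzeros} together with the closure of the Laguerre--Pólya class under the relevant multiplier sequence — the very ingredients already used for the starlikeness bounds — after which only the lengthy but mechanical symmetric-function bookkeeping producing $\nu_{17}$ and $\nu_{18}$ remains.
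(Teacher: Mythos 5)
Your proposal is correct and follows essentially the same route as the paper: both identify $r^{c}(v_{\nu})^{4}$ with the smallest positive zero of the genus-zero entire function $\sum_{n\geq 0}(4n+1)^{2}c_{n}u^{n}$, i.e. of $\left(zv_{\nu}'(z)\right)'$ after the substitution $u=z^{4}$, establish that all of its zeros are real and positive via Laguerre--P\'olya theory, and then apply the Euler--Rayleigh inequalities, your sums $\theta_{k}$ coinciding exactly with the paper's $\epsilon_{k}$. The only local deviations are that the paper obtains zero-reality from closure of $\mathcal{LP}$ under differentiation (applied to $zv_{\nu}'(z)$ with $v_{\nu}'\in\mathcal{LP}$) rather than from the P\'olya--Schur multiplier sequence $\{(4n+1)^{2}\}$, and it derives the crude bound $r^{c}(v_{\nu})<\sqrt[4]{4(\nu+1)^{2}(\nu+2)/5}$ from the identity $1/(r^{c}(v_{\nu}))^{4}=\sum_{n\geq1}4/\bigl(\vartheta_{\nu,n}^{4}-(r^{c}(v_{\nu}))^{4}\bigr)>4\varrho_{1}=5/\bigl(4(\nu+1)^{2}(\nu+2)\bigr)$ involving the zeros of $v_{\nu}'$ rather than from your alternating-series sign check at $u_{\star}$ --- both of which are valid.
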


\begin{theorem}
\label{THC4} The radius of convexity $r^{c}(w_{\nu })$ satisfies $r^{c}(w_{\nu
})<8(\nu +1)^{2}(\nu +2)$ for each $\nu >-1.$ Moreover, if $\nu >-1,$ then
the radius of convexity of $w_{{\nu }}$ satisfies%
\begin{equation*}
4(\nu +1)^{2}(\nu +2)<r^{c}(w_{\nu })<\frac{64\left( \nu +1\right) \left(
\nu +1\right) _{4}}{7\nu ^{2}+94\nu +183}
\end{equation*}%
and%
\begin{equation*}
16\left( \nu +1\right) ^{2}\left( \nu +2\right) \sqrt{\frac{(\nu +3)\left(
\nu +4\right) }{\nu_{19}}}<r^{c}(w_{\nu })<\frac{8\left(\nu
+1\right) \left(\nu +1\right)_{3}\left( \nu +5\right)_{2}\nu_{19}}{\nu_{20}},
\end{equation*}%
where%
\begin{equation*}
\nu _{19}=7\nu ^{2}+94\nu +183
\end{equation*}%
and%
\begin{equation*}
\nu _{20}=9\nu ^{5}+264\nu ^{4}+3108\nu ^{3}+16\,222\nu
^{2}+37\,827\nu +32\,138.
\end{equation*}
\end{theorem}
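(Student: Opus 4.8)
The plan is to run, for $w_{\nu}$, the argument that underlies Theorems \ref{THC1}--\ref{THC3}. First I would record the power series of the normalized function: combining the definition of $w_{\nu}$ with \eqref{pi} and $(\sqrt[4]{z})^{4}=z$ gives
\[
w_{\nu}(z)=\Gamma^{2}(\nu+1)\,z\,\lambda_{\nu}^{\mathbf{\Pi}}\!\left(-\frac{z}{16}\right)=\Gamma^{2}(\nu+1)\sum_{n\geq0}a_{n}z^{\,n+1},\qquad a_{n}=\frac{(-1/16)^{n}}{n!\,\Gamma(\nu+n+1)\,\Gamma(\nu+2n+1)},
\]
so that $w_{\nu}(0)=0,$ $w_{\nu}'(0)=1$ and $(-1)^{n}a_{n}>0$ for $\nu>-1.$ By Theorem \ref{thzeros} (equivalently Theorem \ref{thfourier}) the function $\lambda_{\nu}^{\mathbf{\Pi}}$ belongs to the Laguerre--P\'olya class and, having only positive Taylor coefficients together with growth order $\frac14,$ it has genus $0$ and only negative zeros; hence $w_{\nu},$ $w_{\nu}',$ the function $\mathcal{W}_{\nu}(z):=z\,w_{\nu}'(z)$ and $\mathcal{W}_{\nu}'$ all lie in the Laguerre--P\'olya class (closed under differentiation and under multiplication by $z$), and since their Taylor coefficients alternate in sign, all their zeros are real and positive. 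I would then write the Hadamard factorizations $w_{\nu}'(z)=\prod_{n\geq1}\left(1-z/\omega_{n}\right)$ and $\mathcal{W}_{\nu}'(z)=\prod_{n\geq1}\left(1-z/\varrho_{n}\right)$ with $0<\omega_{1}\leq\omega_{2}\leq\cdots$ and $0<\varrho_{1}\leq\varrho_{2}\leq\cdots.$

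Next I would identify the radius of convexity. Since $1+z\,w_{\nu}''(z)/w_{\nu}'(z)=z\,\mathcal{W}_{\nu}'(z)/\mathcal{W}_{\nu}(z)$ and $\mathcal{W}_{\nu}(z)=z\prod_{n\geq1}\left(1-z/\omega_{n}\right),$ the elementary inequality $\real\left(z/(z-b)\right)\geq r/(r-b),$ valid for $b>r=|z|>0$ with equality at $z=r,$ shows that on the circle $|z|=r<\omega_{1}$ the real part of $1+z\,w_{\nu}''(z)/w_{\nu}'(z)$ is minimised at $z=r;$ exactly as in the proof of Theorem \ref{T4} this yields that $r^{c}(w_{\nu})$ equals the first positive zero $\varrho_{1}$ of $\mathcal{W}_{\nu}',$ and also the smallest positive root of $1=\sum_{k\geq1}s_{k}r^{k},$ where $s_{k}=\sum_{n\geq1}\omega_{n}^{-k}.$ From $1=\sum_{k\geq1}s_{k}\varrho_{1}^{k}>s_{1}\varrho_{1}$ and $1=\sum_{k\geq1}s_{k}\varrho_{1}^{k}<\sum_{k\geq1}s_{1}^{k}\varrho_{1}^{k}=s_{1}\varrho_{1}/(1-s_{1}\varrho_{1})$ (strict since $w_{\nu}'$ has infinitely many zeros, so $s_{k}<s_{1}^{k}$ for $k\geq2$) one gets $1/(2s_{1})<r^{c}(w_{\nu})<1/s_{1};$ and $s_{1}=-2a_{1}/a_{0}=1/\left(8(\nu+1)^{2}(\nu+2)\right)$ comes from the coefficient of $z$ in $w_{\nu}'.$ This gives the crude bound $r^{c}(w_{\nu})<8(\nu+1)^{2}(\nu+2)$ and the lower bound $4(\nu+1)^{2}(\nu+2)<r^{c}(w_{\nu}).$

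For the two refined pairs I would apply the Euler--Rayleigh inequalities to $\varrho_{1}=r^{c}(w_{\nu})$: with $\theta_{k}=\sum_{n\geq1}\varrho_{n}^{-k}$ one has $\theta_{k}^{-1/k}<r^{c}(w_{\nu})<\theta_{k}/\theta_{k+1}$ for every $k\geq1$ (both strict, again because there are infinitely many zeros), and the $\theta_{k}$ are recovered from the Taylor coefficients $A_{k}:=(k+1)^{2}a_{k}/a_{0}$ of $\mathcal{W}_{\nu}'$ via Newton's identities $\theta_{1}=-A_{1},$ $\theta_{2}=A_{1}^{2}-2A_{2},$ $\theta_{3}=-A_{1}^{3}+3A_{1}A_{2}-3A_{3}.$ Using the Gamma-function ratios one finds $a_{1}/a_{0}=-1/\left(16(\nu+1)^{2}(\nu+2)\right),$ $a_{2}/a_{0}=1/\left(512(\nu+1)^{2}(\nu+2)^{2}(\nu+3)(\nu+4)\right)$ and $a_{3}/a_{0}=-1/\left(24\,576(\nu+1)^{2}(\nu+2)^{2}(\nu+3)^{2}(\nu+4)(\nu+5)(\nu+6)\right),$ whence $\theta_{1}=1/\left(4(\nu+1)^{2}(\nu+2)\right),$ $\theta_{2}=\nu_{19}/\left(256(\nu+1)^{4}(\nu+2)^{2}(\nu+3)(\nu+4)\right),$ and $\theta_{3}=\nu_{20}/\left(2048(\nu+1)^{6}(\nu+2)^{3}(\nu+3)^{2}(\nu+4)(\nu+5)(\nu+6)\right).$ Substituting these into the cases $k=1$ and $k=2$ produces precisely $4(\nu+1)^{2}(\nu+2)<r^{c}(w_{\nu})<64(\nu+1)(\nu+1)_{4}/\nu_{19}$ and $16(\nu+1)^{2}(\nu+2)\sqrt{(\nu+3)(\nu+4)/\nu_{19}}<r^{c}(w_{\nu})<8(\nu+1)(\nu+1)_{3}(\nu+5)_{2}\nu_{19}/\nu_{20},$ which are the stated estimates.

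The conceptual steps are exactly those already used for $f_{\nu},g_{\nu},\ldots,v_{\nu},$ so the main obstacle is the bookkeeping: one must compute $\theta_{3}$ honestly --- clearing the common denominator $(\nu+1)^{6}(\nu+2)^{3}(\nu+3)^{2}(\nu+4)(\nu+5)(\nu+6)$ in $-A_{1}^{3}+3A_{1}A_{2}-3A_{3}$ and checking that the numerator equals $\nu_{20}$ --- and then verify that $\theta_{2}^{-1/2}$ collapses to $16(\nu+1)^{2}(\nu+2)\sqrt{(\nu+3)(\nu+4)/\nu_{19}}$ and $\theta_{2}/\theta_{3}$ to $8(\nu+1)(\nu+1)_{3}(\nu+5)_{2}\nu_{19}/\nu_{20}.$ A secondary point is to make sure the first positive zero $\varrho_{1}$ of $\mathcal{W}_{\nu}'$ lies strictly below $\omega_{1}$ (so that the minimum-principle step is carried out on a disk where $\mathcal{W}_{\nu}$ does not vanish), which follows from the interlacing of the zeros of a Laguerre--P\'olya function and of its derivative.
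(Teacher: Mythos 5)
Your proposal is correct and follows essentially the same route as the paper: identify $r^{c}(w_{\nu})$ with the smallest positive zero of $(zw_{\nu}'(z))'$ via Theorem \ref{T4}, use Theorem \ref{thzeros} and closure of the Laguerre--P\'olya class under differentiation to get a Hadamard product for that derivative, and then apply the Euler--Rayleigh inequalities for $k=1,2$, with the power sums extracted from the Taylor coefficients (your Newton-identity bookkeeping is the same computation the paper performs through the logarithmic derivative $\widetilde{W}_{\nu}'/\widetilde{W}_{\nu}$, and your values of $\theta_{1},\theta_{2},\theta_{3}$ agree with the paper's $\iota_{1},\iota_{2},\iota_{3}$). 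The only cosmetic difference is that you obtain the crude two-sided bound from a geometric-series comparison with $s_{1}$, while the paper gets the upper crude bound directly from the first Rayleigh sum of the zeros of $w_{\nu}'$; both give the same estimates.
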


\subsection{Starlikeness of the normalized forms of $\mathbf{\Phi}_{\nu}$ and $\mathbf{\Pi}_{\nu}$}
The following set of results concerns the necessary and sufficient conditions on
starlikeness with respect to the origin for the six normalized functions of the cross-product and
product of Bessel and modified Bessel functions of the first kind. Fig. \ref{fig} contains the graph of the left-hand sides of the equations \eqref{starf}, \eqref{starg}, \eqref{starh}, \eqref{staru}, \eqref{starv} and \eqref{starw} when $\alpha=0.$ As we can see in the proof
of these results the key tools are the monotonicity properties of the zeros of the cross-product
and product with respect to the order. Motivated by the results from \cite{BAS}, it would be interesting
to see the counterparts of the following results for the convex case. For the functions $g_{\nu},$ $h_{\nu},$
$v_{\nu}$ and $w_{\nu}$ we were able to deduce the counterparts of the following theorems in the next subsection, however,
for the functions $f_{\nu}$ and $u_{\nu}$ the determination of the order of convexity remains an open problem, which can be of interest
for further research. We mention that the particular cases of part {\bf c} of the
following theorems when $\alpha=0$ have been deduced also in \cite{HAT}, however, the general case of the starlikeness of order $\alpha$ has not been
considered there. It would be of interest to see if the following results imply some close-to-convexity results on the derivatives
of the normalized functions $f_{\nu},$ $g_{\nu},$ $h_{\nu},$ $u_{\nu},$ $v_{\nu}$ and $w_{\nu},$ similarly as it was proved in \cite{HAT} for $h_{\nu}$ and $w_{\nu}.$

\begin{theorem}
\label{T5}
\begin{enumerate}
\item[\textbf{a)}] The function $f_{{\nu }}$ is starlike of order $\alpha $
in $\mathbb{D}$ if and only if ${\nu \geq \nu }_{\alpha }^{\star }(f_{{\nu }%
}) $, where ${\nu }_{\alpha }^{\star }(f_{{\nu }})$ is the unique root of the
equation%
\begin{equation}\label{starf}
2J_{\nu}(1)I_{\nu}(1)-(\alpha (2{\nu +1)+1)}\left(J_{\nu+1}(1)I_{\nu}(1)+J_{\nu}(1)I_{\nu+1}(1)\right)=0.
\end{equation}
In particular, $f_{\nu}$ is starlike in $\mathbb{D}$ if and only if ${\nu \geq \nu }_{0}^{\star }(f_{\nu })\simeq-0.44{\dots}$,
where ${\nu}_{0}^{\star }(f_{{\nu }})$ is the unique root of the equation \eqref{starf} when $\alpha=0.$

\item[\textbf{b)}] The function $g_{{\nu }}$ is starlike of order $\alpha $
in $\mathbb{D}$ if and only if ${\nu \geq \nu }_{\alpha }^{\star }(g_{{\nu }%
}) $, where ${\nu }_{\alpha }^{\star }(g_{{\nu }})$ is the unique root of the
equation%
\begin{equation}\label{starg}
2J_{\nu}(1)I_{\nu}(1)-(\alpha +2\nu +1)\left(J_{\nu+1}(1)I_{\nu}(1)+J_{\nu}(1)I_{\nu+1}(1)\right)=0.
\end{equation}
In particular, $g_{\nu}$ is starlike in $\mathbb{D}$ if and only if ${\nu \geq \nu }_{0}^{\star }(g_{\nu })\simeq-0.87{\dots}$,
where ${\nu}_{0}^{\star }(g_{{\nu }})$ is the unique root of the equation \eqref{starg} when $\alpha=0.$

\item[\textbf{c)}] The function $h_{{\nu }}$ is starlike of order $\alpha $
in $\mathbb{D}$ if and only if ${\nu \geq \nu }_{\alpha }^{\star }(h_{{\nu }%
}) $, where ${\nu }_{\alpha }^{\star }(h_{{\nu }})$ is the unique root of the
equation%
\begin{equation}\label{starh}
J_{\nu}(1)I_{\nu}(1)-(2\alpha +\nu -1)\left(J_{\nu+1}(1)I_{\nu}(1)+J_{\nu}(1)I_{\nu+1}(1)\right)=0.
\end{equation}
In particular, $h_{\nu}$ is starlike in $\mathbb{D}$ if and only if ${\nu \geq \nu }_{0}^{\star }(h_{\nu })\simeq-0.94{\dots}$,
where ${\nu}_{0}^{\star }(h_{{\nu }})$ is the unique root of the equation \eqref{starh} when $\alpha=0.$
\end{enumerate}
\end{theorem}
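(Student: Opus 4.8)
The plan is to read the three equivalences off the radii of starlikeness already computed in Theorem~\ref{T1}, to translate the condition ``the radius equals $1$'' into \eqref{starf}, \eqref{starg} and \eqref{starh} by means of a single differential identity for the cross-product, and finally to locate the threshold value of $\nu$ by a monotonicity argument in the order. Observe first that a function $F\in\mathcal{A}$ is starlike of order $\alpha$ in $\mathbb{D}=\mathbb{D}_{1}$ if and only if $r_{\alpha}^{\star}(F)\geq1$: every point of $\mathbb{D}$ lies in some $\mathbb{D}_{r}$ with $r<1$, while if $r_{\alpha}^{\star}(F)<1$ then $\real\bigl(zF'(z)/F(z)\bigr)\leq\alpha$ at some point of $\mathbb{D}$.

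The identity I would use is
\[
z\,\mathbf{\Phi}_{\nu}'(z)+\mathbf{\Phi}_{\nu}(z)=2z\,\mathbf{\Pi}_{\nu}(z),
\]
which follows from $\mathbf{\Phi}_{\nu}'=J_{\nu}I_{\nu}''-J_{\nu}''I_{\nu}$ after eliminating the second derivatives by means of the Bessel and modified Bessel differential equations (or directly from \eqref{fi} and \eqref{pi}); in particular $\mathbf{\Phi}_{\nu}'(1)=2J_{\nu}(1)I_{\nu}(1)-\mathbf{\Phi}_{\nu}(1)$, while $\mathbf{\Phi}_{\nu}(1)=J_{\nu+1}(1)I_{\nu}(1)+J_{\nu}(1)I_{\nu+1}(1)$ and $\mathbf{\Pi}_{\nu}(1)=J_{\nu}(1)I_{\nu}(1)$. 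For $\nu>-1$, Theorem~\ref{T1}\,\textbf{b)} gives $r_{\alpha}^{\star}(g_{\nu})=y_{\nu,\alpha,1}$, the smallest positive root of $r\mathbf{\Phi}_{\nu}'(r)-(\alpha+2\nu)\mathbf{\Phi}_{\nu}(r)=0$; setting $r=1$ and substituting $\mathbf{\Phi}_{\nu}'(1)$ turns the equation $y_{\nu,\alpha,1}=1$ into exactly \eqref{starg}. The same substitution in the defining equations of $x_{\nu,\alpha,1}$ (for $\nu>-\tfrac{1}{2}$) and of $z_{\nu,\alpha,1}$ from parts \textbf{a)} and \textbf{c)} of Theorem~\ref{T1} — after dividing through by $2$ in part \textbf{c)} — produces \eqref{starf} and \eqref{starh}. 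Hence everything reduces to proving that $\{\,\nu:r_{\alpha}^{\star}(g_{\nu})\geq1\,\}$ is a half-line whose left endpoint is the unique root of \eqref{starg}, and similarly for $f_{\nu}$ and $h_{\nu}$.

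For this I would exploit the monotonicity in $\nu$ of the relevant first positive zeros, which is the tool emphasised throughout the paper. Using the identity to rewrite $r\mathbf{\Phi}_{\nu}'(r)-(\alpha+2\nu)\mathbf{\Phi}_{\nu}(r)=2r\mathbf{\Pi}_{\nu}(r)-(\alpha+2\nu+1)\mathbf{\Phi}_{\nu}(r)$ and expanding via \eqref{fi}, \eqref{pi},
\[
2r\,\mathbf{\Pi}_{\nu}(r)-(\alpha+2\nu+1)\,\mathbf{\Phi}_{\nu}(r)=2\sum_{n\geq0}\frac{(-1)^{n}\,(4n+1-\alpha)\,(r/2)^{2\nu+4n+1}}{n!\,\Gamma(\nu+n+1)\,\Gamma(\nu+2n+2)},
\]
whose positive-power coefficients are positive for $\nu>-1$ and $0\leq\alpha<1$; for $f_{\nu}$ and $h_{\nu}$ the same computation yields the analogous series with $4n+1-\alpha$ replaced by $4n+(2\nu+1)(1-\alpha)$, respectively $4(n+1-\alpha)$, so for $f_{\nu}$ one needs $\nu>-\tfrac{1}{2}$, which is exactly the range occurring in Theorem~\ref{T1}\,\textbf{a)}. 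By Theorem~\ref{T1}\,\textbf{b)} the radius $r_{\alpha}^{\star}(g_{\nu})=y_{\nu,\alpha,1}$ is the first positive zero of this function, and the crucial point is that $\nu\mapsto y_{\nu,\alpha,1}$ is strictly increasing — the monotonicity in the order of the zeros of a cross-product of Bessel functions, proved along the lines of \cite{cross}, \cite{Lorch}, \cite{HAT} (and of the proof of Theorem~\ref{T1}). Since in addition $y_{\nu,\alpha,1}<\gamma_{\nu,1}\to0$ as $\nu\downarrow-1$ while $y_{\nu,\alpha,1}\to\infty$ as $\nu\to\infty$, continuity and strict monotonicity yield a unique $\nu_{\alpha}^{\star}(g_{\nu})$ with $y_{\nu_{\alpha}^{\star}(g_{\nu}),\alpha,1}=1$, so that $r_{\alpha}^{\star}(g_{\nu})\geq1$ if and only if $\nu\geq\nu_{\alpha}^{\star}(g_{\nu})$, and the root of \eqref{starg} is unique. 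The argument for $h_{\nu}$ is the same; for $f_{\nu}$ one argues identically on $\nu>-\tfrac{1}{2}$, checks that $f_{\nu}$ is not starlike of order $\alpha$ for $\nu\in(-1,-\tfrac{1}{2})$ (there the critical value $x_{\nu,\alpha}$ stays below $1$), and that the root of \eqref{starf} lies in $(-\tfrac{1}{2},\infty)$ — the latter because the left-hand side of \eqref{starf} is negative at $\nu=-\tfrac{1}{2}$ and decreasing in $\alpha$ for $\nu>-\tfrac{1}{2}$. Evaluating \eqref{starf}, \eqref{starg}, \eqref{starh} at $\alpha=0$ and solving numerically gives $\nu_{0}^{\star}(f_{\nu})\simeq-0.44$, $\nu_{0}^{\star}(g_{\nu})\simeq-0.87$ and $\nu_{0}^{\star}(h_{\nu})\simeq-0.94$.

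The main obstacle is the monotonicity step: one must establish rigorously that the first positive zero of the auxiliary functions above is strictly increasing in $\nu$ — equivalently, that the left-hand sides of \eqref{starf}, \eqref{starg}, \eqref{starh}, viewed as functions of $\nu$, change sign exactly once. For the bare cross-product $\mathbf{\Phi}_{\nu}$ this monotonicity is classical, but here it is needed for the perturbed cross-products $2r\mathbf{\Pi}_{\nu}(r)-c(\nu)\mathbf{\Phi}_{\nu}(r)$ with an order-dependent coefficient $c(\nu)$, so one has to invoke the full Laguerre--P\'olya machinery together with its closure under differentiation. A minor technical point, already available from the proof of Theorem~\ref{T1}, is that the minimum of $\real\bigl(zg_{\nu}'(z)/g_{\nu}(z)\bigr)$ over each circle $|z|=r$ is attained at $z=r$, so that the radius of starlikeness is genuinely governed by the real positive zero $y_{\nu,\alpha,1}$.
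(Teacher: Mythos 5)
Your reduction of the three equations to the conditions from Theorem \ref{T1} at $r=1$ is correct, and the identity $z\mathbf{\Phi}_{\nu}'(z)+\mathbf{\Phi}_{\nu}(z)=2z\mathbf{\Pi}_{\nu}(z)$ together with $\mathbf{\Pi}_{\nu}(1)=J_{\nu}(1)I_{\nu}(1)$ and $\mathbf{\Phi}_{\nu}(1)=J_{\nu+1}(1)I_{\nu}(1)+J_{\nu}(1)I_{\nu+1}(1)$ is exactly how the paper converts its conditions into \eqref{starf}--\eqref{starh}. But the heart of the theorem --- that the set of $\nu$ for which the radius of starlikeness is at least $1$ is a half-line, and that each equation has a unique root --- rests in your write-up on the claim that $\nu\mapsto y_{\nu,\alpha,1}$ (the first positive zero of the perturbed cross-product $2r\mathbf{\Pi}_{\nu}(r)-(\alpha+2\nu+1)\mathbf{\Phi}_{\nu}(r)$) is strictly increasing. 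You flag this yourself as ``the main obstacle'' and propose to overcome it by ``the full Laguerre--P\'olya machinery together with its closure under differentiation,'' but closure under differentiation says nothing about how zeros move with the order $\nu$; the known monotonicity results (Lorch, \cite{cross}) are for the zeros $\gamma_{\nu,n}$ of $\mathbf{\Phi}_{\nu}$ itself, not for zeros of combinations with $\nu$-dependent coefficients, and the paper explicitly lists Watson-type formulas for $d\gamma_{\nu,n}'/d\nu$ and $dt_{\nu,n}/d\nu$ as an \emph{open problem}. So as written this step is a genuine gap, not a routine citation.

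The paper avoids this obstacle entirely. Instead of tracking the perturbed zero, it shows (via the minimum principle plus the fact that $r\mapsto rg_{\nu}'(r)/g_{\nu}(r)$ is decreasing on $(0,1)\subset(0,\gamma_{\nu,1})$ once $\gamma_{\nu,1}>1$) that starlikeness of order $\alpha$ in $\mathbb{D}$ is equivalent to the single scalar inequality $g_{\nu}'(1)/g_{\nu}(1)\geq\alpha$, and then uses the explicit Hadamard product to write
\begin{equation*}
\frac{g_{\nu}'(1)}{g_{\nu}(1)}=1-\sum_{n\geq1}\frac{4}{\gamma_{\nu,n}^{4}-1},
\end{equation*}
which is manifestly increasing in $\nu$ because each $\gamma_{\nu,n}$ is increasing in $\nu$ (the classical monotonicity, \cite[Lemma 4]{HAT}). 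That single observation delivers both the half-line structure and the uniqueness of the root, using only the monotonicity you already have. If you reroute your argument through $g_{\nu}'(1)/g_{\nu}(1)$ (and its analogues for $f_{\nu}$, $h_{\nu}$) rather than through $y_{\nu,\alpha,1}$, the gap closes; you should also supply an actual argument for your side remark that $x_{\nu,\alpha}<1$ when $\nu\in\left(-1,-\frac{1}{2}\right)$, which you currently assert without proof.
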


\begin{theorem}
\label{T6}
\begin{enumerate}
\item[\textbf{a)}] The function $u_{{\nu }}$ is starlike of order $\alpha $
in $\mathbb{D}$ if and only if ${\nu \geq \nu }_{\alpha }^{\star }(u_{{\nu }%
}) $, where ${\nu }_{\alpha }^{\star }(u_{{\nu }})$ is the unique root of the
equation%
\begin{equation}\label{staru}
J_{{\nu }}(1)I_{{\nu +1}}(1)-J_{{\nu +1}}(1)I_{{\nu }}(1)+2{\nu (1-\alpha )}%
J_{{\nu }}(1)I_{{\nu }}(1)=0.
\end{equation}
In particular, $u_{\nu}$ is starlike in $\mathbb{D}$ if and only if ${\nu \geq \nu }_{0}^{\star }(u_{\nu })\simeq0.05{\dots}$,
where ${\nu}_{0}^{\star }(u_{{\nu }})$ is the unique root of the equation \eqref{staru} when $\alpha=0.$

\item[\textbf{b)}] The function $v_{{\nu }}$ is starlike of order $\alpha $
in $\mathbb{D}$ if and only if ${\nu \geq \nu }_{\alpha }^{\star }(v_{{\nu }%
}) $, where ${\nu }_{\alpha }^{\star }(v_{{\nu }})$ is the unique root of the
equation%
\begin{equation}\label{starv}
J_{{\nu }}(1)I_{{\nu +1}}(1)-J_{{\nu +1}}(1)I_{{\nu }}(1)+{(1-\alpha )}J_{{%
\nu }}(1)I_{{\nu }}(1)=0.
\end{equation}
In particular, $v_{\nu}$ is starlike in $\mathbb{D}$ if and only if ${\nu \geq \nu }_{0}^{\star }(v_{\nu })\simeq-0.53{\dots}$,
where ${\nu}_{0}^{\star }(v_{{\nu }})$ is the unique root of the equation \eqref{starv} when $\alpha=0.$

\item[\textbf{c)}] The function $w_{{\nu }}$ is starlike of order $\alpha $
in $\mathbb{D}$ if and only if ${\nu \geq \nu }_{\alpha }^{\star }(w_{{\nu }%
}) $, where ${\nu }_{\alpha }^{\star }(w_{{\nu }})$ is the unique root of the
equation%
\begin{equation}\label{starw}
J_{{\nu }}(1)I_{{\nu +1}}(1)-J_{{\nu +1}}(1)I_{{\nu }}(1)+4{(1-\alpha )}J_{{%
\nu }}(1)I_{{\nu }}(1)=0.
\end{equation}
In particular, $w_{\nu}$ is starlike in $\mathbb{D}$ if and only if ${\nu \geq \nu }_{0}^{\star }(w_{\nu })\simeq-0.69{\dots}$,
where ${\nu}_{0}^{\star }(w_{{\nu }})$ is the unique root of the equation \eqref{starw} when $\alpha=0.$
\end{enumerate}
\end{theorem}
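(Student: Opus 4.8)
The statement is equivalent to asking when the radius of starlikeness of order $\alpha$ of each normalized function is at least $1$, so the plan is first to obtain, for each of $u_{\nu},v_{\nu},w_{\nu}$, the exact value of $\real\bigl(zp'(z)/p(z)\bigr)$ on $\mathbb{D}$ in terms of the zeros of $\mathbf{\Pi}_{\nu}$, and then to run a monotonicity argument in $\nu$. By Theorem~\ref{thzeros} and the fact that $\lambda_{\nu}^{\mathbf{\Pi}}$ has genus $0$, $\mathbf{\Pi}_{\nu}$ admits the Hadamard factorization
$$\mathbf{\Pi}_{\nu}(z)=\frac{(z/2)^{2\nu}}{\Gamma^{2}(\nu+1)}\prod_{n\geq1}\Bigl(1-\frac{z^{4}}{j_{\nu,n}^{4}}\Bigr),$$
where $0<j_{\nu,1}<j_{\nu,2}<\cdots$ are the positive zeros of $\mathbf{\Pi}_{\nu}$; since $I_{\nu}$ has no positive zeros these coincide with the positive zeros of $J_{\nu}$. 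Substituting into the three normalizations gives $v_{\nu}(z)=z\prod_{n\geq1}(1-z^{4}/j_{\nu,n}^{4})$, $w_{\nu}(z)=z\prod_{n\geq1}(1-z/j_{\nu,n}^{4})$ and $u_{\nu}(z)=z\prod_{n\geq1}(1-z^{4}/j_{\nu,n}^{4})^{1/(2\nu)}$, hence
$$\frac{zv_{\nu}'(z)}{v_{\nu}(z)}=1-\sum_{n\geq1}\frac{4z^{4}}{j_{\nu,n}^{4}-z^{4}},\qquad\frac{zw_{\nu}'(z)}{w_{\nu}(z)}=1-\sum_{n\geq1}\frac{z}{j_{\nu,n}^{4}-z},\qquad\frac{zu_{\nu}'(z)}{u_{\nu}(z)}=1-\frac{2}{\nu}\sum_{n\geq1}\frac{z^{4}}{j_{\nu,n}^{4}-z^{4}}.$$

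The second step uses the elementary bound $\real\bigl(w/(b-w)\bigr)\le|w|/(b-|w|)$, valid for $b>0$ and $|w|<b$ with equality precisely when $w>0$. Applied termwise (with $w=z^{4}$ for $v_{\nu}$ and for $u_{\nu}$ when $\nu>0$, where $-2/\nu<0$, and with $w=z$ for $w_{\nu}$) this shows that, provided $j_{\nu,1}>1$, on each circle $|z|=r<1$ the real part of $zp'(z)/p(z)$ attains its minimum on the positive real axis, where it equals a strictly decreasing function of $r$. Therefore $\inf_{z\in\mathbb{D}}\real\bigl(zp'(z)/p(z)\bigr)$ equals the (non-attained) boundary value $p'(1)/p(1)$, so $p\in\{v_{\nu},w_{\nu}\}$ (resp. $u_{\nu}$ with $\nu>0$) is starlike of order $\alpha$ in $\mathbb{D}$ if and only if $j_{\nu,1}>1$ and $p'(1)/p(1)\ge\alpha$. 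Since $\mathbf{\Pi}_{\nu}=J_{\nu}I_{\nu}$ and $J_{\nu}'(1)=\nu J_{\nu}(1)-J_{\nu+1}(1)$, $I_{\nu}'(1)=\nu I_{\nu}(1)+I_{\nu+1}(1)$, one has $\mathbf{\Pi}_{\nu}'(1)=2\nu J_{\nu}(1)I_{\nu}(1)-J_{\nu+1}(1)I_{\nu}(1)+J_{\nu}(1)I_{\nu+1}(1)$; plugging this into $v_{\nu}'(1)/v_{\nu}(1)=1-2\nu+\mathbf{\Pi}_{\nu}'(1)/\mathbf{\Pi}_{\nu}(1)$, $u_{\nu}'(1)/u_{\nu}(1)=\tfrac{1}{2\nu}\mathbf{\Pi}_{\nu}'(1)/\mathbf{\Pi}_{\nu}(1)$ and $w_{\nu}'(1)/w_{\nu}(1)=1-\tfrac{\nu}{2}+\tfrac14\mathbf{\Pi}_{\nu}'(1)/\mathbf{\Pi}_{\nu}(1)$ turns the inequalities $p'(1)/p(1)\ge\alpha$ exactly into the nonnegativity of the left-hand sides of \eqref{staru}, \eqref{starv}, \eqref{starw}.

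The last step promotes this pointwise condition to the monotone threshold in $\nu$. For each fixed $n$, $j_{\nu,n}$ is continuous and strictly increasing in $\nu$ on $(-1,\infty)$ — this is the classical monotonicity of the zeros of $\mathbf{\Pi}_{\nu}$ used throughout the paper — so each of $\nu\mapsto v_{\nu}'(1)/v_{\nu}(1)=1-\sum_{n}4/(j_{\nu,n}^{4}-1)$, and its analogues for $u_{\nu}$ (on $\nu>0$) and $w_{\nu}$, is continuous and strictly increasing on the set $\{\nu:j_{\nu,1}>1\}$; it tends to $-\infty$ as $j_{\nu,1}\downarrow1$ and to $1$ as $\nu\to\infty$ (the latter because the Rayleigh-type sums $\sum_{n}j_{\nu,n}^{-4}\to0$). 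As $\alpha<1$ there is a unique value, namely the root $\nu_{\alpha}^{\star}$ of the corresponding equation \eqref{staru}/\eqref{starv}/\eqref{starw}, at which it equals $\alpha$; for $\nu$ below it (in particular where the first zero is $\le1$ and the function has a zero in $\overline{\mathbb{D}}$) the function is not starlike of order $\alpha$, and for $\nu\ge\nu_{\alpha}^{\star}$ it is. For part~\textbf{a)} one additionally treats $-1<\nu<0$, where (as reflected in Theorem~\ref{T2}\,\textbf{a)}) the coefficient $-2/\nu$ is positive and the extremal direction on $|z|=r$ is $z=\sqrt{i}\,r$ rather than the positive real axis; this case is dealt with separately and is consistent with $\nu_{\alpha}^{\star}(u_{\nu})$ being positive. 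Setting $\alpha=0$ and solving the resulting transcendental equations numerically gives the quoted values $0.05\dots$, $-0.53\dots$, $-0.69\dots$. The main obstacle is the second step — rigorously identifying $\inf_{\mathbb{D}}\real\bigl(zp'(z)/p(z)\bigr)$ with the boundary value at $z=1$, which requires simultaneously pinning down the minimising direction on each circle and the monotonicity of that minimum in the radius — together with securing the correct limiting behaviour of the resulting increasing function of $\nu$ at both ends of its domain; once those are in hand, the passage to \eqref{staru}–\eqref{starw} is only bookkeeping with the Bessel recurrences.
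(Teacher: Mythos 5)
Your proposal is correct and follows essentially the same route as the paper: the Hadamard factorization of $\mathbf{\Pi}_{\nu}$ to express $zp'(z)/p(z)$ as a zero sum, the bound $\real\left(\frac{w}{b-w}\right)\le\frac{|w|}{b-|w|}$ to locate the extremal direction on $|z|=r$, monotonicity of $r\mapsto rp'(r)/p(r)$ to reduce to the boundary value at $z=1$, and the monotonicity of $\nu\mapsto j_{\nu,n}$ to obtain the unique threshold, finishing with the Bessel recurrences. The only stylistic difference is that you flag the extra care needed for $u_{\nu}$ on $(-1,0)$ and the limiting behaviour as $j_{\nu,1}\downarrow1$ more explicitly than the paper does.
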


\begin{center}
\begin{figure}[!ht]
   \centering
       \includegraphics[width=16cm]{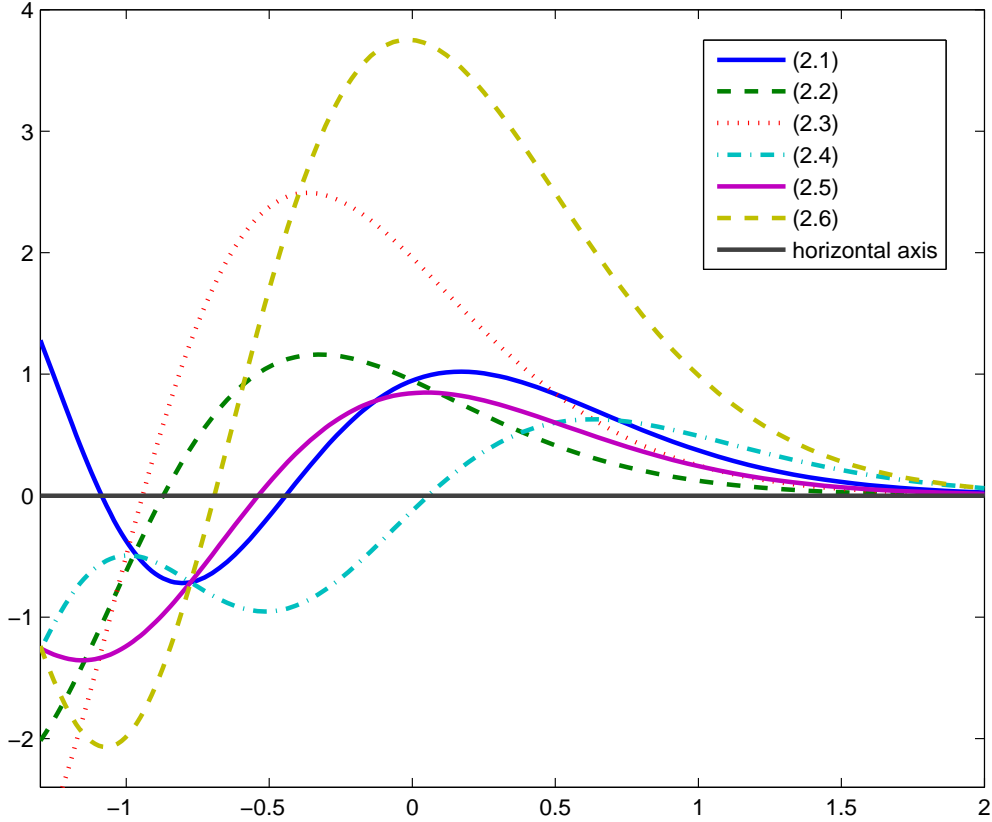}
       \caption{The graph of the left-hand sides of the equations \eqref{starf}, \eqref{starg}, \eqref{starh}, \eqref{staru},
\eqref{starv} and \eqref{starw} for $\alpha=0.$}
       \label{fig}
\end{figure}
\end{center}

\subsection{Convexity of the normalized forms of $\mathbf{\Phi}_{\nu}$ and $\mathbf{\Pi}_{\nu}$}
Recently, the Watson formulas for the zeros of $J_{\nu}$ and $J_{\nu}'$ were applied by Baricz and Sz\'asz \cite{BAS}
in order to deduce necessary and sufficient conditions for the parameter $\nu$ such that some normalized
forms of $J_{\nu}$ map the open unit disk into a convex domain. As we can see below it is possible to deduce the order of convexity of the functions
$g_{\nu},$ $h_{\nu},$ $v_{\nu}$ and $w_{\nu}$ without appealing to Watson-type formulas for the expression like $d\gamma_{\nu,n}/d\nu,$ $d\gamma_{\nu,n}'/d\nu$ and $d t_{\nu,n}/d\nu$ (here $\gamma_{\nu,n},$ $\gamma _{\nu ,n}^{\prime }$ and $t_{\nu ,n}$ are the $n$th positive
zeros of the functions $\mathbf{\Phi }_{\nu},$ $\mathbf{\Phi }_{{\nu }}^{\prime }$ and $\mathbf{\Pi}_{{\nu }}^{\prime },$ respectively), however the determination of the order of convexity for the functions $f_{\nu}$ and $u_{\nu}$ remains an open problem, and we believe that one of the best ways to solve this problem is to deduce for the above mentioned derivatives with respect to the order some Watson-type results, like what we have for $d j_{\nu,n}/d\nu$ and $d j_{\nu,n}'/d\nu,$ see formulas 10.21.17 and 10.21.18 in \cite{OL} (here $j_{\nu,n}'$ denotes the $n$th positive zero of $J_{\nu}'$).

\begin{theorem}\label{thconvexity1}
\begin{enumerate}
\item[\bf a)] The function $g_\nu$ is convex of order $\alpha$ in the unit disk $\mathbb{D}$ if and only if $\nu\geq\nu_{\alpha}^{c}(g_{\nu}),$ where
$\nu_{\alpha}^{c}(g_{\nu})$ is the unique  positive  root  of the equation $g_\nu''(1)+(1-\alpha)g_\nu'(1)=0.$

\item[\bf b)] The function $h_{\nu}$  is convex of order $\alpha$ in $\mathbb{D}$ if and only if  $\nu\geq\nu_{\alpha}^{c}(h_{\nu}),$ where
  $\nu_{\alpha}^{c}(h_{\nu})$ is the unique root of the equation $h_\nu''(1)+(1-\alpha)h_\nu'(1)=0.$
\end{enumerate}
\end{theorem}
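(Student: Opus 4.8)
The plan is to follow the strategy of Baricz and Sz\'asz \cite{BAS}: first reduce the assertion ``$g_{\nu}$ (respectively $h_{\nu}$) is convex of order $\alpha$ in $\mathbb{D}$'' to an inequality at the single point $z=1$, and then, using the monotonicity of the zeros with respect to $\nu$, turn that inequality into the condition $\nu\ge\nu_{\alpha}^{c}$.

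For the reduction I would use Theorem~\ref{thzeros} together with the Hadamard factorization to write, with $0<\gamma_{\nu,1}<\gamma_{\nu,2}<\cdots$ the positive zeros of $\mathbf{\Phi}_{\nu}$,
\begin{equation*}
g_{\nu}(z)=z\prod_{n\ge1}\left(1-\frac{z^{4}}{\gamma_{\nu,n}^{4}}\right),\qquad h_{\nu}(z)=z\prod_{n\ge1}\left(1-\frac{z}{\gamma_{\nu,n}^{4}}\right).
\end{equation*}
The structural fact needed is that the derivative of each of these has only real zeros. For $h_{\nu}$ this is immediate, since $h_{\nu}$ is (up to a reflection) of Laguerre--P\'olya type and that class is closed under differentiation; for $g_{\nu}$ one argues that $\lambda_{\nu}^{\mathbf{\Phi}}$ lies in the Laguerre--P\'olya class with only negative zeros and that this subclass is preserved by the map $\varphi\mapsto\varphi+4w\varphi'$ (which amounts to differentiating $w^{1/4}\varphi(w)$), whence $g_{\nu}'$ inherits the property. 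Combining this with the interlacing of the zeros of the derivative with $\{\gamma_{\nu,n}\}$ (the preliminary material of Section~3) yields, as already in the proof of Theorem~\ref{T3}, the partial fraction expansions
\begin{equation*}
1+\frac{zg_{\nu}''(z)}{g_{\nu}'(z)}=1-\sum_{n\ge1}\frac{4z^{4}}{\zeta_{\nu,n}^{4}-z^{4}},\qquad 1+\frac{zh_{\nu}''(z)}{h_{\nu}'(z)}=1-\sum_{n\ge1}\frac{z}{\eta_{\nu,n}-z},
\end{equation*}
valid for $|z|$ below the first positive zero of the derivative, where $\zeta_{\nu,1}$ and $\eta_{\nu,1}$ denote the first positive zeros of $g_{\nu}'$ and $h_{\nu}'$. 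Using the elementary bound $\real\frac{u}{a-u}\le\frac{|u|}{a-|u|}$ for $a>|u|$, with equality at $u=|u|$, one gets that for $|z|=r$ the real part of $1+zf''(z)/f'(z)$, where $f\in\{g_{\nu},h_{\nu}\}$, is minimised at $z=r$ and that $r\mapsto1+rf''(r)/f'(r)$ decreases strictly from $1$ to $-\infty$ on the interval up to that first zero. Hence $f$ is convex of order $\alpha$ in $\mathbb{D}$ if and only if $r_{\alpha}^{c}(f)\ge1$; and since by Theorem~\ref{T3} the radius $r_{\alpha}^{c}(f)$ is exactly the point where $1+rf''(r)/f'(r)$ reaches the value $\alpha$, this is equivalent to $1+f''(1)/f'(1)\ge\alpha$, i.e.\ (noting that $f'(1)>0$ in this regime, because $r_{\alpha}^{c}(f)\ge1$ forces the first zero of $f'$ to exceed $1$) to $f''(1)+(1-\alpha)f'(1)\ge0$. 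Putting $r=1$ in the defining equation of part b) of Theorem~\ref{T3} and using $g_{\nu}(z)=2^{2\nu}z^{-2\nu}\Gamma(\nu+1)\Gamma(\nu+2)\mathbf{\Phi}_{\nu}(z)$ identifies that equation at $r=1$ with $g_{\nu}''(1)+(1-\alpha)g_{\nu}'(1)=0$, and likewise for $h_{\nu}$ from part c) of Theorem~\ref{T3}.

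The monotonicity step is then short. The zeros $\gamma_{\nu,n}$, $\zeta_{\nu,n}$, $\eta_{\nu,n}$ are all strictly increasing in $\nu$ (the key tool of the paper), so the expansions above show that $\nu\mapsto r_{\alpha}^{c}(g_{\nu})$ and $\nu\mapsto r_{\alpha}^{c}(h_{\nu})$ are continuous and strictly increasing; by the Euler--Rayleigh bounds of Theorems~\ref{THbound4}--\ref{THbound5} they tend to $0$ as $\nu\to-1^{+}$, and they tend to $+\infty$ as $\nu\to+\infty$. Consequently each equals $1$ for exactly one value of $\nu$; denoting it by $\nu_{\alpha}^{c}(g_{\nu})$ (respectively $\nu_{\alpha}^{c}(h_{\nu})$), the equality $r_{\alpha}^{c}(g_{\nu})=1$ is, by the identification above, exactly $g_{\nu}''(1)+(1-\alpha)g_{\nu}'(1)=0$, so $\nu_{\alpha}^{c}(g_{\nu})$ is a root of that equation; it is the only one, since for $\nu$ above the threshold $r_{\alpha}^{c}(g_{\nu})>1$ gives $g_{\nu}''(1)+(1-\alpha)g_{\nu}'(1)>0$ while for $\nu$ below it the quantity is negative. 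Because $r_{\alpha}^{c}(g_{\nu})$ is increasing, $r_{\alpha}^{c}(g_{\nu})\ge1$ holds precisely for $\nu\ge\nu_{\alpha}^{c}(g_{\nu})$, which is the claimed equivalence; the same argument applies to $h_{\nu}$.

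The main obstacle, I expect, is the passage from local to global in the reduction step, that is, showing that $\real(1+zf''(z)/f'(z))$ on all of $\mathbb{D}$ is controlled by its value at $z=1$. For $h_{\nu}$ this is clean, since $h_{\nu}$ genuinely lies (after reflection) in the Laguerre--P\'olya class; for $g_{\nu}$ it is more delicate, because $g_{\nu}$ itself has the purely imaginary zeros $\pm i\gamma_{\nu,n}$ and is \emph{not} of Laguerre--P\'olya type, so one must argue separately that $g_{\nu}'$ has only real zeros and that they interlace $\{\gamma_{\nu,n}\}$. A secondary technical point is the degenerate range of $\nu$ near $-1$ in which the first positive zero of the derivative is $\le1$: there $z=1$ no longer lies in the interval of validity of the partial fraction expansion, and one must check directly — again from the monotonicity of the zeros in $\nu$ — that $g_{\nu}''(1)+(1-\alpha)g_{\nu}'(1)$ does not vanish there, so that the root remains unique. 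Much of the reduction is, however, already contained in the proof of Theorem~\ref{T3}, so the genuinely new content is the bookkeeping in $\nu$ and the identification of the threshold equation.
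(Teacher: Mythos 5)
Your reduction to the single boundary point $z=1$ is essentially the paper's: one checks (via the cross-product analogue $\sum_{n\ge1}(\gamma_{\nu,n}^{4}-1)^{-1}<1/29$ of the bound \eqref{c2b3mm3d4f5g}) that the first positive zero of $g_{\nu}'$ exceeds $1$, and then the inequalities already established in the proof of Theorem \ref{T3}, together with the strict decrease of $r\mapsto 1+rg_{\nu}''(r)/g_{\nu}'(r)$, give $\real\left(1+zg_{\nu}''(z)/g_{\nu}'(z)\right)\ge 1+g_{\nu}''(1)/g_{\nu}'(1)$ on $\mathbb{D}$. The gap is in your monotonicity step. You assert that the zeros $\zeta_{\nu,n}$ of $g_{\nu}'$ and $\xi_{\nu,n}$ of $h_{\nu}'$ are "strictly increasing in $\nu$ (the key tool of the paper)" and deduce from the partial fraction expansions that $\nu\mapsto r_{\alpha}^{c}(g_{\nu})$ is increasing. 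But the key tool of the paper is the monotonicity in $\nu$ of the zeros $\gamma_{\nu,n}$ of $\mathbf{\Phi}_{\nu}$ and $j_{\nu,n}$ of $J_{\nu}$, \emph{not} of the zeros of the derivatives of the normalized functions. No such monotonicity for $\zeta_{\nu,n}$ or $\xi_{\nu,n}$ is proved in the paper or cited from elsewhere; on the contrary, the authors state explicitly that their convexity argument is designed to avoid Watson-type information on zeros of derivatives and list the relevant derivative formulas as an open problem. Without this input your "unique threshold in $\nu$" argument has no support.

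The paper circumvents the difficulty by performing a second logarithmic differentiation of the Hadamard product of $g_{\nu}$ (respectively $h_{\nu}$) \emph{itself}, which yields
\begin{equation*}
1+\frac{g_{\nu}''(1)}{g_{\nu}'(1)}=1-\sum_{n\ge1}\frac{4}{\gamma_{\nu,n}^{4}-1}-\frac{\sum\limits_{n\ge1}\frac{16\,\gamma_{\nu,n}^{4}}{(\gamma_{\nu,n}^{4}-1)^{2}}}{1-\sum\limits_{n\ge1}\frac{4}{\gamma_{\nu,n}^{4}-1}},
\end{equation*}
an expression involving only the $\gamma_{\nu,n}$, whose increase with $\nu$ \emph{is} known. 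Even so, the monotonicity of this quantity in $\nu$ is not immediate because of the quotient term: one needs the analogue of Lemma \ref{lemzerosBessel}, in which the numerator and denominator are treated jointly and the a priori bound on $\sum(\gamma_{\nu,n}^{4}-1)^{-1}$ keeps the denominator positive and controls the sign of the derivative, before concluding that the function of $\nu$ increases from a negative value to $1$ and hence crosses $\alpha$ exactly once. That computation is the genuinely new content you would have to supply; the remainder of your outline (reality of the zeros of $g_{\nu}'$ via the Laguerre--P\'olya machinery, the identification of the threshold equation with $g_{\nu}''(1)+(1-\alpha)g_{\nu}'(1)=0$, and the caveat about the range of $\nu$ where the first zero of the derivative is at most $1$) is consistent with the paper.
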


\begin{theorem}\label{thconvexity2}
\begin{enumerate}
\item[\bf a)] The function $v_\nu$ is convex of order $\alpha$ in the unit disk $\mathbb{D}$ if and only if $\nu\geq\nu_{\alpha}^{c}(v_{\nu}),$ where
$\nu_{\alpha}^{c}(v_{\nu})$ is the unique  positive  root  of the equation $v_\nu''(1)+(1-\alpha)v_\nu'(1)=0.$

\item[\bf b)] The function $w_{\nu}$  is convex of order $\alpha$ in $\mathbb{D}$ if and only if  $\nu\geq\nu_{\alpha}^{c}(w_{\nu}),$ where
  $\nu_{\alpha}^{c}(w_{\nu})$ is the unique root of the equation $w_\nu''(1)+(1-\alpha)w_\nu'(1)=0.$
\end{enumerate}
\end{theorem}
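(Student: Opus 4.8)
The plan is to follow, mutatis mutandis, the proof of Theorem~\ref{thconvexity1}, replacing the cross--product $\mathbf{\Phi}_{\nu}$ by the product $\mathbf{\Pi}_{\nu}$. Fix $\nu>-1$. The starting point is the Hadamard factorization of the derivatives: by the results of Section~3 (the relevant entire functions belong to the Laguerre--P\'{o}lya class, which is closed under differentiation), $v_\nu'$ and $w_\nu'$ are real entire functions with only real zeros, so that
\begin{equation*}
v_\nu'(z)=\prod_{n\ge1}\left(1-\frac{z^{4}}{\vartheta_{\nu,n}^{4}}\right),\qquad
w_\nu'(z)=\prod_{n\ge1}\left(1-\frac{z}{\omega_{\nu,n}}\right),
\end{equation*}
where $0<\vartheta_{\nu,1}<\vartheta_{\nu,2}<\cdots$ and $0<\omega_{\nu,1}<\omega_{\nu,2}<\cdots$ are the positive zeros of $v_\nu'$ and $w_\nu'$ (the numbers of Theorem~\ref{T4}, after the obvious identification). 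By the monotonicity with respect to the order of the zeros of $\mathbf{\Pi}_{\nu}$ and of the functions $z\mapsto z\mathbf{\Pi}_\nu'(z)-(2\nu-1)\mathbf{\Pi}_\nu(z)$ and $z\mapsto z\mathbf{\Pi}_\nu'(z)-(2\nu-4)\mathbf{\Pi}_\nu(z)$, established in Section~3 (cf.\ the analogue for $\mathbf{\Phi}_{\nu}$ in \cite{cross}), each $\vartheta_{\nu,n}$ and each $\omega_{\nu,n}$ is strictly increasing in $\nu$. Logarithmic differentiation then gives, on the disks $|z|<\vartheta_{\nu,1}$ and $|z|<\omega_{\nu,1}$,
\begin{equation*}
1+\frac{zv_\nu''(z)}{v_\nu'(z)}=1-\sum_{n\ge1}\frac{4z^{4}}{\vartheta_{\nu,n}^{4}-z^{4}},\qquad
1+\frac{zw_\nu''(z)}{w_\nu'(z)}=1-\sum_{n\ge1}\frac{z}{\omega_{\nu,n}-z}.
\end{equation*}

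Next I would reduce everything to the single point $z=1$. Using the elementary bound $\real\big(u/(a-u)\big)\le|u|/(a-|u|)$, valid for $a>0$ and $|u|<a$ with equality only when $u=|u|$ (apply it with $u=z^{4}$, $a=\vartheta_{\nu,n}^{4}$, and with $u=z$, $a=\omega_{\nu,n}$), one obtains for $|z|=r$ less than the first zero
\begin{equation*}
\real\!\left(1+\frac{zv_\nu''(z)}{v_\nu'(z)}\right)\ge 1-\sum_{n\ge1}\frac{4r^{4}}{\vartheta_{\nu,n}^{4}-r^{4}}=1+\frac{rv_\nu''(r)}{v_\nu'(r)},
\end{equation*}
and the analogous inequality for $w_\nu$; since each summand is increasing in $r$, the maps $r\mapsto 1+rv_\nu''(r)/v_\nu'(r)$ and $r\mapsto 1+rw_\nu''(r)/w_\nu'(r)$ are strictly decreasing. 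Hence, when $\vartheta_{\nu,1}>1$ (resp.\ $\omega_{\nu,1}>1$) we have $\real\big(1+zv_\nu''(z)/v_\nu'(z)\big)>1+v_\nu''(1)/v_\nu'(1)$ for every $z\in\mathbb{D}$, and letting $z\to1^{-}$ shows this bound is sharp; since $v_\nu'(1)>0$ in that case, $v_\nu$ is convex of order $\alpha$ in $\mathbb{D}$ if and only if $\vartheta_{\nu,1}>1$ and $v_\nu''(1)+(1-\alpha)v_\nu'(1)\ge0$, while if $\vartheta_{\nu,1}\le1$ the function $v_\nu'$ vanishes in $\overline{\mathbb{D}}$ and $v_\nu$ is not convex of any order. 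The same dichotomy holds for $w_\nu$ with $\omega_{\nu,1}$.

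Finally I would establish uniqueness and the exact threshold via monotonicity in $\nu$. On the set of $\nu>-1$ with $\vartheta_{\nu,1}>1$ --- which, $\vartheta_{\nu,1}$ being increasing, is an interval $(\nu^{\ast},\infty)$ --- the quantity $1+v_\nu''(1)/v_\nu'(1)=1-\sum_{n\ge1}4/(\vartheta_{\nu,n}^{4}-1)$ is continuous and strictly increasing (each term decreases because $\vartheta_{\nu,n}$ increases with $\nu$), tends to $-\infty$ as $\nu\downarrow\nu^{\ast}$, and tends to $1$ as $\nu\to\infty$ (the zeros escape to $+\infty$). Since $\alpha<1$, the equation $v_\nu''(1)+(1-\alpha)v_\nu'(1)=0$ has exactly one root $\nu_\alpha^{c}(v_\nu)$ in $(\nu^{\ast},\infty)$; for $\nu>\nu_\alpha^{c}(v_\nu)$ we get $\vartheta_{\nu,1}>1$ and $1+v_\nu''(1)/v_\nu'(1)>\alpha$, so $v_\nu$ is convex of order $\alpha$, whereas for $\nu<\nu_\alpha^{c}(v_\nu)$ either $\vartheta_{\nu,1}\le1$ or $1+v_\nu''(1)/v_\nu'(1)<\alpha$, so $v_\nu$ is not. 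This proves part~a); part~b) for $w_\nu$ follows verbatim with $\vartheta_{\nu,n}^{4}-1$ replaced by $\omega_{\nu,n}-1$.

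The main obstacle is Step~1, namely the two facts imported from Section~3: that $v_\nu'$ and $w_\nu'$ lie in the Laguerre--P\'{o}lya class (equivalently, that all the relevant zeros are real), and that these zeros increase with the order. For $v_\nu'$ one can argue directly: $v_\nu'(z)=P(z^{4})$ with $P(w)=g(w)+4wg'(w)$, where $g(w)=\prod_n\big(1-w/j_{\nu,n}^{4}\big)$ is of Laguerre--P\'{o}lya type, and the operator $g\mapsto g+4wg'$ is the multiplier sequence $\{4n+1\}_{n\ge0}$, whose generating function $(x+\tfrac14)e^{x}$ belongs to the class; for $w_\nu'$ it is even simpler, since $w_\nu(z)=z\prod_n\big(1-z/j_{\nu,n}^{4}\big)$ already lies in the class. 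Once this is in place, the only remaining care is at the endpoints: one must check that $\nu^{\ast}$ (where the first zero of $v_\nu'$, resp.\ $w_\nu'$, equals $1$) lies in $(-1,\infty)$ and is small enough that the root of the displayed equation falls to its right, and that $v_\nu'(1)>0$, $w_\nu'(1)>0$ there, so that the sign of $v_\nu''(1)+(1-\alpha)v_\nu'(1)$ (resp.\ $w_\nu''(1)+(1-\alpha)w_\nu'(1)$) genuinely encodes convexity of order $\alpha$; the positivity of the root in case a) follows by evaluating $1+v_\nu''(1)/v_\nu'(1)$ at $\nu=0$.
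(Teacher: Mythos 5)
Your overall architecture (reduce to $z=1$ via inequality \eqref{s7}, then locate the threshold by monotonicity in $\nu$) matches the paper's, but there is a genuine gap at the step where you claim that the zeros $\vartheta_{\nu,n}$ and $\omega_{\nu,n}$ of $v_\nu'$ and $w_\nu'$ are strictly increasing in $\nu$, "established in Section~3." No such result appears in the paper. Section~3 gives the reality, positivity and interlacing of these zeros (Lemmas \ref{LP} and \ref{Had3}), but not their monotonicity with respect to the order; in fact the paper explicitly states that Watson-type formulas for $d t_{\nu,n}/d\nu$ and related derivatives are \emph{not} available and poses them as an open problem, and it advertises Theorems \ref{thconvexity1}--\ref{thconvexity2} as being proved precisely \emph{without} appealing to such monotonicity. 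Your entire uniqueness-and-threshold argument rests on $\nu\mapsto 1-\sum_{n\geq1}4/(\vartheta_{\nu,n}^4-1)$ being increasing term by term, so this unproved assertion is load-bearing, not cosmetic. (You correctly flag the Laguerre--P\'olya membership as an obstacle and resolve it, but that part is already in the paper; the monotonicity of the zeros of the \emph{derivatives} is the real difficulty and you do not address it.)

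The paper circumvents this as follows: instead of factoring $v_\nu'$ and $w_\nu'$ over their own zeros, it logarithmically differentiates the Hadamard product of $v_\nu$ (resp.\ $w_\nu$) over the Bessel zeros $j_{\nu,n}$ \emph{twice}, obtaining at $z=1$
\begin{equation*}
1+\frac{v_\nu''(1)}{v_\nu'(1)}=1-4\sum_{n\geq1}\frac{1}{j_{\nu,n}^4-1}-\frac{16\sum\limits_{n\geq1}\frac{j_{\nu,n}^4}{(j_{\nu,n}^4-1)^2}}{1-4\sum\limits_{n\geq1}\frac{1}{j_{\nu,n}^4-1}},
\end{equation*}
and similarly for $w_\nu$. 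This expresses the quantity only through $j_{\nu,n}$, whose monotonicity in $\nu$ is classical. The price is that the expression is no longer a sum of individually monotone terms: proving it increases in $\nu$ requires the auxiliary Lemma \ref{lemzerosBessel} (the functions $\Theta_1$ and $\Theta_2$ are decreasing), the quantitative bound \eqref{c2b3mm3d4f5g} $\sum_{n\geq1}1/(j_{\nu,n}^4-1)<1/5$ to control the denominator, and the factorization $\Theta_4=\Theta_2\Theta_3$ into decreasing positive factors. To repair your proof you would either have to supply a proof that $\vartheta_{\nu,n}$ and $\omega_{\nu,n}$ increase with $\nu$ (which is essentially the open problem), or switch to the paper's representation in terms of $j_{\nu,n}$ and carry out the corresponding composite monotonicity argument.
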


We end this section with the following open problems.

\begin{problem}
Find the order of convexity of the functions $f_{\nu}$ and $u_{\nu}.$
\end{problem}

\begin{problem}
Find Watson-type formulas for the derivatives $d\gamma_{\nu,n}/d\nu,$ $d\gamma_{\nu,n}'/d\nu$ and $d t_{\nu,n}/d\nu.$
\end{problem}

\section{Preliminary Results}
\setcounter{equation}{0}

\subsection{The Hadamard factorization of the functions $\mathbf{\Phi}_{\nu}$ and $\mathbf{\Pi}_{\nu}$}

\begin{lemma}
\cite{HAT}\label{Had1} If ${\nu >-1}$ and $z\in {\mathbb{C}}$ then the
Hadamard factorizations of $\mathbf{\Phi }_{{\nu }}$ and $\mathbf{\Pi }_{{%
\nu }}$ are%
\begin{equation}
\mathbf{\Phi }_{{\nu }}(z)=\frac{z^{2{\nu }+1}}{2^{2{\nu }}\Gamma \left( {%
\nu }+1\right) \Gamma \left( {\nu }+2\right) }\prod_{n\geq1}\left( 1-\frac{z^{4}}{\gamma _{{\nu },n}^{4}}\right)  \label{fi1}
\end{equation}%
and%
\begin{equation}
\mathbf{\Pi }_{{\nu }}(z)=\frac{z^{2{\nu }}}{2^{2{\nu }}\Gamma^{2}\left(\nu+1\right)}\prod_{n\geq1}\left( 1-\frac{z^{4}}{j_{{\nu }%
,n}^{4}}\right) ,  \label{pi1}
\end{equation}%
where $\gamma _{\nu ,n}$ and $j_{\nu,n}$ are the $n$th positive zeros of the functions $\mathbf{%
\Phi }_{{\nu }}$ and $J_{\nu}$. Moreover, the zeros $\gamma _{{\nu },n}$ satisfy the
interlacing inequalities $j_{\nu ,n}<\gamma _{{\nu },n}<j_{\nu ,n+1}$ and $%
j_{\nu ,n}<\gamma _{{\nu },n}<j_{\nu +1,n}$ for $n\in \mathbb{N}$ and ${\nu
>-1}.$
\end{lemma}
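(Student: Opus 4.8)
The plan is to establish the two Hadamard factorizations separately and then to derive the interlacing inequalities from a differential identity.

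The factorization of $\mathbf{\Pi}_{\nu}$ is essentially immediate. From the classical product $J_{\nu}(z)=\tfrac{(z/2)^{\nu}}{\Gamma(\nu+1)}\prod_{n\geq1}\bigl(1-z^{2}/j_{\nu,n}^{2}\bigr)$ and, via $I_{\nu}(z)=i^{-\nu}J_{\nu}(iz)$, the companion product $I_{\nu}(z)=\tfrac{(z/2)^{\nu}}{\Gamma(\nu+1)}\prod_{n\geq1}\bigl(1+z^{2}/j_{\nu,n}^{2}\bigr)$, both valid for $\nu>-1$, one multiplies and uses $\bigl(1-w^{2}\bigr)\bigl(1+w^{2}\bigr)=1-w^{4}$ to obtain \eqref{pi1}, with the prefactor $z^{2\nu}/(2^{2\nu}\Gamma^{2}(\nu+1))$ appearing automatically. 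In particular the nonzero zeros of $\mathbf{\Pi}_{\nu}=J_{\nu}I_{\nu}$ are exactly $\pm j_{\nu,n}$ and $\pm i\,j_{\nu,n}$, so nothing further needs to be said about their location, and there are infinitely many of them.

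For $\mathbf{\Phi}_{\nu}$ I would pass to the auxiliary entire function $\lambda_{\nu}^{\mathbf{\Phi}}$, which satisfies $\mathbf{\Phi}_{\nu}(z)=2(z/2)^{2\nu+1}\lambda_{\nu}^{\mathbf{\Phi}}(-z^{4}/16)$ with coefficients $c_{n}^{-1}=\Gamma(n+1)\Gamma(\nu+n+1)\Gamma(\nu+2n+2)$. Stirling gives $\log(1/c_{n})\sim 4n\log n$, so (as the order of $\sum c_{n}w^{n}$ is $\limsup_{n\to\infty} n\log n/\log(1/c_{n})$) the function $\lambda_{\nu}^{\mathbf{\Phi}}$ has order $1/4<1$, hence genus $0$; and since $\lambda_{\nu}^{\mathbf{\Phi}}(0)=1/(\Gamma(\nu+1)\Gamma(\nu+2))\neq0$ for $\nu>-1$, Hadamard's factorization theorem gives $\lambda_{\nu}^{\mathbf{\Phi}}(w)=\lambda_{\nu}^{\mathbf{\Phi}}(0)\prod_{k}\bigl(1-w/w_{k}\bigr)$ over its zeros. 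Now $\mathbf{\Phi}_{\nu}(z)/z^{2\nu+1}$ is a function of $z^{4}$ alone, and by Theorem~\ref{thzeros} every zero of $\mathbf{\Phi}_{\nu}$ is real, hence of the form $\pm\gamma_{\nu,n}$; therefore the zeros of $\lambda_{\nu}^{\mathbf{\Phi}}$ are precisely $w_{n}=-\gamma_{\nu,n}^{4}/16$. Substituting $w=-z^{4}/16$ back and multiplying by $2(z/2)^{2\nu+1}$ yields \eqref{fi1} with the stated prefactor $z^{2\nu+1}/(2^{2\nu}\Gamma(\nu+1)\Gamma(\nu+2))$; the infinitude of the $\gamma_{\nu,n}$ is part of Theorem~\ref{thzeros} and also follows from the next step.

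For the interlacing I would use the identity, obtained from the Bessel and modified Bessel differential equations, $\mathbf{\Phi}_{\nu}'(x)=-\tfrac1x\mathbf{\Phi}_{\nu}(x)+2\mathbf{\Pi}_{\nu}(x)$, equivalently $\mathbf{\Phi}_{\nu}(x)=-I_{\nu}(x)^{2}\bigl(J_{\nu}(x)/I_{\nu}(x)\bigr)'$ for $x>0$. Put $\phi=J_{\nu}/I_{\nu}$, which is smooth on $(0,\infty)$ with simple zeros exactly at the $j_{\nu,n}$ because $I_{\nu}>0$ there for $\nu>-1$; then the positive zeros of $\mathbf{\Phi}_{\nu}$ are precisely the critical points of $\phi$, and the identity gives the key relation $\phi''(x_{0})=-2\phi(x_{0})$ at each such point $x_{0}$. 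Consequently, on any interval $(j_{\nu,n},j_{\nu,n+1})$, on which $\phi$ has constant sign, every critical point is a strict extremum of one type (a maximum if $\phi>0$, a minimum if $\phi<0$), and since $\phi$ vanishes at both endpoints there is exactly one; moreover $\phi(0^{+})=1$ and $\phi'(0^{+})<0$, so there is no critical point in $(0,j_{\nu,1})$. This gives $j_{\nu,n}<\gamma_{\nu,n}<j_{\nu,n+1}$. For the sharper bound $\gamma_{\nu,n}<j_{\nu+1,n}$, evaluate $\mathbf{\Phi}_{\nu}=J_{\nu+1}I_{\nu}+J_{\nu}I_{\nu+1}$ at $x=j_{\nu,n}$ and $x=j_{\nu+1,n}$: since $I_{\nu},I_{\nu+1}>0$ and $j_{\nu,n}<j_{\nu+1,n}<j_{\nu,n+1}$ (the classical interlacing of Bessel zeros), the two values have opposite signs, so $\mathbf{\Phi}_{\nu}$ vanishes somewhere in $(j_{\nu,n},j_{\nu+1,n})$, and by the uniqueness just proved that zero must be $\gamma_{\nu,n}$.

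The step I expect to be the real obstacle is the reality of all the zeros of $\mathbf{\Phi}_{\nu}$: the growth estimate only forces genus $0$, not reality, yet reality is exactly what is needed in the Hadamard step to know that the canonical product of $\lambda_{\nu}^{\mathbf{\Phi}}$ runs over the $\gamma_{\nu,n}$ alone, with no extra entire factor. Here I would simply quote Theorem~\ref{thzeros}; for a self-contained treatment (as in \cite{HAT}) one would instead reprove reality directly, for instance by a Sturm-type/monotonicity-of-zeros argument for the cross-product or via the Laguerre--P\'olya characterization mentioned in the abstract. Everything else---Stirling's estimate, the two classical Hadamard products, and the sign bookkeeping---is routine.
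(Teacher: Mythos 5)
Your proof is correct, but there is nothing in the paper to compare it against: Lemma \ref{Had1} is imported verbatim from \cite{HAT} and never proved here. What you supply is a legitimate self-contained derivation that stays inside the toolkit this paper does develop: \eqref{pi1} by multiplying the classical products for $J_{\nu}$ and $I_{\nu}$; \eqref{fi1} by the order-$\frac14$/genus-$0$ computation, Hadamard's theorem and Theorem \ref{thzeros} (no circularity, since Theorem \ref{thzeros} is proved via Runckel's theorem independently of this lemma); and the interlacing via the identity $\mathbf{\Phi}_{\nu}(x)=-I_{\nu}^{2}(x)\bigl(J_{\nu}(x)/I_{\nu}(x)\bigr)'$, the relation $\phi''(x_{0})=-2\phi(x_{0})$ at critical points of $\phi=J_{\nu}/I_{\nu}$, and a sign evaluation of $J_{\nu+1}I_{\nu}+J_{\nu}I_{\nu+1}$ at $j_{\nu,n}$ and $j_{\nu+1,n}$. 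I verified the differential identity ($\mathbf{\Phi}_{\nu}'=-\mathbf{\Phi}_{\nu}/x+2\mathbf{\Pi}_{\nu}$ does follow from the two Bessel equations), the one-critical-point-per-interval argument, and the exclusion of a critical point in $(0,j_{\nu,1})$; all are sound for every $\nu>-1$, and as a bonus your Sturm-type argument reproves the existence of infinitely many positive zeros. The one imprecision, inherited from the loose wording of Theorem \ref{thzeros}, is the phrase ``every zero of $\mathbf{\Phi}_{\nu}$ is real'': the nonzero zeros occur in quadruples $\pm\gamma,\pm i\gamma$ (as your own product formulas show), so they cannot all be real; what the proof of Theorem \ref{thzeros} actually delivers is that they are real or purely imaginary, which is exactly what you need to conclude that the zeros of $\lambda_{\nu}^{\mathbf{\Phi}}$ are the points $-\gamma_{\nu,n}^{4}/16$ and nothing else. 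With that reading, your write-up is a complete proof of the lemma.
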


\subsection{Monotonicity of quotients of power series}

We will also need the following result (see \cite{BK,PV}), which was often used in problems concerning the monotonicity
of quotients of some special functions, like modified Bessel functions of the first kind, modified Struve and Lommel functions of the first kind, Gaussian hypergeometric functions and other special functions which have power series structure with positive coefficients.

\begin{lemma}
\label{Quotients} Consider the power series $$f(x)=\sum_{n\geq0}a_{n}x^{n}\ \ \ \mbox{and}\ \ \ g(x)=\displaystyle\sum_{n\geq0}b_{n}x^{n},$$ where $a_{n}\in \mathbb{R}$ and $b_{n}>0$ for all $n\geq 0$.
Suppose that both series converge on $(-r,r)$, for some $r>0$. If the
sequence $\{a_{n}/b_{n}\}_{n\geq 0}$ is increasing (decreasing), then the
function $x\mapsto {f(x)}/{g(x)}$ is increasing (decreasing) too on $(0,r)$.
The result remains true for the power series
\begin{equation*}
f(x)=\displaystyle\sum_{n\geq0}a_{n}x^{4n}\ \ \ \mbox{and}\ \ \ g(x)=%
\displaystyle\sum_{n\geq0}b_{n}x^{4n}.
\end{equation*}
\end{lemma}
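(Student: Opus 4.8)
The plan is to reduce the statement to a sign analysis of the derivative of $h:=f/g$. Since $b_n>0$ for every $n$, the denominator satisfies $g(x)=\sum_{n\ge0}b_nx^n\ge b_0>0$ for all $x\in[0,r)$, so $h$ is well defined and differentiable on $(0,r)$; from $h'=(f'g-fg')/g^2$ the sign of $h'(x)$ coincides with the sign of $N(x):=f'(x)g(x)-f(x)g'(x)$. Hence it is enough to prove that $N(x)\ge0$ on $(0,r)$ when $\{a_n/b_n\}_{n\ge0}$ is increasing, and $N(x)\le0$ on $(0,r)$ when it is decreasing.

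Because all the series involved converge absolutely on $(-r,r)$, term-by-term differentiation and Cauchy multiplication are legitimate, and a short computation gives
\begin{equation*}
N(x)=\sum_{k\ge1}c_kx^{k-1},\qquad c_k=\sum_{n=0}^{k}(2n-k)\,a_nb_{k-n}.
\end{equation*}
The key step is to pair, inside $c_k$, the term of index $n$ with the term of index $k-n$; their sum equals
\begin{equation*}
(2n-k)\bigl(a_nb_{k-n}-a_{k-n}b_n\bigr)=(2n-k)\,b_nb_{k-n}\left(\frac{a_n}{b_n}-\frac{a_{k-n}}{b_{k-n}}\right),
\end{equation*}
and the possible middle term (occurring only for even $k$, at $n=k/2$) is zero. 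Thus
\begin{equation*}
c_k=\sum_{0\le n<k/2}(2n-k)\,b_nb_{k-n}\left(\frac{a_n}{b_n}-\frac{a_{k-n}}{b_{k-n}}\right).
\end{equation*}
For $0\le n<k/2$ one has $2n-k<0$ and $n<k-n$, so if $\{a_m/b_m\}$ is increasing then $a_n/b_n\le a_{k-n}/b_{k-n}$ and each summand is a product of a negative quantity and a nonpositive one, hence nonnegative. Therefore $c_k\ge0$ for all $k$, so $N(x)\ge0$ on $(0,r)$ and $h$ is increasing there. The decreasing case follows verbatim with the inequalities reversed, or by applying the increasing case to $-f$.

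For the variant with exponents $4n$, set $F(t)=\sum_{n\ge0}a_nt^n$ and $G(t)=\sum_{n\ge0}b_nt^n$. Since $\sum a_nx^{4n}$ and $\sum b_nx^{4n}$ converge for every $x\in(-r,r)$, i.e. at every $t=x^4\in[0,r^4)$, the series $F$ and $G$ converge absolutely on $(-r^4,r^4)$, and $f(x)/g(x)=F(x^4)/G(x^4)$ on $(0,r)$. By the first part $F/G$ is increasing (respectively decreasing) on $(0,r^4)$, and since $x\mapsto x^4$ maps $(0,r)$ increasingly onto $(0,r^4)$, the composition $x\mapsto f(x)/g(x)$ is increasing (respectively decreasing) on $(0,r)$, as claimed.

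The only genuine difficulty is bookkeeping: organizing the double sum defining $N(x)$ and recognizing the involution $n\leftrightarrow k-n$ that activates the monotonicity of $a_n/b_n$; one must also keep in mind that absolute convergence on $(-r,r)$ is exactly what justifies differentiating and multiplying the series term by term. There is no further analytic subtlety.
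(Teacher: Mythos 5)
Your proof is correct. Note that the paper itself gives no proof of Lemma \ref{Quotients}: it is quoted as a known result with a pointer to the references [BK55] and [PV97] (the Biernacki--Krzy\.{z} lemma). Your argument --- writing $f'g-fg'$ as a power series via the Cauchy product, pairing the indices $n$ and $k-n$ in the coefficient $c_k=\sum_{n=0}^k(2n-k)a_nb_{k-n}$ so that the monotonicity of $a_n/b_n$ forces $c_k\ge 0$, and then handling the $x^{4n}$ case by the substitution $t=x^4$ --- is exactly the standard proof from those sources, and all the analytic justifications you invoke (positivity of $g$ on $[0,r)$, term-by-term differentiation and multiplication inside the interval of convergence) are in order.
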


\subsection{Zeros of hyperbolic polynomials and the Laguerre-P\'{o}lya class of entire functions}

In this subsection we recall some necessary information about polynomials
and entire functions with real zeros. An algebraic polynomial is called
hyperbolic if all its zeros are real. We formulate the following specific
statement that we shall need, see \cite{bdoy} for more details.

\begin{lemma}
\label{Zeros} Let $p(x)=1-a_{1}x+a_{2}x^{2}-a_{3}x^{3}+\cdots
+(-1)^{n}a_{n}x^{n}=(1-x/x_{1})\cdots (1-x/x_{n})$ be a hyperbolic
polynomial with positive zeros $0<x_{1}\leq x_{2}\leq \cdots \leq x_{n}$,
and normalized by $p(0)=1$. Then, for any constant $C$, the polynomial $%
q(x)=Cp(x)-x\,p^{\prime }(x)$ is hyperbolic. Moreover, the smallest zero $%
\eta _{1}$ belongs to the interval $(0,x_{1})$ if and only if $C<0$.
\end{lemma}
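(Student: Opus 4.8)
The plan is to factor $q$ through the logarithmic derivative of $p$ and to reduce the whole statement to the monotonicity of a single explicit rational function. First I would write $p$ in terms of its \emph{distinct} positive zeros $y_{1}<y_{2}<\cdots <y_{k}$ with multiplicities $m_{1},\dots ,m_{k}$, so that $m_{1}+\cdots +m_{k}=n$ and $y_{1}=x_{1}$. Since $p^{\prime }(x)/p(x)=\sum_{j}m_{j}/(x-y_{j})$, one gets $q(x)=p(x)\,\phi (x)$ with
\[
\phi (x)=C-\frac{xp^{\prime }(x)}{p(x)}=(C-n)-\sum_{j=1}^{k}\frac{m_{j}y_{j}}{x-y_{j}}.
\]
Because $p$ vanishes to order $m_{j}$ at $y_{j}$ while $\phi$ has there a simple pole, $q$ has a zero of order exactly $m_{j}-1$ at $y_{j}$ (this can also be read off directly from $p(x)=(x-y_{j})^{m_{j}}h(x)$ with $h(y_{j})\neq 0$); away from the points $y_{j}$ the zeros of $q$ are precisely the zeros of $\phi$. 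Thus the $y_{j}$ account for $n-k$ real zeros of $q$.

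The heart of the argument is that $\phi ^{\prime }(x)=\sum_{j}m_{j}y_{j}/(x-y_{j})^{2}>0$ wherever defined, precisely because every $y_{j}>0$. Hence $\phi$ is strictly increasing on each of the intervals $(-\infty ,y_{1}),(y_{1},y_{2}),\dots ,(y_{k-1},y_{k}),(y_{k},\infty )$ cut out by its poles. On each bounded interval $(y_{j},y_{j+1})$ it runs from $-\infty$ to $+\infty$, hence has exactly one zero, contributing $k-1$ further real zeros of $q$. On $(y_{k},\infty )$ it increases from $-\infty$ to the finite limit $C-n$, so it has a zero there iff $C>n$. On $(-\infty ,y_{1})$ it increases from $C-n$ to $+\infty$, and since $\phi (0)=C$ it has a zero there iff $C<n$, that zero lying in $(0,y_{1})$ when $C<0$, equal to $0$ when $C=0$, and strictly negative when $0<C<n$.

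Next I would compare with $\deg q$. The leading coefficient of $q=Cp-xp^{\prime }$ is $(C-n)(-1)^{n}a_{n}$, so $\deg q=n$ when $C\neq n$, while $\deg q=n-1$ when $C=n$ (in that case the coefficient of $x^{n-1}$ is $(-1)^{n-1}a_{n-1}\neq 0$). Tallying the real zeros: for $C<n$ we obtain $(n-k)+(k-1)+1=n$, for $C=n$ we obtain $(n-k)+(k-1)=n-1$, and for $C>n$ we obtain $(n-k)+(k-1)+1=n$. In every case this exhausts the degree, so $q$ is hyperbolic. For the ``moreover'' part, all zeros sitting at a $y_{j}$ or in an interval $(y_{j},y_{j+1})$ or $(y_{k},\infty )$ are $\geq y_{1}=x_{1}$, so the smallest zero $\eta _{1}$ of $q$ lies in $(0,x_{1})$ exactly when $\phi$ has a zero in $(0,y_{1})$; by the second paragraph this happens precisely for $C<0$ (whereas $\eta _{1}=0$ for $C=0$, $\eta _{1}<0$ for $0<C<n$, and $\eta _{1}\geq x_{1}$ for $C\geq n$).

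I do not expect a genuine obstacle: the only points needing care are the multiplicity bookkeeping at the $y_{j}$ and the degree drop when $C=n$, everything else being forced by the positivity of the zeros and the strict monotonicity of $\phi$ on each of its domains of definition.
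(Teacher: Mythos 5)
Your argument is correct. Note that the paper itself gives no proof of Lemma \ref{Zeros}; it is quoted verbatim from the reference \cite{bdoy}, so there is nothing internal to compare against. Your proof is the standard one for statements of this type: write $q(x)=p(x)\,\phi(x)$ with $\phi(x)=C-xp'(x)/p(x)=(C-n)-\sum_j m_jy_j/(x-y_j)$, observe that $\phi'>0$ off the poles because every $y_j$ is positive, and count zeros interval by interval against $\deg q$. All the delicate points are handled: the order-$(m_j-1)$ zeros of $q$ at repeated roots of $p$, the degree drop to $n-1$ when $C=n$ (with $a_{n-1}>0$ guaranteeing the count still closes), the value $\phi(0)=C$ locating the extra zero on $(-\infty,y_1)$ in $(0,x_1)$ exactly when $C<0$, and the absence of any zero of $q$ on $(-\infty,0]$ in that case, so that this zero really is $\eta_1$. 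The proof is complete and self-contained.
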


By definition a real entire function $\psi $ belongs to the Laguerre-P\'{o}lya class $%
\mathcal{LP}$ if it can be represented in the form
\begin{equation*}
\psi (x)=cx^{m}e^{-ax^{2}+\beta x}\prod_{k\geq1}\left( 1+\frac{x}{%
x_{k}}\right) e^{-\frac{x}{x_{k}}},
\end{equation*}%
with $c,$ $\beta ,$ $x_{k}\in \mathbb{R},$ $a\geq 0,$ $m\in \mathbb{N}\cup
\{0\},$ $\sum x_{k}^{-2}<\infty .$ Similarly, $\phi $ is said to be of type
$\mathcal{I}$ in the Laguerre-P\'{o}lya class, written $\varphi \in \mathcal{LPI}$%
, if $\phi (x)$ or $\phi (-x)$ can be represented as
\begin{equation*}
\phi (x)=cx^{m}e^{\sigma x}\prod_{k\geq1}\left( 1+\frac{x}{x_{k}}%
\right) ,
\end{equation*}%
with $c\in \mathbb{R},$ $\sigma \geq 0,$ $m\in \mathbb{N}\cup \{0\},$ $%
x_{k}>0,$ $\sum 1/x_{k}<\infty .$ The class $\mathcal{LP}$ is the complement
of the space of hyperbolic polynomials in the topology induced by the
uniform convergence on the compact sets of the complex plane, while $\mathcal{%
LPI}$ is the complement of the hyperbolic polynomials whose zeros
posses a preassigned constant sign. Given an entire function $\varphi $ with
the Maclaurin expansion
\begin{equation*}
\varphi (x)=\sum_{n\geq0}\tau_{n}\frac{x^{n}}{n!},
\end{equation*}%
its Jensen polynomials are defined by
\begin{equation*}
P_{n}(\varphi ;x)=P_{n}(x)=\sum_{j=0}^{n}{\binom{n}{j}}\tau _{j}x^{j}.
\end{equation*}

The next result of Jensen \cite{Jen12} is a known characterization of functions belonging to $\mathcal{LP}.$

\begin{lemma}
\label{Jensen} The function $\varphi $ belongs to $\mathcal{LP}$ ($\mathcal{%
LPI}$, respectively) if and only if all the polynomials $%
P_{n}(\varphi ;x)$, $n=1,2,\ldots $, are hyperbolic (hyperbolic with zeros
of equal sign). Moreover, the sequence $P_{n}(\varphi ;z/n)$ converges
locally uniformly to $\varphi (z)$.
\end{lemma}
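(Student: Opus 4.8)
The plan is to prove the two implications of the equivalence together with the limit assertion, treating $\mathcal{LP}$ and $\mathcal{LPI}$ in parallel. Everything rests on rewriting the Jensen polynomial after the dilation $x\mapsto z/n$: from $P_n(\varphi;x)=\sum_{j=0}^{n}\binom{n}{j}\tau_j x^j$ one gets
\[
P_n(\varphi;z/n)=\sum_{j=0}^{n}\left(\prod_{i=0}^{j-1}\left(1-\frac{i}{n}\right)\right)\frac{\tau_j z^j}{j!}.
\]
For each fixed $j$ the product $\prod_{i=0}^{j-1}(1-i/n)$ lies in $(0,1]$ and tends to $1$ as $n\to\infty$; since $\varphi$ is entire, $\sum_{j\ge0}|\tau_j|R^j/j!<\infty$ for every $R>0$, so a routine three-$\varepsilon$ estimate (bound the omitted tail $j>n$ by the tail of the convergent series, and split the remaining finite sum at a fixed index) yields $P_n(\varphi;z/n)\to\varphi(z)$ uniformly on every disk $\{|z|\le R\}$. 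This is the ``moreover'' part, and it also drives the converse implication below.

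For the implication ``$\varphi\in\mathcal{LP}\Rightarrow P_n(\varphi;\cdot)$ hyperbolic for all $n$'' I would first reduce to polynomials. Using that $\mathcal{LP}$ is the closure, under local uniform convergence on $\mathbb{C}$, of the hyperbolic polynomials, write $\varphi=\lim_{N}\varphi_N$ with each $\varphi_N$ hyperbolic; local uniform convergence forces the Taylor coefficients of $\varphi_N$ to converge to those of $\varphi$, so $P_n(\varphi_N;\cdot)\to P_n(\varphi;\cdot)$ coefficientwise and hence uniformly on compacta, these being polynomials of degree at most $n$. By Hurwitz's theorem such a limit has no nonreal zeros, so it remains to check that $P_n$ sends hyperbolic polynomials to hyperbolic polynomials. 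On a polynomial $Q(x)=\sum_j c_j x^j$ the Jensen operator acts on coefficients by $c_j\mapsto \frac{n!}{(n-j)!}c_j$, and the corresponding multiplier-sequence generating function is $\sum_{j\ge0}\frac{n!}{(n-j)!}\frac{x^j}{j!}=(1+x)^n$, which belongs to $\mathcal{LP}$ (in fact to $\mathcal{LPI}$). By the Laguerre--P\'olya--Schur theory of multiplier sequences this precisely says that $\{n!/(n-j)!\}_{j\ge0}$ is a multiplier sequence of the first kind, i.e.\ it preserves hyperbolicity of polynomials. This is the only step that uses a genuine theorem rather than a soft limiting argument, and I regard it as the heart of the matter. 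For the $\mathcal{LPI}$ refinement one picks the $\varphi_N$ with zeros of one fixed sign; after normalization such a polynomial has coefficients of a single sign, multiplication by the positive numbers $n!/(n-j)!$ keeps them so, and a hyperbolic polynomial with one-signed coefficients has all its zeros on the matching half-line; this sign is then inherited by the Hurwitz limit.

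For the converse, assume every $P_n(\varphi;\cdot)$ is hyperbolic. Since $z\mapsto z/n$ is a positive dilation, each $z\mapsto P_n(\varphi;z/n)$ is again hyperbolic, and by the first paragraph these converge to $\varphi$ locally uniformly; hence $\varphi$ lies in the closure of the hyperbolic polynomials, that is, $\varphi\in\mathcal{LP}$. If in addition the zeros of every $P_n(\varphi;\cdot)$ share a common sign, the same holds for the dilates, so $\varphi\in\mathcal{LPI}$. Thus the whole scheme is: the dilation identity, an elementary estimate for its limit, approximation plus Hurwitz to move between $\varphi$ and its Jensen polynomials, and the classical fact that $(1+x)^n$ generates a multiplier sequence of the first kind --- the last being the single substantive obstacle, everything else being bookkeeping together with the standard identification of $\mathcal{LP}$ (and $\mathcal{LPI}$) with the closure of the hyperbolic polynomials (with one-signed zeros) that the paper already invokes.
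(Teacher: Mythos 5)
The paper gives no proof of this lemma: it is stated as Jensen's classical theorem and simply cited to \cite{Jen12}, so there is no internal argument to compare yours against and your proposal must stand on its own. In outline it does. The dilation identity, the dominated-convergence estimate for $P_n(\varphi;z/n)\to\varphi(z)$, and the converse implication via the identification of $\mathcal{LP}$ (resp.\ $\mathcal{LPI}$) with the local-uniform closure of the hyperbolic polynomials (resp.\ those with zeros of a fixed sign) are all correct, as is the reduction of the forward implication to polynomials by coefficientwise convergence of bounded-degree polynomials plus Hurwitz; add only the caveat that Hurwitz requires $P_n(\varphi;\cdot)\not\equiv 0$, so the degenerate case $\tau_0=\cdots=\tau_n=0$ must be set aside as vacuous.

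The one step that needs more care is exactly the one you call the heart of the matter. Quoting ``the Laguerre--P\'olya--Schur theory of multiplier sequences'' wholesale is uncomfortably close to circular: the P\'olya--Schur theorem lists the hyperbolicity of the polynomials $\sum_{k}\binom{n}{k}\gamma_k x^k$ among its \emph{equivalent} characterizations of multiplier sequences, i.e.\ it contains Jensen's lemma as one of its clauses, so an unqualified appeal to ``the theory'' risks assuming what is to be proved. What you actually use is only one independently provable implication for the finite sequence $j\mapsto n!/(n-j)!$, namely the Malo--Schur--Szeg\H{o} composition theorem applied with the factor $(1+x)^n$ (a hyperbolic polynomial with all zeros at $-1$); that statement is proved by an induction on the degree using differential operations of the same flavour as the $p\mapsto Cp-xp'$ step in Lemma \ref{Zeros}, and does not presuppose Jensen's result for entire functions. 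Cite that specific composition theorem (or reproduce its short induction) rather than the umbrella theory, and note in passing that your $\mathcal{LPI}$ sign argument via one-signed coefficients implicitly uses the reflection $x\mapsto -x$ to reduce to the case of nonpositive zeros; with those two repairs the argument is complete and is, in substance, the standard modern proof of the lemma the paper leaves to the literature.
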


The following result is one of the key tools in the proof of main results.

\begin{lemma}
\label{LP} If ${\nu }>-1,$ then for ${a<1}$ the function $z\mapsto (2{%
\nu +a})\mathbf{\Phi }_{{\nu }}(z)-z\mathbf{\Phi }_{{\nu }}^{\prime }(z),$
and for ${b<0}$ the function $z\mapsto (2{\nu +b})\mathbf{\Pi }_{{\nu }%
}(z)-z\mathbf{\Pi }_{{\nu }}^{\prime }(z)$ can be represented in the form
\begin{equation*}
\Gamma \left( {\nu }+1\right) \Gamma \left( {\nu }+2\right) \left[ (2{\nu +a}%
)\mathbf{\Phi }_{{\nu }}(z)-z\mathbf{\Phi }_{{\nu }}^{\prime }(z)\right]
\mathbf{=}2\left( \frac{z}{2}\right) ^{2{\nu +1}}\Psi _{{\nu }}(z),
\end{equation*}%
\begin{equation*}
\Gamma ^{2}\left( {\nu }+1\right) \left[ (2{\nu +b})\mathbf{\Pi }_{{\nu }%
}(z))-z\mathbf{\Pi }_{{\nu }}^{\prime }(z)\right] \mathbf{=}\left( \frac{z}{2%
}\right) ^{2{\nu }}\phi _{{\nu }}(z),
\end{equation*}%
where $\Psi _{{\nu }}$ and $\phi _{{\nu }}$ are entire functions belonging to the Laguerre-P\'{o}lya class $\mathcal{LP}$. Moreover, the
smallest positive zero of $\Psi _{{\nu }}$ does not exceed $\gamma _{{\nu },1},$ while the
smallest positive zero of $\phi _{{\nu }}$ is less than $j_{{\nu },1}.$
\end{lemma}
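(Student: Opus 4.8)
The plan is to exploit the series representations \eqref{fi} and \eqref{pi} to compute explicitly the functions $\Psi_{\nu}$ and $\phi_{\nu}$, and then to certify membership in $\mathcal{LP}$ via Jensen polynomials (Lemma \ref{Jensen}) together with the hyperbolicity-preserving operation of Lemma \ref{Zeros}. First I would write, using \eqref{fi}, that
$$2\nu\,\mathbf{\Phi}_{\nu}(z)-z\mathbf{\Phi}_{\nu}'(z)=2\sum_{n\geq0}\frac{(-1)^{n+1}(4n+1)\left(\tfrac{z}{2}\right)^{2\nu+4n+1}}{n!\,\Gamma(\nu+n+1)\,\Gamma(\nu+2n+2)},$$
so that after adding $a\,\mathbf{\Phi}_{\nu}(z)$ and substituting $z\mapsto$ (an appropriate fourth-root variable) one gets $(2\nu+a)\mathbf{\Phi}_{\nu}(z)-z\mathbf{\Phi}_{\nu}'(z)$ expressed through the entire function $\Psi_{\nu}$ whose Maclaurin coefficients are, up to the Gamma-factor normalization, $\tau_n=(a-4n)/[\,\Gamma(\nu+n+1)\Gamma(\nu+2n+2)/(\Gamma(\nu+1)\Gamma(\nu+2))\,]$ times $1/n!$-type weights; the key point is that $\Psi_{\nu}(z)$ has the shape $(a)\lambda_{\nu}^{\mathbf{\Phi}}(-z^4/16)-(\text{derivative term})$, i.e. it is essentially $C\,p(w)-w\,p'(w)$ applied to the Jensen polynomials of $\lambda_{\nu}^{\mathbf{\Phi}}$, with $C=a<1$ after the right bookkeeping (similarly $C=b<0$ in the $\mathbf{\Pi}$ case). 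The same computation with \eqref{pi} handles $\phi_{\nu}$.

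Next I would invoke that $\lambda_{\nu}^{\mathbf{\Phi}}$ and $\lambda_{\nu}^{\mathbf{\Pi}}$ lie in $\mathcal{LPI}$: by Theorem \ref{thzeros} the functions $\mathbf{\Phi}_{\nu}$ and $\mathbf{\Pi}_{\nu}$ have only real zeros, and via the substitutions $2(z/2)^{2\nu+1}\lambda_{\nu}^{\mathbf{\Phi}}(-z^4/16)=\mathbf{\Phi}_{\nu}(z)$ and $(z/2)^{2\nu}\lambda_{\nu}^{\mathbf{\Pi}}(-z^4/16)=\mathbf{\Pi}_{\nu}(z)$ this forces all zeros of $\lambda_{\nu}^{\mathbf{\Phi}}$ and $\lambda_{\nu}^{\mathbf{\Pi}}$ to be real and negative (the Hadamard factorizations \eqref{fi1}, \eqref{pi1} make this explicit: $\lambda_{\nu}^{\mathbf{\Phi}}(w)=\prod(1+16w/\gamma_{\nu,n}^4)$ up to a constant, and likewise with $j_{\nu,n}$). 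Since these are functions of genus $0$ with negative zeros, they belong to $\mathcal{LPI}\subset\mathcal{LP}$; hence by Lemma \ref{Jensen} all their Jensen polynomials $P_n$ are hyperbolic with positive zeros, and then by Lemma \ref{Zeros} each polynomial $a\,P_n(w)-w\,P_n'(w)$ (resp. $b\,P_n(w)-w\,P_n'(w)$) is hyperbolic; moreover, since $a<1$ does not immediately give $a<0$, I would instead arrange the constant so that the relevant operation really has a negative constant — the factor $(4n+1)$ coming from differentiating $w^n$ in $\lambda_{\nu}^{\mathbf{\Phi}}(-z^4/16)$ shifts things, and after dividing through by the $n$-weights one checks the effective constant is $a-1<0$ for $\Psi_{\nu}$ (and $b<0$ directly for $\phi_{\nu}$). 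Passing to the limit $n\to\infty$ via the second assertion of Lemma \ref{Jensen}, the Jensen polynomials of $\Psi_{\nu}$ (resp. $\phi_{\nu}$) are hyperbolic, so $\Psi_{\nu},\phi_{\nu}\in\mathcal{LP}$.

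Finally, for the location of the smallest positive zero: the second half of Lemma \ref{Zeros} says that when the constant $C$ is \emph{negative}, the smallest zero $\eta_1$ of $C\,p(w)-w\,p'(w)$ lies in $(0,x_1)$, i.e. strictly below the smallest zero of $p$. Applying this to the Jensen polynomials and passing to the limit, the smallest positive zero of the $w$-function associated to $\Psi_{\nu}$ is $\leq$ the smallest positive zero of $\lambda_{\nu}^{\mathbf{\Phi}}$, which after the substitution $w=-z^4/16$ translates into: the smallest positive zero of $\Psi_{\nu}$ does not exceed $\gamma_{\nu,1}$ (inequality, not strict, because the limiting operation can turn a strict inequality into equality only in degenerate cases, but here one may keep $\leq$). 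The same argument with $b<0$ strictly gives that the smallest positive zero of $\phi_{\nu}$ is strictly less than $j_{\nu,1}$. The main obstacle I anticipate is the careful bookkeeping in the first step: matching the differentiation operator $z\tfrac{d}{dz}$ on $\mathbf{\Phi}_{\nu}$ with the operator $w\tfrac{d}{dw}$ on $\lambda_{\nu}^{\mathbf{\Phi}}(w)$ under the substitution $w=-z^4/16$ (which introduces a factor $4$) and identifying the \emph{correct} effective constant so that Lemma \ref{Zeros} applies with a negative $C$ — this is exactly where the hypotheses $a<1$ and $b<0$ (rather than $a<0$ and $b<0$) come from, and getting that threshold right is the delicate point.
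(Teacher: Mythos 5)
Your proposal follows essentially the same route as the paper: both reduce the claim to the Jensen polynomials of the $\mathcal{LPI}$ function obtained from the Hadamard factorization of $\mathbf{\Phi}_{\nu}$ (resp. $\mathbf{\Pi}_{\nu}$) in the fourth-root variable, apply Lemma \ref{Zeros} to $C\,P_n(\zeta)-\zeta P_n'(\zeta)$ with the effective constants $\tfrac{1}{4}(a-1)<0$ and $\tfrac{b}{4}<0$ (you correctly identified why the threshold is $a<1$ rather than $a<0$), and pass to the limit via Lemma \ref{Jensen}. The only loose point is the sign bookkeeping — Lemma \ref{Zeros} needs \emph{positive} zeros, so one must work with the variable $\zeta$ in which the Jensen polynomials are $\sum_k\binom{n}{k}(-\zeta)^k/[(\nu+1)_k(\nu+2)_{2k}]$, exactly as the paper does — but this is the "right bookkeeping" you already flagged and does not affect the argument.
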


\begin{proof}[\bf Proof]
Suppose that ${\nu }>-1.$ It is clear from Theorem \ref{thzeros} and Lemma \ref{Had1} on the infinite product
representation of $z\mapsto \mathbf{\Upsilon }_{{\nu }}(z)=2^{2{\nu }%
}\Gamma \left( {\nu }+1\right) \Gamma \left( {\nu }+2\right) z^{-2{\nu -1}}%
\mathbf{\Phi }_{{\nu }}(z)$ that this function belongs to $\mathcal{LP}$.
This implies that the function $z\mapsto \mathbf{\Upsilon }_{{\nu }%
}(2z^{\frac{1}{4}})=\mathbf{\tilde{\Upsilon}}_{{\nu }}(z)$ belongs to $\mathcal{LPI}$. Then it follows from Lemma \ref{Jensen} that its Jensen
polynomials%
\begin{equation*}
P_{n}(\mathbf{\tilde{\Upsilon}}_{{\nu }};\zeta )=\sum_{k=0}^{n}{\binom{n}{k}}%
\frac{1}{\left( {\nu +1}\right) _{k}\left( {\nu +2}\right) _{2k}}\left(
-\zeta \right) ^{k}
\end{equation*}%
are all hyperbolic. However, observe that the Jensen polynomials of $z\mapsto\tilde{%
\Psi}_{{\nu }}(z)=\Psi _{{\nu }}(2z^{\frac{1}{4}})$ are simply
\begin{equation*}
\frac{1}{4}P_{n}(\tilde{\Psi}_{{\nu }};\zeta )=\frac{1}{4}\left( {a}%
-1\right) \,P_{n}(\mathbf{\tilde{\Upsilon}}_{{\nu }};\zeta )-\zeta
\,P_{n}^{\prime }(\mathbf{\tilde{\Upsilon}}_{{\nu }};\zeta ).
\end{equation*}%
Lemma \ref{Zeros} implies that all zeros of $P_{n}(\tilde{\Psi}_{{\nu }%
};\zeta )$ are real and positive and that the smallest one precedes the
first zero of $P_{n}(\mathbf{\tilde{\Upsilon}}_{{\nu }};\zeta )$. In view of
Lemma \ref{Jensen}, the latter conclusion immediately yields that $\tilde{%
\Psi}_{{\nu }}\in \mathcal{LPI}$ and that its first zero is less than $%
\gamma _{{\nu },1}$. Finally, the first part of the statement of the lemma
follows after we go back from $\tilde{\Psi}_{{\nu }}$ to $\Psi _{{\nu }}$ by
setting $\zeta =\frac{z^{4}}{16}$.

Similarly, the function $z\mapsto \mathbf{\Omega }_{{\nu }}(z)=2^{2{\nu }%
}\Gamma ^{2}\left( {\nu }+1\right) z^{-2{\nu }}\mathbf{\Pi }_{{\nu }}(z)$
belongs to the Laguerre-P\'{o}lya class of entire functions, which implies
that the function $z\mapsto \mathbf{\Omega }_{{\nu }}(2z^{\frac{1}{4}})=%
\overline{\mathbf{\Omega }}_{{\nu }}(z)$ belongs to $\mathcal{LPI}$.
Then it follows from Lemma \ref{Jensen} that its Jensen polynomials%
\begin{equation*}
P_{n}(\overline{\mathbf{\Omega }}_{{\nu }};\zeta )=\sum_{k=0}^{n}{\binom{n}{k%
}}\frac{1}{\left( {\nu +1}\right) _{k}\left( {\nu +1}\right) _{2k}}\left(
-\zeta \right) ^{k}
\end{equation*}%
are all hyperbolic. However, observe that the Jensen polynomials of $z\mapsto\tilde{%
\phi}_{{\nu }}(z)=\phi _{{\nu }}(2z^{\frac{1}{4}})$ are simply
\begin{equation*}
\frac{1}{4}P_{n}(\tilde{\phi}_{{\nu }};\zeta )=\frac{b}{4}\,P_{n}(\overline{%
\mathbf{\Omega }}_{{\nu }};\zeta )-\zeta \,P_{n}^{\prime }(\overline{\mathbf{%
\Omega }}_{{\nu }};\zeta ).
\end{equation*}%
Lemma \ref{Zeros} implies that all zeros of $P_{n}(\tilde{\phi}_{{\nu }%
};\zeta )$ are real and positive and that the smallest one precedes the
first zero of $P_{n}(\overline{\mathbf{\Omega }}_{{\nu }};\zeta )$. In view
of Lemma \ref{Jensen}, the latter conclusion immediately yields that $\tilde{%
\phi}_{{\nu }}\in \mathcal{LPI}$ and that its first zero precedes $j_{%
{\nu },1}$. Thus, the second part of the statement of this lemma follows
after we go back from $\tilde{\phi}_{{\nu }}$ to $\phi _{{\nu }}$ by setting
$\zeta =\frac{z^{4}}{16}$.
\end{proof}

\subsection{The Hadamard factorization of the derivatives of $\mathbf{\Phi }%
_{{\protect\nu }}$ and $\mathbf{\Pi }_{{\protect\nu }}$}

The following infinite product representations are the Hadamard
factorizations for the derivatives of $\mathbf{\Phi }_{{\nu }}$ and $\mathbf{%
\Pi }_{{\nu }}.$

\begin{lemma}
\label{Had2} For $z\in {\mathbb{C}}$ the Hadamard factorizations of $%
\mathbf{\Phi }_{{\nu }}^{\prime }$ and $\mathbf{\Pi }_{{\nu }}^{\prime }$
are as follows: if $\nu >-\frac{1}{2}$ then%
\begin{equation}
\mathbf{\Phi }_{{\nu }}^{\prime }(z)=\frac{(2{\nu }+1)\left( \frac{z}{2}%
\right) ^{2{\nu }}}{\Gamma \left( {\nu }+1\right) \Gamma \left( {\nu }%
+2\right) }\prod_{n\geq1}\left( 1-\frac{z^{4}}{\gamma _{{\nu }%
,n}^{\prime 4}}\right) ,  \label{Ha1}
\end{equation}%
and if ${\nu >0}$ then%
\begin{equation}
\mathbf{\Pi }_{{\nu }}^{\prime }(z)=\frac{{\nu }\left( \frac{z}{2}\right) ^{2%
{\nu -1}}}{\Gamma ^{2}\left( {\nu }+1\right) }\prod_{n\geq1}\left( 1-\frac{z^{4}}{t_{{\nu },n}^{2}}\right) ,  \label{Ha2}
\end{equation}%
where $\gamma _{\nu ,n}^{\prime },$and $t_{\nu ,n}$ are the $n$th positive
zeros of the functions $\mathbf{\Phi }_{{\nu }}^{\prime }$ and $\mathbf{\Pi }%
_{{\nu }}^{\prime },$ respectively. Moreover, if $\nu >-\frac{1}{2}$ then the zeros $\gamma _{\nu ,n}$ and $\gamma
_{\nu ,n}^{\prime }$ interlace; and if ${\nu >0}$ then the
zeros $j_{\nu ,n}$ and $t_{\nu ,n}$ interlace.
\end{lemma}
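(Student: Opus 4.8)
The plan is to run the Laguerre--P\'olya argument of Lemma \ref{LP} once more, with differentiation taking the place that the multiplications by $(2{\nu}+a)$ and $(2{\nu}+b)$ played there. First I would differentiate the series \eqref{fi} and \eqref{pi} term by term. Using $\frac{d}{dz}(z/2)^{m}=\frac{m}{2}(z/2)^{m-1}$ one finds
\[
\mathbf{\Phi}_{{\nu}}^{\prime}(z)=\frac{(2{\nu}+1)(z/2)^{2{\nu}}}{\Gamma({\nu}+1)\Gamma({\nu}+2)}\sum_{n\geq0}\frac{(-1)^{n}(2{\nu}+4n+1)}{(2{\nu}+1)\,n!\,({\nu}+1)_{n}({\nu}+2)_{2n}}\,(z/2)^{4n},
\]
and the companion formula for $\mathbf{\Pi}_{{\nu}}^{\prime}$ with leading factor $\nu(z/2)^{2{\nu}-1}/\Gamma^{2}({\nu}+1)$ and general coefficient proportional to $\frac{(-1)^{n}(2{\nu}+4n)}{n!\,({\nu}+1)_{n}({\nu}+1)_{2n}}$. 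For ${\nu}>-\frac12$ (respectively ${\nu}>0$) these leading factors do not vanish, so they are the genuine dominant terms and the remaining series is normalized to $1$ at the origin and is a power series in $z^{4}$ with sign-alternating coefficients. Denote by $\widetilde{\Xi}_{{\nu}}$ and $\widetilde{\varphi}_{{\nu}}$ the entire functions obtained from these two series by the substitution $z=2\zeta^{1/4}$ (replacing $(z/2)^{4n}$ with $\zeta^{n}$); each is obtained from $\lambda^{\mathbf{\Phi}}_{{\nu}}(-\zeta)$, respectively $\lambda^{\mathbf{\Pi}}_{{\nu}}(-\zeta)$, by multiplying the $n$-th coefficient by a factor linear in $n$, hence they have the same growth order $\frac14$ and genus $0$. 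Consequently the lemma will follow once I show $\widetilde{\Xi}_{{\nu}}\in\mathcal{LPI}$ for ${\nu}>-\frac12$ and $\widetilde{\varphi}_{{\nu}}\in\mathcal{LPI}$ for ${\nu}>0$: then, being equal to $1$ at $0$ with alternating coefficients, $\widetilde{\Xi}_{{\nu}}(\zeta)=\prod_{n\geq1}(1-\zeta/\zeta_{n})$ with $\zeta_{n}>0$, and replacing $\zeta$ by $z^{4}/16$ gives \eqref{Ha1} with $\gamma_{{\nu},n}^{\prime 4}=16\zeta_{n}$, and analogously \eqref{Ha2} with $t_{{\nu},n}$ the positive zeros of $\mathbf{\Pi}_{{\nu}}^{\prime}$.

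For the membership I would copy the proof of Lemma \ref{LP} line for line. That proof shows that the Jensen polynomials $P_{n}(\widetilde{\mathbf{\Upsilon}}_{{\nu}};\zeta)=\sum_{k=0}^{n}\binom{n}{k}\frac{(-\zeta)^{k}}{({\nu}+1)_{k}({\nu}+2)_{2k}}$ and $P_{n}(\overline{\mathbf{\Omega}}_{{\nu}};\zeta)=\sum_{k=0}^{n}\binom{n}{k}\frac{(-\zeta)^{k}}{({\nu}+1)_{k}({\nu}+1)_{2k}}$ are hyperbolic with positive zeros. A direct computation, using $\sum_{k}\binom{n}{k}k\,c_{k}\zeta^{k}=\zeta\,\frac{d}{d\zeta}\sum_{k}\binom{n}{k}c_{k}\zeta^{k}$, gives
\[
-\frac{2{\nu}+1}{4}\,P_{n}(\widetilde{\Xi}_{{\nu}};\zeta)=-\frac{2{\nu}+1}{4}\,P_{n}(\widetilde{\mathbf{\Upsilon}}_{{\nu}};\zeta)-\zeta\,P_{n}^{\prime}(\widetilde{\mathbf{\Upsilon}}_{{\nu}};\zeta),
\]
and likewise $-\frac{{\nu}}{2}P_{n}(\widetilde{\varphi}_{{\nu}};\zeta)=-\frac{{\nu}}{2}P_{n}(\overline{\mathbf{\Omega}}_{{\nu}};\zeta)-\zeta P_{n}^{\prime}(\overline{\mathbf{\Omega}}_{{\nu}};\zeta)$. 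These are of the shape $C\,p(\zeta)-\zeta\,p^{\prime}(\zeta)$ with $p$ hyperbolic, $p(0)=1$, and $C=-\frac{2{\nu}+1}{4}<0$ (because ${\nu}>-\frac12$) respectively $C=-\frac{{\nu}}{2}<0$ (because ${\nu}>0$), so Lemma \ref{Zeros} applies: the polynomials $P_{n}(\widetilde{\Xi}_{{\nu}};\cdot)$ and $P_{n}(\widetilde{\varphi}_{{\nu}};\cdot)$ are hyperbolic with positive zeros, the smallest of which lies below the smallest zero of $p$. By Lemma \ref{Jensen} this yields $\widetilde{\Xi}_{{\nu}},\widetilde{\varphi}_{{\nu}}\in\mathcal{LPI}$, proving \eqref{Ha1} and \eqref{Ha2}; moreover the smallest zeros inherit the estimates $\gamma_{{\nu},1}^{\prime}\leq\gamma_{{\nu},1}$ and $t_{{\nu},1}\leq j_{{\nu},1}$.

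The interlacing assertions I would then obtain from Rolle's theorem, now that the product formulas are available. On $(0,\infty)$ the positive zeros of $\mathbf{\Phi}_{{\nu}}$ are $\gamma_{{\nu},1}<\gamma_{{\nu},2}<\cdots$ (Lemma \ref{Had1}), they are simple, and $\mathbf{\Phi}_{{\nu}}$ changes sign at each of them; since $\mathbf{\Phi}_{{\nu}}^{\prime}(z)>0$ for small $z>0$ (its leading term is $(2{\nu}+1)(z/2)^{2{\nu}}/[\Gamma({\nu}+1)\Gamma({\nu}+2)]$), the function $\mathbf{\Phi}_{{\nu}}^{\prime}$ vanishes at least once in $(0,\gamma_{{\nu},1})$ and at least once in each $(\gamma_{{\nu},n},\gamma_{{\nu},n+1})$. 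But by \eqref{Ha1} the positive zeros of $\mathbf{\Phi}_{{\nu}}^{\prime}$ are exactly $\gamma_{{\nu},1}^{\prime}<\gamma_{{\nu},2}^{\prime}<\cdots$, so there is precisely one in each of these disjoint intervals, which forces $\gamma_{{\nu},1}^{\prime}<\gamma_{{\nu},1}<\gamma_{{\nu},2}^{\prime}<\gamma_{{\nu},2}<\cdots$. Since $I_{{\nu}}>0$ on $(0,\infty)$, the positive zeros of $\mathbf{\Pi}_{{\nu}}=J_{{\nu}}I_{{\nu}}$ coincide with $j_{{\nu},1}<j_{{\nu},2}<\cdots$, and the same argument together with \eqref{Ha2} gives $t_{{\nu},1}<j_{{\nu},1}<t_{{\nu},2}<j_{{\nu},2}<\cdots$.

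I do not expect a real obstacle here: the whole scheme of Lemma \ref{LP} carries over and only the bookkeeping changes. The two points that require attention are the sign of the constant $C$ in the appeal to Lemma \ref{Zeros}, which is negative exactly in the stated ranges ${\nu}>-\frac12$ and ${\nu}>0$ and which degenerates at the endpoints ${\nu}=-\frac12$, ${\nu}=0$ where the leading coefficient of $\mathbf{\Phi}_{{\nu}}^{\prime}$, respectively $\mathbf{\Pi}_{{\nu}}^{\prime}$, vanishes (this is why these values are excluded); and the elementary remark that $\widetilde{\Xi}_{{\nu}}$ and $\widetilde{\varphi}_{{\nu}}$ have order $\frac14<1$, which is what rules out an exponential factor and turns $\mathcal{LPI}$-membership into the clean Hadamard products \eqref{Ha1}, \eqref{Ha2}. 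The strict inequalities in the interlacing come for free once one combines the Rolle count with the fact, just proved, that the positive zeros of the derivatives are precisely the $\gamma_{{\nu},n}^{\prime}$, respectively $t_{{\nu},n}$.
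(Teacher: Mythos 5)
Your derivation of the factorizations \eqref{Ha1} and \eqref{Ha2} is correct and in fact more self-contained than the paper's on this point: the paper only computes the growth order $\frac{1}{4}$ and invokes Hadamard's theorem, whereas you additionally justify, via the Jensen polynomials and Lemma \ref{Zeros} with $C=-\frac{2\nu+1}{4}<0$ (respectively $C=-\frac{\nu}{2}<0$), that the auxiliary functions $\widetilde{\Xi}_{\nu}$ and $\widetilde{\varphi}_{\nu}$ belong to $\mathcal{LPI}$ --- which is exactly what one needs before the product can be written over real positive fourth powers. I checked the identity $-\frac{2\nu+1}{4}P_{n}(\widetilde{\Xi}_{\nu};\zeta)=-\frac{2\nu+1}{4}P_{n}(\widetilde{\mathbf{\Upsilon}}_{\nu};\zeta)-\zeta P_{n}^{\prime}(\widetilde{\mathbf{\Upsilon}}_{\nu};\zeta)$ and its companion; both are right, and the sign condition on $C$ degenerates precisely at $\nu=-\frac12$ and $\nu=0$ as you note. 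That half of the argument I would accept as written.

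The interlacing half has a genuine gap. Rolle's theorem gives you \emph{at least} one zero of $\mathbf{\Phi}_{\nu}^{\prime}$ in $(0,\gamma_{\nu,1})$ and in each $(\gamma_{\nu,n},\gamma_{\nu,n+1})$, and \eqref{Ha1} tells you that the positive zeros form an increasing sequence $\gamma_{\nu,1}^{\prime}<\gamma_{\nu,2}^{\prime}<\cdots$; but ``at least one per interval'' together with ``the zeros form an increasing sequence'' does not yield ``exactly one per interval''. Nothing you have said excludes, say, $\gamma_{\nu,1}^{\prime},\gamma_{\nu,2}^{\prime}\in(0,\gamma_{\nu,1})$ with only $\gamma_{\nu,3}^{\prime}$ in $(\gamma_{\nu,1},\gamma_{\nu,2})$, which is fully consistent with your Rolle count. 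This is exactly where the paper brings in an extra tool: since $\mathbf{\Upsilon}_{\nu}\in\mathcal{LP}$, the Laguerre inequality gives $\left(\mathbf{\Phi}_{\nu}^{\prime}(z)\right)^{2}-\mathbf{\Phi}_{\nu}(z)\mathbf{\Phi}_{\nu}^{\prime\prime}(z)>(2\nu+1)\left(\mathbf{\Phi}_{\nu}(z)\right)^{2}/z^{2}>0$ for $\nu>-\frac12$, so $z\mapsto\mathbf{\Phi}_{\nu}^{\prime}(z)/\mathbf{\Phi}_{\nu}(z)$ is \emph{strictly decreasing} between consecutive zeros, runs from $+\infty$ down to $-\infty$ there, and hence vanishes exactly once in each interval (the same inequality is what the paper uses to certify simplicity of the $\gamma_{\nu,n}$; your reading of it off the strict interlacing with the $j_{\nu,n}$ in Lemma \ref{Had1} is defensible). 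To repair your version you must either import this monotonicity of the logarithmic derivative, or upgrade your appeal to Lemma \ref{Zeros} to the full polynomial-level interlacing of the zeros of $Cp-xp^{\prime}$ with those of $p$ and pass it to the limit through the Jensen polynomials; the Rolle count alone does not close the argument. The same remark applies verbatim to $t_{\nu,n}$ versus $j_{\nu,n}$.
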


\begin{proof}[\bf Proof]
From (\ref{fi})\ we have that%
\begin{equation*}
\frac{1}{2{\nu }+1}\Gamma \left( {\nu }+1\right) \Gamma \left( {\nu }%
+2\right) 2^{2{\nu }}z^{-2{\nu }}\mathbf{\Phi }_{{\nu }}^{\prime }(z)=\frac{1%
}{2{\nu }+1}\sum_{n\geq0}\frac{\left( -1\right) ^{n}\left( 2{\nu }%
+4n+1\right) }{2^{4n}n!\left( {\nu +1}\right) _{n}\left( {\nu +2}\right)
_{2n}}z^{4n}
\end{equation*}%
and%
\begin{equation*}
\lim_{n\rightarrow \infty }\frac{n\log n}{4n\log 2+\log \Gamma \left( {n}%
+1\right) +\log \left( {\nu +1}\right) _{n}+\log \left( {\nu +2}\right)
_{2n}-\log \left( 2{\nu }+4n+1\right) }=\frac{1}{4}.
\end{equation*}%
Here we used $n!=\Gamma \left( {n}+1\right) $ and
\begin{equation*}
\lim_{n\longrightarrow \infty }\frac{\log \Gamma \left( bn+c\right) }{n\log n%
}=b,
\end{equation*}%
where $b$ and $c$ are positive constants. In view of the formula of the growth order of entire functions \cite[p. 6]{LV} we infer that
the above entire function is of growth order $\rho =\frac{1}{4}$, and thus by
applying Hadamard's Theorem \cite[p. 26]{LV} the infinite product
representation of $\mathbf{\Phi }_{{\nu }}^{\prime }$ can be written indeed as in (%
\ref{Ha1}).

According to Theorem \ref{thzeros} the function $z\mapsto\mathbf{\Upsilon }_{{\nu }}(z)=2^{2{\nu }}\Gamma \left( {\nu }%
+1\right) \Gamma \left( {\nu }+2\right) z^{-2{\nu -1}}\mathbf{\Phi }_{{\nu }%
}(z)$ belongs to $\mathcal{LP}$ (since the exponential factors in the
infinite product are canceled because of the symmetry of the zeros $\pm
\gamma _{\nu ,n},$ $n\in \mathbb{N},$ with respect to the origin), it
follows that it satisfies the Laguerre inequality (see \cite{skov})
\begin{equation}
\left( \mathbf{\Upsilon }_{\nu }^{(n)}(x)\right) ^{2}-\mathbf{%
\Upsilon }_{\nu }^{(n-1)}(x) \mathbf{\Upsilon }_{\nu
}^{(n+1)}(x) >0,  \label{2.3}
\end{equation}%
where $\nu >-\frac{1}{2}$ and $z\in \mathbb{R}$. On the other hand, we have that%
\begin{equation*}
\mathbf{\Upsilon }_{\nu }^{\prime }(z)=2^{2{\nu }}\Gamma \left( {\nu }%
+1\right) \Gamma \left( {\nu }+2\right) z^{-2{\nu -2}}\left[ z\mathbf{\Phi }%
_{{\nu }}^{\prime }(z)-\left( 2{\nu }+1\right) \mathbf{\Phi }_{{\nu }}(z)%
\right] ,
\end{equation*}%
\begin{equation*}
\mathbf{\Upsilon }_{\nu }^{\prime \prime }(z)=2^{2{\nu }}\Gamma \left( {\nu }%
+1\right) \Gamma \left( {\nu }+2\right) z^{-2{\nu -3}}\left[ z^{2}\mathbf{%
\Phi }_{{\nu }}^{\prime \prime }(z)-2\left( 2{\nu }+1\right) z\mathbf{\Phi }%
_{{\nu }}^{\prime }(z)+\left( 2{\nu }+1\right) \left( 2{\nu }+2\right)
\mathbf{\Phi }_{{\nu }}(z)\right]
\end{equation*}%
and thus the Laguerre inequality (\ref{2.3}) for $n=1$ is equivalent to%
\begin{equation*}
2^{4{\nu }}\Gamma ^{2}\left( {\nu }+1\right) \Gamma ^{2}\left( {\nu }%
+2\right) z^{-4{\nu -4}}\left[ z^{2}\left( \mathbf{\Phi }_{{\nu }}^{\prime
}(z)\right) ^{2}-z^{2}\mathbf{\Phi }_{{\nu }}(z)\mathbf{\Phi }_{{\nu }%
}^{\prime \prime }(z)-\left( 2\nu +1\right) \left( \mathbf{\Phi }_{{\nu }%
}(z)\right) ^{2}\right] >0.
\end{equation*}%
This implies that%
\begin{equation}\label{contr}
\left( \mathbf{\Phi }_{{\nu }}^{\prime }(z)\right) ^{2}-\mathbf{\Phi }_{{\nu
}}(z)\mathbf{\Phi }_{{\nu }}^{\prime \prime }(z)>\frac{\left( 2\nu +1\right)
\left( \mathbf{\Phi }_{{\nu }}(z)\right) ^{2}}{z^{2}}>0
\end{equation}
for $\nu >-\frac{1}{2}$ and $z\in \mathbb{R}$, and thus $z\mapsto \mathbf{\Phi
}_{{\nu }}^{\prime }(z)/\mathbf{\Phi }_{{\nu }}(z)$ is decreasing on $%
(0,\infty )\backslash \left\{ \gamma _{\nu ,n}:n\in \mathbb{N}\right\} .$
Since the zeros $\gamma _{\nu ,n}$ of the function $\mathbf{\Phi }_{{\nu }}$
are real and simple\footnote{If the zeros $\gamma_{\nu,n}$ would be not simple,
let us suppose that are of multiplicity two for example, then we would have a
contradiction with the inequality \eqref{contr}.}, $\mathbf{\Phi }_{{\nu }}^{\prime }(z)$ does not vanish
in $\gamma _{\nu ,n},$ $n\in \mathbb{N}$. Thus, for a fixed $k\in \mathbb{N}$
the function $z\mapsto \mathbf{\Phi }_{{\nu }}^{\prime }(z)/\mathbf{\Phi
}_{{\nu }}(z)$ takes the limit $\infty $ when $z\searrow \gamma _{\nu ,k-1},$
and the limit $-\infty $ when $z\nearrow \gamma _{\nu ,k}.$ Moreover, since $%
z\mapsto \mathbf{\Phi }_{{\nu }}^{\prime }(z)/\mathbf{\Phi }_{{\nu }}(z)$
is decreasing on $(0,\infty )\backslash \left\{ \gamma _{\nu ,n}:n\in
\mathbb{N}\right\} $ it results that in each interval $(\gamma _{\nu
,k-1},\gamma _{\nu ,k})$ its restriction intersects the horizontal line only
once, and the abscissa of this intersection point is exactly $\gamma _{\nu
,k}^{\prime }$. Consequently, the zeros $\gamma _{\nu ,n}$ and $\gamma _{\nu
,n}^{\prime }$ interlace. Here we used the convention $\gamma _{\nu
,0}=0$.

By means of (\ref{pi1}) we have%
\begin{equation*}
\frac{1}{2{\nu }}\Gamma ^{2}\left( {\nu }+1\right) 2^{2{\nu }}z^{-2{\nu +1}}%
\mathbf{\Pi }_{{\nu }}^{\prime }(z)=\frac{1}{2{\nu }}\sum_{n\geq0}%
\frac{\left( -1\right) ^{n}\left( 2{\nu }+4n\right) }{2^{4n}n!\left( {\nu +1}%
\right) _{n}\left( {\nu +1}\right) _{2n}}z^{4n}
\end{equation*}%
of which growth order can be written as
\begin{equation*}
\lim_{n\rightarrow \infty }\frac{n\log n}{4n\log 2+\log \Gamma \left( {n}%
+1\right) +\log \left( {\nu +1}\right) _{n}+\log \left( {\nu +1}\right)
_{2n}-\log \left( 2{\nu }+4n\right) }=\frac{1}{4}.
\end{equation*}%
By Hadamard's Theorem \cite[p. 26]{LV} the infinite product
representation of $\mathbf{\Pi }_{{\nu }}^{\prime }$ is exactly as in (\ref%
{Ha2}).

According to Theorem \ref{thzeros} the function $z\mapsto \mathbf{\Omega }_{{\nu }}(z)=2^{2{\nu }}\Gamma
^{2}\left( {\nu }+1\right) z^{-2{\nu }}\mathbf{\Pi }_{{\nu }}(z)$ belongs to
$\mathcal{LP}$ (since the exponential factors in the infinite product are
canceled because of the symmetry of the zeros $\pm j_{\nu ,n},$ $n\in
\mathbb{N},$ with respect to the origin). On the other hand, we have that%
\begin{equation*}
\mathbf{\Omega }_{\nu }^{\prime }(z)=2^{2{\nu }}\Gamma ^{2}\left( {\nu }%
+1\right) z^{-2{\nu -1}}\left[ z\mathbf{\Pi }_{{\nu }}^{\prime }(z)-2{\nu }%
\mathbf{\Pi }_{{\nu }}(z)\right] ,
\end{equation*}%
\begin{equation*}
\mathbf{\Omega }_{\nu }^{\prime \prime }(z)=2^{2{\nu }}\Gamma ^{2}\left( {%
\nu }+1\right) z^{-2{\nu -2}}\left[ z^{2}\mathbf{\Pi }_{{\nu }}^{\prime
\prime }(z)-4{\nu }z\mathbf{\Pi }_{{\nu }}^{\prime }(z)+2{\nu }\left( 2{\nu }%
+1\right) \mathbf{\Pi }_{{\nu }}(z)\right]
\end{equation*}%
and thus the Laguerre inequality (see \cite{skov})
$$\left( \mathbf{\Omega}_{\nu }^{(n)}(x)\right) ^{2}-\mathbf{%
\Omega}_{\nu }^{(n-1)}(x)\mathbf{\Omega}_{\nu
}^{(n+1)}(x)>0$$
for $n=1$ is equivalent to%
\begin{equation*}
2^{4{\nu }}\Gamma ^{4}\left( {\nu }+1\right) z^{-4{\nu -2}}\left[
z^{2}\left( \mathbf{\Pi }_{{\nu }}^{\prime }(z)\right) ^{2}-z^{2}\mathbf{\Pi
}_{{\nu }}(z)\mathbf{\Pi }_{{\nu }}^{\prime \prime }(z)-2\nu \left( \mathbf{%
\Pi }_{{\nu }}(z)\right) ^{2}\right] >0.
\end{equation*}%
This implies that%
\begin{equation*}
\left( \mathbf{\Pi }_{{\nu }}^{\prime }(z)\right) ^{2}-\mathbf{\Pi }_{{\nu }%
}(z)\mathbf{\Pi }_{{\nu }}^{\prime \prime }(z)>\frac{2\nu \left( \mathbf{\Pi
}_{{\nu }}(z)\right) ^{2}}{z^{2}}>0
\end{equation*}%
for ${\nu >0}$ and $z\in \mathbb{R}$, and thus $z\mapsto \mathbf{\Pi }_{{%
\nu }}^{\prime }(z)/\mathbf{\Pi }_{{\nu }}(z)$ is decreasing on $(0,\infty
)\backslash \left\{ j_{\nu ,n}:n\in \mathbb{N}\right\} .$ Since the zeros $%
j_{\nu ,n}$ of the function $\mathbf{\Pi }_{{\nu }}$ are real and simple, $%
\mathbf{\Pi }_{{\nu }}^{\prime }(z)$ does not vanish in $j_{\nu ,n},$ $n\in
\mathbb{N}$. Thus, for a fixed $k\in \mathbb{N}$ the function $z\mapsto
\mathbf{\Pi }_{{\nu }}^{\prime }(z)/\mathbf{\Pi }_{{\nu }}(z)$ takes the
limit $\infty $ when $z\searrow j_{\nu ,k-1},$ and the limit $-\infty $ when
$z\nearrow j_{\nu ,k}.$ Moreover, since $z\mapsto \mathbf{\Pi }_{{\nu }%
}^{\prime }(z)/\mathbf{\Pi }_{{\nu }}(z)$ is decreasing on $(0,\infty
)\backslash \left\{ j_{\nu ,n}:n\in \mathbb{N}\right\} $ it results that in
each interval $(j_{\nu ,k-1},j_{\nu ,k})$ its restriction intersects the
horizontal line only once, and the abscissa of this intersection point is
exactly $t_{\nu ,k}$. Consequently, the zeros $j_{\nu ,n}$ and $t_{\nu ,n}$
interlace. Here we used the convention $j_{\nu ,0}=0$.
\end{proof}

\subsection{The Hadamard factorization of the derivatives of $g_{{\protect%
\nu }}$, $h_{{\protect\nu }}$, $v_{{\protect\nu }}$ and $w_{{\protect\nu }}$}
The next result contains the Hadamard factorizations of the derivatives of
$g_{\nu},$ $h_{\nu},$ $v_{\nu}$ and $w_{\nu},$ which by means of Lemma \ref{LP} belong to the
Laguerre-P\'olya class $\mathcal{LP}$ and therefore when $\nu>-1$ they have only real zeros.

\begin{lemma}
\label{Had3} If ${\nu >-1}$ then the functions $g_{{\nu }}^{\prime }$, $h_{{%
\nu }}^{\prime }$, $v_{{\nu }}^{\prime }$ and $w_{{\nu }}^{\prime }$ are
entire functions of order $\rho =\frac{1}{4}$. Consequently, their Hadamard
factorizations for $z\in {\mathbb{C}}$ are of the form%
\begin{equation*}
g_{{\nu }}^{\prime }(z)=\prod\limits_{n\geq1}\left( 1-\frac{z^{4}}{%
\zeta _{{\nu },n}^{4}}\right) \text{, \ \ \ }h_{{\nu }}^{\prime
}(z)=\prod\limits_{n\geq1}\left( 1-\frac{z}{\xi _{{\nu },n}^{4}}%
\right)
\end{equation*}%
and%
\begin{equation*}
v_{{\nu }}^{\prime }(z)=\prod\limits_{n\geq1}\left( 1-\frac{z^{4}}{%
\vartheta _{{\nu },n}^{4}}\right) \text{, \ \ \ }w_{{\nu }}^{\prime
}(z)=\prod\limits_{n\geq1}\left( 1-\frac{z}{\omega _{{\nu },n}^{4}}%
\right) ,
\end{equation*}%
where $\zeta _{{\nu },n}$ and $\xi _{{\nu },n}$\ are the $n$th positive zeros
of $z\mapsto z\mathbf{\Phi }_{{\nu }}^{\prime }(z)-2{\nu }\mathbf{\Phi }%
_{{\nu }}(z)$ and $z\mapsto z\mathbf{\Phi }_{{\nu }}^{\prime }(z)-(2{\nu
-3)}\mathbf{\Phi }_{{\nu }}(z)$\ while $\vartheta _{{\nu },n}$ and $\omega _{%
{\nu },n}$ are the $n$th positive zeros of $z\mapsto z\mathbf{\Pi }_{{\nu }%
}^{\prime }(z)-(2{\nu -1})\mathbf{\Pi }_{{\nu }}(z)$ and $z\mapsto z%
\mathbf{\Pi }_{{\nu }}^{\prime }(z)-(2{\nu -4})\mathbf{\Pi }_{{\nu }}(z)$.
\end{lemma}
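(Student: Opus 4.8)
The plan is to put each of the four derivatives into closed form and then read it off from Lemma \ref{LP}. Starting from $g_{\nu}(z)=2^{2\nu}z^{-2\nu}\Gamma(\nu+1)\Gamma(\nu+2)\mathbf{\Phi}_{\nu}(z)$, $h_{\nu}(z)=2^{2\nu}z^{-\nu/2+3/4}\Gamma(\nu+1)\Gamma(\nu+2)\mathbf{\Phi}_{\nu}(\sqrt[4]{z})$, $v_{\nu}(z)=2^{2\nu}z^{-2\nu+1}\Gamma^{2}(\nu+1)\mathbf{\Pi}_{\nu}(z)$ and $w_{\nu}(z)=2^{2\nu}z^{-\nu/2+1}\Gamma^{2}(\nu+1)\mathbf{\Pi}_{\nu}(\sqrt[4]{z})$, I would differentiate and collect terms. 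Writing $t=\sqrt[4]{z}$, this gives
\[
g_{\nu}'(z)=2^{2\nu}\Gamma(\nu+1)\Gamma(\nu+2)\,z^{-2\nu-1}\big[z\mathbf{\Phi}_{\nu}'(z)-2\nu\,\mathbf{\Phi}_{\nu}(z)\big],
\]
\[
h_{\nu}'(z)=\frac{2^{2\nu}}{4}\,\Gamma(\nu+1)\Gamma(\nu+2)\,z^{-\nu/2-1/4}\big[t\,\mathbf{\Phi}_{\nu}'(t)-(2\nu-3)\,\mathbf{\Phi}_{\nu}(t)\big],
\]
and the same computation with $\mathbf{\Pi}_{\nu}$ and $\Gamma^{2}(\nu+1)$ in the roles of $\mathbf{\Phi}_{\nu}$ and $\Gamma(\nu+1)\Gamma(\nu+2)$ produces $v_{\nu}'(z)=2^{2\nu}\Gamma^{2}(\nu+1)z^{-2\nu}[z\mathbf{\Pi}_{\nu}'(z)-(2\nu-1)\mathbf{\Pi}_{\nu}(z)]$ and $w_{\nu}'(z)=\frac{2^{2\nu}}{4}\Gamma^{2}(\nu+1)z^{-\nu/2}[t\mathbf{\Pi}_{\nu}'(t)-(2\nu-4)\mathbf{\Pi}_{\nu}(t)]$.

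Next I would invoke Lemma \ref{LP}. With $a=0$ and $a=-3$ (both $<1$) the bracketed expressions built from $\mathbf{\Phi}_{\nu}$ equal $-2(z/2)^{2\nu+1}\Psi_{\nu}(z)/(\Gamma(\nu+1)\Gamma(\nu+2))$ and $-2(t/2)^{2\nu+1}\Psi_{\nu}(t)/(\Gamma(\nu+1)\Gamma(\nu+2))$ respectively, for the associated functions $\Psi_{\nu}\in\mathcal{LP}$; with $b=-1$ and $b=-4$ (both $<0$) the ones built from $\mathbf{\Pi}_{\nu}$ equal $-(z/2)^{2\nu}\phi_{\nu}(z)/\Gamma^{2}(\nu+1)$ and $-(t/2)^{2\nu}\phi_{\nu}(t)/\Gamma^{2}(\nu+1)$ for the associated $\phi_{\nu}\in\mathcal{LP}$. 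Substituting these and using $t^{2\nu+1}=z^{\nu/2+1/4}$, $t^{2\nu}=z^{\nu/2}$, all gamma factors and all powers of $z$ cancel and one is left with $g_{\nu}'(z)=-\Psi_{\nu}(z)$, $h_{\nu}'(z)=-\tfrac14\Psi_{\nu}(\sqrt[4]{z})$, $v_{\nu}'(z)=-\phi_{\nu}(z)$ and $w_{\nu}'(z)=-\tfrac14\phi_{\nu}(\sqrt[4]{z})$. Thus, for $\nu>-1$, each of the four derivatives is --- up to a nonzero constant and, for $h_{\nu}'$ and $w_{\nu}'$, the substitution $z\mapsto\sqrt[4]{z}$ --- a member of the Laguerre--P\'{o}lya class; in particular its zeros are real: for $g_{\nu}'$ and $v_{\nu}'$, which depend on $z^{4}$, they fall into the four-fold symmetric quadruples $\pm\zeta_{\nu,n},\pm i\zeta_{\nu,n}$ and $\pm\vartheta_{\nu,n},\pm i\vartheta_{\nu,n}$, whereas for $h_{\nu}'$ and $w_{\nu}'$, which depend on $z$, they are the positive reals $\xi_{\nu,n}^{4}$ and $\omega_{\nu,n}^{4}$.

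Finally I would determine the shape of the product. Replacing $z^{4}$ by $\zeta$ in the power series of $g_{\nu}'$, $v_{\nu}'$ and $z$ by $\zeta$ in those of $h_{\nu}'$, $w_{\nu}'$ gives in each case a series $\sum_{n\geq0}\tau_{n}\zeta^{n}$ whose coefficient $\tau_{n}$ is, up to a polynomial factor in $n$, of the form $\pm1/(n!\,\Gamma(\nu+n+1)\,\Gamma(\nu+2n+k))$ with $k\in\{1,2\}$; since $\log\Gamma(bn+c)\sim bn\log n$ as $n\to\infty$ for fixed $b,c>0$, one gets $\log(1/|\tau_{n}|)\sim 4n\log n$, so this $\zeta$-series has growth order $\tfrac14$ by the same computation as in the proof of Lemma \ref{Had2}. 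Being transcendental of order $\tfrac14<1$, it has genus $0$, so Hadamard's theorem gives a canonical product with no exponential factor, and in particular the zero sets are infinite. Returning to $z$, normalizing by $g_{\nu}'(0)=h_{\nu}'(0)=v_{\nu}'(0)=w_{\nu}'(0)=1$ (valid since $g_{\nu},h_{\nu},v_{\nu},w_{\nu}\in\mathcal{A}$), and reading the zeros off the displayed formulas for the derivatives, yields the asserted factorizations, with $\zeta_{\nu,n},\xi_{\nu,n}$ the $n$th positive zeros of $z\mapsto z\mathbf{\Phi}_{\nu}'(z)-2\nu\mathbf{\Phi}_{\nu}(z)$ and $z\mapsto z\mathbf{\Phi}_{\nu}'(z)-(2\nu-3)\mathbf{\Phi}_{\nu}(z)$, and $\vartheta_{\nu,n},\omega_{\nu,n}$ the $n$th positive zeros of $z\mapsto z\mathbf{\Pi}_{\nu}'(z)-(2\nu-1)\mathbf{\Pi}_{\nu}(z)$ and $z\mapsto z\mathbf{\Pi}_{\nu}'(z)-(2\nu-4)\mathbf{\Pi}_{\nu}(z)$. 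The main obstacle is the bookkeeping in the first two steps --- making every power of $z$ and every gamma factor cancel so that each derivative is literally the function furnished by Lemma \ref{LP}; once that matching is secured, the rest is the same order-$\tfrac14$ Hadamard factorization already carried out for $\mathbf{\Phi}_{\nu}'$ and $\mathbf{\Pi}_{\nu}'$.
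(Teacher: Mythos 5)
Your proposal is correct and follows essentially the same route as the paper: compute the closed forms of $g_{\nu}'$, $h_{\nu}'$, $v_{\nu}'$, $w_{\nu}'$, verify growth order $\tfrac14$ (hence genus $0$) by the same $\log\Gamma(bn+c)\sim bn\log n$ asymptotics used for Lemma \ref{Had2}, obtain reality of the zeros from Lemma \ref{LP} with the admissible parameters $a=0,-3$ and $b=-1,-4$, and conclude via Hadamard's theorem. The only difference is cosmetic: you make the identification $g_{\nu}'=-\Psi_{\nu}$, $h_{\nu}'=-\tfrac14\Psi_{\nu}(\sqrt[4]{z})$, etc.\ fully explicit, whereas the paper records the equivalent power-series expansions and cites Lemma \ref{LP} for the reality of $\zeta_{\nu,n}$, $\xi_{\nu,n}$, $\vartheta_{\nu,n}$, $\omega_{\nu,n}$ directly.
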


\begin{proof}[\bf Proof]
We have that%
\begin{equation*}
g_{{\nu }}^{\prime }(z)=2^{2{\nu }}\Gamma \left( {\nu }+1\right) \Gamma
\left( {\nu }+2\right) z^{-2{\nu -1}}\left[ z\mathbf{\Phi }_{{\nu }}^{\prime
}(z)-2{\nu }\mathbf{\Phi }_{{\nu }}(z)\right] =\sum_{n\geq0}\frac{%
\left( -1\right) ^{n}(4n+1)z^{4n}}{n!2^{4n}\left( {\nu +1}\right) _{n}\left(
{\nu +2}\right) _{2n}},
\end{equation*}%
\begin{equation*}
h_{{\nu }}^{\prime }(z)=2^{2{\nu -2}}\Gamma \left( {\nu }+1\right) \Gamma
\left( {\nu }+2\right) z^{-\frac{{\nu }}{2}{-}\frac{{1}}{4}}\left[ z^{\frac{1}{4}}%
\mathbf{\Phi }_{{\nu }}^{\prime }(z^{\frac{1}{4}})-(2{\nu -3)}\mathbf{\Phi }_{{\nu }%
}(z^{\frac{1}{4}})\right] =\sum_{n\geq0}\frac{\left( -1\right) ^{n}(n+1)z^{n}%
}{n!2^{4n}\left( {\nu +1}\right) _{n}\left( {\nu +2}\right) _{2n}},
\end{equation*}%
\begin{equation*}
v_{{\nu }}^{\prime }(z)=2^{2{\nu }}\Gamma ^{2}\left( {\nu }+1\right) z^{-2{%
\nu }}\left[ z\mathbf{\Pi }_{{\nu }}^{\prime }(z)-(2{\nu -1})\mathbf{\Pi }_{{%
\nu }}(z)\right] =\sum_{n\geq0}\frac{\left( -1\right) ^{n}(4n+1)z^{4n}%
}{n!2^{4n}\left( {\nu +1}\right) _{n}\left( {\nu +1}\right) _{2n}},
\end{equation*}%
\begin{equation*}
w_{{\nu }}^{\prime }(z)=2^{2{\nu -2}}\Gamma ^{2}\left( {\nu }+1\right) z^{-%
\frac{{\nu }}{2}}\left[ z^{\frac{1}{4}}\mathbf{\Pi }_{{\nu }}^{\prime }(z^{\frac{1}{4}})-(2{%
\nu -4})\mathbf{\Pi }_{{\nu }}(z^{\frac{1}{4}})\right] =\sum_{n\geq0}\frac{%
\left( -1\right) ^{n}(n+1)z^{n}}{n!2^{4n}\left( {\nu +1}\right) _{n}\left( {%
\nu +1}\right) _{2n}}
\end{equation*}%
and since
\begin{equation*}
\lim_{n\rightarrow \infty }\frac{n\log n}{4n\log 2+\log \Gamma \left( {n}%
+1\right) +\log \left( {\nu +1}\right) _{n}+\log \left( {\nu +a}\right)
_{2n}-\log \left( bn+1\right) }=\frac{1}{4},
\end{equation*}%
for each $a,b>0,$ it follows that each of the above entire functions have
growth order $\frac{1}{4}$ and hence their genus is zero.
Moreover, we know that the zeros $\zeta _{{\nu },n}$, $\xi _{{\nu },n},$ $%
\vartheta _{{\nu },n}$ and $\omega _{{\nu },n},$ $n\in \mathbb{N}$, are real
according to Lemma \ref{LP}, and with this the rest of the proof follows by
applying Hadamard's Theorem \cite[p. 26]{LV}.
\end{proof}

\subsection{A monotonicity result on the zeros of Bessel functions} We know that the equation $J_\nu(1)=0$ has a single root
$\nu^{\circ}=-0.77{\dots}.$ If $\nu>\nu^{\circ},$ then the smallest positive
root of the equation  $J_{\nu}(z)=0$ is $j_{\nu,1}$ and
$j_{\nu,1}>1$. Moreover, if $\nu<\nu^{\circ},$ then $j_{\nu,1}\in(0,
1).$ The mapping $\zeta:(\nu^{\circ},\infty)\rightarrow(0,\infty),$ defined
by
$$\zeta(\nu)=\sum_{n\geq1}\frac{1}{j_{\nu,n}^4-1}+\sum_{n\geq1}\frac{j_{\nu,n}^4}{(j_{\nu,n}^4-1)^2},$$
is strictly decreasing and
$\lim\limits_{\nu\rightarrow\nu^{\circ}}\zeta(\nu)=+\infty,$
$\lim\limits_{\nu\rightarrow\infty}\zeta(\nu)=0.$ Consequently the
equation
$$1=\sum_{n\geq1}\frac{1}{j_{\nu,n}^4-1}+\sum_{n\geq1}\frac{j_{\nu,n}^4}{(j_{\nu,n}^4-1)^2}$$
has a unique root $\underline{\nu}\in(\nu^{\circ},\infty).$ Moreover, it is possible to show that $\underline{\nu}\in\left(\nu^{\circ},-\frac{1}{2}\right).$
By using the second Rayleigh sum for the zeros of Bessel functions of the first kind
$$\sum_{n\geq1}\frac{1}{j^4_{\nu,n}}=\frac{1}{2^4(\nu+1)^2(\nu+2)}$$
we have for $\nu>-\frac{1}{2}$ that
$$\sum_{n\geq1}\frac{1}{j^4_{\nu,n}}\leq\frac{1}{2^4(1-\frac{1}{2})^2(2-\frac{1}{2})}=\frac{1}{6}.$$
This implies that $j^4_{\nu,n}>6$ for $\nu>-\frac{1}{2}$ and $n\in\mathbb{N},$ and this inequality is equivalent to $$\frac{1}{j^4_{\nu,n}-1}<\frac{6}{5}\cdot\frac{1}{j^4_{\nu,n}}.$$ Thus, we get
\begin{equation}\label{c2b3mm3d4f5g}\sum_{n\geq 1}\frac{1}{j^4_{\nu,n}-1}<
\frac{6}{5}\sum_{n\geq1}\frac{1}{j^4_{\nu,n}}<\frac{6}{5}\cdot\frac{1}{6}=\frac{1}{5},\end{equation}
which implies that $\underline{\nu}\in\left(\nu^{\circ},-\frac{1}{2}\right).$

\begin{lemma}\label{lemzerosBessel}
The function $\Theta:(\underline{\nu},\infty)\rightarrow\mathbb{R},$ defined by
\begin{equation}\label{3m3d4f5g}
\Theta(\nu)=1-\sum_{n\geq1}\frac{1}{j_{\nu,n}^4-1}-\frac{\sum\limits_{n\geq1}\frac{j_{\nu,n}^4}{(j_{\nu,n}^4-1)^2}}{1-\sum\limits_{n\geq1}\frac{1}{j_{\nu,n}^4-1}},
\end{equation}
is strictly increasing.
\end{lemma}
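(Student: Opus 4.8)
The plan is to reduce the statement to the classical monotonicity of the Bessel zeros together with two elementary one-variable monotonicity facts. Throughout write $A(\nu)=\sum_{n\geq1}\frac{1}{j_{\nu,n}^{4}-1}$ and $B(\nu)=\sum_{n\geq1}\frac{j_{\nu,n}^{4}}{(j_{\nu,n}^{4}-1)^{2}}$, so that $\zeta(\nu)=A(\nu)+B(\nu)$ and
\[
\Theta(\nu)=\bigl(1-A(\nu)\bigr)-\frac{B(\nu)}{1-A(\nu)}.
\]
First I would check that $\Theta$ is well defined on $(\underline{\nu},\infty)$: since $\underline{\nu}>\nu^{\circ}$ we have $j_{\nu,1}>1$ there, hence $j_{\nu,n}^{4}>1$ for every $n$ and all the terms above are positive, while convergence of both series follows by comparison with $\sum_{n\geq1}j_{\nu,n}^{-4}$, which is finite by the second Rayleigh sum. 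Moreover, as $\zeta$ is strictly decreasing with $\zeta(\underline{\nu})=1$, for $\nu>\underline{\nu}$ one has $A(\nu)<\zeta(\nu)<1$, so the denominator $1-A(\nu)$ is strictly positive and the displayed rewriting of $\Theta$ is legitimate.

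Next I would prove that $A$ and $B$ are both strictly decreasing on $(\underline{\nu},\infty)$. The map $t\mapsto\frac{1}{t-1}$ is strictly decreasing on $(1,\infty)$, and so is $t\mapsto\frac{t}{(t-1)^{2}}$, since $\frac{d}{dt}\frac{t}{(t-1)^{2}}=-\frac{t+1}{(t-1)^{3}}<0$ for $t>1$. By the classical fact that $\nu\mapsto j_{\nu,n}$ is strictly increasing on $(-1,\infty)$ for each fixed $n$ (see \cite{Wat} and \cite{OL}), each summand of $A$ and of $B$ is a strictly decreasing function of $\nu$; comparing the two series termwise for $\nu_{1}<\nu_{2}$ --- so that no differentiation under the summation sign is required --- yields $A(\nu_{1})>A(\nu_{2})$ and $B(\nu_{1})>B(\nu_{2})$.

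It then remains to assemble the pieces. For $\underline{\nu}<\nu_{1}<\nu_{2}$ we have $0<1-A(\nu_{1})<1-A(\nu_{2})$ and $B(\nu_{1})>B(\nu_{2})>0$, whence $\frac{B(\nu_{1})}{1-A(\nu_{1})}>\frac{B(\nu_{2})}{1-A(\nu_{2})}$, i.e. $-\frac{B(\nu_{1})}{1-A(\nu_{1})}<-\frac{B(\nu_{2})}{1-A(\nu_{2})}$. Adding this last inequality to $1-A(\nu_{1})<1-A(\nu_{2})$ gives $\Theta(\nu_{1})<\Theta(\nu_{2})$, so $\Theta$ is strictly increasing. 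The only non-elementary ingredient is the monotonicity of $j_{\nu,n}$ in $\nu$, and essentially the only point requiring care is keeping $1-A(\nu)$ positive, which is exactly why the domain is $(\underline{\nu},\infty)$ rather than the larger interval $(\nu^{\circ},\infty)$; I do not expect any serious obstacle.
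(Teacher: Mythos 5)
Your proof is correct and follows essentially the same route as the paper: both decompose $\Theta$ as $1-\Theta_1-\Theta_2$ with $\Theta_1=A$ and $\Theta_2=B/(1-A)$, show each piece is strictly decreasing, and obtain positivity of $1-A$ on $(\underline{\nu},\infty)$ from $A(\nu)<\zeta(\nu)<\zeta(\underline{\nu})=1$. The only difference is that the paper proves the monotonicity of $\Theta_2$ by differentiating the series and checking the sign of the resulting expression, whereas you argue termwise (using that $t\mapsto 1/(t-1)$ and $t\mapsto t/(t-1)^2$ decrease on $(1,\infty)$ and $j_{\nu,n}$ increases in $\nu$) together with the elementary quotient inequality --- a harmless, slightly more elementary variation that avoids differentiation under the summation sign.
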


\begin{proof}[\bf Proof]
We have to show that the functions $\Theta_1, \Theta_2:(\underline{\nu},\infty)\rightarrow\mathbb{R},$ defined by
$$\Theta_1(\nu)=\sum_{n\geq1}\frac{1}{j_{\nu,n}^4-1},\ \ \ \Theta_2(\nu)=\frac{\sum\limits_{n\geq1}\frac{j_{\nu,n}^4}{(j_{\nu,n}^4-1)^2}}{1-\sum\limits_{n\geq1}\frac{1}{j_{\nu,n}^4-1}},$$
are strictly decreasing. Since $j_{\nu,n}$ is strictly increasing with respect to $\nu$ it follows that $\Theta_1$ is strictly decreasing.
On the other hand we have $$\Theta'_2(\nu)=\frac{-\Delta(\nu)}{\left(1-\sum\limits_{n\geq1}\frac{1}{j_{\nu,n}^4-1}\right)^2},$$
where
$$\Delta(\nu)=\sum_{n\geq1}\frac{4j_{\nu,n}^3\frac{d j_{\nu,n}}{d\nu}(j_{\nu,n}^4+1)}{(j_{\nu,n}^4-1)^3}\left(1-\sum_{n\geq1}\frac{1}{j_{\nu,n}^4-1}\right)+
\sum_{n\geq1}\frac{4j_{\nu,n}^3\frac{d j_{\nu,n}}{d\nu}}{(j_{\nu,n}^4-1)^2}\sum_{n\geq1}\frac{j_{\nu,n}^4}{(j_{\nu,n}^4-1)^2}.$$
By using the inequality \eqref{c2b3mm3d4f5g} and the fact that $\frac{dj_{\nu,n}}{d\nu}>0$ for each $n\in\mathbb{N}$ and $\nu>-1,$ it follows that
$\Delta(\nu)>0$ and $\Theta_2'(\nu)<0$ for $\nu\in(\underline{\nu},\infty).$
\end{proof}

\section{Proofs of the main results}
\setcounter{equation}{0}

\begin{proof}[\bf Proof of Theorem \ref{thzeros}]
We consider the entire function $f(z)=\sum_{n\geq 0}a_nz^n.$ According to Runckel \cite[Theorem 4]{runckel}
we know that if $f$ can be represented as $f(z)=e^{az^2}h(z),$ where $a\leq0$ and $h$ is of type
$$h(z)=ce^{bz}\prod_{n\geq1}\left(1-\frac{z}{c_n}\right)e^{\frac{z}{c_n}}$$ with $\sum_{n\geq1}|c_n|^{-2}<\infty,$
$f$ has real zeros only (or no zeros at all), and $G$ is of type
\begin{equation}\label{typeA}G(z)=e^{\beta z}\prod_{n\geq1}\left(1+\frac{z}{\alpha_n}\right)e^{-\frac{z}{\alpha_n}}\end{equation} with $\alpha_n>0,$ $\beta\in\mathbb{R},$ $\sum_{n\geq1}\alpha_n^{-2}<\infty,$ then the function
$F(z)=\sum_{n\geq0}a_nG(n)z^n$ has real zeros only. Now, consider the function
\begin{equation}\label{zerof1}2^{\nu-\frac{1}{2}}z^{-\nu-\frac{1}{2}}\mathbf{\Phi}_{\nu}(\sqrt{2z})=
\sum_{n\geq0}\frac{(-1)^n\left(\frac{z}{2}\right)^{2n}}{n!\Gamma(\nu+n+1)\Gamma(\nu+2n+2)}.\end{equation}
Since $$\frac{1}{\Gamma(z)}=ze^{\gamma z}\prod_{n\geq 1}\left(1+\frac{z}{n}\right)e^{-\frac{z}{n}},$$ the function
$$G(z)=\frac{1}{\Gamma\left(\frac{z}{2}+\nu+1\right)\Gamma(z+\nu+2)}$$
is of type \eqref{typeA} when $\nu>-1.$ Thus, if we choose $f(z)=e^{-\left(\frac{z}{2}\right)^2},$ then by using Runckel's above mentioned result we obtain that the function in \eqref{zerof1} has real zeros only if $\nu>-1.$ This implies that the function $\mathbf{\Phi}_{\nu}$ has also real zeros only if $\nu>-1.$ Now, by choosing the same exponential function $f$ and
taking $$G(z)=\frac{1}{\Gamma\left(\frac{z}{2}+\nu+1\right)\Gamma(z+\nu+1)},$$
which is of type \eqref{typeA} when $\nu>-1,$ a similar argument as before shows that the function
$$2^{\nu}z^{-\nu}\mathbf{\Pi}_{\nu}(\sqrt{2z})=
\sum_{n\geq0}\frac{(-1)^n\left(\frac{z}{2}\right)^{2n}}{n!\Gamma(\nu+n+1)\Gamma(\nu+2n+1)}$$
has only real zeros when $\nu>-1$ and consequently $\mathbf{\Pi}_{\nu}$ has also only real zeros if $\nu>-1.$
\end{proof}

\begin{proof}[\bf Proof of Theorem \ref{thhyper}]
We consider the entire functions
$$\frac{1}{2}z^{-\frac{\nu}{2}-\frac{1}{4}}\mathbf{\Phi}_{\nu}(2\sqrt[4]{z})=\sum_{n\geq 0}\frac{(-1)^nz^n}{n!\Gamma(\nu+n+1)\Gamma(\nu+2n+2)}$$
and
$$z^{-\frac{\nu}{2}}\mathbf{\Pi}_{\nu}(2\sqrt[4]{z})=\sum_{n\geq 0}\frac{(-1)^nz^n}{n!\Gamma(\nu+n+1)\Gamma(\nu+2n+1)},$$
which according to Theorem \ref{thzeros} belong to the class $\mathcal{LP}.$ In view of the relation $(a)_{2n}=2^{2n}\left(\frac{a}{2}\right)_n\left(\frac{a+1}{2}\right)_n$ the Jensen polynomials of the above entire functions can be written as
\begin{align*}\sum_{k=0}^nC_n^k\frac{(-1)^kz^k}{\Gamma(\nu+k+1)\Gamma(\nu+2k+2)}
&=\frac{\nu+1}{\Gamma^2(\nu+2)}\sum_{k=0}^n\frac{(-n)_kz^k}{(\nu+1)_k(\nu+2)_{2k}k!}\\
&=\frac{\nu+1}{\Gamma^2(\nu+2)}\sum_{k=0}^n\frac{(-n)_k}{(\nu+1)_k\left(\frac{\nu+2}{2}\right)_{k}
\left(\frac{\nu+3}{2}\right)_{k}}\frac{\left(\frac{z}{4}\right)^k}{k!}\\
&=\frac{\nu+1}{\Gamma^2(\nu+2)}\cdot {}_1F_3\left(-n;\nu+1,\frac{\nu+2}{2},\frac{\nu+3}{2};\frac{z}{4}\right)\end{align*}
and
\begin{align*}\sum_{k=0}^nC_n^k\frac{(-1)^kz^k}{\Gamma(\nu+k+1)\Gamma(\nu+2k+1)}
&=\frac{1}{\Gamma^2(\nu+1)}\sum_{k=0}^n\frac{(-n)_kz^k}{(\nu+1)_k(\nu+1)_{2k}k!}\\
&=\frac{1}{\Gamma^2(\nu+1)}\sum_{k=0}^n\frac{(-n)_k}{(\nu+1)_k\left(\frac{\nu+1}{2}\right)_{k}
\left(\frac{\nu+2}{2}\right)_{k}}\frac{\left(\frac{z}{4}\right)^k}{k!}\\
&=\frac{1}{\Gamma^2(\nu+1)}\cdot {}_1F_3\left(-n;\nu+1,\frac{\nu+1}{2},\frac{\nu+2}{2};\frac{z}{4}\right).\end{align*}
Thus, by using Lemma \ref{Jensen} it follows that the zeros of the above hypergeometric polynomials are all real.
\end{proof}

\begin{proof}[\bf Proof of Theorem \ref{T1}]
First we prove part \textbf{a} for ${\nu >-\frac{1}{2}}$ and parts \textbf{b} and
\textbf{c} for ${\nu >-1}$. We need to show that for the corresponding
values of ${\nu }$ and $\alpha $ the inequalities
\begin{equation}
\real \left( \frac{zf_{{\nu }}^{\prime }(z)}{f_{{\nu }}(z)}\right) >\alpha ,%
\text{ \ \ }\real \left( \frac{zg_{{\nu }}^{\prime }(z)}{g_{{\nu }}(z)}\right)
>\alpha \text{ \ and \ }\real \left( \frac{zh_{{\nu }}^{\prime }(z)}{h_{{\nu }%
}(z)}\right) >\alpha \text{ \ \ }  \label{2.0}
\end{equation}%
are valid for $z\in \mathbb{D}_{r_{\alpha }^{\star }(f_{{\nu }})}$, $z\in
\mathbb{D}_{r_{\alpha }^{\star }(g_{{\nu }})}$ and $z\in \mathbb{D}%
_{r_{\alpha }^{\star }(h_{{\nu }})}$ respectively, and each of the above
inequalities does not hold in larger disks. It follows from (\ref{fi1}) that%
\begin{equation*}
\frac{zf_{{\nu }}^{\prime }(z)}{f_{{\nu }}(z)}=\frac{1}{2{\nu +1}}\frac{z%
\mathbf{\Phi }_{{\nu }}^{\prime }(z)}{\mathbf{\Phi }_{{\nu }}(z)}=1-\frac{1}{%
2{\nu +1}}\sum_{n\geq1}\frac{4z^{4}}{\gamma _{{\nu },n}^{4}-z^{4}},
\end{equation*}%
\begin{equation*}
\frac{zg_{{\nu }}^{\prime }(z)}{g_{{\nu }}(z)}=-2{\nu }+\frac{z\mathbf{\Phi }%
_{{\nu }}^{\prime }(z)}{\mathbf{\Phi }_{{\nu }}(z)}=1-\sum_{n\geq1}%
\frac{4z^{4}}{\gamma _{{\nu },n}^{4}-z^{4}}
\end{equation*}%
and%
\begin{equation*}
\frac{zh_{{\nu }}^{\prime }(z)}{h_{{\nu }}(z)}=\frac{3}{4}-\frac{{\nu }}{2}+%
\frac{1}{4}\frac{z^{\frac{1}{4}}\mathbf{\Phi }_{{\nu }}^{\prime }(z^{\frac{1}{4}})}{\mathbf{%
\Phi }_{{\nu }}(z^{\frac{1}{4}})}=1-\sum_{n\geq1}\frac{z}{\gamma _{{\nu }%
,n}^{4}-z}.
\end{equation*}%
On the other hand, it is known that \cite{BAS} if ${z\in \mathbb{C}}$ and $%
\beta $ ${\in \mathbb{R}}$ are such that $\beta >{\left\vert z\right\vert }$%
, then%
\begin{equation}
\frac{{\left\vert z\right\vert }}{\beta -{\left\vert z\right\vert }}\geq
\real\left( \frac{z}{\beta -z}\right) .  \label{s7}
\end{equation}%
Then the inequality
\begin{equation*}
\frac{{\left\vert z\right\vert }^{4}}{\gamma _{{\nu },n}^{4}-{\left\vert
z\right\vert }^{4}}\geq \real \left( \frac{z^{4}}{\gamma _{{\nu },n}^{4}-z^{4}}%
\right) ,
\end{equation*}%
holds for every ${\nu >-1}$, $n\in \mathbb{N}$ and ${\left\vert
z\right\vert <}\gamma _{{\nu },1}$. Therefore,
\begin{equation}
\real \left( \frac{zf_{{\nu }}^{\prime }(z)}{f_{{\nu }}(z)}\right) =1-\frac{1}{%
2{\nu +1}}\real \left( \sum_{n\geq1}\frac{4z^{4}}{\gamma _{{\nu }%
,n}^{4}-z^{4}}\right) \geq 1-\frac{1}{2{\nu +1}}\sum_{n\geq1}\frac{%
4\left\vert z\right\vert ^{4}}{\gamma _{{\nu },n}^{4}-\left\vert
z\right\vert ^{4}}=\frac{\left\vert z\right\vert f_{{\nu }}^{\prime
}(\left\vert z\right\vert )}{f_{{\nu }}(\left\vert z\right\vert )},
\label{sf}
\end{equation}%
\begin{equation}
\real \left( \frac{zg_{{\nu }}^{\prime }(z)}{g_{{\nu }}(z)}\right) =1-\real
\left( \sum_{n\geq1}\frac{4z^{4}}{\gamma _{{\nu },n}^{4}-z^{4}}%
\right) \geq 1-\sum_{n\geq1}\frac{4\left\vert z\right\vert ^{4}}{%
\gamma _{{\nu },n}^{4}-\left\vert z\right\vert ^{4}}=\frac{\left\vert
z\right\vert g_{{\nu }}^{\prime }(\left\vert z\right\vert )}{g_{{\nu }%
}(\left\vert z\right\vert )}  \label{sg}
\end{equation}%
and%
\begin{equation}
\real \left( \frac{zh_{{\nu }}^{\prime }(z)}{h_{{\nu }}(z)}\right) =1-\real
\left( \sum_{n\geq1}\frac{z}{\gamma _{{\nu },n}^{4}-z}\right) \geq
1-\sum_{n\geq1}\frac{\left\vert z\right\vert }{\gamma _{{\nu }%
,n}^{4}-\left\vert z\right\vert }=\frac{\left\vert z\right\vert h_{{\nu }%
}^{\prime }(\left\vert z\right\vert )}{h_{{\nu }}(\left\vert z\right\vert )},
\label{sh}
\end{equation}%
where equalities are attained only when $z=\left\vert z\right\vert =r$. The
above inequalities and the minimum principle for harmonic functions imply
that the corresponding inequalities in (\ref{2.0}) hold if and only if $%
\left\vert z\right\vert <x_{{\nu ,\alpha ,1}},$ $\left\vert z\right\vert <y_{%
{\nu ,\alpha ,1}}$ and $\left\vert z\right\vert <z_{{\nu ,\alpha ,1}},$
respectively, where $x_{{\nu ,\alpha ,1}}$, $y_{{\nu ,\alpha ,1}}$ and $z_{{%
\nu ,\alpha ,1}}$ are the smallest positive roots of the equations%
\begin{equation*}
rf_{{\nu }}^{\prime }(r)/f_{{\nu }}(r)=\alpha ,\text{ \ }rg_{{\nu }}^{\prime
}(r)/g_{{\nu }}(r)=\alpha ,\ rh_{{\nu }}^{\prime }(r)/h_{{\nu }}(r)=\alpha .
\end{equation*}%
Since their solutions coincide with the zeros of the functions
\begin{equation*}
r\mapsto r\mathbf{\Phi }_{{\nu }}^{\prime }(r)-\alpha (2{\nu +1)}\mathbf{%
\Phi }_{{\nu }}(r),\ r\mapsto r\mathbf{\Phi }_{{\nu }}^{\prime }(r)-(\alpha
+2{\nu )}\mathbf{\Phi }_{{\nu }}(r),
\end{equation*}%
\begin{equation*}
r\mapsto r^{\frac{1}{4}}\mathbf{\Phi }_{{\nu }}^{\prime }(r^{\frac{1}{4}})-(4\alpha +2{\nu
-3)}\mathbf{\Phi }_{{\nu }}(r^{\frac{1}{4}}),
\end{equation*}%
the result we need follows from Lemma \ref{LP} by taking instead of $a$ the
values $2{\nu }(\alpha {-1)+}\alpha $, $\alpha $ and $4\alpha -3,$
respectively. In other words, Lemma \ref{LP} shows that all the zeros of the
above three functions are real and their first positive zeros do not exceed
the first positive zero $\gamma _{{\nu },1}$. This guarantees that the above
inequalities hold. This completes the proof of part \textbf{a} when ${\nu
>-\frac{1}{2}}$, and parts \textbf{b} and \textbf{c} when ${\nu >-1.}$

Now, to prove the statement for part \textbf{a} when ${\nu }\in \left( -1,-%
\frac{1}{2}\right) $ we observe that the counterpart of (\ref{s7}) is
\begin{equation}
\real\left( \frac{z}{\beta -z}\right) \geq \frac{-{\left\vert
z\right\vert }}{\beta +{\left\vert z\right\vert }}  \label{2.10}
\end{equation}%
and it holds for all ${z\in \mathbb{C}}$ and $\beta $ ${\in \mathbb{R}}$
such that $\beta >{\left\vert z\right\vert }$ (see \cite{sz}). From (\ref%
{2.10}), we obtain the inequality
\begin{equation*}
\real \left( \frac{z^{4}}{\gamma _{{\nu },n}^{4}-z^{4}}\right) \geq \frac{-{%
\left\vert z\right\vert }^{4}}{\gamma _{{\nu },n}^{4}+{\left\vert
z\right\vert }^{4}},
\end{equation*}%
which holds for all ${\nu >-1},$ $n\in \mathbb{N}$ and ${\left\vert
z\right\vert <}\gamma _{{\nu },1}$ and it implies that%
\begin{equation*}
\real \left( \frac{zf_{{\nu }}^{\prime }(z)}{f_{{\nu }}(z)}\right) =1-\frac{1}{%
2{\nu +1}}\real \left( \sum_{n\geq1}\frac{4z^{4}}{\gamma _{{\nu }%
,n}^{4}-z^{4}}\right) \geq 1+\frac{1}{2{\nu +1}}\sum_{n\geq1}\frac{%
4\left\vert z\right\vert ^{4}}{\gamma _{{\nu },n}^{4}+\left\vert
z\right\vert ^{4}}=\frac{\sqrt{i}\left\vert z\right\vert f_{{\nu }}^{\prime
}(\sqrt{i}\left\vert z\right\vert )}{f_{{\nu }}(\sqrt{i}\left\vert
z\right\vert )}.
\end{equation*}%
In this case equality is attained if $z=\sqrt{i}\left\vert z\right\vert
=\sqrt{i}r.$ Moreover, the above inequality implies that
\begin{equation*}
\real \left( \frac{zf_{{\nu }}^{\prime }(z)}{f_{{\nu }}(z)}\right) >\alpha
\end{equation*}%
if and only if $\left\vert z\right\vert <x_{{\nu ,\alpha }}$, where $x_{{\nu
,\alpha }}$ denotes the smallest positive root of $\sqrt{i}rf_{{%
\nu }}^{\prime }(\sqrt{i}r)/f_{{\nu }}(\sqrt{i}r)=\alpha ,$ which is
equivalent to
\begin{equation*}
\sqrt{i}r\mathbf{\Phi }_{{\nu }}^{\prime }(\sqrt{i}r)-\alpha (2{\nu +1)}%
\mathbf{\Phi }_{{\nu }}(\sqrt{i}r)=0,\text{ for }{\nu }\in \left( -1,-\frac{1%
}{2}\right) .
\end{equation*}%
All we need to prove
is that the above equation has actually only one root in $(0,\infty )$.
Observe that, according to Lemma \ref{Quotients}, the function
\begin{equation*}
\left.r\mapsto \frac{\sqrt{i}r\mathbf{\Phi }_{{\nu }}^{\prime }(\sqrt{i}r)}{\mathbf{%
\Phi }_{{\nu }}(\sqrt{i}r)}=\sum_{n\geq0}\frac{\left( 4n+2{\nu +1}%
\right) \left( \frac{r}{2}\right) ^{4n}}{n!\Gamma \left( {\nu }+n+1\right)
\Gamma \left( {\nu }+2n+2\right) }\right/\sum_{n\geq0}\frac{\left(
\frac{r}{2}\right) ^{4n}}{n!\Gamma \left( {\nu }+n+1\right) \Gamma \left( {%
\nu }+2n+2\right) }
\end{equation*}%
is increasing on $(0,\infty )$ as a quotient of two power series whose
positive coefficients form the increasing ``quotient
sequence'' $\left\{ 4n+2{\nu +1}\right\} _{n\geq 0}.$ On
the other hand, the above function tends to $2{\nu +1}$ when $r\rightarrow
0, $ so that its graph can intersect the horizontal line $y=\alpha \left( 2{%
\nu +1}\right) >2{\nu +1}$ only once. This completes the proof of part
\textbf{a} of the theorem when ${\nu }\in \left( -1,-\frac{1}{2}\right) $.
\end{proof}

\begin{proof}[\bf Proof of Theorem \ref{T2}]
The proof of Theorem \ref{T2} is analogous to the proof
of Theorem \ref{T1}. First we prove part \textbf{a} for ${\nu >0}$ and parts \textbf{b} and
\textbf{c} for ${\nu >-1}$. From (\ref{pi1}) we have%
\begin{equation*}
\frac{zu_{{\nu }}^{\prime }(z)}{u_{{\nu }}(z)}=\frac{1}{2{\nu }}\frac{z%
\mathbf{\Pi }_{{\nu }}^{\prime }(z)}{\mathbf{\Pi }_{{\nu }}(z)}=1-\frac{1}{2{%
\nu }}\sum_{n\geq1}\frac{4z^{4}}{j_{{\nu },n}^{4}-z^{4}},
\end{equation*}%
\begin{equation*}
\frac{zv_{{\nu }}^{\prime }(z)}{v_{{\nu }}(z)}={1}-2{\nu }+\frac{z\mathbf{%
\Pi }_{{\nu }}^{\prime }(z)}{\mathbf{\Pi }_{{\nu }}(z)}=1-\sum_{n\geq1}\frac{4z^{4}}{j_{{\nu },n}^{4}-z^{4}}
\end{equation*}%
and%
\begin{equation*}
\frac{zw_{{\nu }}^{\prime }(z)}{w_{{\nu }}(z)}=1-\frac{{\nu }}{2}+\frac{1}{4}%
\frac{z^{\frac{1}{4}}\mathbf{\Pi }_{{\nu }}^{\prime }(z^{\frac{1}{4}})}{\mathbf{\Pi }_{{\nu }%
}(z^{\frac{1}{4}})}=1-\sum_{n\geq1}\frac{z}{j_{{\nu },n}^{4}-z}.
\end{equation*}%
On the other hand, by means of (\ref{s7}) we have the inequality
\begin{equation*}
\frac{{\left\vert z\right\vert }^{4}}{j_{{\nu },n}^{4}-{\left\vert
z\right\vert }^{4}}\geq \real \left( \frac{z^{4}}{j_{{\nu },n}^{4}-z^{4}}%
\right) ,
\end{equation*}%
for every ${\nu >-1}$, $n\in \mathbb{N}$ and ${\left\vert z\right\vert <}j_{{%
\nu },1}$. Therefore,%
\begin{equation}
\real \left( \frac{zu_{{\nu }}^{\prime }(z)}{u_{{\nu }}(z)}\right) \geq 1-%
\frac{1}{2{\nu }}\sum_{n\geq1}\frac{4\left\vert z\right\vert ^{4}}{j_{%
{\nu },n}^{4}-\left\vert z\right\vert ^{4}}=\frac{\left\vert z\right\vert u_{%
{\nu }}^{\prime }(\left\vert z\right\vert )}{u_{{\nu }}(\left\vert
z\right\vert )},  \label{su}
\end{equation}%
\begin{equation}
\real \left( \frac{zv_{{\nu }}^{\prime }(z)}{v_{{\nu }}(z)}\right) \geq
1-\sum_{n\geq1}\frac{4\left\vert z\right\vert ^{4}}{j_{{\nu }%
,n}^{4}-\left\vert z\right\vert ^{4}}=\frac{\left\vert z\right\vert v_{{\nu }%
}^{\prime }(\left\vert z\right\vert )}{v_{{\nu }}(\left\vert z\right\vert )}
\label{sv}
\end{equation}%
and%
\begin{equation}
\real \left( \frac{zw_{{\nu }}^{\prime }(z)}{w_{{\nu }}(z)}\right) \geq
1-\sum_{n\geq1}\frac{\left\vert z\right\vert }{j_{{\nu }%
,n}^{4}-\left\vert z\right\vert }=\frac{\left\vert z\right\vert w_{{\nu }%
}^{\prime }(\left\vert z\right\vert )}{w_{{\nu }}(\left\vert z\right\vert )},
\label{sw}
\end{equation}%
where equalities are attained only when $z=\left\vert z\right\vert =r$. The
above inequalities and the minimum principle for harmonic functions imply
that the inequalities%
\begin{equation*}
\real \left( \frac{zu_{{\nu }}^{\prime }(z)}{u_{{\nu }}(z)}\right) >\alpha ,%
\text{ \ \ }\real \left( \frac{zv_{{\nu }}^{\prime }(z)}{v_{{\nu }}(z)}\right)
>\alpha \text{ \ and \ }\real \left( \frac{zw_{{\nu }}^{\prime }(z)}{w_{{\nu }%
}(z)}\right) >\alpha \text{ \ \ }
\end{equation*}%
hold if and only if $\left\vert z\right\vert <\delta _{{\nu ,\alpha ,1}},$ $%
\left\vert z\right\vert <\rho _{{\nu ,\alpha ,1}}$ and $\left\vert
z\right\vert <\sigma _{{\nu ,\alpha ,1}},$ respectively, where $\delta _{{%
\nu ,\alpha ,1}}$, $\rho _{{\nu ,\alpha ,1}}$ and $\sigma _{{\nu ,\alpha ,1}%
} $ are the smallest positive roots of the equations%
\begin{equation*}
ru_{{\nu }}^{\prime }(r)/u_{{\nu }}(r)=\alpha ,\text{ \ }rv_{{\nu }}^{\prime
}(r)/v_{{\nu }}(r)=\alpha ,\ rw_{{\nu }}^{\prime }(r)/w_{{\nu }}(r)=\alpha .
\end{equation*}%
Since their solutions coincide with the zeros of the functions
\begin{equation*}
r\mapsto r\mathbf{\Pi }_{{\nu }}^{\prime }(r)-2\alpha {\nu }\mathbf{\Pi }_{{%
\nu }}(r),\ r\mapsto r\mathbf{\Pi }_{{\nu }}^{\prime }(r)-(\alpha +2{\nu -1)}%
\mathbf{\Pi }_{{\nu }}(r),
\end{equation*}%
\begin{equation*}
r\mapsto r^{\frac{1}{4}}\mathbf{\Pi }_{{\nu }}^{\prime }(r^{\frac{1}{4}})-2(2\alpha +{\nu -2)%
}\mathbf{\Pi }_{{\nu }}(r^{\frac{1}{4}}),
\end{equation*}%
the result we need follows from Lemma \ref{LP} by taking instead of $b$ the
values $2{\nu }(\alpha {-1)}$, $\alpha -1$ and $4(\alpha -1),$ respectively.
In other words, Lemma \ref{LP} shows that all the zeros of the above three
functions are real and their first positive zeros do not exceed the first
positive zero $j_{{\nu },1}$. This guarantees that the above inequalities
hold. This completes the proof of part \textbf{a} when ${\nu >0}$, and parts
\textbf{b} and \textbf{c} when ${\nu >-1.}$

Now, to prove the statement for part \textbf{a} when ${\nu }\in \left(
-1,0\right) $. From (\ref{2.10}), we obtain the inequality
\begin{equation*}
\real \left( \frac{z^{4}}{\gamma _{{\nu },n}^{4}-z^{4}}\right) \geq \frac{-{%
\left\vert z\right\vert }^{4}}{j_{{\nu },n}^{4}+{\left\vert z\right\vert }%
^{4}},
\end{equation*}%
which holds for all ${\nu >-1},$ $n\in \mathbb{N}$ and ${\left\vert
z\right\vert <}j_{{\nu },1}$ and it implies that%
\begin{equation*}
\real \left( \frac{zu_{{\nu }}^{\prime }(z)}{u_{{\nu }}(z)}\right) \geq 1+%
\frac{1}{2{\nu }}\sum_{n\geq1}\frac{4\left\vert z\right\vert ^{4}}{j_{%
{\nu },n}^{4}+\left\vert z\right\vert ^{4}}=\frac{\sqrt{i}\left\vert
z\right\vert u_{{\nu }}^{\prime }(\sqrt{i}\left\vert z\right\vert )}{u_{{\nu }%
}(\sqrt{i}\left\vert z\right\vert )}.
\end{equation*}%
In this case equality is attained if $z=\sqrt{i}\left\vert z\right\vert
=\sqrt{i}r.$ Moreover, the above inequality implies that
\begin{equation*}
\real \left( \frac{zu_{{\nu }}^{\prime }(z)}{u_{{\nu }}(z)}\right) >\alpha
\end{equation*}%
if and only if $\left\vert z\right\vert <\delta _{{\nu ,\alpha }}$, where $%
\delta _{{\nu ,\alpha }}$ denotes the smallest positive root of
$\sqrt{i}ru_{{\nu }}^{\prime }(\sqrt{i}r)/u_{{\nu }}(\sqrt{i}r)=\alpha ,$ which
is equivalent to
\begin{equation*}
\sqrt{i}r\mathbf{\Pi }_{{\nu }}^{\prime }(\sqrt{i}r)-2\alpha {\nu }\mathbf{\Pi
}_{{\nu }}(\sqrt{i}r)=0,\text{ for }{\nu }\in \left( -1,0\right) .
\end{equation*}%
All we need to prove is that the above
equation has actually only one root in $(0,\infty )$. Observe that,
according to Lemma \ref{Quotients}, the function
\begin{equation*}
\left.r\mapsto \frac{\sqrt{i}r\mathbf{\Pi }_{{\nu }}^{\prime }(\sqrt{i}r)}{\mathbf{%
\Pi }_{{\nu }}(\sqrt{i}r)}=\sum_{n\geq0}\frac{\left( 4n+2{\nu }\right)
\left( \frac{r}{2}\right) ^{4n}}{n!\Gamma \left( {\nu }+n+1\right) \Gamma
\left( {\nu }+2n+1\right) }\right/\sum_{n\geq0}\frac{\left( \frac{r%
}{2}\right) ^{4n}}{n!\Gamma \left( {\nu }+n+1\right) \Gamma \left( {\nu }%
+2n+1\right) }
\end{equation*}%
is increasing on $(0,\infty )$ as a quotient of two power series whose
positive coefficients form the increasing \textquotedblleft quotient
sequence\textquotedblright\ $\left\{ 4n+2{\nu }\right\} _{n\geq 0}.$ On the
other hand, the above function tends to $2{\nu }$ when $r\rightarrow 0,$ so
that its graph can intersect the horizontal line $y=2\alpha {\nu }>2{\nu }$
only once. This completes the proof of part \textbf{a} of the theorem when ${%
\nu }\in \left( -1,0\right) $.
\end{proof}

\begin{proof}[\bf Proof of Theorem \ref{THbound3}]
The radius of starlikeness of $f_{{\nu }}$ is the first positive root of
the equation $f_{{\nu }}^{\prime }(z)=0,$ which is equivalent to $\mathbf{\Phi}_{\nu}'(z)=0.$ On the other hand, by using \eqref{fi1}
we have that
$$\frac{z\mathbf{\Phi}_{\nu}'(z)}{\mathbf{\Phi}_{\nu}(z)}=2\nu+1-\sum_{n\geq 1}\frac{4z^4}{\gamma_{\nu,n}^4-z^4}.$$ Since the above expression vanishes at
$r^{\star}(f_{\nu})$ we obtain that
\begin{equation*}
\frac{1}{(r^{\star }(f_{\nu }))^{4}}=\frac{1}{2\nu +1}\sum_{n\geq 1}\frac{4}{%
\gamma _{{\nu },n}^{4}-(r^{\star }(f_{\nu }))^{4}}>\frac{1}{2\nu +1}%
\sum_{n\geq 1}\frac{4}{\gamma _{{\nu },n}^{4}}=\frac{1}{4(\nu +1)_{3}(2\nu
+1)}.
\end{equation*}%
Here we used the first Rayleigh sum of the zeros $\gamma_{\nu,n}$ computed in \cite{cross}. Now, we consider the infinite sum and product representation of the
function
\begin{equation*}
z\mapsto \mathbf{\varphi }_{\nu }(z)=\frac{1}{2{\nu +1}}2^{2{\nu }}z^{\frac{-%
{\nu }}{2}}\Gamma \left( {\nu }+1\right) \Gamma \left( {\nu }+2\right)
\mathbf{\Phi }_{{\nu }}^{\prime }(\sqrt[4]{z}).
\end{equation*}
We have%
\begin{equation*}
\mathbf{\varphi }_{\nu }(z)=\sum_{n\geq 0}\frac{\left( -1\right) ^{n}\left( 2%
{\nu +4n+1}\right) }{2^{4n}n!\left( 2{\nu +1}\right) \left( {\nu +1}\right)
_{n}\left( {\nu +2}\right) _{2n}}z^{n}
\end{equation*}%
and%
\begin{equation*}
\mathbf{\varphi }_{\nu }(z)=\prod\limits_{n\geq 1}\left( 1-\frac{z}{\gamma _{%
{\nu },n}^{\prime 4}}\right),
\end{equation*}
according to Lemma \ref{Had2}. By using the Euler-Rayleigh inequalities $\tau _{k}^{-\frac{1}{k}}<\gamma _{{%
\nu },1}'^4<\frac{\tau _{k}}{\tau _{k+1}},$ where $\tau_k=\sum_{n\geq1}\gamma_{\nu,n}'^{-4k},$ we obtain the
following inequalities for $\nu >-\frac{1}{2}$ and $k\in \mathbb{N}$
\begin{equation*}
\sqrt[4]{\tau _{k}^{-\frac{1}{k}}}<r^{\star }(f_{\nu })<\sqrt[4]{\frac{\tau
_{k}}{\tau _{k+1}}}.
\end{equation*}%
Since%
\begin{equation*}
\tau _{1}=\frac{2{\nu +5}}{16\left( 2{\nu +1}\right) (\nu +1)(\nu +2)(\nu +3)%
},\text{ \ }\tau _{2}=\frac{20\nu ^{3}+184\nu ^{2}+529\nu +473}{2^{8}\left(
2\nu +1\right) ^{2}(\nu +1)_{3}(\nu +1)_{5}},
\end{equation*}%
\begin{equation*}
\tau _{3}=\frac{\nu _{1}}{\allowbreak 2^{11}\left( 2\nu +1\right)
^{3}\left((\nu +1)_{3}\right) ^{2}(\nu +1)_{7}}
\end{equation*}%
and%
\begin{equation*}
\tau _{4}=\frac{\nu _{2}}{2^{16}\left( 2\nu +1\right) ^{4}\left(
(\nu +1)_{3}\right) ^{2}(\nu +1)_{5}(\nu +1)_{9}},
\end{equation*}%
where%
\begin{equation*}
\nu _{1}=168\nu ^{5}+2876\nu ^{4}+18\,590\nu ^{3}+57\,349\nu
^{2}+84\,874\nu +48\,267
\end{equation*}%
and%
\begin{eqnarray*}
\nu _{2}&=&6864\nu ^{9}+245\,792\nu ^{8}+3802\,808\nu
^{7}+33\,438\,984\nu ^{6}+184\,372\,941\nu ^{5}+661\,304\,856\nu ^{4}\\
&&+1542\,867\,228\nu ^{3}+2256\,870\,262\nu ^{2}+1877\,042\,671\nu
+675\,828\,138,
\end{eqnarray*}%
in particular, when $k\in \{1,2,3\}$ we have the bounds for the smallest
positive zero of derivative of cross-product of Bessel functions. It is
possible to have more tight bounds for other values of $k\in \mathbb{N}.$
\end{proof}

\begin{proof}[\bf Proof of Theorem \ref{THbound4}]
By using the first Rayleigh sum and the implicit relation for $r^{\star
}(g_{\nu })$ we get for all $\nu >-1$ that
\begin{equation*}
\frac{1}{(r^{\star }(g_{\nu }))^{4}}=\sum_{n\geq 1}\frac{4}{\gamma _{{\nu }%
,n}^{4}-(r^{\star }(g_{\nu }))^{4}}>\sum_{n\geq 1}\frac{4}{\gamma _{{\nu }%
,n}^{4}}=\frac{1}{4(\nu +1)_{3}}.
\end{equation*}
Now, by using the Euler-Rayleigh inequalities it is possible to have more
tight bounds for the radius of starlikeness $r^{\star
}(g_{\nu }).$ For this observe that the zeros of
\begin{equation*}
g_{\nu }(z)=\sum_{n\geq 0}\frac{\left( -1\right) ^{n}}{2^{4n}n!\left( {\nu +1%
}\right) _{n}\left( {\nu +2}\right) _{2n}}z^{4n+1}
\end{equation*}%
all are real when $\nu >-1,$ according to Theorem \ref{thzeros}. Consequently, this function belongs to the
Laguerre-P\'{o}lya class $\mathcal{LP}$ of real entire functions (see subsection 3.3 for more details), which are uniform limits of real polynomials whose
all zeros are real. Now, since the Laguerre-P\'{o}lya class $\mathcal{LP}$
is closed under differentiation, it follows that $g_{\nu }^{\prime }$
belongs also to the Laguerre-P\'{o}lya class and hence all of its zeros are
real. Thus, the function $z\mapsto \widetilde{g}_{\nu }(z)=g_{\nu }^{\prime
}(\sqrt[4]{z})$ has only positive real zeros and having growth order $\frac{1%
}{4}$ it can be written as the product (see Lemma \ref{Had3})
\begin{equation*}
\widetilde{g}_{\nu }(z)=\prod\limits_{n\geq 1}\left( 1-\frac{z}{\zeta _{{\nu
},n}^{4}}\right) ,
\end{equation*}%
where $\zeta _{\nu ,n}>0$ for each $n\in \mathbb{N}.$ Now, by using the
Euler-Rayleigh sum $\sigma _{k}=\sum_{n\geq 1}\zeta _{{\nu },n}^{-4k}$ and
the infinite sum representation of $\mathbf{\Phi }_{{\nu }}$ we have
\begin{equation*}
\frac{\widetilde{g}_{\nu }^{\prime }(z)}{\widetilde{g}_{\nu }(z)}%
=\sum_{n\geq 1}\frac{-1}{\zeta _{{\nu },n}^{4}-z}=\sum_{n\geq 1}\sum_{k\geq
0}\frac{-1}{\zeta _{{\nu },n}^{4k+4}}z^{k}=\sum_{k\geq 0}-\sigma
_{k+1}z^{k},\ \ |z|<\zeta _{{\nu },1},
\end{equation*}%
\begin{equation*}
\left. \frac{\widetilde{g}_{\nu }^{\prime }(z)}{\widetilde{g}_{\nu }(z)}%
=\sum_{n\geq 0}\frac{(-1)^{n+1}(4n+5)}{2^{4n+4}n!\left( {\nu +1}\right)
_{n+1}\left( {\nu +2}\right) _{2n+2}}z^{n}\right/ \sum_{n\geq 0}\frac{\left(
-1\right) ^{n}(4n+1)}{2^{4n}n!\left( {\nu +1}\right) _{n}\left( {\nu +2}%
\right) _{2n}}z^{n}.
\end{equation*}%
From these relations it is possible to express the Euler-Rayleigh sums in
terms of $\nu $ and by using the Euler-Rayleigh inequalities $\sigma _{k}^{-%
\frac{1}{k}}<\zeta _{{\nu },1}^{4}<\frac{\sigma _{k}}{\sigma _{k+1}}$ we
obtain the following inequalities for $\nu >-1$ and $k\in \mathbb{N}$
\begin{equation*}
\sqrt[4]{\sigma _{k}^{-\frac{1}{k}}}<r^{\star }(g_{\nu })<\sqrt[4]{\frac{%
\sigma _{k}}{\sigma _{k+1}}}.
\end{equation*}%
Since
\begin{equation*}
\sigma _{1}=\frac{5}{16(\nu +1)_{3}},\ \sigma _{2}=\allowbreak \frac{16\nu
^{2}+189\nu +473}{2^{8}(\nu +1)_{3}(\nu +1)_{5}},\ \sigma _{3}=\frac{\nu _{3}}{\allowbreak 2^{11}\left( (\nu +1)_{3}%
\right) ^{2}(\nu +1)_{7}}\allowbreak ,
\end{equation*}%
and%
\begin{equation*}
\sigma _{4}=\frac{\nu_{4}}{\allowbreak \allowbreak 2^{16}\left(
(\nu +1)_{3}\right)^{2}(\nu +1)_{5}(\nu +1)_{9}},
\end{equation*}%
in particular, when $k\in \{1,2,3\}$ from the above Euler-Rayleigh
inequalities we have the following inequalities
\begin{equation*}
\sqrt[4]{\frac{16(\nu +1)_{3}}{5}}<r^{\star }(g_{\nu })<\sqrt[4]{\frac{%
80(\nu +1)_{5}}{16\nu ^{2}+189\nu +473}},
\end{equation*}%
\begin{equation*}
\sqrt[8]{\frac{2^{8}(\nu +1)_{3}(\nu +1)_{5}}{16\nu ^{2}+189\nu +473}}%
<r^{\star }(g_{\nu })<\sqrt[4]{\frac{8(\nu +1)_{3}(\nu +6)(\nu +7)\left(
16\nu ^{2}+189\nu +473\right) }{\nu_{3}}},
\end{equation*}%
\begin{equation*}
\sqrt[12]{\frac{\allowbreak 2^{11}\left((\nu +1)_{3}\right)^{2}(\nu +1)_{7}%
}{\nu _{3}}}<r^{\star }(g_{\nu })<\sqrt[4]{\frac{\allowbreak
\allowbreak 32(\nu +1)_{5}(\nu +8)(\nu +9)\nu _{3}^{\star }}{\nu_{4}}%
},
\end{equation*}%
where%
\begin{equation*}
\nu _{3}=32\nu ^{4}+824\nu ^{3}+7969\nu ^{2}+32\,944\nu +48\,267,
\end{equation*}%
\begin{eqnarray*}
\nu _{4}&=&256\nu ^{8}+13\,568\nu ^{7}+312\,736\nu
^{6}+4085\,373\nu ^{5}+32\,951\,080\nu ^{4}\\
&&+167\,370\,756\nu ^{3}+521\,177\,838\nu ^{2}+907\,600\,351\nu +675\,828\,138
\end{eqnarray*}
and it is possible to have more tight bounds for other values of $k\in
\mathbb{N}.$
\end{proof}

\begin{proof}[\bf Proof of Theorem \ref{THbound5}]
We have that%
\begin{equation*}
\frac{zh_{{\nu }}^{\prime }(z)}{h_{{\nu }}(z)}=\frac{1}{z}-\sum_{n\geq 1}%
\frac{1}{\gamma _{{\nu },n}^{4}-z},
\end{equation*}%
which vanishes at $r^{\star }(h_{\nu }).$ In view of the first Rayleigh sum
for the zeros of the cross-product of Bessel functions of the first kind we get
\begin{equation*}
\frac{1}{r^{\star }(h_{\nu })}=\sum_{n\geq 1}\frac{1}{\gamma _{{\nu }%
,n}^{4}-r^{\star }(h_{\nu })}>\sum_{n\geq 1}\frac{1}{\gamma _{{\nu },n}^{4}}=%
\frac{1}{16(\nu +1)_{3}}.
\end{equation*}
Now, we consider the infinite sum representation of $h_{\nu }$ and its
derivative%
\begin{equation*}
h_{\nu }(z)=\sum_{n\geq 0}\frac{\left( -1\right) ^{n}}{2^{4n}n!\left( {\nu +1%
}\right) _{n}\left( {\nu +2}\right) _{2n}}z^{n+1},
\end{equation*}%
\begin{equation*}
h_{\nu }^{\prime }(z)=\sum_{n\geq0}\frac{\left( -1\right) ^{n}\left(
n+1\right) }{2^{4n}n!\left( {\nu +1}\right) _{n}\left( {\nu +2}\right) _{2n}}%
z^{n}.
\end{equation*}%
The function $h_{\nu }$ has only real zeros for $\nu >-1$ and belongs to the
Laguerre-P\'{o}lya class $\mathcal{LP}$ of real entire functions, according to Theorem \ref{thzeros}. Therefore $h_{\nu }^{\prime }$ belongs also to the Laguerre-P\'{o}lya
class $\mathcal{LP}$ and has also only real zeros. Moreover, it can be seen that the function $h_{\nu
}^{\prime }$ has only positive real zeros and having growth order $\frac{1}{4%
}$ it can be written as the product (see Lemma \ref{Had3})
\begin{equation*}
h_{\nu }^{\prime }(z)=\prod\limits_{n\geq 1}\left( 1-\frac{z}{\xi _{{\nu }%
,n}^{4}}\right) ,
\end{equation*}%
where $\xi _{\nu ,n}>0$ for each $n\in \mathbb{N}$. Now, by using the
Euler-Rayleigh sum $\rho _{k}=\sum_{n\geq 1}\xi _{{\nu },n}^{-4k}$ and the
infinite sum representation of the function $\mathbf{\Phi }_{{\nu }}$ we have%
\begin{equation*}
\frac{h_{\nu }^{\prime \prime }(z)}{h_{\nu }^{\prime }(z)}=\sum_{n\geq 1}%
\frac{-1}{\xi _{{\nu },n}^{4}-z}=\sum_{n\geq 1}\sum_{k\geq 0}\frac{-1}{\xi _{%
{\nu },n}^{4k+4}}z^{k}=\sum_{k\geq 0}-\rho _{k+1}z^{k},\ \ |z|<\xi _{{\nu }%
,1},
\end{equation*}%
\begin{equation*}
\left. \frac{h_{\nu }^{\prime \prime }(z)}{h_{\nu }^{\prime }(z)}%
=\sum_{n\geq 0}\frac{(-1)^{n+1}(n+2)}{2^{4n+4}n!\left( {\nu +1}\right)
_{n+1}\left( {\nu +2}\right) _{2n+2}}z^{n}\right/ \sum_{n\geq 0}\frac{\left(
-1\right) ^{n}(n+1)}{2^{4n}n!\left( {\nu +1}\right) _{n}\left( {\nu +2}%
\right) _{2n}}z^{n}.
\end{equation*}%
We can express the Euler-Rayleigh sums in terms of $\nu $ and by using the
Euler-Rayleigh inequalities $\rho _{k}^{-\frac{1}{k}}<\xi _{{\nu },1}^{4}<%
\frac{\rho _{k}}{\rho _{k+1}}$ we get the following inequalities for $\xi _{{\nu }%
,1}^{4}$ when $\nu >-1$ and $k\in \mathbb{N}$%
\begin{equation*}
\rho _{k}^{-\frac{1}{k}}<r^{\star }(h_{\nu })<\frac{\rho _{k}}{\rho _{k+1}}.
\end{equation*}%
Since
\begin{equation*}
\rho _{1}=\frac{1}{8(\nu +1)_{3}},\text{ \ }\rho _{2}=\frac{\nu ^{2}+24\nu
+71}{2^{8}(\nu +1)_{3}(\nu +1)_{5}},\ \rho _{3}=\frac{\nu_{5}}{\allowbreak 2^{12}\left( (\nu +1)_{3}%
\right) ^{2}(\nu +1)_{7}}
\end{equation*}%
and%
\begin{equation*}
\rho _{4}=\frac{\nu_{6}}{2^{16}\left((\nu +1)_{3}\right)^{2}(\nu
+1)_{5}(\nu +1)_{9}},
\end{equation*}%
where%
\begin{equation*}
\nu _{5}=\nu ^{4}+37\nu ^{3}+593\nu ^{2}+3275\nu +5598,
\end{equation*}%
\begin{eqnarray*}
\nu _{6}&=&\nu ^{8}+68\nu ^{7}+2062\nu ^{6}+36\,519\nu
^{5}+388\,627\nu ^{4}\\
&&+2477\,862\nu ^{3}+9218\,508\nu ^{2}+18\,391\,471\nu +15\,167\,442,
\end{eqnarray*}%
in particular, when $k\in \{1,2,3\}$ we have the inequalities of this
theorem.
\end{proof}

\begin{proof}[\bf Proof of Theorem \ref{THSbound1}]
By using the first Rayleigh sum and the implicit relation for $r^{\star
}(u_{\nu }),$ we obtain for all $\nu >{0}$ that
\begin{equation*}
\frac{1}{(r^{\star }(u_{\nu }))^{4}}=\frac{1}{2\nu }\sum_{n\geq 1}\frac{4}{%
j_{{\nu },n}^{4}-(r^{\star }(u_{\nu }))^{4}}>\frac{1}{2\nu }\sum_{n\geq 1}%
\frac{4}{j_{{\nu },n}^{4}}=\frac{1}{8\nu (\nu +1)^{2}(\nu +2)}.
\end{equation*}%
Now, we consider the infinite sum and product representation of the
function
\begin{equation*}
z\mapsto \mathbf{\chi }_{\nu }(z)=\frac{1}{2{\nu }}2^{2{\nu }}\Gamma
^{2}\left( {\nu }+1\right) z^{\frac{-{\nu }}{2}+\frac{1}{4}}\mathbf{\Pi }_{{%
\nu }}^{\prime }(\sqrt[4]{z}).
\end{equation*}%
We have%
\begin{equation*}
\mathbf{\chi }_{\nu }(z)=\sum_{n\geq 0}\frac{\left( -1\right) ^{n}\left( {%
\nu }+2n\right) }{2^{4n}n!{\nu }\left( {\nu +1}\right) _{n}\left( {\nu +1}%
\right) _{2n}}z^{n}
\end{equation*}%
and%
\begin{equation*}
\mathbf{\chi }_{\nu }(z)=\prod\limits_{n\geq 1}\left( 1-\frac{z}{t_{{\nu }%
,n}^{4}}\right),
\end{equation*}
according to Lemma \ref{Had2}. By using the Euler-Rayleigh inequalities $\eta_{k}^{-\frac{1}{k}}<t_{{\nu }%
,1}^{4}<\frac{\eta_{k}}{\eta_{k+1}},$ where $\eta_k=\sum_{n\geq 1}t_{\nu,n}^{-4k},$ we obtain the inequalities for $\nu >0
$ and $k\in \mathbb{N}$
\begin{equation*}
\sqrt[4]{\eta_{k}^{-\frac{1}{k}}}<r^{\star }(u_{\nu })<\sqrt[4]{\frac{\eta
_{k}}{\eta_{k+1}}}.
\end{equation*}%
Since%
\begin{equation*}
\eta_{1}=\frac{1}{16{\nu }(\nu +1)^{2}},\text{ \ }\eta_{2}=\frac{5\nu
^{2}+15\nu +12}{2^{8}\left( \nu \right) _{3}(\nu )_{4}\left( \nu +1\right)
^{2}}, \ \eta_{3}=\frac{\nu_{7}}{2^{11}\nu (\nu )_{4}\left( \nu \right)
_{6}\left( \nu +1\right) ^{4}}
\end{equation*}%
and%
\begin{equation*}
\eta_{4}=\frac{\nu_{8}}{2^{16}\left(\left(\nu\right)_{3}\right)
^{2}\left( \nu \right) _{5}\left( \nu \right) _{8}\left( \nu +1\right) ^{4}},
\end{equation*}%
where%
\begin{equation*}
\nu_{7}=21\nu ^{4}+173\nu ^{3}+533\nu ^{2}+717\nu +360
\end{equation*}%
and%
\begin{eqnarray*}
\nu_{8} &=&429\nu ^{8}+8688\nu ^{7}+76\,280\nu ^{6}+377\,494\nu
^{5}+1148\,139\nu ^{4}\\
&&+2194\,202\nu ^{3}+2574\,064\nu ^{2}+1698\,048\nu +483\,840,
\end{eqnarray*}%
in particular, when $k\in \{1,2,3\}$ we have the bounds for the smallest
positive zero of derivative of product of Bessel functions. It is possible
to have more tight bounds for other values of $k\in \mathbb{N}.$
\end{proof}

\begin{proof}[\bf Proof of Theorem \ref{THSbound2}]
By using the first Rayleigh sum and the implicit relation for $r^{\star
}(v_{\nu }),$ we get for all $\nu >-1$ that
\begin{equation*}
\frac{1}{(r^{\star }(v_{\nu }))^{4}}=\sum_{n\geq 1}\frac{4}{j_{{\nu }%
,n}^{4}-(r^{\star }(v_{\nu }))^{4}}>\sum_{n\geq 1}\frac{4}{j_{{\nu },n}^{4}}=%
\frac{1}{4(\nu +1)^{2}(\nu +2)}.
\end{equation*}
Now, by using the Euler-Rayleigh inequalities it is possible to have more
tight bounds for the radius of starlikeness $r^{\star
}(v_{\nu }).$ For this we recall that according to Theorem \ref{thzeros} the zeros of
\begin{equation*}
v_{\nu }(z)=\sum_{n\geq 0}\frac{\left( -1\right) ^{n}}{2^{4n}n!\left( {\nu +1%
}\right) _{n}\left( {\nu +1}\right) _{2n}}z^{4n+1}
\end{equation*}%
all are real when $\nu >-1.$ Consequently, this function belongs to the
Laguerre-P\'{o}lya class $\mathcal{LP}$ of real entire functions and since the
Laguerre-P\'{o}lya class $\mathcal{LP}$ is closed under differentiation, it
follows that $v_{\nu }^{\prime }$ belongs also to the Laguerre-P\'{o}lya
class. Consequently all of its zeros of $v_{\nu}'$ are real when $\nu>-1$. Thus, the function $z\mapsto
\widetilde{v}_{\nu }(z)=v_{\nu }^{\prime }(\sqrt[4]{z})$ has only positive
real zeros and having growth order $\frac{1}{4}$ it can be written as the
product (see Lemma \ref{Had3})
\begin{equation*}
\widetilde{v}_{\nu }(z)=\prod\limits_{n\geq 1}\left( 1-\frac{z}{\vartheta _{{%
\nu },n}^{4}}\right) ,
\end{equation*}%
where $\vartheta _{\nu ,n}>0$ for each $n\in \mathbb{N}.$ Now, by using the
Euler-Rayleigh sum $\varrho_{k}=\sum_{n\geq 1}\vartheta _{{\nu },n}^{-4k}$ and the
infinite sum representation of $\mathbf{\Pi }_{{\nu }}$ we have
\begin{equation*}
\frac{\widetilde{v}_{\nu }^{\prime }(z)}{\widetilde{v}_{\nu }(z)}%
=\sum_{n\geq 1}\frac{-1}{\vartheta _{{\nu },n}^{4}-z}=\sum_{n\geq
1}\sum_{k\geq 0}\frac{-1}{\vartheta _{{\nu },n}^{4k+4}}z^{k}=\sum_{k\geq
0}-\varrho_{k+1}z^{k},\ \ |z|<\vartheta _{{\nu },1},
\end{equation*}%
\begin{equation*}
\left. \frac{\widetilde{v}_{\nu }^{\prime }(z)}{\widetilde{v}_{\nu }(z)}%
=\sum_{n\geq 0}\frac{(-1)^{n+1}(4n+5)}{2^{4n+4}n!\left( {\nu +1}\right)
_{n+1}\left( {\nu +1}\right) _{2n+2}}z^{n}\right/ \sum_{n\geq 0}\frac{\left(
-1\right) ^{n}(4n+1)}{2^{4n}n!\left( {\nu +1}\right) _{n}\left( {\nu +1}%
\right) _{2n}}z^{n}.
\end{equation*}%
From these relations it is possible to express the Euler-Rayleigh sums in
terms of $\nu $ and by using the Euler-Rayleigh inequalities $\varrho_{k}^{-\frac{1%
}{k}}<\vartheta _{{\nu },1}^{4}<\frac{\varrho_{k}}{\varrho_{k+1}}$ we obtain the
inequalities for $\nu >-1$ and $k\in \mathbb{N}$
\begin{equation*}
\sqrt[4]{\varrho_{k}^{-\frac{1}{k}}}<r^{\star }(v_{\nu })<\sqrt[4]{\frac{\varrho_{k}}{%
\varrho_{k+1}}}.
\end{equation*}%
Since
\begin{equation*}
\varrho_{1}=\frac{5}{16(\nu +1)^{2}(\nu +2)},\ \varrho_{2}=\frac{16\nu ^{2}+157\nu +291}{%
2^{8}(\nu +1)_{4}\left( \nu +1\right) ^{3}\left( \nu +2\right) }\allowbreak
,\ \varrho_{3}=\frac{\nu _{9}}{\allowbreak 2^{11}(\nu +1)_{3}(\nu
+1)_{6}\left( \nu +1\right) ^{4}\left( \nu +2\right) }\allowbreak ,
\end{equation*}%
and%
\begin{equation*}
\varrho_{4}=\frac{\nu _{10}}{\allowbreak \allowbreak \allowbreak
\allowbreak 2^{16}(\nu +1)_{4}(\nu +1)_{8}\left( \nu +1\right) ^{6}\left(
\nu +2\right) ^{2}},
\end{equation*}%
where%
\begin{equation*}
\nu_{9}=32\nu ^{5}+792\nu ^{4}+7753\nu ^{3}+35\,977\nu
^{2}+78\,453\nu +64\,469
\end{equation*}%
and%
\begin{eqnarray*}
\nu_{10} &=&256\nu ^{8}+11\,520\nu ^{7}+224\,672\nu
^{6}+2469\,757\nu ^{5}+16\,606\,040\nu ^{4}\\
&&+69\,429\,816\nu ^{3}+175\,324\,950\nu ^{2}+243\,560\,267\nu +142\,215\,442,
\end{eqnarray*}%
in particular, when $k\in \{1,2,3\}$ from the above Euler-Rayleigh
inequalities we have the inequalities in this theorem.
\end{proof}

\begin{proof}[\bf Proof of Theorem \ref{THSbound3}]
We have that%
\begin{equation*}
\frac{zw_{{\nu }}^{\prime }(z)}{w_{{\nu }}(z)}=\frac{1}{z}-\sum_{n\geq 1}%
\frac{1}{j_{{\nu },n}^{4}-z},
\end{equation*}%
which vanishes at $r^{\star }(w_{\nu }).$ In view of the first Rayleigh sum
for the zeros of the Bessel functions of the first kind we get
\begin{equation*}
\frac{1}{r^{\star }(w_{\nu })}=\sum_{n\geq 1}\frac{1}{j_{{\nu }%
,n}^{4}-r^{\star }(w_{\nu })}>\sum_{n\geq 1}\frac{1}{j_{{\nu },n}^{4}}=\frac{%
1}{16(\nu +1)^{2}(\nu +2)}.
\end{equation*}
Now, we consider the infinite sum representations of $w_{\nu }$ and its
derivative%
\begin{equation*}
w_{\nu }(z)=\sum_{n\geq 0}\frac{\left( -1\right) ^{n}}{2^{4n}n!\left( {\nu +1%
}\right) _{n}\left( {\nu +1}\right) _{2n}}z^{n+1},
\end{equation*}%
\begin{equation*}
w_{\nu }^{\prime }(z)=\sum_{n\geq 0}\frac{\left( -1\right) ^{n}\left(
n+1\right) }{2^{4n}n!\left( {\nu +1}\right) _{n}\left( {\nu +1}\right) _{2n}}%
z^{n}.
\end{equation*}%
According to Theorem \ref{thzeros} the function $w_{\nu }$ has only real zeros for $\nu >-1$ and belongs to the
Laguerre-P\'{o}lya class $\mathcal{LP}$ of real entire functions. Therefore $%
w_{\nu }^{\prime }$ belongs also to the Laguerre-P\'{o}lya class $\mathcal{LP%
}$ and consequently has also only real zeros. Moreover, the function $w_{\nu }^{\prime }$ has only
positive real zeros and having growth order $\frac{1}{4}$ it can be written
as the product (see Lemma \ref{Had3})
\begin{equation*}
w_{\nu }^{\prime }(z)=\prod\limits_{n\geq 1}\left( 1-\frac{z}{\omega _{{\nu }%
,n}^{4}}\right) ,
\end{equation*}%
where $\omega _{\nu ,n}>0$ for each $n\in \mathbb{N}$. Now, by using the
Euler-Rayleigh sum $q_{k}=\sum_{n\geq 1}\omega _{{\nu },n}^{-4k}$ and the
infinite sum representation of the function $\mathbf{\Pi }_{{\nu }}$ we have%
\begin{equation*}
\frac{w_{\nu }^{\prime \prime }(z)}{w_{\nu }^{\prime }(z)}=\sum_{n\geq 1}%
\frac{-1}{\omega _{{\nu },n}^{4}-z}=\sum_{n\geq 1}\sum_{k\geq 0}\frac{-1}{%
\omega _{{\nu },n}^{4k+4}}z^{k}=\sum_{k\geq 0}-q_{k+1}z^{k},\ \ |z|<\omega _{%
{\nu },1},
\end{equation*}%
\begin{equation*}
\left. \frac{w_{\nu }^{\prime \prime }(z)}{w_{\nu }^{\prime }(z)}%
=\sum_{n\geq 0}\frac{(-1)^{n+1}(n+2)}{2^{4n+4}n!\left( {\nu +1}\right)
_{n+1}\left( {\nu +1}\right) _{2n+2}}z^{n}\right/ \sum_{n\geq 0}\frac{\left(
-1\right) ^{n}(n+1)}{2^{4n}n!\left( {\nu +1}\right) _{n}\left( {\nu +1}%
\right) _{2n}}z^{n}.
\end{equation*}%
We can express the Euler-Rayleigh sums in terms of $\nu $ and by using the
Euler-Rayleigh inequalities $q_{k}^{-\frac{1}{k}}<\omega _{{\nu },1}^{4}<%
\frac{q_{k}}{q_{k+1}}$ we get the inequalities for $\omega _{{\nu },1}^{4}$
when $\nu >-1$ and $k\in \mathbb{N}$%
\begin{equation*}
q_{k}^{-\frac{1}{k}}<r^{\star }(w_{\nu })<\frac{q_{k}}{q_{k+1}}.
\end{equation*}%
Since
\begin{equation*}
q_{1}=\frac{1}{8(\nu +1)^{2}(\nu +2)},\text{ \ }q_{2}=\frac{\nu ^{2}+22\nu
+45}{2^{8}(\nu +1)_{4}\left( \nu +1\right)^{3}\left(\nu+2\right) },\ q_{3}=\frac{\nu_{11}}{2^{12}(\nu +1)_{3}(\nu +1)_{6}\left( \nu
+1\right) ^{4}\left( \nu +2\right)}
\end{equation*}%
and%
\begin{equation*}
q_{4}=\frac{\nu_{12}}{\allowbreak \allowbreak 2^{16}(\nu
+1)_{4}(\nu +1)_{8}\left( \nu +1\right) ^{6}\left( \nu +2\right) ^{2}},
\end{equation*}%
where%
\begin{equation*}
\nu _{11}=\nu ^{5}+36\nu ^{4}+584\nu ^{3}+3554\nu ^{2}+8919\nu +7834
\end{equation*}%
and%
\begin{eqnarray*}
\nu _{12} &=&\nu ^{8}+60\nu ^{7}+1610\nu ^{6}+25\,303\nu
^{5}+229\,535\nu ^{4}\\
&&+1199\,202\nu ^{3}+3542\,412\nu ^{2}+5461\,979\nu +3396\,826,
\end{eqnarray*}%
in particular, when $k\in \{1,2,3\}$ we have the inequalities of this
theorem.
\end{proof}

\begin{proof}[\bf Proof of Theorem \ref{T3}]
\textbf{a)} Observe that
\begin{equation*}
1+\frac{zf_{{\nu }}^{\prime \prime }(z)}{f_{{\nu }}^{\prime }(z)}=1+\frac{z%
\mathbf{\Phi }_{{\nu }}^{\prime \prime }(z)}{\mathbf{\Phi }_{{\nu }}^{\prime
}(z)}+\left( \frac{1}{2{\nu +1}}-1\right) \frac{z\mathbf{\Phi }_{{\nu }%
}^{\prime }(z)}{\mathbf{\Phi }_{{\nu }}(z)},\text{ \ }{\nu \neq -}\frac{1}{2}%
\text{\ .\ }
\end{equation*}%
By means of (\ref{fi1}) and (\ref{Ha1}) we have%
\begin{equation}
\frac{z\mathbf{\Phi }_{{\nu }}^{\prime }(z)}{\mathbf{\Phi }_{{\nu }}(z)}=2{%
\nu +1}-\sum_{n\geq1}\frac{4z^{4}}{\gamma _{{\nu },n}^{4}-z^{4}},%
\text{ \ \ \ }1+\frac{z\mathbf{\Phi }_{{\nu }}^{\prime \prime }(z)}{\mathbf{%
\Phi }_{{\nu }}^{\prime }(z)}=2{\nu +1}-\sum_{n\geq1}\frac{4z^{4}}{%
\gamma _{{\nu },n}^{\prime 4}-z^{4}},  \label{2.1}
\end{equation}%
and it follows that%
\begin{equation*}
1+\frac{zf_{{\nu }}^{\prime \prime }(z)}{f_{{\nu }}^{\prime }(z)}=1-\left(
\frac{1}{2{\nu +1}}-1\right) \sum_{n\geq1}\frac{4z^{4}}{\gamma _{{\nu
},n}^{4}-z^{4}}-\sum_{n\geq1}\frac{4z^{4}}{\gamma _{{\nu },n}^{\prime
4}-z^{4}},\text{ \ }{\nu \neq -}\frac{1}{2}.
\end{equation*}%
Now, suppose that ${\nu }\in ({-}\frac{1}{2},0].$ By using (\ref{s7}), we
obtain for all $z\in \mathbb{D}_{\gamma _{{\nu },1}^{\prime }}$ the
inequality%
\begin{equation*}
\real\left(1+\frac{zf_{{\nu }}^{\prime \prime }(z)}{f_{{\nu }}^{\prime
}(z)}\right) \geqslant 1-\left( \frac{1}{2{\nu +1}}-1\right)
\sum_{n\geq1}\frac{4r^{4}}{\gamma _{{\nu },n}^{4}-r^{4}}%
-\sum_{n\geq1}\frac{4r^{4}}{\gamma _{{\nu },n}^{\prime 4}-r^{4}},
\end{equation*}%
where ${\left\vert z\right\vert =r.}$ Moreover, observe that if we use the
inequality \cite[Lemma 2.1]{BAS}%
\begin{equation}
\lambda \real\left( \frac{z}{a-z}\right) -\real\left( \frac{z}{b-z}%
\right) \geqslant \lambda \frac{{\left\vert z\right\vert }}{a-{\left\vert
z\right\vert }}-\frac{{\left\vert z\right\vert }}{b-{\left\vert z\right\vert
}},  \label{2.2}
\end{equation}%
where $a>b>0,$ $\lambda \in \left[ 0,1\right] $ and $z\in \mathbb{C}$ such
that ${\left\vert z\right\vert <b}$, then we get that the above inequality
is also valid when ${\nu }>0$. Here we used that the zeros $\gamma _{{\nu }%
,n}$ and $\gamma _{{\nu },n}^{\prime }$ interlace, according to Lemma \ref{Had2}. Thus, for ${r}\in \left(
0,\gamma _{{\nu },1}^{\prime }\right) $ we have
\begin{equation*}
\inf_{z\in \mathbb{D}_{r}}\left\{ \real\left( 1+\frac{zf_{{\nu }%
}^{\prime \prime }(z)}{f_{{\nu }}^{\prime }(z)}\right) \right\} =1+\frac{rf_{%
{\nu }}^{\prime \prime }(r)}{f_{{\nu }}^{\prime }(r)}.
\end{equation*}%
On the other hand, the function $F_{{\nu }}:\left( 0,\gamma _{{\nu }%
,1}^{\prime }\right) \longrightarrow \mathbb{R}$, defined by%
\begin{equation*}
F_{{\nu }}(r)=1+\frac{rf_{{\nu }}^{\prime \prime }(r)}{f_{{\nu }}^{\prime
}(r)},
\end{equation*}%
is strictly decreasing for all ${\nu }>{-}\frac{1}{2}.$ Namely, we have
\begin{eqnarray*}
F_{{\nu }}^{\prime }(r) &=&-\left( \frac{1}{2{\nu +1}}-1\right)
\sum_{n\geq1}\frac{16r^{3}\gamma _{{\nu },n}^{4}}{\left( \gamma _{{%
\nu },n}^{4}-r^{4}\right) ^{2}}-\sum_{n\geq1}\frac{16r^{3}\gamma _{{%
\nu },n}^{\prime 4}}{\left( \gamma _{{\nu },n}^{\prime 4}-r^{4}\right) ^{2}}
\\
&<&\sum_{n\geq1}\frac{16r^{3}\gamma _{{\nu },n}^{4}}{\left( \gamma _{{%
\nu },n}^{4}-r^{4}\right) ^{2}}-\sum_{n\geq1}\frac{16r^{3}\gamma _{{%
\nu },n}^{\prime 4}}{\left( \gamma _{{\nu },n}^{\prime 4}-r^{4}\right) ^{2}}%
<0
\end{eqnarray*}%
for ${\nu }>{-}\frac{1}{2}$ and ${r}\in \left( 0,\gamma _{{\nu },1}^{\prime
}\right) .$ Here we used again that the zeros $\gamma _{{\nu },n}$ and $%
\gamma _{{\nu },n}^{\prime }$ interlace, and for all $n\in \mathbb{N}$, ${%
\nu }>{-}\frac{1}{2}$ and $r<\sqrt{\gamma _{{\nu },n}\gamma _{{\nu }%
,n}^{\prime }}$ we have that%
\begin{equation*}
\gamma _{{\nu },n}^{4}\left( \gamma _{{\nu },n}^{\prime 4}-r^{4}\right)
^{2}<\gamma _{{\nu },n}^{\prime 4}\left( \gamma _{{\nu },n}^{4}-r^{4}\right)
^{2}.
\end{equation*}%
Now, since $\lim_{r\searrow 0}F_{{\nu }}(r)=1>\alpha $ and $\lim_{r\nearrow
\gamma _{{\nu },1}^{\prime }}F_{{\nu }}(r)=-\infty ,$ in view of the minimum
principle for harmonic functions it follows that for ${\nu }>{-}\frac{1}{2}$
and $z\in \mathbb{D}_{r_{1}}$ we have%
\begin{equation}
\real\left( 1+\frac{zf_{{\nu }}^{\prime \prime }(z)}{f_{{\nu }}^{\prime
}(z)}\right) >\alpha  \label{f1}
\end{equation}%
if and only if $r_{1}$ is the unique root of
\begin{equation*}
1+\frac{rf_{{\nu }}^{\prime \prime }(r)}{f_{{\nu }}^{\prime }(r)}=\alpha
\end{equation*}%
situated in $\left( 0,\gamma _{{\nu },1}^{\prime }\right) .$ This completes
the proof of part \textbf{a} of our theorem when ${\nu }>{-}\frac{1}{2}$.

\textbf{b) }In view of Lemma \ref{Had3} we have that
\begin{equation*}
1+\frac{zg_{{\nu }}^{\prime \prime }(z)}{g_{{\nu }}^{\prime }(z)}%
=1-\sum_{n\geq1}\frac{4z^{4}}{\zeta _{{\nu },n}^{4}-z^{4}}\text{.}
\end{equation*}%
By using the inequality (\ref{s7}), for all $z\in \mathbb{D}_{\zeta _{{\nu }%
,1}}$ we obtain the inequality%
\begin{equation*}
\real\left( 1+\frac{zg_{{\nu }}^{\prime \prime }(z)}{g_{{\nu }}^{\prime
}(z)}\right) \geqslant 1-\sum_{n\geq1}\frac{4r^{4}}{\zeta _{{\nu }%
,n}^{4}-r^{4}},
\end{equation*}%
where ${\left\vert z\right\vert =r.}$ Thus, for ${r}\in \left( 0,\zeta _{{%
\nu },1}\right) $ we get%
\begin{equation*}
\inf_{z\in \mathbb{D}_{r}}\left\{ \real\left( 1+\frac{zg_{{\nu }%
}^{\prime \prime }(z)}{g_{{\nu }}^{\prime }(z)}\right) \right\} =1+\frac{rg_{%
{\nu }}^{\prime \prime }(r)}{g_{{\nu }}^{\prime }(r)}.
\end{equation*}%
The function $G_{{\nu }}:\left( 0,\zeta _{{\nu },1}\right) \longrightarrow
\mathbb{R}$, defined by%
\begin{equation*}
G_{{\nu }}(r)=1+\frac{rg_{{\nu }}^{\prime \prime }(r)}{g_{{\nu }}^{\prime
}(r)},
\end{equation*}%
is strictly decreasing and $\lim_{r\searrow 0}G_{{\nu }}(r)=1>\alpha $, $%
\lim_{r\nearrow \zeta _{{\nu },1}}G_{{\nu }}(r)=-\infty $. Consequently, in
view of the minimum principle for harmonic functions for $z\in \mathbb{D}%
_{r_{2}}$ we have that
\begin{equation*}
\real\left( 1+\frac{zg_{{\nu }}^{\prime \prime }(z)}{g_{{\nu }}^{\prime
}(z)}\right) >\alpha
\end{equation*}%
if and only if $r_{2}$ is the unique root of
\begin{equation*}
1+\frac{rg_{{\nu }}^{\prime \prime }(r)}{g_{{\nu }}^{\prime }(r)}=\alpha
\end{equation*}%
situated in $\left( 0,\zeta _{{\nu },1}\right) .$ Finally, the inequality $%
\zeta _{{\nu },1}<\gamma _{{\nu },1}$ follows from Lemma \ref{LP}.

\textbf{c)} In view of Lemma \ref{Had3} we have that
\begin{equation*}
1+\frac{zh_{{\nu }}^{\prime \prime }(z)}{h_{{\nu }}^{\prime }(z)}%
=1-\sum_{n\geq1}\frac{z}{\xi _{{\nu },n}^{4}-z}\text{.}
\end{equation*}%
By using the inequality (\ref{s7}), for all $z\in \mathbb{D}_{\xi _{{\nu }%
,1}}$ we obtain the inequality%
\begin{equation*}
\real\left( 1+\frac{zh_{{\nu }}^{\prime \prime }(z)}{h_{{\nu }}^{\prime
}(z)}\right) \geqslant 1-\sum_{n\geq1}\frac{r}{\xi _{{\nu },n}^{4}-r},
\end{equation*}%
where ${\left\vert z\right\vert =r.}$ Thus, for ${r}\in \left( 0,\xi _{{\nu }%
,1}\right) $ we get%
\begin{equation*}
\inf_{z\in \mathbb{D}_{r}}\left\{ \real\left( 1+\frac{zh_{{\nu }%
}^{\prime \prime }(z)}{h_{{\nu }}^{\prime }(z)}\right) \right\} =1+\frac{%
r\,h_{{\nu }}^{\prime \prime }(r)}{h_{{\nu }}^{\prime }(r)}.
\end{equation*}%
The function $H_{{\nu }}:\left( 0,\xi _{{\nu },1}\right) \longrightarrow
\mathbb{R}$, defined by%
\begin{equation*}
H_{{\nu }}(r)=1+\frac{r\,h_{{\nu }}^{\prime \prime }(r)}{h_{{\nu }}^{\prime
}(r)},
\end{equation*}%
is strictly decreasing and $\lim_{r\searrow 0}H_{{\nu }}(r)=1>\alpha $, $%
\lim_{r\nearrow \xi _{{\nu },1}}H_{{\nu }}(r)=-\infty $. Consequently, in
view of the minimum principle for harmonic functions for $z\in \mathbb{D}%
_{r_{3}}$ we have that
\begin{equation*}
\real\left( 1+\frac{zh_{{\nu }}^{\prime \prime }(z)}{h_{{\nu }}^{\prime
}(z)}\right) >\alpha
\end{equation*}%
if and only if $r_{3}$ is the unique root of
\begin{equation*}
1+\frac{r\,h_{{\nu }}^{\prime \prime }(r)}{h_{{\nu }}^{\prime }(r)}=\alpha
\end{equation*}%
situated in $\left( 0,\xi _{{\nu },1}\right) .$ Finally, the inequality $\xi
_{{\nu },1}<\gamma _{{\nu },1}$ follows from Lemma \ref{LP}.
\end{proof}

\begin{proof}[\bf Proof of Theorem \ref{T4}]
\textbf{a)} By means of (\ref{pi1}) and (\ref{Ha2}) we have%
\begin{align*}
1+\frac{zu_{\nu }^{\prime \prime }(z)}{u_{\nu }^{\prime }(z)}& =1+\frac{z%
\mathbf{\ \mathbf{\Pi }}_{\nu }^{\prime \prime }(z)}{\mathbf{\mathbf{\Pi }}%
_{\nu }^{\prime }(z)}+\left( \frac{1}{2\nu }-1\right) \frac{z\mathbf{\mathbf{%
\Pi }}_{\nu }^{\prime }(z)}{\mathbf{\mathbf{\Pi }}_{\nu }(z)},\text{ }\nu
\neq 0 \\
& =1-\left( \frac{1}{2\nu }-1\right) \sum_{n\geq1}\frac{4z^{4}}{j_{{%
\nu },n}^{4}-z^{4}}-\sum_{n\geq1}\frac{4z^{4}}{t_{{\nu },n}^{4}-z^{4}}%
.
\end{align*}%
Now, suppose that $\nu \in (0,\frac{1}{2}].$ By using the inequality (\ref%
{s7}), for all $z\in \mathbb{D}_{t_{\nu ,1}}$ we obtain the inequality
\begin{equation*}
\real\left( 1+\frac{zu_{\nu }^{\prime \prime }(z)}{u_{\nu }^{\prime }(z)}%
\right) \geqslant 1-\left( \frac{1}{2\nu }-1\right) \sum_{n\geq1}%
\frac{4r^{4}}{j_{{\nu },n}^{4}-r^{4}}-\sum_{n\geq1}\frac{4r^{4}}{t_{{%
\nu },n}^{4}-r^{4}},
\end{equation*}%
where ${\left\vert z\right\vert =r.}$ Moreover, observe that if we use the
inequality (\ref{2.2}) then we get that the above inequality is also valid
when $\nu >\frac{1}{2}$. Here we used that the zeros $j_{\nu ,n}$ and $%
t_{\nu ,n}$ interlace, according to Lemma \ref{Had2}. The above inequality implies for ${r}\in \left(
0,t_{\nu ,1}\right) $%
\begin{equation*}
\inf_{z\in \mathbb{D}_{r}}\left\{ \real\left( 1+\frac{zu_{\nu }^{\prime
\prime }(z)}{u_{\nu }^{\prime }(z)}\right) \right\} =1+\frac{ru_{\nu
}^{\prime \prime }(r)}{u_{\nu }^{\prime }(r)}.
\end{equation*}%
On the other hand, the function $U_{\nu }:\left( 0,t_{\nu ,1}\right)
\longrightarrow \mathbb{R}$, defined by%
\begin{equation*}
U_{\nu }(r)=1+\frac{ru_{\nu }^{\prime \prime }(r)}{u_{\nu }^{\prime }(r)},
\end{equation*}%
is strictly decreasing since%
\begin{eqnarray*}
U_{\nu }^{\prime }(r) &=&-\left( \frac{1}{2\nu }-1\right) \sum_{n\geq1}
\frac{16r^{3}j_{{\nu },n}^{4}}{\left( j_{{\nu },n}^{4}-r^{4}\right) ^{2}}%
-\sum_{n\geq1}\frac{16r^{3}t_{{\nu },n}^{4}}{\left( t_{{\nu }%
,n}^{4}-r^{4}\right) ^{2}} \\
&<&\sum_{n\geq1}\frac{16r^{3}j_{{\nu },n}^{4}}{\left( j_{{\nu }%
,n}^{4}-r^{4}\right) ^{2}}-\sum_{n\geq1}\frac{16r^{3}t_{{\nu },n}^{4}%
}{\left( t_{{\nu },n}^{4}-r^{4}\right) ^{2}}<0
\end{eqnarray*}%
for $\nu >0$ and ${r}\in \left( 0,t_{\nu ,1}\right) .$ Here we used again
that the zeros $j_{\nu ,n}$ and $t_{\nu ,n}$ interlace for all $n\in \mathbb{%
N}$, $\nu >0$ and $r<\sqrt{j_{\nu ,n}t_{\nu ,n}}$ we have that
\begin{equation*}
j_{{\nu },n}^{4}\left( t_{{\nu },n}^{4}-r^{4}\right) ^{2}<t_{{\nu }%
,n}^{4}\left( j_{{\nu },n}^{4}-r^{4}\right) ^{2}.
\end{equation*}%
Since $\lim_{r\searrow 0}U_{\nu }(r)=1>\alpha $ and $\lim_{r\nearrow t_{\nu
,1}}U_{\nu }(r)=-\infty ,$ in view of the minimum principle for harmonic
functions it follows that for $z\in \mathbb{D}_{r_{4}}$ we have%
\begin{equation*}
\real\left( 1+\frac{zu_{\nu }^{\prime \prime }(z)}{u_{\nu }^{\prime }(z)}%
\right) >\alpha
\end{equation*}%
if and only if $r_{4}$ is the unique root of
\begin{equation*}
1+\frac{ru_{\nu }^{\prime \prime }(r)}{u_{\nu }^{\prime }(r)}=\alpha
\end{equation*}%
situated in $\left( 0,t_{\nu ,1}\right) .$

\textbf{b)} In view of Lemma \ref{Had3} we have that
\begin{equation*}
1+\frac{zv_{{\nu }}^{\prime \prime }(z)}{v_{{\nu }}^{\prime }(z)}%
=1-\sum_{n\geq1}\frac{4z^{4}}{\vartheta _{{\nu },n}^{4}-z^{4}}\text{.}
\end{equation*}%
By using the inequality (\ref{s7}), for all $z\in \mathbb{D}_{\vartheta _{{%
\nu },1}}$ we obtain the inequality%
\begin{equation*}
\real\left( 1+\frac{zv_{{\nu }}^{\prime \prime }(z)}{v_{{\nu }}^{\prime
}(z)}\right) \geqslant 1-\sum_{n\geq1}\frac{4r^{4}}{\vartheta _{{\nu }%
,n}^{4}-r^{4}},
\end{equation*}%
where ${\left\vert z\right\vert =r.}$ Thus, for ${r}\in \left( 0,\vartheta _{%
{\nu },1}\right) $ we get%
\begin{equation*}
\inf_{z\in \mathbb{D}_{r}}\left\{ \real\left( 1+\frac{zv_{{\nu }%
}^{\prime \prime }(z)}{v_{{\nu }}^{\prime }(z)}\right) \right\} =1+\frac{rv_{%
{\nu }}^{\prime \prime }(r)}{v_{{\nu }}^{\prime }(r)}.
\end{equation*}
On the other hand, the function $V_{\nu }:\left( 0,\vartheta _{{\nu }%
,1}\right) \longrightarrow \mathbb{R}$, defined by%
\begin{equation*}
V_{\nu }(r)=1+\frac{rv_{\nu }^{\prime \prime }(r)}{v_{\nu }^{\prime }(r)},
\end{equation*}%
is strictly decreasing \ and $\lim_{r\searrow 0}V_{\nu }(r)=1>\alpha $ and $%
\lim_{r\nearrow \vartheta _{{\nu },1}}V_{\nu }(r)=-\infty $. Consequently,
in view of the minimum principle for harmonic functions for $z\in \mathbb{D}%
_{r_{5}}$ we have that
\begin{equation*}
\real\left( 1+\frac{zv_{\nu }^{\prime \prime }(z)}{v_{\nu }^{\prime }(z)}%
\right) >\alpha
\end{equation*}%
if and only if $r_{5}$ is the unique root of
\begin{equation*}
1+\frac{rv_{\nu }^{\prime \prime }(r)}{v_{\nu }^{\prime }(r)}=\alpha
\end{equation*}%
situated in $\left( 0,\vartheta _{{\nu },1}\right) .$ Finally, the
inequality $\vartheta _{{\nu },1}<j_{{\nu },1}$ follows from Lemma \ref{LP}.

\textbf{c)} Similarly, from Lemma \ref{Had3} by using
\begin{equation*}
1+\frac{zw_{{\nu }}^{\prime \prime }(z)}{w_{{\nu }}^{\prime }(z)}%
=1-\sum_{n\geq1}\frac{z}{\omega _{{\nu },n}^{4}-z}.
\end{equation*}%
and the inequality (\ref{s7}), we get for all $z\in \mathbb{D}_{\omega _{{%
\nu },1}}$
\begin{equation*}
\real\left( 1+\frac{zw_{\nu }^{\prime \prime }(z)}{w_{\nu }^{\prime }(z)}%
\right) \geqslant 1-\sum_{n\geq1}\frac{r}{\omega _{{\nu },n}^{4}-r},
\end{equation*}%
where ${\left\vert z\right\vert =r.}$ Hence, for ${r}\in \left( 0,\omega _{{%
\nu },1}\right) $ we obtain
\begin{equation*}
\inf_{z\in \mathbb{D}_{r}}\left\{ \real\left( 1+\frac{zw_{\nu }^{\prime
\prime }(z)}{w_{\nu }^{\prime }(z)}\right) \right\} =1+\frac{rw_{\nu
}^{\prime \prime }(r)}{w_{\nu }^{\prime }(r)}.
\end{equation*}%
On the other hand, the function $W_{\nu }:\left( 0,\omega _{{\nu },1}\right)
\longrightarrow \mathbb{R}$, defined by%
\begin{equation*}
W_{\nu }(r)=1+\frac{rw_{\nu }^{\prime \prime }(r)}{w_{\nu }^{\prime }(r)},
\end{equation*}%
is strictly decreasing and $\lim_{r\searrow 0}W_{\nu }(r)=1>\alpha $ and $%
\lim_{r\nearrow \omega _{{\nu },1}}W_{\nu }(r)=-\infty .$ Consequently, in
view of the minimum principle for harmonic functions for $z\in \mathbb{D}%
_{r_{6}}$ we have that
\begin{equation*}
\real\left( 1+\frac{zw_{\nu }^{\prime \prime }(z)}{w_{\nu }^{\prime }(z)}%
\right) >\alpha
\end{equation*}%
if and only if $r_{6}$ is the unique root of
\begin{equation*}
1+\frac{rw_{\nu }^{\prime \prime }(r)}{w_{\nu }^{\prime }(r)}=\alpha
\end{equation*}%
situated in $\left( 0,\omega _{{\nu },1}\right) .$ Finally, the inequality $%
\omega _{{\nu },1}<j_{{\nu },1}$ follows from Lemma \ref{LP}.
\end{proof}

\begin{proof}[\bf Proof of Theorem \ref{THC1}]
We have that%
\begin{equation*}
1+\frac{zg_{{\nu }}^{\prime \prime }(z)}{g_{{\nu }}^{\prime }(z)}%
=1-\sum_{n\geq1}\frac{4z^{4}}{\zeta _{{\nu },n}^{4}-z^{4}},
\end{equation*}%
which vanishes at $r^{c}(g_{\nu })$. For all $\nu >-1$ we get
\begin{equation*}
\frac{1}{(r^{c}(g_{\nu }))^{4}}=\sum_{n\geq 1}\frac{4}{\zeta _{{\nu }%
,n}^{4}-(r^{c}(g_{\nu }))^{4}}>\sum_{n\geq 1}\frac{4}{\zeta _{{\nu },n}^{4}}%
=4\sigma _{1}=\frac{5}{4(\nu +1)_{3}},
\end{equation*}%
where $\zeta _{{\nu },n}$ is the $n$th positive zeros of $g_{\nu }^{\prime }.$ By using the Euler-Rayleigh inequalities it is possible to have more
tight bounds for the radius of convexity $r^{c}(g_{\nu }).$ For this observe that the zeros
of
\begin{equation*}
g_{\nu }^{\prime }(z)=\sum_{n\geq 0}\frac{\left( -1\right) ^{n}\left(
4n+1\right) }{2^{4n}n!\left( {\nu +1}\right) _{n}\left( {\nu +2}\right) _{2n}%
}z^{4n}
\end{equation*}%
are all real when $\nu >-1,$ since $g_{\nu}'$ belongs to the
Laguerre-P\'{o}lya class $\mathcal{LP},$ according to the proof of Theorem \ref{THbound4}. Now, since the
Laguerre-P\'{o}lya class $\mathcal{LP}$ is closed under differentiation, it
follows that the function $z\mapsto G_{\nu }(z)=\left( zg_{\nu }^{\prime
}(z)\right) ^{\prime }$ belongs also to the Laguerre-P\'{o}lya class and
hence all of its zeros are real. Thus, the function $z\mapsto \widetilde{G}%
_{\nu }(z)=G_{\nu }(\sqrt[4]{z})$ has only positive real zeros and having
growth order $\frac{1}{4}$ it can be written as the product
\begin{equation*}
\widetilde{G}_{\nu }(z)=\prod\limits_{n\geq 1}\left( 1-\frac{z}{a_{{\nu }%
,n}^{4}}\right) ,
\end{equation*}%
where $a_{\nu ,n}>0$ for each $n\in \mathbb{N}.$ Now, by using the
Euler-Rayleigh sum $\kappa_{k}=\sum_{n\geq 1}a_{{\nu },n}^{-4k}$ and the
infinite sum representation of $\widetilde{G}_{\nu }$ we have
\begin{equation*}
\frac{\widetilde{G}_{\nu }^{\prime }(z)}{\widetilde{G}_{\nu }(z)}%
=\sum_{n\geq 1}\frac{-1}{a_{{\nu },n}^{4}-z}=\sum_{n\geq 1}\sum_{k\geq 0}%
\frac{-1}{a_{{\nu },n}^{4k+4}}z^{k}=\sum_{k\geq 0}-\kappa_{k+1}z^{k},\ \
|z|<a_{{\nu },1},
\end{equation*}%
\begin{equation*}
\left. \frac{\widetilde{G}_{\nu }^{\prime }(z)}{\widetilde{G}_{\nu }(z)}%
=\sum_{n\geq 0}\frac{(-1)^{n+1}(4n+5)^{2}}{2^{4n+4}n!\left( {\nu +1}\right)
_{n+1}\left( {\nu +2}\right) _{2n+2}}z^{n}\right/ \sum_{n\geq 0}\frac{\left(
-1\right) ^{n}(4n+1)^{2}}{2^{4n}n!\left( {\nu +1}\right) _{n}\left( {\nu +2}%
\right) _{2n}}z^{n}.
\end{equation*}%
From these relations it is possible to express the Euler-Rayleigh sums in
terms of $\nu $ and by using the Euler-Rayleigh inequalities $\kappa_{k}^{-%
\frac{1}{k}}<a_{{\nu },1}^{4}<\frac{\kappa_{k}}{\kappa_{k+1}}$ we obtain
the following inequalities for $\nu >-1$ and $k\in \mathbb{N}$
\begin{equation*}
\sqrt[4]{\kappa_{k}^{-\frac{1}{k}}}<r^{c}(g_{\nu })<\sqrt[4]{\frac{\kappa_{k}}{\kappa_{k+1}}}.
\end{equation*}%
Since
\begin{equation*}
\kappa_{1}=\frac{25}{16(\nu +1)_{3}},\ \kappa_{2}=\allowbreak \frac{%
544\nu ^{2}+5301\nu +12\,257}{2^{8}(\nu +1)_{3}(\nu +1)_{5}}
\end{equation*}%
and%
\begin{equation*}
\kappa_{3}=\frac{3\left( 2112\nu ^{4}+48\,784\nu ^{3}+417\,279\nu
^{2}+1556\,904\nu +2123\,797\right) }{\allowbreak 2^{11}\left( (\nu +1)_{3}%
\right) ^{2}(\nu +1)_{7}}\allowbreak,
\end{equation*}%
in particular, when $k\in \{1,2\}$ from the above Euler-Rayleigh
inequalities we have the inequalities in this theorem and it is possible to
have more tight bounds for other values of $k\in \mathbb{N}.$
\end{proof}

\begin{proof}[\bf Proof of Theorem \ref{THC2}]
The expression
\begin{equation*}
1+\frac{zh_{{\nu }}^{\prime \prime }(z)}{h_{{\nu }}^{\prime }(z)}%
=1-\sum_{n\geq1}\frac{z}{\xi _{{\nu },n}^{4}-z}
\end{equation*}%
vanishes at $r^{c}(h_{\nu }).$ In view of the first Rayleigh sum for
the zeros of $h_{\nu }^{\prime }$ we get
\begin{equation*}
\frac{1}{r^{c}(h_{\nu })}=\sum_{n\geq 1}\frac{1}{\xi _{{\nu }%
,n}^{4}-r^{c}(h_{\nu })}>\sum_{n\geq 1}\frac{1}{\xi _{{\nu },n}^{4}}=\rho
_{1}=\frac{1}{8(\nu +1)_{3}}.
\end{equation*}
Now, we consider the infinite sum representation of $h_{\nu}^{\prime},$ that is,
\begin{equation*}
h_{\nu}^{\prime}(z)=\sum_{n\geq0}\frac{\left( -1\right) ^{n}\left(
n+1\right)}{2^{4n}n!\left({\nu+1}\right)_{n}\left({\nu +2}\right)_{2n}}%
z^{n}.
\end{equation*}%
The function $h_{\nu}'$ has only real zeros for $\nu >-1$ and
belongs to the Laguerre-P\'{o}lya class $\mathcal{LP}$ of real entire
functions, according to the proof of Theorem \ref{THbound5}. Therefore, the function
$z\mapsto \widetilde{H}_{\nu }(z)=\left(zh_{\nu}'(z)\right)'$ belongs also to the Laguerre-P\'{o}%
lya class $\mathcal{LP}$ and has also only real zeros. The function $%
\widetilde{H}_{\nu }(z)$ has only positive real zeros and having growth
order $\frac{1}{4}$ it can be written as the product
\begin{equation*}
\widetilde{H}_{\nu }(z)=\prod\limits_{n\geq 1}\left( 1-\frac{z}{b_{{\nu }%
,n}^{4}}\right) ,
\end{equation*}%
where $b_{\nu ,n}>0$ for each $n\in \mathbb{N}$. Now, by using the
Euler-Rayleigh sum $\alpha_{k}=\sum_{n\geq 1}b_{{\nu },n}^{-4k}$ and the
infinite sum representation of the function $\widetilde{H}_{\nu }$ we have%
\begin{equation*}
\frac{\widetilde{H}_{\nu }^{\prime }(z)}{\widetilde{H}_{\nu }(z)}%
=\sum_{n\geq 1}\frac{-1}{b_{{\nu },n}^{4}-z}=\sum_{n\geq 1}\sum_{k\geq 0}%
\frac{-1}{b_{{\nu },n}^{4k+4}}z^{k}=\sum_{k\geq 0}-\alpha_{k+1}z^{k},\ \
|z|<b_{{\nu },1},
\end{equation*}%
\begin{equation*}
\left. \frac{\widetilde{H}_{\nu }^{\prime }(z)}{\widetilde{H}_{\nu }(z)}%
=\sum_{n\geq 0}\frac{(-1)^{n+1}(n+2)^{2}}{2^{4n+4}n!\left( {\nu +1}\right)
_{n+1}\left( {\nu +2}\right) _{2n+2}}z^{n}\right/ \sum_{n\geq 0}\frac{\left(
-1\right) ^{n}(n+1)^{2}}{2^{4n}n!\left( {\nu +1}\right) _{n}\left( {\nu +2}%
\right) _{2n}}z^{n}.
\end{equation*}%
We can express the Euler-Rayleigh sums in terms of $\nu $ and by using the
Euler-Rayleigh inequalities $\alpha_{k}^{-\frac{1}{k}}<b_{{\nu },1}^{4}<\frac{%
\alpha_{k}}{\alpha_{k+1}}$ we get the inequalities for $b_{{\nu },1}^{4}$ when
$\nu >-1$ and $k\in \mathbb{N}$%
\begin{equation*}
\alpha_{k}^{-\frac{1}{k}}<r^{c}(h_{\nu })<\frac{\alpha_{k}}{\alpha_{k+1}}.
\end{equation*}%
Since
\begin{equation*}
\alpha_{1}=\frac{1}{4(\nu +1)_{3}},\text{ \ }\alpha_{2}=\frac{7\nu ^{2}+108\nu
+293}{2^{8}(\nu +1)_{3}(\nu +1)_{5}}
\end{equation*}%
and%
\begin{equation*}
\alpha_{3}=\frac{3\left( 3\nu ^{4}+91\nu ^{3}+1059\nu ^{2}+4965\nu
+7834\right) }{\allowbreak 2^{11}\left((\nu +1)_{3}\right)^{2}(\nu +1)_{7}}
\end{equation*}
in particular, when $k\in \{1,2\}$ we have the inequalities of this theorem.
\end{proof}

\begin{proof}[\bf Proof of Theorem \ref{THC3}]
We know that
\begin{equation*}
1+\frac{zv_{{\nu }}^{\prime \prime }(z)}{v_{{\nu }}^{\prime }(z)}%
=1-\sum_{n\geq1}\frac{4z^{4}}{\vartheta _{{\nu },n}^{4}-z^{4}},
\end{equation*}%
which vanishes at $r^{c}(v_{\nu })$. For all $\nu >-1$ we get
\begin{equation*}
\frac{1}{(r^{c}(v_{\nu }))^{4}}=\sum_{n\geq 1}\frac{4}{\vartheta _{{\nu }%
,n}^{4}-(r^{c}(v_{\nu }))^{4}}>\sum_{n\geq 1}\frac{4}{\vartheta _{{\nu }%
,n}^{4}}=4\varrho_{1}=\frac{5}{4(\nu +1)^{2}(\nu +2)},
\end{equation*}%
where $\vartheta_{{\nu },n}$ is the $n$th positive zeros of $v_{\nu }^{\prime }.$
By using the Euler-Rayleigh inequalities it is possible to have more
tight bounds for the radius convexity $r^{c}(v_{\nu }).$ For this we recall that the zeros
of
\begin{equation*}
v_{\nu }^{\prime }(z)=\sum_{n\geq 0}\frac{\left( -1\right) ^{n}\left(
4n+1\right) }{2^{4n}n!\left( {\nu +1}\right) _{n}\left( {\nu +1}\right) _{2n}%
}z^{4n}
\end{equation*}%
are all real when $\nu >-1$ since this function belongs to the
Laguerre-P\'{o}lya class $\mathcal{LP}$, according to the proof of Theorem \ref{THSbound2}. It follows that the function $%
z\mapsto \overline{V}_{\nu}(z)=\left( zv_{\nu }^{\prime }(z)\right) ^{\prime }$
belongs also to the Laguerre-P\'{o}lya class and hence all of its zeros are
real. Thus, the function $z\mapsto \widetilde{V}_{\nu }(z)=\overline{V}_{\nu }(\sqrt[4]{%
z})$ has only positive real zeros and having growth order $\frac{1}{4}$ it
can be written as the product
\begin{equation*}
\widetilde{V}_{\nu }(z)=\prod\limits_{n\geq 1}\left( 1-\frac{z}{d_{{\nu }%
,n}^{4}}\right) ,
\end{equation*}%
where $d_{\nu,n}>0$ for each $n\in \mathbb{N}.$ Now, by using the
Euler-Rayleigh sum $\epsilon _{k}=\sum_{n\geq 1}d_{{\nu },n}^{-4k}$ and the
infinite sum representation of $\widetilde{V}_{\nu }$ we have
\begin{equation*}
\frac{\widetilde{V}_{\nu }^{\prime }(z)}{\widetilde{V}_{\nu }(z)}%
=\sum_{n\geq 1}\frac{-1}{d_{{\nu },n}^{4}-z}=\sum_{n\geq 1}\sum_{k\geq 0}%
\frac{-1}{d_{{\nu },n}^{4k+4}}z^{k}=\sum_{k\geq 0}-\epsilon _{k+1}z^{k},\ \
|z|<a_{{\nu },1},
\end{equation*}%
\begin{equation*}
\left. \frac{\widetilde{V}_{\nu }^{\prime }(z)}{\widetilde{V}_{\nu }(z)}%
=\sum_{n\geq 0}\frac{(-1)^{n+1}(4n+5)^{2}}{2^{4n+4}n!\left( {\nu +1}\right)
_{n+1}\left( {\nu +1}\right) _{2n+2}}z^{n}\right/ \sum_{n\geq 0}\frac{\left(
-1\right) ^{n}(4n+1)^{2}}{2^{4n}n!\left( {\nu +1}\right) _{n}\left( {\nu +1}%
\right) _{2n}}z^{n}.
\end{equation*}%
From these relations it is possible to express the Euler-Rayleigh sums in
terms of $\nu $ and by using the Euler-Rayleigh inequalities $\epsilon
_{k}^{-\frac{1}{k}}<d_{{\nu },1}^{4}<\frac{\epsilon _{k}}{\epsilon _{k+1}}$
we obtain the following inequalities for $\nu >-1$ and $k\in \mathbb{N}$
\begin{equation*}
\sqrt[4]{\epsilon _{k}^{-\frac{1}{k}}}<r^{c}(v_{\nu })<\sqrt[4]{\frac{%
\epsilon _{k}}{\epsilon _{k+1}}}.
\end{equation*}%
Since
\begin{equation*}
\epsilon _{1}=\frac{25}{16(\nu +1)^{2}(\nu +2)},\ \epsilon _{2}=\frac{544\nu
^{2}+4213\nu +7419}{2^{8}(\nu +1)_{4}\left( \nu +1\right) ^{3}\left( \nu
+2\right) }\allowbreak
\end{equation*}%
and%
\begin{equation*}
\epsilon _{3}=\frac{6336\nu ^{5}+140\,016\nu ^{4}+1212\,429\nu
^{3}+5112\,301\nu ^{2}+10\,459\,449\nu +8300\,897}{2^{11}(\nu +1)_{3}(\nu
+1)_{6}\left( \nu +1\right) ^{4}\left( \nu +2\right) }\allowbreak ,
\end{equation*}%
in particular, when $k\in \{1,2\}$ from the above Euler-Rayleigh
inequalities we have the inequalities in this theorem and it is possible to
have more tight bounds for other values of $k\in \mathbb{N}.$
\end{proof}

\begin{proof}[\bf Proof of Theorem \protect\ref{THC4}]
We have that%
\begin{equation*}
1+\frac{zw_{{\nu }}^{\prime \prime }(z)}{w_{{\nu }}^{\prime }(z)}%
=1-\sum_{n\geq1}\frac{z}{\omega _{{\nu },n}^{4}-z},
\end{equation*}%
which vanishes at $r^{c}(w_{\nu })$. For all $\nu >-1$ we obtain
\begin{equation*}
\frac{1}{r^{c}(v_{\nu })}=\sum_{n\geq 1}\frac{1}{\omega _{{\nu }%
,n}^{4}-r^{c}(v_{\nu })}>\sum_{n\geq 1}\frac{1}{\omega _{{\nu },n}^{4}}%
=q_{1}=\frac{1}{8(\nu +1)^{2}(\nu +2)},
\end{equation*}%
where $\omega _{{\nu },n}$ is the $n$th positive zeros of $w_{\nu }^{\prime }.$
Now, we consider the infinite sum representation of $w_{\nu }^{\prime }$%
\begin{equation*}
w_{\nu }^{\prime }(z)=\sum_{n\geq0}\frac{\left( -1\right) ^{n}\left(
n+1\right) }{2^{4n}n!\left( {\nu +1}\right) _{n}\left( {\nu +1}\right) _{2n}}%
z^{n}.
\end{equation*}%
The function $w_{\nu }^{\prime }$ has only real zeros for $\nu >-1$ and
belongs to the Laguerre-P\'{o}lya class $\mathcal{LP}$ of real entire
functions, according to the proof of Theorem \ref{THSbound3}. Therefore, the function $z\mapsto \widetilde{W}_{\nu }(z)=\left(
zw_{\nu }^{\prime }(z)\right) ^{\prime }$ belongs also to the Laguerre-P\'{o}%
lya class $\mathcal{LP}$ and has also only real zeros. The function $%
\widetilde{W}_{\nu }(z)$ has only positive real zeros and having growth
order $\frac{1}{4}$ it can be written as the product
\begin{equation*}
\widetilde{W}_{\nu }(z)=\prod\limits_{n\geq 1}\left( 1-\frac{z}{\widetilde{%
\omega }_{{\nu },n}^{4}}\right) ,
\end{equation*}%
where $\widetilde{\omega }_{\nu ,n}>0$ for each $n\in \mathbb{N}$. Now, by
using the Euler-Rayleigh sum $\iota _{k}=\sum_{n\geq 1}\widetilde{\omega }_{{%
\nu },n}^{-4k}$ and the infinite sum representation of the function $%
\widetilde{W}_{\nu }$ we have%
\begin{equation*}
\frac{\widetilde{W}_{\nu }^{\prime }(z)}{\widetilde{W}_{\nu }(z)}%
=\sum_{n\geq 1}\frac{-1}{\widetilde{\omega }_{{\nu },n}^{4}-z}=\sum_{n\geq
1}\sum_{k\geq 0}\frac{-1}{\widetilde{\omega }_{{\nu },n}^{4k+4}}%
z^{k}=\sum_{k\geq 0}-\iota _{k+1}z^{k},\ \ |z|<\widetilde{\omega }_{{\nu }%
,1},
\end{equation*}%
\begin{equation*}
\left. \frac{\widetilde{W}_{\nu }^{\prime }(z)}{\widetilde{W}_{\nu }(z)}%
=\sum_{n\geq 0}\frac{(-1)^{n+1}(n+2)^{2}}{2^{4n+4}n!\left( {\nu +1}\right)
_{n+1}\left( {\nu +1}\right) _{2n+2}}z^{n}\right/ \sum_{n\geq 0}\frac{\left(
-1\right) ^{n}(n+1)^{2}}{2^{4n}n!\left( {\nu +1}\right) _{n}\left( {\nu +1}%
\right) _{2n}}z^{n}.
\end{equation*}%
We can express the Euler-Rayleigh sums in terms of $\nu $ and by using the
Euler-Rayleigh inequalities $\iota _{k}^{-\frac{1}{k}}<\widetilde{\omega }_{{%
\nu },1}^{4}<\frac{\iota _{k}}{\iota _{k+1}}$ we get the following inequalities for $%
\widetilde{\omega }_{{\nu },1}^{4}$ when $\nu >-1$ and $k\in \mathbb{N}$%
\begin{equation*}
\iota _{k}^{-\frac{1}{k}}<r^{c}(w_{\nu })<\frac{\iota _{k}}{\iota _{k+1}}.
\end{equation*}%
Since%
\begin{equation*}
\imath _{1}=\frac{1}{4(\nu +1)^{2}(\nu +2)},\ \imath _{2}=\frac{7\nu
^{2}+94\nu +183}{2^{8}(\nu +1)_{4}\left( \nu +1\right) ^{3}\left( \nu
+2\right) }
\end{equation*}%
and%
\begin{equation*}
\imath _{3}=\frac{9\nu ^{5}+264\nu ^{4}+3108\nu ^{3}+16\,222\nu
^{2}+37\,827\nu +32\,138}{2^{11}(\nu +1)_{3}(\nu +1)_{6}\left( \nu +1\right)
^{4}\left( \nu +2\right) }\allowbreak ,
\end{equation*}%
in particular, when $k\in \{1,2\}$ from the above Euler-Rayleigh
inequalities we have the inequalities in this theorem and it is possible to
have more tight bounds for other values of $k\in \mathbb{N}.$
\end{proof}

\begin{proof}[\bf Proof of Theorem \ref{T5}]
Recall, that $\nu\mapsto\gamma_{\nu,n}$ is increasing on $(-1,\infty)$ for $n\in\mathbb{N}$ fixed, see \cite[Lemma 4]{HAT}. Thus, we have that
$\gamma_{\nu,1}>1$ if and only if $\nu>\nu_{\star},$ where $\nu_{\star}\simeq-0.97{\dots}$ is the unique root of $\gamma_{\nu,1}=1,$ or
equivalently $\mathbf{\Phi}_{\nu}(1)=0,$ or equivalently $J_{\nu+1}(1)I_{\nu}(1)+J_{\nu}(1)I_{\nu+1}(1)=0.$ Consequently, we have that for $\nu>\nu_{\star}$ and $n\in\mathbb{N}$ all zeros $\gamma_{\nu,n}$ are outside of the unit disk $\mathbb{D}.$ According to (\ref{sf}), (\ref{sg}) and (\ref{sh}) for ${z\in \mathbb{D}}$, $%
{\left\vert z\right\vert =r}$ we get%
\begin{equation*}
\real \left( \frac{zf_{{\nu }}^{\prime }(z)}{f_{{\nu }}(z)}\right) \geq 1-%
\frac{1}{2{\nu +1}}\sum_{n\geq1}\frac{4r^{4}}{\gamma _{{\nu }%
,n}^{4}-r^{4}}=\frac{1}{2{\nu +1}}\frac{r\mathbf{\Phi }_{{\nu }}^{\prime }(r)%
}{\mathbf{\Phi }_{{\nu }}(r)}=\frac{rf_{{\nu }}^{\prime }(r)}{f_{{\nu }}(r)},\ \ \nu>-\frac{1}{2},
\end{equation*}%
\begin{equation*}
\real \left( \frac{zg_{{\nu }}^{\prime }(z)}{g_{{\nu }}(z)}\right) \geq
1-\sum_{n\geq1}\frac{4r^{4}}{\gamma _{{\nu },n}^{4}-r^{4}}=-2{\nu }+%
\frac{r\mathbf{\Phi }_{{\nu }}^{\prime }(r)}{\mathbf{\Phi }_{{\nu }}(r)}=%
\frac{rg_{{\nu }}^{\prime }(r)}{g_{{\nu }}(r)}, \ \ \nu>-1
\end{equation*}%
and%
\begin{equation*}
\real \left( \frac{zh_{{\nu }}^{\prime }(z)}{h_{{\nu }}(z)}\right) \geq
1-\sum_{n\geq1}\frac{r}{\gamma _{{\nu },n}^{4}-r}=\frac{3}{4}-\frac{{%
\nu }}{2}+\frac{1}{4}\frac{r^{\frac{1}{4}}\mathbf{\Phi }_{{\nu }}^{\prime }(r^{\frac{1}{4}})%
}{\mathbf{\Phi }_{{\nu }}(r^{\frac{1}{4}})}=\frac{rh_{{\nu }}^{\prime }(r)}{h_{{\nu }%
}(r)},\ \ \nu>-1,
\end{equation*}%
since $\gamma _{{\nu },n}>\gamma_{\nu ,1}>1$ for ${\nu }%
>\nu_{\star}$ and $n\in\mathbb{N}.$ Now, we have
\begin{equation*}
\frac{\partial }{\partial r}\left( \frac{rf_{{\nu }}^{\prime }(r)}{f_{{\nu }%
}(r)}\right) =-\frac{16}{2{\nu +1}}\sum_{n\geq1}\frac{\gamma _{{\nu }%
,n}^{4}r^{3}}{\left( \gamma _{{\nu },n}^{4}-r^{4}\right) ^{2}}<0,\ \ \nu>-\frac{1}{2},
\end{equation*}%
\begin{equation*}
\frac{\partial }{\partial r}\left( \frac{rg_{{\nu }}^{\prime }(r)}{g_{{\nu }%
}(r)}\right) =-16\sum_{n\geq1}\frac{\gamma _{{\nu },n}^{4}r^{3}}{%
\left( \gamma _{{\nu },n}^{4}-r^{4}\right) ^{2}}<0, \ \ \nu>-1
\end{equation*}%
and%
\begin{equation*}
\frac{\partial }{\partial r}\left( \frac{rh_{{\nu }}^{\prime }(r)}{h_{{\nu }%
}(r)}\right) =-\sum_{n\geq1}\frac{\gamma _{{\nu },n}^{4}}{\left(
\gamma _{{\nu },n}^{4}-r\right) ^{2}}<0, \ \ \nu>-1.
\end{equation*}%
These imply that the functions $r\mapsto {rf_{{\nu }}^{\prime }(r)}/{f_{%
{\nu }}(r)},$ $r\mapsto {rg_{{\nu }}^{\prime }(r)}/{g_{{\nu }}(r)}\ $%
and $r\mapsto {rh_{{\nu }}^{\prime }(r)}/{h_{{\nu }}(r)}$ are
decreasing on $(0,1)\subset (0,\gamma _{\nu ,1}).$ So, for $z\in\mathbb{D}$ and for the corresponding $\nu$ we have
\begin{equation*}
\real \left( \frac{zf_{{\nu }}^{\prime }(z)}{f_{{\nu }}(z)}\right) \geq \frac{%
rf_{{\nu }}^{\prime }(r)}{f_{{\nu }}(r)}\geq \frac{f_{{\nu }}^{\prime }(1)}{%
f_{{\nu }}(1)}=1-\frac{1}{2{\nu +1}}\sum_{n\geq1}\frac{4}{\gamma _{{%
\nu },n}^{4}-1},\ \ \nu>-\frac{1}{2},
\end{equation*}%
\begin{equation*}
\real \left( \frac{zg_{{\nu }}^{\prime }(z)}{g_{{\nu }}(z)}\right) \geq \frac{%
rg_{{\nu }}^{\prime }(r)}{g_{{\nu }}(r)}\geq \frac{g_{{\nu }}^{\prime }(1)}{%
g_{{\nu }}(1)}=1-\sum_{n\geq1}\frac{4}{\gamma _{{\nu },n}^{4}-1},\ \ \nu>-1
\end{equation*}%
and%
\begin{equation*}
\real \left( \frac{zh_{{\nu }}^{\prime }(z)}{h_{{\nu }}(z)}\right) \geq \frac{%
rh_{{\nu }}^{\prime }(r)}{h_{{\nu }}(r)}\geq \frac{h_{{\nu }}^{\prime }(1)}{%
h_{{\nu }}(1)}=1-\sum_{n\geq1}\frac{1}{\gamma _{{\nu },n}^{4}-1}, \ \ \nu>-1.
\end{equation*}%
By using again the fact that the function ${\nu }\mapsto \gamma _{{\nu },n}$ is
increasing on $\left( -1,\infty \right) $ for all fixed $n\in\mathbb{N},$ we get that
the functions ${\nu }\mapsto {f_{{\nu }}^{\prime }(1)}/{f_{{\nu
}}(1)},$ ${\nu }\mapsto {g_{{\nu }}^{\prime }(1)}/{g_{{\nu }}(1)}$
and ${\nu }\mapsto {h_{{\nu }}^{\prime }(1)}/{h_{{\nu }}(1)}$ are
also increasing on $\left(-\frac{1}{2},\infty\right)$ and $(-1,\infty),$ respectively. Therefore the following statements are valid for $0\leq
\alpha <1:$

\vskip2mm

$\bullet$ ${f_{{\nu }}^{\prime }(1)}/{f_{{\nu }}(1)}\geq\alpha $ if and only if ${\nu
\geq \nu }_{\alpha }^{\star }(f_{{\nu }})$, where ${\nu }_{\alpha }^{\star
}(f_{{\nu }})$ is the unique root of $f_{{\nu }}^{\prime }(1)=\alpha f_{{\nu }}(1).$

\vskip2mm

$\bullet$ ${g_{{\nu }}^{\prime }(1)}/{g_{{\nu }}(1)}\geq\alpha $ if and only if ${%
\nu \geq \nu }_{\alpha }^{\star }(g_{{\nu }})$, where ${\nu }_{\alpha }^{\star
}(g_{{\nu }})$ is the unique root of $g_{{\nu }}^{\prime }(1)=\alpha g_{{\nu }}(1).$

\vskip2mm

$\bullet$ ${h_{{\nu }}^{\prime }(1)}/{h_{{\nu }}(1)}\geq\alpha $ if and only if ${%
\nu \geq \nu }_{\alpha }^{\star }(h_{{\nu }})$, where ${\nu }_{\alpha }^{\star
}(h_{{\nu }})$ is the unique root of $h_{{\nu }}^{\prime }(1)=\alpha h_{{\nu }}(1).$

\vskip2mm

The above equations are equivalent to
\begin{equation*}
2\mathbf{\Pi }_{{\nu }}(1)-(\alpha (2{\nu +1)+1)}\mathbf{\Phi }_{{\nu }%
}(1)=0,
\end{equation*}
\begin{equation*}
2\mathbf{\Pi }_{{\nu }}(1)-(\alpha +2{\nu +1)}\mathbf{\Phi }_{{\nu }}(1)=0
\end{equation*}%
and%
\begin{equation*}
\mathbf{\Pi }_{{\nu }}(1)-(2\alpha +{\nu -1)}\mathbf{\Phi }_{{\nu }}(1)=0
\end{equation*}%
which are equivalent to the equations in the statements of the theorem. This completes the proof.
\end{proof}

\begin{proof}[\bf Proof of Theorem \ref{T6}]
According to (\ref{su}), (\ref{sv}) and (\ref{sw}) for ${z\in \mathbb{D}}$
and ${\left\vert z\right\vert =r,}$ we obtain%
\begin{equation*}
\real \left( \frac{zu_{{\nu }}^{\prime }(z)}{u_{{\nu }}(z)}\right) \geq 1-%
\frac{1}{2{\nu }}\sum_{n\geq1}\frac{4r^{4}}{j_{{\nu },n}^{4}-r^{4}}=%
\frac{1}{2{\nu }}\frac{r\mathbf{\Pi }_{{\nu }}^{\prime }(r)}{\mathbf{\Pi }_{{%
\nu }}(r)}=\frac{ru_{{\nu }}^{\prime }(r)}{u_{{\nu }}(r)}, \ \ \nu>0,
\end{equation*}%
\begin{equation*}
\real \left( \frac{zv_{{\nu }}^{\prime }(z)}{v_{{\nu }}(z)}\right) \geq
1-\sum_{n\geq1}\frac{4r^{4}}{j_{{\nu },n}^{4}-r^{4}}={1}-2{\nu }+%
\frac{r\mathbf{\Pi }_{{\nu }}^{\prime }(r)}{\mathbf{\Pi }_{{\nu }}(r)}=\frac{%
rv_{{\nu }}^{\prime }(r)}{v_{{\nu }}(r)}, \ \ \nu>-1
\end{equation*}%
and%
\begin{equation*}
\real \left( \frac{zw_{{\nu }}^{\prime }(z)}{w_{{\nu }}(z)}\right) \geq
1-\sum_{n\geq1}\frac{r}{j_{{\nu },n}^{4}-r}=1-\frac{{\nu }}{2}+\frac{1%
}{4}\frac{r^{\frac{1}{4}}\mathbf{\Pi }_{{\nu }}^{\prime }(r^{\frac{1}{4}})}{\mathbf{\Pi }_{{%
\nu }}(r^{\frac{1}{4}})}=\frac{rw_{{\nu }}^{\prime }(r)}{w_{{\nu }}(r)},\ \ \nu>-1,
\end{equation*}%
since $j_{{\nu },n}>j_{\nu,1}>1$ for $n\in\mathbb{N}$ and ${\nu }>\nu^{\circ}\simeq-0.77{\dots},$
where $\nu^{\circ}$ is unique root of the equation $j_{\nu,1}=1,$ see \cite{bszasz} for more details. Now, we get
\begin{equation*}
\frac{\partial }{\partial r}\left( \frac{ru_{{\nu }}^{\prime }(r)}{u_{{\nu }%
}(r)}\right) =-\frac{8}{{\nu }}\sum_{n\geq1}\frac{j_{{\nu }%
,n}^{4}r^{3}}{\left( j_{{\nu },n}^{4}-r^{4}\right) ^{2}}<0,\ \ \nu>0,
\end{equation*}%
\begin{equation*}
\frac{\partial }{\partial r}\left( \frac{rv_{{\nu }}^{\prime }(r)}{v_{{\nu }%
}(r)}\right) =-16\sum_{n\geq1}\frac{j_{{\nu },n}^{4}r^{3}}{\left( j_{{%
\nu },n}^{4}-r^{4}\right) ^{2}}<0, \ \ \nu>-1
\end{equation*}%
and%
\begin{equation*}
\frac{\partial }{\partial r}\left( \frac{rw_{{\nu }}^{\prime }(r)}{w_{{\nu }%
}(r)}\right) =-\sum_{n\geq1}\frac{j_{{\nu },n}^{4}}{\left( j_{{\nu }%
,n}^{4}-r\right) ^{2}}<0,\ \ \nu>-1.
\end{equation*}%
So, the functions $r\mapsto {ru_{{\nu }}^{\prime }(r)}/{u_{{\nu }}(r)%
},$ $r\mapsto {rv_{{\nu }}^{\prime }(r)}/{v_{{\nu }}(r)}$ and $%
r\mapsto {rw_{{\nu }}^{\prime }(r)}/{w_{{\nu }}(r)}$ are decreasing
on $(0,1)\subset (0,j_{\nu ,1}).$ Hence,%
\begin{equation*}
\real \left( \frac{zu_{{\nu }}^{\prime }(z)}{u_{{\nu }}(z)}\right) \geq \frac{%
ru_{{\nu }}^{\prime }(r)}{u_{{\nu }}(r)}\geq \frac{u_{{\nu }}^{\prime }(1)}{%
u_{{\nu }}(1)}=1-\frac{1}{2{\nu }}\sum_{n\geq1}\frac{4}{j_{{\nu }%
,n}^{4}-1}, \ \ \nu>0,
\end{equation*}%
\begin{equation*}
\real \left( \frac{zv_{{\nu }}^{\prime }(z)}{v_{{\nu }}(z)}\right) \geq \frac{%
rv_{{\nu }}^{\prime }(r)}{v_{{\nu }}(r)}\geq \frac{v_{{\nu }}^{\prime }(1)}{%
v_{{\nu }}(1)}=1-\sum_{n\geq1}\frac{4}{j_{{\nu },n}^{4}-1}, \ \ \nu>-1
\end{equation*}%
and%
\begin{equation*}
\real \left( \frac{zw_{{\nu }}^{\prime }(z)}{w_{{\nu }}(z)}\right) \geq \frac{%
rw_{{\nu }}^{\prime }(r)}{w_{{\nu }}(r)}\geq \frac{w_{{\nu }}^{\prime }(1)}{%
w_{{\nu }}(1)}=1-\sum_{n\geq1}\frac{1}{j_{{\nu },n}^{4}-1}, \ \ \nu>-1.
\end{equation*}%
Since ${\nu }\mapsto j_{{\nu },n}$ is increasing on $\left(
-1,\infty \right)$ for all fixed $n\in\mathbb{N},$ the functions ${%
\nu }\mapsto {u_{{\nu }}^{\prime }(1)}/{u_{{\nu }}(1)},$ ${\nu }%
\mapsto {v_{{\nu }}^{\prime }(1)}/{v_{{\nu }}(1)}$ and ${\nu }%
\mapsto {w_{{\nu }}^{\prime }(1)}/{w_{{\nu }}(1)}$ are also
increasing on $(0,\infty)$ and $(-1,\infty),$ respectively. Therefore the following statements are true for $0\leq \alpha
<1: $

\vskip2mm

$\bullet$ ${u_{{\nu }}^{\prime }(1)}/{u_{{\nu }}(1)}\geq\alpha $ if and only if ${\nu
\geq \nu }_{\alpha }^{\star }(u_{{\nu }})$, where ${\nu }_{\alpha }^{\star
}(u_{{\nu }})$ is the unique root of $u_{{\nu }}^{\prime }(1)=\alpha u_{{\nu }}(1).$

\vskip2mm

$\bullet$ ${v_{{\nu }}^{\prime }(1)}/{v_{{\nu }}(1)}\geq\alpha $ if and only if ${%
\nu \geq \nu }_{\alpha }^{\star }(v_{{\nu }})$, where ${\nu }_{\alpha }^{\star
}(v_{{\nu }})$ is the unique root of $v_{{\nu }}^{\prime }(1)=\alpha v_{{\nu }}(1).$

\vskip2mm

$\bullet$ ${w_{{\nu }}^{\prime }(1)}/{w_{{\nu }}(1)}\geq\alpha $ if and only if ${%
\nu \geq \nu }_{\alpha }^{\star }(w_{{\nu }})$, where ${\nu }_{\alpha }^{\star
}(w_{{\nu }})$ is the unique root of $w_{{\nu }}^{\prime }(1)=\alpha w_{{\nu }}(1).$

\vskip2mm

The above equations are equivalent to
\begin{equation*}
J_{{\nu }}(1)I_{{\nu +1}}(1)-J_{{\nu +1}}(1)I_{{\nu }}(1)+2{\nu (1-\alpha )}%
J_{{\nu }}(1)I_{{\nu }}(1)=0,
\end{equation*}
\begin{equation*}
J_{{\nu }}(1)I_{{\nu +1}}(1)-J_{{\nu +1}}(1)I_{{\nu }}(1)+{(1-\alpha )}J_{{%
\nu }}(1)I_{{\nu }}(1)=0\
\end{equation*}%
and%
\begin{equation*}
J_{{\nu }}(1)I_{{\nu +1}}(1)-J_{{\nu +1}}(1)I_{{\nu }}(1)+4{(1-\alpha )}J_{{%
\nu }}(1)I_{{\nu }}(1)=0.
\end{equation*}
\end{proof}

\begin{proof}[\bf Proof of Theorem \ref{thconvexity1}]
Since the proofs of Theorems \ref{thconvexity1} and \ref{thconvexity2} are very similar not only by nature, but also from the point
of view of computations, we will present in details only the proof of Theorem \ref{thconvexity2}. The proof of Theorem \ref{thconvexity1} goes along
the same lines as the proof of Theorem \ref{thconvexity2}, we just need to change everywhere the zeros $j_{\nu,n}$ to the zeros $\gamma_{\nu,n}$ of the cross-product. The only different thing is the inequality \eqref{c2b3mm3d4f5g}, which in the case of the zeros $\gamma_{\nu,n}$ will be the following: if $\nu>-\frac{1}{2},$  then
$$
\sum_{n\geq1}\frac{1}{\gamma^4_{\nu,n}-1}<\frac{1}{29}.
$$
This follows from the fact that if $\nu>-\frac{1}{2}$ then
$$\sum_{n\geq1}\frac{1}{\gamma^4_{\nu,n}}=\frac{1}{2^4(\nu+1)(\nu+2)(\nu+3)}\leq\frac{1}{2^4(1-\frac{1}{2})(2-\frac{1}{2})(3-\frac{1}{2})}=\frac{1}{30},$$
which implies $\gamma^4_{\nu,n}>30$ for every $\nu>-\frac{1}{2}$ and $n\in\mathbb{N},$ or equivalently $\frac{1}{\gamma^4_{\nu,n}-1}<\frac{30}{29}\frac{1}{\gamma^4_{\nu,n}},$ resulting that
$$\sum_{n\geq1}\frac{1}{\gamma^4_{\nu,n}-1}<\frac{30}{29}\sum_{n\geq1}\frac{1}{\gamma^4_{\nu,n}}<\frac{30}{29}\frac{1}{30}=\frac{1}{29}.$$
Moreover, similarly as we did before and in Lemma \ref{lemzerosBessel}, it can be shown that the
equation
$$1=\sum_{n\geq1}\frac{1}{\gamma_{\nu,n}^4-1}+\sum_{n\geq1}\frac{\gamma_{\nu,n}^4}{(\gamma_{\nu,n}^4-1)^2}$$
has a unique root and the function $$\nu\mapsto 1-\sum_{n\geq1}\frac{1}{\gamma_{\nu,n}^4-1}-\frac{\sum\limits_{n\geq1}\frac{\gamma_{\nu,n}^4}{(\gamma_{\nu,n}^4-1)^2}}{1-\sum\limits_{n\geq1}\frac{1}{\gamma_{\nu,n}^4-1}}$$
is strictly increasing on the corresponding interval.
\end{proof}

\begin{proof}[\bf Proof of Theorem \ref{thconvexity2}]
{\bf a)} We define the mapping $\omega:(0,j_{\nu,1})\rightarrow\mathbb{R},$ by $$\omega(r)=\frac{rv'_{\nu}(r)}{v_{\nu}(r)}=1-\sum_{n\geq1}\frac{4r^4}{j_{\nu,n}^4-r^4}.$$
It can be seen that the inequality (\ref{c2b3mm3d4f5g})
implies $\omega(1)>0.$  This means that $r^{\star}({v_\nu})>1.$ Thus, from the proof of part {\bf b} of Theorem \ref{T4} we infer that
$$\real\left(1+\frac{z{v_\nu''}(z)}{v'_\nu(z)}\right)\geq1+\frac{r{v_\nu''}(r)}{v'_\nu(r)}$$ holds for every $1>r\geq|z|.$
This inequality and the monotonicity of the function $V_{\nu}$ imply
$$\real\left(1+\frac{z{v_\nu''}(z)}{v'_\nu(z)}\right)\geq1+\frac{r{v_\nu''}(r)}{v'_\nu(r)}\geq1+\frac{{v_\nu''}(1)}{v'_\nu(1)}$$
for each $1>r\geq|z|.$ Let $\overline{\nu}$ be the unique root of the equation
$$1=\sum_{n\geq1}\frac{4}{j_{\nu,n}^4-1}+\sum_{n\geq1}\frac{16j_{\nu,n}^4}{(j_{\nu,n}^4-1)^2}.$$
In the proof of Lemma \ref{lemzerosBessel} we proved that the functions
$\Theta_1,\Theta_2:(\underline{\nu},\infty)\rightarrow(0,\infty),$ defined by
$$\Theta_1(\nu)=\sum_{n\geq1}\frac{1}{j_{\nu,n}^4-1},\ \Theta_2(\nu)=\frac{\sum\limits_{n\geq1}\frac{j_{\nu,n}^4}{(j_{\nu,n}^4-1)^2}}{1-\sum\limits_{n\geq1}\frac{1}{j_{\nu,n}^4-1}},$$
are strictly decreasing.  Since  $\underline{\nu}<\overline{\nu}$ it follows that $\Theta_1$ and $\Theta_2$ are increasing on the interval $(\overline{\nu},\infty)$ too. It can be shown that $\Theta_3:(\overline{\nu},\infty)\rightarrow(0,\infty),$ defined by
$$\Theta_3(\nu)=\frac{1-\Theta_1(\nu)}{1-4\Theta_1(\nu)},$$ is strictly decreasing. Consequently the mapping $\Theta_4:(\overline{\nu},\infty)\rightarrow(0,\infty),$ defined by $$\Theta_4(\nu)=\Theta_2(\nu)\Theta_3(\nu)
=\frac{\sum\limits_{n\geq1}\frac{j_{\nu,n}^4}{(j_{\nu,n}^4-1)^2}}{1-4\sum\limits_{n\geq1}\frac{1}{j_{\nu,n}^4-1}},$$ is strictly decreasing because
is a product of two strictly decreasing positive functions. Thus, we get that the function $\chi:(\overline{\nu},\infty)\rightarrow\mathbb{R},$ defined by   $$\chi(\nu)=1-4\Theta_1(\nu)-16\Theta_4(\nu)=1+\frac{{v_\nu''}(1)}{v'_\nu(1)},$$ is strictly increasing.
Since $$\chi(\overline{\nu})=1-4\Theta_1(\overline{\nu})-16\Theta_4(\overline{\nu})
<1-4\Theta_1(\overline{\nu})-\sum_{n\geq1}\frac{j_{\overline{\nu},n}^4}{(j_{\overline{\nu},n}^4-1)^2}=0$$
and $\lim\limits_{\nu\rightarrow\infty}\chi(\nu)=1$ it follows that the equation $\chi(\nu)=\alpha$ has an unique root $\nu_{\alpha}^{c}(v_{\nu})$  for every  $\alpha\in[0,1),$ and we have
$$\real\left(1+\frac{z{v_\nu''}(z)}{v'_\nu(z)}\right)\geq1+\frac{{v_{\nu_{\alpha}^{c}(v_{\nu})}''}(1)}{v'_{\nu_{\alpha}^{c}(v_{\nu})}(1)}=\alpha$$
for every $|z|<1$ and $\nu\geq\nu_{\alpha}^{c}(v_{\nu}).$

{\bf b)} First we show the following affirmation: the radius of convexity of order $\alpha$ of the functions $w_{\nu}$
is $r^{c}_{\alpha}(w_\nu)=r_6,$ where $r_6$  is the unique  positive  root  of the following equation
$$\left.\left(1
+\frac{z{w_\nu''}(z)}{w_\nu'(z)}\right)\right|_{z=r}=\alpha,$$
situated in the interval $(0,\omega_{\nu,1}).$ This affirmation is actually equivalent to part {\bf c} of Theorem \ref{T4}, however, we give
here an alternative proof. The logarithmic differentation of the equality
$$w_{\nu}(z)=z\prod_{n\geq 1}\left(1-\frac{z}{j_{\nu,n}^4}\right)$$
gives
$$\frac{z{w_\nu'}(z)}{w_\nu(z)}=1-\sum_{n\geq1}\frac{z}{j_{\nu,n}^4-z}.$$ A second logarithmic differentiation leads to
$$1+\frac{z{w_\nu''}(z)}{w'_\nu(z)}=1-\sum_{n\geq1}\frac{z}{j_{\nu,n}^4-z}-\frac{\sum\limits_{n\geq1}\frac{zj_{\nu,n}^4}{(j_{\nu,n}^4-z)^2}}
{1-\sum\limits_{n\geq1}\frac{z}{j_{\nu,n}^4-z}}.$$
Since $r^{\star}({w_\nu})$ is the smallest root of the equation $1-\sum\limits_{n\geq1}\frac{r}{j_{\nu,n}^4-r}=0,$
for $r^{\star}({w_\nu})>r\geq|z|$ we have that
\begin{align*}
\real\left(1+\frac{z{w_\nu''}(z)}{w'_\nu(z)}\right)&=1-\sum_{n\geq1}\real\frac{z}{j_{\nu,n}^4-z}-
\real\frac{\sum\limits_{n\geq1}\frac{zj_{\nu,n}^4}{(j_{\nu,n}^4-z)^2}}{1-\sum\limits_{n\geq1}\frac{z}{j_{\nu,n}^4-z}}\\
&\geq1-\sum_{n\geq1}\frac{r}{j_{\nu,n}^4-r}
-\frac{\sum\limits_{n\geq1}\left|\frac{zj_{\nu,n}^4}{(j_{\nu,n}^4-z)^2}\right|}{1-\sum\limits_{n\geq1}\left|\frac{z}{j_{\nu,n}^4-z}\right|}\\
&\geq1-\sum\limits_{n\geq1}\frac{r}{j_{\nu,n}^4-r}-\frac{\sum\limits_{n\geq1}\frac{rj_{\nu,n}^4}{(j_{\nu,n}^4-r)^2}}{1-\sum\limits_{n\geq1}\frac{r}{j_{\nu,n}^4-r}}=
1+\frac{r{w_\nu''}(r)}{w'_\nu(r)}.
\end{align*}
The rest of the proof is just the same as in the proof of part {\bf c} of Theorem \ref{T4}, so we omit the details. From the proof of part {\bf c} of Theorem \ref{T4} we also know that the function $W_{\nu}$ is strictly decreasing and together with the above inequality this implies that for $|z|<1$ we have
\begin{eqnarray}\label{d4cp5b1mkj}\real\left(1+\frac{z{w_\nu''}(z)}{w'_\nu(z)}\right)\geq1+\frac{{w_\nu''}(1)}{w'_\nu(1)}.\end{eqnarray}
On the other hand, from Lemma \ref{lemzerosBessel} we know that the function $\Theta:(\underline{\nu},\infty)\rightarrow\mathbb{R},$ defined by
$$\Theta(\nu)=1+\frac{{w_\nu''}(1)}{w'_\nu(1)}=1-\sum_{n\geq1}\frac{1}{j_{\nu,n}^4-1}-\frac{\sum\limits_{n\geq1}\frac{j_{\nu,n}^4}{(j_{\nu,n}^4-1)^2}}
{1-\sum\limits_{n\geq1}\frac{1}{j_{\nu,n}^4-1}},$$
is strictly increasing. Moreover, we have that
$$\Theta(\underline{\nu})=1-\sum_{n\geq1}\frac{1}{j_{\overline{\nu},n}^4-1}-
\frac{\sum\limits_{n\geq1}\frac{j_{\overline{\nu},n}^4}{(j_{\overline{\nu},n}^4-1)^2}}{1-\sum\limits_{n\geq1}\frac{1}{j_{\overline{\nu},n}^4-1}}
<1-\sum_{n\geq1}\frac{1}{j_{\overline{\nu},n}^4-1}-{\sum_{n\geq1}\frac{j_{\overline{\nu},n}^4}{(j_{\overline{\nu},n}^4-1)^2}}=0,$$
and $\lim\limits_{\nu\rightarrow\infty}\Theta(\nu)=1.$ Thus, the
equation $\Theta(\nu)=\alpha$ has an unique root
$\nu_{\alpha}^c(w_{\nu})$ for every fixed number $\alpha\in[0,1).$
Finally, if $\nu\geq\nu_{\alpha}^c(w_{\nu}),$  then the inequality
(\ref{d4cp5b1mkj}) and the monotonicity of the mapping $\Theta$
imply that
$$\real\left(1+\frac{z{w_\nu''}(z)}{w'_\nu(z)}\right)\geq1+\frac{{w_\nu''}(1)}{w'_\nu(1)}\geq
1+\frac{{w_{\nu_{\alpha}^c(w_{\nu})}''}(1)}{w'_{\nu_{\alpha}^c(w_{\nu})}(1)}=\alpha, \ \textrm{for all} \ \ z\in\mathbb{D}.$$
\end{proof}

\end{document}